\renewcommand{\ge}{\geqslant}
\renewcommand{\le}{\leqslant}
\let\op=\llbracket
\let\cl=\rrbracket
\def\pv#1{\ensuremath{\mathsf{#1}}}
\def\Om#1#2{\ensuremath{\overline{\Omega}_{#1}{\pv{#2}}}}
\newcommand\image{\mathop{\mathrm{Im}}}
\let\cal=\mathcal
\def\Cl#1{\ensuremath{\cal{#1}}}
\newcommand\malcev{%
\mathbin{\hbox{$\bigcirc$\kern-9pt\raise1.2pt\hbox{\scriptsize$m$}\,}}}
\newcommand\smalcev{%
\mathbin{\hbox{$\bigcirc$\kern-8pt\raise1.2pt\hbox{\tiny$m$}\,}}}
\newcommand{\relm}{\mathrel{\mathrlap{\hspace{2mm}\circ}{\longrightarrow}}}
\newtheorem{Thm}{Theorem}[section]
\newtheorem{Prop}[Thm]{Proposition}
\newtheorem{Lemma}[Thm]{Lemma}
\newtheorem{Cor}[Thm]{Corollary}
\theoremstyle{definition}
\newtheorem{eg}[Thm]{Example}
\begin{document}

\title[Locally countable pseudovarieties]{Locally countable
  pseudovarieties}

\author{J. Almeida}%
\address{CMUP, Dep.\ Matem\'atica, Faculdade de Ci\^encias,
  Universidade do Porto, Rua do Campo Alegre 687, 4169-007 Porto,
  Portugal} \email{jalmeida@fc.up.pt}

\author{O. Kl\'\i ma}%
\address{Dept.\ of Mathematics and Statistics, Masaryk University,
  Kotl\'a\v rsk\'a 2, 611 37 Brno, Czech Republic}%
\email{klima@math.muni.cz}

\begin{abstract}
  The purpose of this paper is to contribute to the theory of
  profinite semigroups by considering the special class consisting of
  those all of whose finitely generated closed subsemigroups are
  countable, which are said to be locally countable. We also call
  locally countable a pseudovariety \pv V (of finite semigroups) for
  which all pro-\pv V semigroups are locally countable. We investigate
  operations preserving local countability of pseudovarieties and show
  that, in contrast with local finiteness, several natural operations
  do not preserve it.
  We also investigate the relationship of a finitely generated
  profinite semigroup being countable with every element being
  expressable in terms of the generators using multiplication and the
  idempotent (omega) power. The two properties turn out to be
  equivalent if there are only countably many group elements, gathered
  in finitely many regular \Cl J-classes. We also show that the
  pseudovariety generated by all finite ordered monoids satisfying the
  inequality $1\le x^n$ is locally countable if and only if $n=1$.
\end{abstract}

\keywords{Profinite algebra, pseudovariety,
  locally countable, Mal'cev product, factorization forest,
  Prouhet-Thue-Morse substitution}

\makeatletter%
\@namedef{subjclassname@2010}{%
  \textup{2010} Mathematics Subject Classification}%
\makeatother

\subjclass[2010]{Primary 20M07. Secondary 20M05, 03F03}

\maketitle

\tableofcontents

\section{Introduction}
\label{sec:intro}

With the advent of modern computers, several mathematical models of
computation emerged. One of the simplest and most successful in
applications is that of a finite automaton, which computes regular
languages. The transition semigroup of the minimal automaton of a
regular language, which is also its syntactic semigroup, provides an
alternative purely algebraic computation model. This connection with
algebra turned out to be rather fruitful and eventually led to a
general framework for the translation of combinatorial problems on
classes of regular languages to algebraic problems about finite
semigroups, which is provided by Eilenberg's correspondence
\cite{Eilenberg:1976}. On the algebraic side, one considers so-called
pseudovarieties of semigroups, classes of finite semigroups closed
under taking homomorphic images, subsemigroups, and finite direct
products.

Unlike varieties, pseudovarieties do not in general possess free
members. To capture them, one needs to consider more general
structures, namely profinite semigroups, which are inverse limits of
finite semigroups. Relatively free profinite semigroups are then
realized naturally as topological spaces namely as the Stone duals of
Boolean algebras of regular languages
\cite[Theorem~3.6.1]{Almeida:1994a} (see also \cite{Pippenger:1997}).
In fact, the algebraic structure of such profinite semigroups is also
captured by duality theory
\cite{Gehrke&Grigorieff&Pin:2008,Gehrke:2016a,Almeida&Klima:2019d}.
Particularly in view of Reiterman's theorem \cite{Reiterman:1982},
that provides a description of pseudovarieties in terms of formal
equalities of members of a profinite semigroup (so-called
pseudoidentities), profnite semigroups thus naturally came to play an
important role in the theory of pseudovarieties of semigroups, which
remains the core of the theory of finite semigroups
\cite{Rhodes&Steinberg:2009qt}.

A very special type of pseudovarieties is that of locally finite
pseudovarieties. They may be characterized by the existence of a
finite bound on the cardinality of members on a given number of
generators, or by the finiteness of the finitely generated relatively
free profinite semigroups. Locally finite pseudovarieties constitute a
particularly well behaved special case, for instance when dealing with
the important operation of semidirect product (see
\cite[Section~3.7]{Rhodes&Steinberg:2009qt}).

In this paper, we consider a generalization of locally finite
pseudovarieties: locally countable pseudovarieties. These are
pseudovarieties whose finitely generated relatively free profinite
semigroups are countable. They turn out to have an important property,
namely that closed congruences on their finitely generated relatively
free profinite semigroups are profinite \cite{Almeida&Klima:2019d}, a
property that holds for arbitrary profinite groups but that fails in
general for profinite semigroups. The authors came across this
property in connection with the conjectured completeness of a proof
scheme for pseudoidentities \cite{Almeida&Klima:2017a}.

It should be noted that an infinite countably generated profinite
semigroup is either countable or has the cardinality of the continuum.
This follows from the Cantor-Bendixson Theorem (see
\cite[Theorem~6.4]{Kechris:1995}), which holds in every complete
metric space with a countable dense subset. In the case of a finitely
generated profinite semigroup $S$, one may say more precisely that, if
$S$~is uncountable, then it is the union of a countable set with a
closed set homeomorphic with the Cantor set. In particular, from the
point of view of cardinality, locally countable pseudovarieties should
deserve special attention.

There is a duality significance of a Stone space being countable that
somehow adds motivation to considering this cardinality condition.
Indeed, the Boolean algebras whose dual spaces are countable have been
characterized by Day~\cite[Section~4]{Day:1967} as the countable
superatomic Boolean algebras, where superatomic means that every
homomorphic image is atomic.

We proceed to describe briefly the organization and main contributions
of this paper. Section~\ref{sec:prelims} provides the preliminary
material on semigroups used in the rest of the paper. In
Section~\ref{sec:loc-count-pvs} we start the investigation of locally
countable pseudovarieties of semigroups; in particular, we discuss how
they behave with respect to the operations of join and semidirect
product. The Mal'cev product is considered in
Section~\ref{sec:Malcev}, where a profinite characterization is given
that plays a role in later sections. A well known and key result is
that the set of locally finite pseudovarieties is closed under Mal'cev
product, a fact that comes from Brown's finiteness theorem
\cite{Brown:1971}. One of the several proofs in the literature of
Brown's theorem is due to Simon \cite{Simon:1989} and uses his
Factorization Forest Theorem. This important theorem turns out to have
applications also in the study of locally countable pseudovarieties
that are based on a theorem presented in Section~\ref{sec:FFT}
concerning algebraic generation of a finitely generated profinite
semigroup. In Section~\ref{sec:LG}, we discuss profinite semigroups
with only finitely many regular \Cl J-classes. In particular, we show
that such a finitely generated profinite semigroup is countable if and
only if it has only countably many group elements, in which case it is
algebraically generated by the finite generating set together with the
idempotents. The special case where there are only finitely many
idempotents is considered in Section~\ref{sec:IE}, where it is shown
that the Mal'cev product of a locally countable pseudovariety of
semigroups with only one idempotent with a locally finite
pseudovariety is again locally countable; moreover, in such a case, we
show that elements of finitely generated relatively free profinite
semigroups can be obtained from the generators by using only product
and the $\omega$-power, without the need to nest the latter operation,
a statement that generalizes a very useful result of the first author
\cite{Almeida:1990b}. The Mal'cev product of a locally countable
pseudovariety with a locally finite pseudovariety in general is
considered in Section~\ref{sec:malcev-locfin}, where it is shown that
if the second factor consists of nilpotent semigroups, then the
resulting Mal'cev product is locally countable. By examining the atoms
in the lattice of pseudovarieties containing non-nilpotent semigroups,
we show in Section~\ref{sec:examples} that the result of
Section~\ref{sec:malcev-locfin} fails as soon as the second factor
contains a non-nilpotent semigroup. Finally, in
Section~\ref{sec:A-ESl} we use aperiodic inverse semigroups to show
that certain pseudovarieties of block groups are not locally
countable, thus answering some questions raised in our paper
\cite{Almeida&Klima:2017b}. In particular, we show that the
pseudovariety generated by all finite ordered monoids satisfying the
inequality $1\le x^n$ is not locally countable whenever $n\ge2$.

\section{Preliminaries}
\label{sec:prelims}

As the remainder of the paper is concerned with semigroups, we gather
in this section the required preliminary material on that subject. For
general background, the reader is referred to
\cite{Almeida:1994a,Rhodes&Steinberg:2009qt}.

A fundamental tool in semigroup theory is given by the so-called
Green's relations \cite{Green:1951}. They are binary relations
concerning the ideal structure of a semigroup $S$ and are defined as
follows. For $s,t\in S$, write $s\le_{\Cl R}t$ if $s$ belongs to the
principal right ideal generated by~$t$. Replacing right by left or
two-sided, one obtains respectively the relations $\le_{\Cl L}$ and
$\le_{\Cl J}$. The intersection of the relations $\le_{\Cl R}$ and
$\le_{\Cl L}$ is denoted $\le_{\Cl H}$. Each of the above relations
$\le_{\Cl K}$ is a quasi-order on~$S$ and the corresponding
equivalence relation ${\le_{\Cl K}}\cap{\ge_{\Cl K}}$ is denoted \Cl
K. The relations \Cl R and \Cl L commute under composition and so $\Cl
D=\Cl R\Cl L$ is also an equivalence relation on~$S$. In particular,
every \Cl D-class $D$ is a union of \Cl R-classes and also a union of
\Cl L-classes, where the intersection of each \Cl R-class with each
\Cl L-class within $D$ is an \Cl H-class. It turns out that the
relations \Cl D and \Cl J coincide in each compact semigroup (see, for
instance, \cite[Proposition~3.1.10]{Rhodes&Steinberg:2009qt}).

By a \emph{pseudovariety} we mean a class of finite semigroups that is
closed under taking homomorphic images, subalgebras and finite direct
products. For a pseudovariety \pv V, a \emph{pro-\pv V semigroup} is a
subdirect product of semigroups from~\pv V. In the case of the
pseudovariety \pv S of all finite semigroups, a pro-\pv S semigroup is
simply called a \emph{profinite semigroup}. A topological semigroup is
\emph{locally countable} if every finitely generated closed
subsemigroup is countable. Recall that a pseudovariety \pv V is
\emph{locally finite} if every finitely generated pro-\pv V semigroup
is finite. We say that a pseudovariety \pv V is \emph{locally
  countable} if every pro-\pv V semigroup is locally countable. Of
course, a locally finite pseudovariety is also locally countable. Such
notions may be extended to a general algebraic signature by simply
replacing semigroups by algebras in that signature. Since we only deal
with semigroups in the rest of the paper, there is no advantage in
formulating such definitions in a more general context.

The classes of all finite semigroups in which the relations \Cl J and
\Cl R are trivial (meaning that they are reduced to the equality
relation) are denoted, respectively, \pv J and \pv R. On the other
hand, the class of all \Cl H-trivial finite semigroups is denoted \pv
A. All three are pseudovarieties, the reason for the notation \pv A
being that the members of~\pv A are the finite \emph{aperiodic}
semigroups, that is, finite semigroups in which all subgroups are
trivial.

An element $s$ of a semigroup $S$ is \emph{regular} if there exists
$s'\in S$ such that $s=ss's$. It follows immediately from the
definitions that an element is regular if and only if its \Cl R-class
(respectively its \Cl L-class) contains an idempotent. Hence, a \Cl
D-class consists of regular elements if and only if it contains a
regular element, if and only if it contains an idempotent in each \Cl
R-class and in each \Cl L-class. Such \Cl D-classes are said to be
\emph{regular}. The \Cl H-classes of~$S$ that contain idempotents are
exactly the maximal subgroups of~$S$.

Profinite semigroups with only one \Cl J-class are said to be
\emph{completely simple}. A profinite semigroup with zero whose
nonzero elements form a regular \Cl J-class is said to be
\emph{completely 0-simple}. The finite completely simple semigroups
constitute a pseudovariety, denoted \pv{CS}. Those whose subgroups lie
in a pseudovariety of groups \pv H form a subpseudovariety of~\pv{CS},
denoted \pv{CS(H)}.

Given an ideal $I$ of a semigroup $S$, the union of the equality
relation on~$S$ with the universal relation on~$I$ is a congruence on
$S$. The corresponding quotient semigroup is denoted $S/I$ and is
called the \emph{Rees quotient} of $S$ by~$I$.

For a semigroup $S$, denote by $E(S)$ the set of its idempotent
elements.

Several operators may be defined on pseudovarieties. Let us consider
first some unary operators, defined on a pseudovariety \pv V. We
denote \pv{DV} the class of all finite semigroups whose regular \Cl
D-classes constitute subsemigroups belonging to~\pv V. The class
\pv{EV} consists of all finite semigroups $S$ such that the
subsemigroup generated by $E(S)$ belongs to~\pv V. For \pv{LV}, we
take the class of all finite semigroups $S$ such that, for every $e\in
E(S)$, the subsemigroup $eSe=\{ese:s\in S\}$ belongs to~\pv V. The
definition of the block operator \pv B is a bit more complicated.
Given a regular \Cl D-class $D$ of a finite semigroup~$S$, we consider
the smallest equivalence relation $\beta$ on the idempotents of~$D$
for which \Cl L or \Cl R-equivalent idempotents are equivalent. For
each $\beta$-class~$C$, we consider the subsemigroup $T$ generated by
the union of the \Cl H-classes containing members of $C$ and the ideal
$I=\{t\in T: t<_{\Cl J}e\}$ of~$T$, where $e\in C$ is arbitrary. The
Rees quotient $T/I$ is called a \emph{block} of~$D$. We denote \pv{BV}
the class of all finite semigroups whose blocks belong to~\pv V. It is
well known that \pv{BV}, \pv{DV}, \pv{EV} and \pv{LV} are
pseudovarieties.

Three binary operators on pseudovarieties play a special role in
finite semigroup theory. They correspond to classical algebraic
constructions. Given a semigroup $S$, we denote by $S^1$ the monoid
obtained from $S$ by adding an identity element unless $S$ is already
a monoid; we also denote by $\mathrm{End}(S)$ the monoid of all
endomorphisms of~$S$. Given another semigroup $T$ and a monoid
homomorphism $\varphi:S^1\to\mathrm{End}(T)$, we denote $\varphi(t)(s)$
by $\vphantom{|}^ts$. The Cartesian product $S\times T$ is then a
semigroup under the operation $(s,t)(s',t')=(s\,\vphantom{|}^ts',tt')$
which is denoted $S*T$ and is called the \emph{semidirect product} of
$S$ and $T$ determined by~$\varphi$.

Given a class \Cl C of finite semigroups, there is a smallest
pseudovariety containing it, which is called the pseudovariety
\emph{generated} by~\Cl C. Let \pv V and \pv W be two pseudovarieties.
The \emph{join} $\pv V\vee\pv W$ is the pseudovariety generated by
$\pv V\cup\pv W$ and may be thought as the pseudovariety generated by
all direct products $S\times T$ with $S\in\pv V$ and $T\in\pv W$. The
\emph{semidirect product} $\pv V*\pv W$ is the pseudovariety generated
by the class of all semidirect products $S*T$ with $S\in\pv V$ and
$T\in\pv W$. The \emph{Mal'cev} product $\pv V\malcev\pv W$ is the
pseudovariety generated by the class of all finite semigroups $S$ such
that there exists a homomorphism $\varphi:S\to T$ with $T\in\pv W$ and
$\varphi^{-1}(e)\in\pv V$ for every $e\in E(S)$.

The following definitions may be considered for arbitrary signatures
but we recall them here only in the context of semigroups required for
the remainder of the paper. For the general setting, see
\cite{Almeida:2003cshort,Almeida&Costa:2015hb}. We say that a
continuous mapping $\varphi:X\to S$ into a topological semigroup $S$
is a \emph{generating mapping} if the subsemigroup of $S$ generated by
$\varphi(X)$ is dense; in this case, we also say that $S$ is
\emph{$X$-generated}.

Let \pv V be a pseudovariety and let $X$ be a topological space. We
say that a continuous function $\varphi:X\to S$ into a pro-\pv V
semigroup $S$ defines $S$ as a \emph{free pro-\pv V semigroup
  over~$X$} if it has the following universal property: for every
continuous mapping $\psi:X\to T$ into another pro-\pv V semigroup $T$,
there is a unique continuous homomorphism $\hat{\psi}:S\to T$ such
that $\hat{\psi}\circ\varphi=\psi$. Standard arguments show that such
a pro-\pv V semigroup $S$ is unique up to homeomorphic isomorphism
respecting the choice of generators and it is denoted \Om XV. In case
$X$ is a finite set, we generally consider $X$ as a discrete space
when referring to~\Om XV. Moreover, when we write \Om nV for a
positive integer $n$, we mean \Om XV where $X=\{1,\ldots,n\}$.

The existence of free pro-\pv V semigroups over an arbitrary
topological space $X$ may be established by considering inverse limits
of $X$-generated semigroups from~\pv V. In particular, \Om XV is
$X$-generated. The generating mapping $\varphi:X\to\Om XV$ is also
called the \emph{natural mapping}. If \pv V is not the trivial
pseudovariety \pv I, consisting only of singleton semigroups, and $X$
is a profinite space, then the natural mapping $\varphi$ into~\Om XV is
injective and we usually identify each $x\in X$ with $\varphi(x)$.
Elements of \Om XV are sometimes called \emph{pseudowords}. Elements
of the subsemigroup of~\Om XV generated by $X$ are said to be
\emph{finite}, the remaining elements being called \emph{infinite}. A
word $u=x_1\cdots x_n$ with each $x_i\in X$ is said to be a
\emph{subword of the pseudoword} $v\in\Om XV$ if there is a
factorization $v=v_0x_1v_1\cdots x_nv_n$ with each $v_i$ in $(\Om
XV)^1$.

Note that the pseudovariety \pv V is locally countable if and only if
\Om nV is countable for every positive integer~$n$.

For a compact semigroup $S$ and an element~$s\in S$, it is well known
that the closed subsemigroup generated by~$s$ contains a unique
idempotent, which we denote $s^\omega$. In case $S$ is a finite
semigroup, $s^\omega$ is the unique idempotent power of~$s$.

Given a finite set $X$ and a pseudovariety \pv V, each element
$w\in\Om XV$ may be viewed as an operation $w_S:S^X\to S$ on each
pro-\pv V semigroup $S$ as follows: given a function $\psi\in S^X$ and
the natural mapping $\varphi:X\to\Om XV$, we let $w_S=\hat{\psi}(w)$,
where $\hat{\psi}$ is the unique continuous homomorphism such that the
following diagram commutes:
\begin{equation*}
  \label{eq:universal}
  \xymatrix{
    X \ar[r]^\varphi \ar[rd]_\psi
    & \Om XV \ar[d]^{\hat{\psi}} \\
    & S
  }
\end{equation*}
It may be shown that in this way $S$ becomes a profinite
$\Omega$-algebra where $\Omega_0=\emptyset$ and $\Omega_n=\Om nV$ for
$n\ge1$. Another convenient signature is that reduced to the
$\omega$-power and multiplication, for which every profinite semigroup
has thus a natural structure. The elements of the subalgebra of~\Om XV
generated by $X$ in this signature are called $\omega$-words.

A formal equality $u=v$ of elements of~\Om XV is said to be a
\emph{\pv V-pseudoidentity} over~$X$. It is said to be
\emph{satisfied} in the pro-\pv V semigroup $S$ if the equality
$u_S=v_S$ holds. The class of all semigroups from \pv V satisfying a
set $\Sigma$ of \pv V-pseudoidentities is a subpseudovariety of~\pv V
denoted $\op\Sigma\cl_{\pv V}$ and it is said to be \emph{defined}
by~$\Sigma$. By Reiterman's Theorem \cite{Reiterman:1982}, every
subpseudovariety is defined by some set of \pv V-pseudoidentities. In
case \pv V is the pseudovariety \pv S of all finite semigroups, then
we omit reference to~\pv V. The elements of~$X$ are often called
\emph{variables}. We may sometimes write $u=1$ as an abbreviation for
the pair of pseudoidentities $ux=x=xu$, where $x$ is a new variable. A
similar convention applies to $u=0$, which abbreviates $ux=u=xu$.

Given a residually finite discrete semigroup $S$, we may define a
metric on $S$ by letting $d(s,s)=0$ and, for distinct $s,s'\in S$,
$d(s,s')=2^{-r(s,s')}$ where $r(s,s')$ is the minimum cardinality
of a finite semigroup $T$ for which there is a homomorphism
$\varphi:S\to T$ such that $\varphi(s)\ne\varphi(s')$. Note that the
multiplication of $S$ is uniformly continuous with respect to~$d$. The
completion of the metric space $(S,d)$ has therefore a natural
structure of topological semigroup. It is called the \emph{profinite
  completion} of~$S$ and it is denoted $\widehat{S}$. In the
particular case where $S=X^+$ is the free semigroup generated by a
discrete set $X$, $\widehat{X^+}$ may be shown to be a free profinite
semigroup over~$X$. In particular, every element of~\Om XS is the
limit of some sequence of~words.

Denote by $\widehat{\mathbb{N}}$ the profinite completion of the
additive monoid $\mathbb{N}$ of natural numbers. It may be thought of
as the monoid $(\Om1S)^1$, written additively. If we denote $x$ the
generator $1$ of \Om1S, the identification between
$\widehat{\mathbb{N}}$ sends $0$ to the identity element~$1$ and each
positive $n\in\mathbb{N}$ to $x^n$. In general, we write $x^\alpha$
for the element of $(\Om1S)^1$ corresponding
to~$\alpha\in\widehat{\mathbb{N}}$.

Many pseudovarieties play a role in the sequel. For the moment, for
the sake of example, we introduce the following:
\begin{itemize}
\item $\pv{Sl}=\op x^2=x, xy=yx\cl$ is the pseudovariety of all finite
  \emph{semilattices};
\item $\pv N=\op x^\omega=0\cl=\bigcup_{n\ge1}\op x_1\cdots x_n=0\cl$
  is the pseudovariety of all finite \emph{nilpotent} semigroups;
\item $\pv G=\op x^\omega=1\cl$ is the pseudovariety of all finite
  groups;
\item $\pv K=\op x^\omega y=x^\omega\cl$ is the pseudovariety of all
  finite semigroups in which idempotents are left zeros;
\item $\pv{IE}=\op x^\omega=y^\omega\cl$ is the pseudovariety of all
  finite semigroups with a unique idempotent.
\end{itemize}
Further pseudovarieties will be introduced as needed.

\section{Locally countable pseudovarieties}
\label{sec:loc-count-pvs}

We proceed with some examples of locally countable pseudovarieties of
semigroups.

It was conjectured by I. Simon and proved by the first author that the
pseudovariety \pv J is locally countable
\cite[Corollary~3.4]{Almeida:1990b}. In particular, we deduce that the
pseudovariety \pv J satisfies the strong form of the conjecture
of~\cite{Almeida&Klima:2017a}.

For groups, local countability does not provide a new class of
pseudovarieties.

\begin{Thm}
  \label{t:Zelmanov}
  The following are equivalent for a pseudovariety of groups~\pv H:
  \begin{enumerate}[(i)]
  \item\label{item:Z-0} \Om1H is countable;
  \item\label{item:Z-1} \pv H is locally countable;
  \item\label{item:Z-2} \pv H is locally finite;
  \item\label{item:Z-3} \pv H satisfies some identity of the form
    $x^n=1$, where $n$~is a positive integer;
  \item\label{item:Z-4} \Om1H is finite.
  \end{enumerate}
\end{Thm}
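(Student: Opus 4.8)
The plan is to prove the chain of implications $(\ref{item:Z-4})\Rightarrow(\ref{item:Z-0})\Rightarrow(\ref{item:Z-1})\Rightarrow\text{(nothing by itself)}$ and then close the loop via $(\ref{item:Z-0})\Rightarrow(\ref{item:Z-3})\Rightarrow(\ref{item:Z-2})\Rightarrow(\ref{item:Z-4})$, since $(\ref{item:Z-2})\Rightarrow(\ref{item:Z-1})$ is immediate (a locally finite pseudovariety is locally countable, as noted in Section~\ref{sec:prelims}) and $(\ref{item:Z-1})\Rightarrow(\ref{item:Z-0})$ is trivial. The genuinely easy implications are $(\ref{item:Z-4})\Rightarrow(\ref{item:Z-0})\Rightarrow(\ref{item:Z-1})$, where the second one needs the observation that, for a pseudovariety of \emph{groups}, every finitely generated pro-\pv H semigroup is a quotient of \Om nH for some $n$, hence countable once \Om1H is; a clean way to see the passage from one generator to $n$ generators is that $\widehat{F_n/V}$ injects into a finite power of $\widehat{F_1/V}$-related group only after more work, so instead I would argue directly: $(\ref{item:Z-0})\Rightarrow(\ref{item:Z-3})$ is the crux, and once it is in hand, $(\ref{item:Z-3})\Rightarrow(\ref{item:Z-2})$ is classical.

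The main obstacle is the implication $(\ref{item:Z-0})\Rightarrow(\ref{item:Z-3})$: from countability of the free profinite group on one generator we must extract a global exponent. The key input here is the Zelmanov positive solution of the restricted Burnside problem (hence the label \texttt{t:Zelmanov}), which states that for each $n$ there is, up to isomorphism, only a \emph{finite} set of finite groups of exponent dividing $n$ that are generated by a bounded number of elements --- equivalently, the pseudovariety $\pv G_n=\op x^n=1\cl$ is locally finite. I would argue the contrapositive: suppose \pv H satisfies no identity $x^n=1$. Then for every $n$ there is a group $G_n\in\pv H$ containing an element of order not dividing $n$, i.e.\ an element $g_n$ with $g_n^n\ne 1$. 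Consider the continuous homomorphism $\widehat\psi_n:\Om1H\to G_n$ sending the generator $x$ to $g_n$; then the images $\widehat\psi_n(x^m)$, for $m$ ranging over $\mathbb N$, see that $x$ has ``unbounded order'' in \Om1H, and more precisely the closure of $\langle x\rangle$ in \Om1H surjects onto $\widehat{\mathbb Z}$ (since for no $n$ is $x^n$ forced to be idempotent-related to $1$). But $\widehat{\mathbb Z}\cong\prod_p\mathbb Z_p$ is uncountable, so \Om1H is uncountable, contradicting $(\ref{item:Z-0})$. The care needed is to make the phrase ``surjects onto $\widehat{\mathbb Z}$'' precise: one shows that the projective limit defining the closed subgroup $\overline{\langle x\rangle}$ of \Om1H has, for each $n$, the cyclic group $\mathbb Z/n\mathbb Z$ as a quotient (take the cyclic subgroup generated by the image of $x$ in a suitable finite quotient of \Om1H witnessing order $>\!n$, using that $\op x^n=1\cl$ does \emph{not} contain all of \pv H), whence $\overline{\langle x\rangle}\twoheadrightarrow\varprojlim_n\mathbb Z/n\mathbb Z=\widehat{\mathbb Z}$.

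Finally, for $(\ref{item:Z-3})\Rightarrow(\ref{item:Z-2})$ I would invoke Zelmanov's theorem directly: if \pv H satisfies $x^n=1$ then $\pv H\subseteq\op x^n=1\cl=\pv G_n$, and $\pv G_n$ is locally finite by the restricted Burnside problem, so \pv H is locally finite; and $(\ref{item:Z-2})\Rightarrow(\ref{item:Z-4})$ is the very definition of local finiteness specialized to one generator. This completes the cycle. I expect the write-up to be short, with essentially the whole content concentrated in the reduction to uncountability of $\widehat{\mathbb Z}$ and the citation of Zelmanov; no delicate combinatorics on pseudowords is needed here, in contrast to the later sections of the paper.
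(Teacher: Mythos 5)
Your overall skeleton (cycle through the five conditions, Zelmanov for $(\text{\ref{item:Z-3}})\Rightarrow(\text{\ref{item:Z-2}})$, contrapositive for the crux $(\text{\ref{item:Z-0}})\Rightarrow(\text{\ref{item:Z-3}})$) matches the paper, but the crux step has a genuine gap. From the failure of every identity $x^n=1$ you only get that \Om1H has finite cyclic quotients of unbounded order; this does \emph{not} yield $\mathbb{Z}/n\mathbb{Z}$ as a quotient for every $n$, because $\mathbb{Z}/n\mathbb{Z}$ is a quotient of $\mathbb{Z}/m\mathbb{Z}$ only when $n\mid m$, and your ``suitable finite quotient witnessing order $>n$'' gives no control on divisibility. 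Concretely, take \pv H to be the pseudovariety of all finite $p$-groups: it satisfies no identity $x^n=1$, yet $\Om1H\cong\mathbb{Z}_p$, whose finite quotients are only the groups $\mathbb{Z}/p^k\mathbb{Z}$, so the closed subgroup generated by $x$ certainly does not surject onto $\widehat{\mathbb{Z}}$. The desired conclusion (uncountability of \Om1H) is still true in this example, but for a reason your argument does not supply.

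The repair is exactly the case split the paper performs. If the exponents of groups in \pv H involve only finitely many primes, then some prime $p$ occurs with unbounded powers, so every $\mathbb{Z}/p^k\mathbb{Z}$ lies in \pv H and \Om1H maps onto $\varprojlim_k\mathbb{Z}/p^k\mathbb{Z}=\mathbb{Z}_p$, which is uncountable. Otherwise infinitely many primes $p_1,p_2,\ldots$ occur, each $\mathbb{Z}/p_i\mathbb{Z}$ lies in \pv H, and one takes an accumulation point $\alpha$ of $(p_1\cdots p_n)_n$ in $\widehat{\mathbb{N}}$ and checks via the Chinese Remainder Theorem that $\Om1{Ab}_\alpha\cong\prod_i\Om1{Ab}_{p_i}$ is an uncountable quotient of \Om1H (here $\pv{Ab}_\alpha=\op x^\alpha=1,\ xy=yx\cl\subseteq\pv H$); note the target is a product of cyclic groups of distinct prime orders, not $\widehat{\mathbb{Z}}$. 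Alternatively you could invoke the general fact that every infinite profinite group is uncountable (Baire category or Cantor--Bendixson), which subsumes both cases; the paper states this fact but deliberately keeps the proof self-contained, using Zelmanov so that only the procyclic case needs handling. As written, though, your single-shot claim of a surjection onto $\widehat{\mathbb{Z}}$ is false and must be replaced by one of these arguments.
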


\begin{proof}
  $(\text{\ref{item:Z-3}})\Leftrightarrow(\text{\ref{item:Z-4}})$ and
  $(\text{\ref{item:Z-2}})\Rightarrow(\text{\ref{item:Z-1}}) %
  \Rightarrow(\text{\ref{item:Z-0}})$ are obvious.

  $(\text{\ref{item:Z-3}})\Rightarrow(\text{\ref{item:Z-2}})$ follows from
  Zelmanov's solution of the restricted Burnside problem
  \cite{Zelmanov:1991}.

  $(\text{\ref{item:Z-0}})\Rightarrow(\text{\ref{item:Z-3}})$ Suppose
  that \Om1H is countable and assume that (\ref{item:Z-3})~fails.
  Then, \pv H must contain groups of arbitrarily large exponent. If
  the number of primes appearing in such exponents is finite, then
  there is a prime $p$ such that the cyclic group of order $p^n$
  belongs to~\pv H for every $n\ge1$. But, the inverse limit of such
  cyclic groups is the additive group of the ring $\mathbb{Z}_p$ of
  $p$-adic integers, which is uncountable and a homomorphic image of
  \Om 1H, contradicting the hypothesis (\ref{item:Z-0}). Hence, there
  is an infinite set of primes $\{p_1,p_2,\ldots\}$ such that
  $\mathbb{Z}/p_i\mathbb{Z}$ belongs to~\pv H. For each
  $\alpha\in\widehat{\mathbb{N}}$, consider the pseudovariety
  $\pv{Ab}_\alpha=\op x^\alpha=1, xy=yx\cl$. Let $\alpha$ be an
  accumulation point of the sequence $(p_1\cdots p_n)_n$ in
  $\widehat{\mathbb{N}}$. By the Chinese Remainder Theorem, it is easy
  to show that the mapping $\Om 1{Ab}_\alpha\to\prod_{i=1}^\infty\Om
  1{Ab}_{p_i}$ induced by the natural projections $\Om
  1{Ab}_\alpha\to\Om 1{Ab}_{p_i}$ is a bijection. This again
  contradicts the assumption that \pv H satisfies (\ref{item:Z-0})
  since $\pv{Ab}_\alpha$ is contained in~\pv H.
\end{proof}

Actually, a more general result is true: every profinite countable
group is finite. Several proofs of this fact may be given. A direct
proof can be found in~\cite[Proposition~2.3.1]{Ribes&Zalesskii:2010}.
Other proofs are obtained by applying various results.
In~\cite[Section~2.3]{Vannacci:2015PhD}, one can find immeadiate
proofs based on the Baire Category Theorem or using the Haar measure.
Yet another proof is obtained by considering the set of isolated
points of a profinite countable group. By symmetry, if there is such a
point then all points are isolated so that, by compactness, the group
is finite. Otherwise, by the Cantor-Bendixson Theorem, the group is
uncountable. In the proof of Theorem~\ref{t:Zelmanov}, thanks to
Zelmanov's theorem, we only need to deal with the cyclic case.

\begin{eg}
  \label{sec:equational-loccount-not-locfin}
  Consider the equational pseudovariety $\pv N_2=\op x^2=0\cl$. Note
  that it is locally countable. For a finite alphabet $A$, the
  semigroup $\Om AN_2$ is the Rees quotient of $A^+$ by the ideal
  consisting of the words containing some square factor. Hence, $\Om
  AN_2$ is infinite whenever $|A|\ge3$ \cite{Thue:1912}. Thus, in
  contrast with the group case, a locally countable pseudovariety need
  not be locally finite, even if it is equational.
\end{eg}

The remainder of this section examines operations that preserve local
countability and, therefore, provide methods to produce many examples
of locally countable pseudovarieties.

\begin{Thm}
  \label{t:join}
  If \pv V and \pv W are locally countable pseudovarieties then so is
  $\pv V\vee\pv W$.
\end{Thm}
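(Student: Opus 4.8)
The plan is to realize $\Om{n}{(\pv V\vee\pv W)}$ as a closed subsemigroup of a product of two countable profinite semigroups and then estimate its cardinality directly. Since $\pv V\vee\pv W$ is the pseudovariety generated by all direct products $S\times T$ with $S\in\pv V$ and $T\in\pv W$, every semigroup in $\pv V\vee\pv W$ is a quotient of a subsemigroup of such a product, so every pro-$(\pv V\vee\pv W)$ semigroup is a subdirect product of semigroups of the form $S\times T$. Applying this to the free object, the natural continuous homomorphisms $\Om{n}{(\pv V\vee\pv W)}\to\Om{n}{V}$ and $\Om{n}{(\pv V\vee\pv W)}\to\Om{n}{W}$ (coming from $\pv V,\pv W\subseteq\pv V\vee\pv W$) together induce a continuous homomorphism
\begin{equation*}
  \Phi\colon \Om{n}{(\pv V\vee\pv W)}\longrightarrow \Om{n}{V}\times\Om{n}{W}.
\end{equation*}

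The key step is to show that $\Phi$ is injective. This is where one uses that $\pv V\vee\pv W$ is generated by the direct products $S\times T$: given two distinct elements $u,v$ of $\Om{n}{(\pv V\vee\pv W)}$, by residual finiteness there is a finite semigroup $R\in\pv V\vee\pv W$ and a continuous homomorphism $\Om{n}{(\pv V\vee\pv W)}\to R$ separating them; $R$ divides some direct product $S_1\times\cdots\times S_k\times T_1\times\cdots\times T_\ell$ with $S_i\in\pv V$, $T_j\in\pv W$, and after regrouping (using that $\pv V$ and $\pv W$ are closed under finite products) $R$ divides a single $S\times T$ with $S\in\pv V$, $T\in\pv W$. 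Pulling back the composite $\Om{n}{(\pv V\vee\pv W)}\to R \hookleftarrow$ (a subsemigroup of) $S\times T$, one sees that $u,v$ already have distinct images in $\Om{n}{S}\times\Om{n}{T}$, hence in $\Om{n}{V}\times\Om{n}{W}$ via the projections; so $\Phi(u)\ne\Phi(v)$. Thus $\Phi$ is an injective continuous homomorphism, so $\Om{n}{(\pv V\vee\pv W)}$ embeds as a subsemigroup of $\Om{n}{V}\times\Om{n}{W}$.

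Once the embedding is established the conclusion is immediate: by hypothesis $\pv V$ and $\pv W$ are locally countable, so $\Om{n}{V}$ and $\Om{n}{W}$ are countable for every positive integer $n$ (using the characterization recalled in the preliminaries), hence $\Om{n}{V}\times\Om{n}{W}$ is countable, and therefore so is its subsemigroup $\Om{n}{(\pv V\vee\pv W)}$. Since this holds for all $n$, the pseudovariety $\pv V\vee\pv W$ is locally countable.

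I expect the main obstacle to be the bookkeeping in the injectivity argument --- specifically, being careful that "$R$ divides a product of members of $\pv V$ and members of $\pv W$" can be massaged into "$R$ divides $S\times T$ with $S\in\pv V$, $T\in\pv W$", and that a separating homomorphism onto a finite quotient can be factored through the relevant finite pieces of $\Om{n}{V}$ and $\Om{n}{W}$. None of this is deep, but it is the step that genuinely uses the structure of the join rather than formal nonsense about inverse limits. An alternative, essentially equivalent, route would be to argue that the closed congruence on $\Om{n}{S}$ defining $\Om{n}{(\pv V\vee\pv W)}$ is the intersection of the two closed congruences defining $\Om{n}{V}$ and $\Om{n}{W}$, which gives the same embedding; I would present whichever is shorter.
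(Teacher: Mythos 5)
Your proposal is correct and is essentially the paper's proof: the paper likewise embeds $\Om A{(V\vee W)}$ into $\Om AV\times\Om AW$ via the natural projections and concludes countability of the product. The only difference is that the paper simply asserts this embedding, whereas you spell out the standard separation argument (finite quotient in $\pv V\vee\pv W$ dividing some $S\times T$, then lifting along generators), which is exactly the routine justification the paper leaves implicit.
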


\begin{proof}
  Let $A$ be a finite set. Simply note that the natural projections
  \Om A{(V\vee W)} onto \Om AV and \Om AW yield an embedding of~\Om
  A{(V\vee W)} into the product $\Om AV\times\Om AW$, which is
  countable since the factors are countable.
\end{proof}

The next result is an immediate application of a representation
theorem for free profinite semigroups over a semidirect product of
pseudovarieties $\pv V*\pv W$ \cite{Almeida&Weil:1995a}. Indeed, as
has already been observed in~\cite{Almeida&Klima:2017a}, since $\Om
A{(V*W)}$ embeds in a certain semidirect product $\Om BV*\Om AW$,
where $B=(\Om AW)^1\times A$, we have the following theorem.

\begin{Thm}
  \label{t:star}
  If \pv V and \pv W are pseudovarieties of semigroups such that \pv V
  is locally countable and \pv W is locally finite, then $\pv V*\pv W$
  is also locally countable.
\end{Thm}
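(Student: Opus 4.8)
The plan is to exploit the representation theorem of Almeida and Weil \cite{Almeida&Weil:1995a} for relatively free profinite semigroups over a semidirect product, exactly as indicated in the paragraph preceding the statement. Concretely, for a finite set $A$ one has a continuous embedding of $\Om A{(V*W)}$ into a semidirect product $\Om BV*\Om AW$, where $B=(\Om AW)^1\times A$. Since cardinality does not increase under passing to a subsemigroup, it suffices to show that this semidirect product is countable, and for that it suffices that both factors are countable: the underlying set of any semidirect product $S*T$ is the Cartesian product $S\times T$, which is countable as soon as $S$ and $T$ are.

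The first step is to observe that $\Om AW$ is finite: this is immediate from the hypothesis that $\pv W$ is locally finite together with the fact that $A$ is finite. Consequently the index set $B=(\Om AW)^1\times A$ is a \emph{finite} set. The second step is then to invoke local countability of $\pv V$: since $B$ is finite, $\Om BV$ is countable. The third step records that $\Om AW$, being finite, is in particular countable. Combining, $\Om BV*\Om AW$ has underlying set $\Om BV\times\Om AW$, a product of a countable set with a finite set, hence countable. Therefore its subsemigroup $\Om A{(V*W)}$ is countable, and since $A$ was an arbitrary finite set, the remark at the end of Section~\ref{sec:prelims} gives that $\pv V*\pv W$ is locally countable.

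There is essentially no obstacle here beyond correctly citing the embedding: the entire content is the Almeida--Weil representation, and once the index set $B$ is seen to be finite (which is the only place local finiteness of $\pv W$ is used, and is the reason the argument would break if $\pv W$ were merely locally countable) the cardinality bookkeeping is routine. One should be mildly careful that the embedding in \cite{Almeida&Weil:1995a} is stated for a semidirect product built from a \emph{specific} choice of action, so that $\Om BV*\Om AW$ really denotes a single semigroup of cardinality $|\Om BV|\cdot|\Om AW|$ rather than a pseudovariety; but this is exactly how the preceding paragraph phrases it, so nothing more need be said.
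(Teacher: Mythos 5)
Your proposal is correct and follows exactly the paper's route: the paper also deduces the theorem directly from the Almeida--Weil embedding of $\Om A{(V*W)}$ into $\Om BV*\Om AW$ with $B=(\Om AW)^1\times A$, the finiteness of $B$ coming from local finiteness of $\pv W$ and the countability of $\Om BV$ from local countability of $\pv V$. The cardinality bookkeeping you spell out is precisely what the paper leaves implicit.
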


For example, the pseudovarieties $\pv J*\pv B$, where $\pv B=\op
x^2=x\cl$ is the pseudovariety of all finite bands, and $\pv J*\op
x^n=1\cl$ are locally countable. Consider the pseudovariety \pv R. It
is known that $\pv{Sl}*\pv J=\pv R$ \cite{Brzozowski&Fich:1984} and
that \pv R is not locally countable: in fact \pv R contains the
pseudovariety \pv K and even \Om2K is uncountable as its infinite
elements are in natural bijection with the infinite words over a
two-letter alphabet (cf.~\cite[Section~3.7]{Almeida:1994a}). Hence,
the assumptions on \pv V and \pv W may not be exchanged in the
hypothesis of Theorem~\ref{t:star}. Since it is well known and easy to
check that \pv R is closed under semidirect product, we also have the
equality $\pv J*\pv J=\pv R$ and so even the semidirect product of a
locally countable pseudovariety with itself may not be locally
countable.

For the Mal'cev product, the situation is considerably more
complicated and is analyzed in the next six sections.

\section{The Mal'cev product and profinite semigroups}
\label{sec:Malcev}

Recall from Section~\ref{sec:prelims} the definition of the Mal'cev
product of pseudovarieties in terms of generators. A comprehensive
description of the Mal'cev product involves the notion of relational
morphism.

By a \emph{relational morphism} $\mu:S\relm T$ between semigroups $S$
and $T$ we mean a subsemigroup $\mu$ of the direct product $S\times T$
such that the projection $\mu\to S$ on the first component is onto. In
other words, a relational morphism $S\relm T$ is a relation with
domain $S$ and values in~$T$ which is closed under multiplication.
Some authors prefer to emphasize the transformation character of a
relational morphism, which is viewed as a function associating subsets
of $T$ to elements of $S$, calling the set of pairs $(s,t)\in S\times
T$ such that $s$ is related with $t$ the \emph{graph} of the
relational morphism (cf.~\cite{Rhodes&Steinberg:2009qt}). We make no
such distinction as we view relations as sets of ordered pairs as is
common in set theory.

It is well known that the Mal'cev product \pv{V\malcev W} consists of
all finite semigroups $S$ for which there is a relational morphism
$\mu:S\relm T$ into some $T\in\pv W$ such that $\mu^{-1}(e)\in\pv V$
for every $e\in E(T)$. Our first result, which will be instrumental
later in the paper, provides a profinite version of this comprehensive
characterization of the Mal'cev product. We have not been able to find
it in such a simple form in the literature although results of a
similar nature can be found in~\cite{Pin&Weil:1996a} and also
in~\cite[Chapter~3]{Rhodes&Steinberg:2009qt} in a more general
setting.

\begin{Thm}
  \label{t:Malcev}
  Let $S$ be a profinite semigroup and let \pv V and \pv W be
  pseudovarieties. Then $S$ is pro-\pv{(V\malcev W)} if and only if
  there exists a closed relational morphism $\mu:S\relm T$ such that
  $T$ is a pro-\pv W semigroup and $\mu^{-1}(e)$ is a pro-\pv V
  semigroup for every $e\in E(T)$.
\end{Thm}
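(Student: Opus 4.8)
The plan is to prove the two implications separately, using inverse limits in each direction. For the "only if" direction, suppose $S$ is pro-$\pv{(V\malcev W)}$, so $S$ is a subdirect product of finite semigroups $S_i\in\pv{V\malcev W}$; write $\pi_i:S\to S_i$ for the projections. By the classical relational morphism characterization of $\pv{V\malcev W}$ recalled just before the statement, for each $i$ there is a relational morphism $\mu_i:S_i\relm T_i$ with $T_i\in\pv W$ and $\mu_i^{-1}(e)\in\pv V$ for every $e\in E(T_i)$. I would then form $\mu_i\circ\pi_i\subseteq S\times T_i$, which is a relational morphism $S\relm T_i$, and assemble these into a single relation $\mu\subseteq S\times\prod_i T_i$ by taking $\mu=\{(s,(t_i)_i): (s,t_i)\in\mu_i\circ\pi_i\text{ for all }i\}$. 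One checks that $\mu$ is a closed subsemigroup of $S\times\prod_i T_i$ projecting onto $S$ (surjectivity uses a compactness argument: each fibre $\mu_i(\pi_i(s))$ is a nonempty finite, hence closed, set, so their product is nonempty by Tychonoff), that $T:=\overline{\text{projection of }\mu\text{ on the second factor}}$ is pro-$\pv W$, and that for an idempotent $e=(e_i)_i\in E(T)$ the fibre $\mu^{-1}(e)=\bigcap_i (\mu_i\circ\pi_i)^{-1}(e_i)$ is a closed subsemigroup of $S$ mapping into each $\mu_i^{-1}(e_i)\in\pv V$, hence is pro-$\pv V$. Some care is needed because an idempotent of $T$ need not have all coordinates idempotent unless one passes to a subnet; I would handle this by noting that it suffices to consider a cofinal system in which $T$ itself is realized as an inverse limit of finite quotients $T^{(j)}\in\pv W$, pull $\mu$ back along $T\to T^{(j)}$, and argue coordinatewise there.

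For the "if" direction, suppose we are given a closed relational morphism $\mu:S\relm T$ with $T$ pro-$\pv W$ and each fibre $\mu^{-1}(e)$ ($e\in E(T)$) pro-$\pv V$. Since $\mu\subseteq S\times T$ is itself a profinite semigroup and the two projections $\mu\to S$, $\mu\to T$ are continuous homomorphisms with the first surjective, the strategy is to approximate: write $S$ as an inverse limit of finite semigroups $S$ via continuous surjections $\rho_k:S\to F_k$, and correspondingly refine $\mu$ and $T$ to finite quotients $\mu_k\relm T_k$ so that $\rho_k$ factors through the first projection of $\mu_k$. The goal is to choose the approximations so that $F_k\in\pv{V\malcev W}$, which by the generator-definition of $\pv{V\malcev W}$ will exhibit $S=\varprojlim F_k$ as pro-$\pv{(V\malcev W)}$. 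To get $F_k\in\pv{V\malcev W}$ one wants a relational morphism from $F_k$ into a member of $\pv W$ with $\pv V$-fibres over idempotents; the natural candidate is the image of $\mu_k$ in $F_k\times T_k$, and one must arrange that $T_k\in\pv W$ and that $\mu_k^{-1}(e)\in\pv V$ for each idempotent $e\in E(T_k)$.

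The main obstacle, and the point requiring the most care, is exactly this last arrangement: passing from the profinite statement (fibres $\mu^{-1}(e)$ over \emph{genuine} idempotents of $T$ are pro-$\pv V$) to a finite statement (fibres over idempotents of a finite quotient $T_k$ lie in $\pv V$). A finite quotient $T_k$ of $T$ has, in general, more idempotents than the images of idempotents of $T$, and the preimage in $F_k$ of such an idempotent need not lie in $\pv V$. The remedy is a standard but delicate compactness/Reiterman argument: since each $\mu^{-1}(e)$ is pro-$\pv V$, it satisfies every $\pv V$-pseudoidentity; for a fixed finite set of defining pseudoidentities $\Sigma$ of (a suitable truncation of) $\pv V$ and a fixed $e$, this $\Sigma$ is already satisfied after projecting to some finite quotient, and by compactness of $E(T)$ together with continuity one can choose a \emph{single} finite quotient $T\to T_k$ for which every idempotent of $T_k$ lifts only to elements $e$ of $T$ with $e^\omega$ close enough to $e$ that the corresponding fibre in $F_k$ already satisfies $\Sigma$. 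Making this uniform choice — balancing the level of approximation of $S$, of $T$, and of the pseudoidentities of $\pv V$ — is the technical heart of the proof; once it is in place, the inverse-limit assembly on both sides is routine. I would also remark that when $S$, $T$ are finite the statement reduces immediately to the classical characterization, which is the sanity check that the approximations are set up correctly.
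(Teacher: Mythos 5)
Your ``only if'' direction is essentially the paper's argument: assemble the finite witnesses $\mu_i:S_i\relm T_i$ into a closed subsemigroup of $S\times\prod_i T_i$, use Tychonoff for surjectivity onto $S$, and note that the fibre over an idempotent is an intersection of preimages of the $\pv V$-fibres, hence residually $\pv V$ and so pro-$\pv V$. The extra care you announce there is unnecessary: coordinate projections are homomorphisms, so every coordinate of an idempotent of $T\subseteq\prod_i T_i$ is automatically idempotent; no subnet or pull-back is needed.

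The ``if'' direction, however, has a genuine gap, and your diagnosis of the difficulty is also slightly off. Every idempotent $e$ of a finite continuous quotient $\varphi:T\to U$ \emph{does} lift to an idempotent of $T$, since $\varphi^{-1}(e)$ is a nonempty closed subsemigroup of a compact semigroup and hence contains some $f\in E(T)$; the real problem is that the fibre of the induced relational morphism over $e$ is $\{s:\mu(s)\cap\varphi^{-1}(e)\neq\emptyset\}$, which in general strictly contains $\mu^{-1}(f)$, and nothing in your sketch forces it into $\pv V$. Your proposed remedy cannot be made to work as stated: membership of a finite semigroup in $\pv V$ is not certified by a finite ``truncation'' of a basis of pseudoidentities unless $\pv V$ happens to be finitely based, and no refinement of $T$ makes preimages of idempotents consist only of elements ``close'' to idempotents (these preimages are clopen subsemigroups and typically contain elements far from $E(T)$); in any case such considerations do not address the set-theoretic issue of ``meets'' versus ``contains''. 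The paper resolves precisely this point by other means: it first reduces to $S$ finite, using that $\varphi(\mu^{-1}(e))$, a finite continuous image of a pro-$\pv V$ semigroup, lies in $\pv V$; it then shows each $\mu(s)$ is clopen in $T$, so the finitely many sets $\mu(s)$ generate a finite Boolean algebra whose atoms form a clopen partition of $T$; by Hunter's Lemma this partition is refined by a clopen congruence, i.e.\ a continuous homomorphism $\varphi:T\to U$ onto a finite semigroup such that $\varphi(t_1)=\varphi(t_2)$ implies $t_1\in\mu(s)\Leftrightarrow t_2\in\mu(s)$ for all $s\in S$; consequently, for $e\in E(U)$ and an idempotent $f\in\varphi^{-1}(e)$, meeting $\varphi^{-1}(e)$ is equivalent to containing it, so the fibre of $\mu\varphi$ over $e$ is exactly $\mu^{-1}(f)\in\pv V$, while $U\in\pv W$. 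Without this refinement device (or an equivalent replacement for the step you yourself flag as the ``technical heart''), your inverse-limit assembly does not yield $F_k\in\pv{V\malcev W}$, so the converse remains unproved.
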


\begin{proof}
  Assuming that $S$ is a pro-\pv{(V\malcev W)} semigroup, we know that
  $S$ is an inverse limit $\varprojlim S_i$ of an inverse system of
  semigroups $S_i$ from \pv{V\malcev W}, where $i$ runs over a
  directed set $I$. For each $i\in I$, there exists a relational
  morphism $\mu_i:S_i\relm T_i$ with $T_i\in\pv W$ and
  $\mu_i^{-1}(e)\in\pv V$ for every $e\in E(T_i)$. Given a finite
  subset $F$ of~$I$, let
  $$U_F=\{(s,f)\in S\times\prod_{i\in F}T_i: %
  (\forall i\in F)\ (\varphi_i(s),f(i))\in\mu_i\},$$ %
  where $\varphi_i:S\to S_i$ is the natural continuous homomorphism.
  Note that $U_F$ is a closed subsemigroup of $S\times\prod_{i\in
    F}T_i$, hence it is a profinite semigroup. For $F_1\subseteq
  F_2\subseteq I$ with $F_2$ finite, dropping the components from
  $F_2\setminus F_1$, we obtain an onto continuous homomorphism
  $U_{F_2}\to U_{F_1}$. Consider the profinite semigroup
  $U=\varprojlim U_F$, where $F$ runs over the finite subsets of~$I$.
  On the other hand, dropping the $S$-component of the elements
  of~$U_F$, we obtain a continuous homomorphism $U_F\to T_F$ onto a
  subsemigroup of the product $\prod_{i\in F}T_i$. Similarly, we
  obtain homomorphisms $T_{F_2}\to T_{F_1}$ for $F_1\subseteq
  F_2\subseteq I$ with $F_2$ finite. Let $T=\varprojlim T_F$ where,
  again, $F$ runs over the finite subsets of~$I$. Since the $T_i$
  belong to~\pv W, so does each $T_F$, whence $T$~is a pro-\pv W
  semigroup. The natural projections $U\to S$ and $U\to T$ show that
  $U$ may be viewed as a closed relational morphism $U:S\relm T$. We
  claim that $U^{-1}(e)$ is a pro-\pv V semigroup for every idempotent
  $e\in E(T)$.

  Let $e\in E(T)$. Given $i\in I$, let $e_i$ be the natural projection
  of $e$ in $T_{\{i\}}\subseteq T_i$. By the choice of~$\mu_i$, the
  semigroup $\mu_i^{-1}(e_i)$ belongs to~\pv V. Let
  $V_i=\varphi_i^{-1}(\mu_i^{-1}(e_i))=U_{\{i\}}^{-1}(e_i)$. Note that
  $U^{-1}(e)=\bigcap_{i\in I}V_i$ is a (nonempty) closed subsemigroup
  of~$S$, whence a profinite semigroup. Given two distinct elements
  $s,s'\in U^{-1}(e)$, there exists $i\in I$ such that
  $\varphi_i(s)\ne\varphi_i(s')$. As $\varphi_i(s)$ and
  $\varphi_i(s')$ belong to the semigroup $\mu_i^{-1}(e_i)$ from~\pv
  V, this shows that $U^{-1}(e)$ is residually~\pv V. Hence,
  $U^{-1}(e)$ is a pro-\pv V semigroup. This completes the proof of
  the direct implication in the statement of the theorem.

  For the converse, suppose that there is a relational morphism
  $\mu:S\relm T$ as in the statement of the theorem. Let $\varphi:S\to
  R$ be a continuous homomorphism onto a finite semigroup $R$. Since
  $S$ is profinite, we need to show that $R$ belongs to~\pv{V\malcev
    W} (cf.~\cite[Lemma~3.2.2]{Rhodes&Steinberg:2009qt}). Note that
  $\varphi^{-1}\mu:R\relm T$ is a closed relational morphism. For each
  idempotent $e\in E(T)$,
  $(\varphi^{-1}\mu)^{-1}(e)=\varphi(\mu^{-1}(e))$ is a finite
  continuous homomorphic image of a pro-\pv V semigroup and,
  therefore, it belongs to~\pv V
  \cite[Proposition~3.7]{Almeida:2003cshort}. This shows that $R$
  satisfies the same hypothesis as~$S$. Thus, we may assume from
  hereon that $S$ is finite.

  Given $s\in S$, the set $F_s=\{s\}\times\mu(s)$ is the preimage
  of~$s$ in~$\mu$ under the first component projection $\pi_1:\mu\to
  S$; since $\pi_1$ is continuous, $F_s$ is a clopen subset of~$\mu$.
  The image of $F_s$ under the second component projection
  $\pi_2:\mu\relm T$ is precisely the set $\mu(s)$; since $\pi_2$ is
  both a closed (being a continuous mapping between compact spaces)
  and an open mapping (being a component projection), we deduce that
  $\mu(s)$ is a clopen subset of~$T$. Now, consider the subalgebra $B$
  generated by the sets $\mu(s)$ ($s\in S$) of the Boolean algebra of
  all clopen subsets of~$T$. Since $S$ is assumed to be finite, $B$~is
  finite and its atoms form a partition of~$T$ into clopen subsets. By
  Hunter's Lemma \cite{Hunter:1988}, such a partition admits a
  refinement by a clopen congruence. In other words, there is a
  continuous homomorphism $\varphi:T\to U$ onto a finite semigroup $U$
  such that, if $t_1,t_2\in T$ are such that
  $\varphi(t_1)=\varphi(t_2)$ then, for every $s\in S$, $t_1\in\mu(s)$
  if and only if $t_2\in\mu(s)$.

  Consider the relational morphism $\nu=\mu\varphi:S\relm U$ obtained
  by relation composition. For $e\in E(U)$, the set $\varphi^{-1}(e)$
  is a closed subsemigroup of~$T$, whence it has some idempotent $f$.
  Note that $s\in\nu^{-1}(e)$ if and only if
  $\varphi^{-1}(e)\cap\mu(s)\ne\emptyset$. By the choice of~$\varphi$,
  the latter condition is in fact equivalent to
  $\varphi^{-1}(e)\subseteq\mu(s)$ and so also to $f\in\mu(s)$. Hence,
  $\nu^{-1}(e)=\mu^{-1}(f)$ is a semigroup from~\pv V. Since $U\in\pv
  W$, this shows that $S$ belongs to~\pv{V\malcev W}, as required.
\end{proof}

In the special case of the profinite semigroup \Om A{(V\malcev W)}, we
may say a bit more. Since it is a pro-\pv{(V\malcev W)} semigroup, by
Theorem~\ref{t:Malcev} there is a closed relational morphism $\mu:\Om
A{(V\malcev W)}\relm T$ into a pro-\pv W semigroup $T$ such that
$\mu^{-1}(e)$ is pro-\pv V for every $e\in E(T)$. We may choose for
each $a\in A$ an element $t_a\in\mu(a)$. The closed subsemigroup
of~$\mu$ generated by the set $\{(a,t_a):a\in A\}$ is still a closed
relational morphism sharing the above property with~$\mu$, and so we
may assume that it coincides with~$\mu$. Moreover, for the second
component projection $\pi_2:\mu\to T$, the semigroup
$\pi_2^{-1}(e)=\mu^{-1}(e)\times\{e\}$ is pro-\pv V. By
Theorem~\ref{t:Malcev}, $\mu$ is a pro-\pv{(V\malcev W)} semigroup.
Since it is $A$-generated via the mapping $a\mapsto(a,t_a)$, it
follows that the first component projection $\pi_1:\mu\to\Om
A{(V\malcev W)}$ is an isomorphism. The composite
$\pi_1^{-1}\pi_2=\mu$ is, therefore, a continuous homomorphism
$\varphi:\Om A{(V\malcev W)}\to T$. Since $T$ is pro-\pv W, $\varphi$
factors through the natural projection $p:\Om A{(V\malcev W)}\to\Om
AW$, which is identical on generators, as $\varphi=\psi\circ p$:
$$\xymatrix{
  &\mu \ar[ld]_{\pi_1} \ar[rd]^{\pi_2}
  & \\
  \Om A{(V\malcev W)} \ar[rr]^\varphi \ar[rd]^p
  && T \\
  &\Om AW \ar[ru]^\psi }$$ %
Given an idempotent $e\in\Om AW$, by the commutativity of the above
diagram, we have the inclusion $p^{-1}(e)\subseteq\varphi^{-1}(\psi(e))$,
which entails that $p^{-1}(e)$ is a pro-\pv V semigroup. This
discussion proves the following consequence of Theorem~\ref{t:Malcev}.

\begin{Cor}
  \label{c:Malcev}
  The natural projection $p:\Om A{(V\malcev W)}\to\Om AW$ is such that
  $p^{-1}(e)$ is a pro-\pv V semigroup for every $e\in E(\Om AW)$.\qed
\end{Cor}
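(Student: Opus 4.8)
The plan is to deduce the corollary from Theorem~\ref{t:Malcev}. Since $\Om A{(V\malcev W)}$ is $A$-generated, it is in particular a pro-$\pv{(V\malcev W)}$ semigroup, so Theorem~\ref{t:Malcev} provides a closed relational morphism $\mu:\Om A{(V\malcev W)}\relm T$ with $T$ pro-$\pv W$ and $\mu^{-1}(f)$ pro-$\pv V$ for every $f\in E(T)$. What is still missing is any a priori link between $\mu$ and the natural projection~$p$, and the crux of the argument is to replace $\mu$ by a relational morphism that genuinely factors through~$p$.

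First I would normalize $\mu$. For each $a\in A$ choose $t_a\in\mu(a)$ and let $\mu'$ be the closed subsemigroup of $\mu$ generated by $\{(a,t_a):a\in A\}$. Then $\mu'$ is again a closed relational morphism $\Om A{(V\malcev W)}\relm T$ (its first-component projection is onto, since its image is a closed subsemigroup containing $A$), and for each $f\in E(T)$ the semigroup $\mu'^{-1}(f)$ is a closed subsemigroup of the pro-$\pv V$ semigroup $\mu^{-1}(f)$, hence itself pro-$\pv V$. Viewing the second-component projection $\pi_2:\mu'\to T$ as a relational morphism and noting that $\pi_2^{-1}(f)=\mu'^{-1}(f)\times\{f\}$ is pro-$\pv V$ for every $f\in E(T)$, the converse direction of Theorem~\ref{t:Malcev} shows that $\mu'$ is itself a pro-$\pv{(V\malcev W)}$ semigroup. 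As $\mu'$ is moreover $A$-generated (via $a\mapsto(a,t_a)$), the universal property of $\Om A{(V\malcev W)}$ yields a continuous homomorphism $\theta:\Om A{(V\malcev W)}\to\mu'$ with $\theta(a)=(a,t_a)$; by uniqueness in the universal property one gets $\pi_1\circ\theta=\mathrm{id}$, and by continuity together with density of the generators one gets $\theta\circ\pi_1=\mathrm{id}$, so the first-component projection $\pi_1:\mu'\to\Om A{(V\malcev W)}$ is an isomorphism. Put $\varphi=\pi_2\circ\pi_1^{-1}:\Om A{(V\malcev W)}\to T$, a continuous homomorphism with $\varphi(a)=t_a$.

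Since $T$ is pro-$\pv W$, the assignment $a\mapsto t_a$ extends uniquely to a continuous homomorphism $\psi:\Om AW\to T$; as $\varphi$ and $\psi\circ p$ are continuous homomorphisms agreeing on the generators $A$, they coincide, so $\varphi=\psi\circ p$. Now fix $e\in E(\Om AW)$ and set $f=\psi(e)\in E(T)$. From $\varphi=\psi\circ p$ we obtain $p^{-1}(e)\subseteq\varphi^{-1}(f)$, and $\varphi^{-1}(f)=\pi_1\big(\pi_2^{-1}(f)\big)$ is isomorphic, via $\pi_1$, to $\pi_2^{-1}(f)=\mu'^{-1}(f)\times\{f\}$, which is pro-$\pv V$. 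Since a closed subsemigroup of a pro-$\pv V$ semigroup is again pro-$\pv V$, we conclude that $p^{-1}(e)$ is pro-$\pv V$, as required.

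I expect the one delicate point to be this normalization/straightening step: passing from the abstract relational morphism $\mu$ supplied by Theorem~\ref{t:Malcev} to the $A$-generated $\mu'$ while retaining the property that all idempotent preimages are pro-$\pv V$, and then identifying $\mu'$ with $\Om A{(V\malcev W)}$ through $\pi_1$. This is where both implications of Theorem~\ref{t:Malcev} and the universal property of the free pro-$\pv{(V\malcev W)}$ semigroup are combined. Everything downstream --- the factorization through $p$ and the transfer of the pro-$\pv V$ property along the isomorphism $\pi_1$ --- is then routine.
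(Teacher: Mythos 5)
Your proposal is correct and follows essentially the same route as the paper: normalize $\mu$ to the $A$-generated closed relational morphism generated by the pairs $(a,t_a)$, use both directions of Theorem~\ref{t:Malcev} to see it is pro-$\pv{(V\malcev W)}$ with $\pi_1$ an isomorphism, and then factor the resulting homomorphism $\varphi=\pi_2\circ\pi_1^{-1}$ through $p$ to get $p^{-1}(e)\subseteq\varphi^{-1}(\psi(e))$, a closed subsemigroup of a pro-$\pv V$ semigroup. The only difference is that you spell out details (surjectivity of $\pi_1$ on $\mu'$, the two composites being identities) that the paper leaves implicit.
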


From Corollary~\ref{c:Malcev} and Theorem~\ref{t:Malcev}, it is
immediate to deduce the so-called Pin-Weil Basis Theorem for the
Mal'cev product \cite{Pin&Weil:1996a} (see also
\cite[Theorem~3.7.13]{Rhodes&Steinberg:2009qt}): if
$\{u_i(x_1,\ldots,x_{n_i})=v_i(x_1,\ldots,x_{n_i}): i\in I\}$ is a
basis of pseudoidentities for~\pv V, then
$$\{u_i(w_1,\ldots,w_{n_i})=v_i(w_1,\ldots,w_{n_i}): %
i\in I, %
\pv W\models w_1^2=w_1=\cdots=w_{n_i}\}$$ %
is a basis of pseudoidentities for~\pv{V\malcev W}.

\section{An application of the Factorization Forest Theorem}
\label{sec:FFT}

Already the preservation of local finiteness by the Mal'cev product of
semigroup pseudovarieties is a nontrivial result depending on a
finiteness theorem of Brown \cite{Brown:1971} which was first proved
using combinatorial methods. Several proofs of Brown's theorem are
available in the literature. See \cite[Theorem~4.2.4 and Notes to
Chapter~4]{Rhodes&Steinberg:2009qt} for an algebraic proof and several
references.

A particularly elegant proof of Brown's theorem is due to Simon
\cite{Simon:1989} using his Factorization Forest Theorem, which we
proceed to recall.

For a set $X$, let $\Cl F(X)$ be the set of all finite sequences
$(x_1,\ldots,x_n)$ of elements of $X$. The \emph{length} of the
sequence $s=(x_1,\ldots,x_n)$ is~$n$ and is denoted $|s|$. By a
\emph{factorization forest over $A$} we mean a pair $F=(X,d)$, where
$X$ if a subset of~$A^+$ and $d:X\to\Cl F(X)$ such that, for every
$x\in X$, $d(x)=(x_1,\ldots,x_n)$ implies $x=x_1\cdots x_n$.

Let $F=(X,d)$ be a factorization forest over~$A$. If $x\in X$ then we
say that the \emph{degree} of $x\in X$ is 0 if $|d(x)|=1$, while it is
$|d(x)|$ otherwise. The \emph{external elements} of~$F$ are the
elements of~$X$ of degree~0. The \emph{height} $h(x)$ of~$x$ is
defined recursively as follows: $h(x)=0$ if $x$ is external and
$h(x)=1+\max\{h(x_i): 1\le i\le n\}$ if $d(x)=(x_1,\ldots,x_n)$ with
$n>1$. The \emph{height} of $F$ is $h(F)=\sup\{h(x):x\in X\}$.

Let $f:A^+\to S$ be a semigroup homomorphism. A factorization forest
$F$ is \emph{Ramseyan modulo $f$} if, whenever the degree of $x$ is at
least 3 and $d(x)=(x_1,\ldots,x_n)$, we have
$f(x)=f(x_1)=\cdots=f(x_n)$ and $f(x)$ is idempotent. We say that $f$
\emph{admits} the factorization forest $F=(X,d)$ if $X=A^+$ and the
external set is~$A$.

\begin{Thm}[Factorization Forest Theorem~\cite{Simon:1990}]
  \label{t:FFT}
  Every homomorphism $f:A^+\to S$ into a finite semigroup $S$ admits a
  Ramseyan factorization forest of height at most $9|S|$.
\end{Thm}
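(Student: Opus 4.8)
The plan is to argue by induction on $|S|$, proving a bound linear in $|S|$ and tracking constants to reach $9|S|$. The base case $|S|=1$ is immediate: the unique element of $S$ is idempotent, so every value of $f$ is idempotent and \emph{every} factorization of a word into three or more blocks is automatically Ramseyan; splitting $a_1\cdots a_n$ into its $n$ letters then gives a forest of height at most~$1$.

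For the inductive step, fix $f\colon A^+\to S$ with $|S|\ge2$, which we may assume surjective. The idea is to use Green's relations to cut a word into pieces whose factorization is controlled by a proper divisor of~$S$. Reading $w=a_1\cdots a_n$ from the left, the prefix values satisfy $f(a_1\cdots a_{i+1})\le_{\Cl R}f(a_1\cdots a_i)$, so they run through a $\le_{\Cl R}$-descending chain of \Cl R-classes, and this cuts $w$ into consecutive blocks on each of which the prefix trajectory stays inside one \Cl R-class~$R$, hence inside one \Cl D-class; reading from the right with $\le_{\Cl L}$ refines this further. On such a stable block one introduces Rees coordinates for~$R$: a ``column index'' recording the \Cl L-class, which ranges over a finite set, and the remaining data lying in a maximal subgroup~$G$ of~$R$. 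If $S$ is not completely simple, then $|G|<|S|$ and one recurses on a proper divisor; the completely simple case, and within it the group case, is handled directly --- a run of positions with constant column index along which one multiplies by a fixed group element~$p$ can be chopped into blocks of length the order of~$p$, each block carrying the unique idempotent of its Rees cell and thus forming one Ramseyan node of arbitrarily large degree, after which only boundedly many letters at the ends remain.

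Assembling the blocks uses only binary nodes, for which the Ramseyan condition is vacuous; the work lies in interleaving this assembly with the recursion so that the contributions of the left ($\le_{\Cl R}$) pass, the right ($\le_{\Cl L}$) pass and the group part accumulate additively in~$|S|$ rather than compounding, which is where the constant $9=3\cdot3$ comes from. That accounting is the main obstacle: the Ramsey-type merge that produces the high-degree idempotent-labelled nodes is short once the right invariant is isolated, but forcing the total height to stay \emph{linear} in~$|S|$ requires exploiting the \Cl D-class structure rather than a bare pigeonhole or Ramsey argument, which would yield only a tower-type bound. A conceptually cleaner alternative is to run the induction on~$|S|$ through the ``local divisor'' construction at a generator, following Kufleitner and Colcombet, which also sharpens the bound to $3|S|$; either route establishes the statement.
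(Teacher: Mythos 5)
The first thing to note is that the paper does not prove Theorem~\ref{t:FFT} at all: it is quoted as a known result of Simon \cite{Simon:1990} (with the remark that \cite{Simon:1992} gives a simplified proof yielding only an exponential bound), and it is used later as a black box. So there is no proof in the paper to match yours against; the question is whether your sketch could stand on its own, and it cannot. You reproduce the familiar architecture of the known proofs (induction on $|S|$, a left-to-right pass along the $\le_{\Cl R}$-descending chain of prefix values, a dual $\le_{\Cl L}$ pass, Rees coordinates on a regular \Cl D-class, recursion into a maximal subgroup or a proper divisor), but the two places where the theorem actually lives are left unestablished. Your treatment of the completely simple/group case is not correct as stated: you only handle ``a run of positions with constant column index along which one multiplies by a fixed group element $p$'', chopped into blocks of length the order of~$p$. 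In a word evaluated in a group the successive letters multiply by varying elements, so there is no fixed $p$ in general; the standard argument instead cuts the word at the positions where the \emph{prefix product} attains one fixed value $g$, observes that every interior factor then maps to the identity (so all of them can be gathered under a single Ramseyan node of arbitrarily large degree, the identity being idempotent), and recurses on the set of values attained by the prefix products. That is a genuinely different, and necessary, bookkeeping.

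Second, the quantitative content of the theorem --- that the contributions of the $\le_{\Cl R}$ pass, the $\le_{\Cl L}$ pass and the group recursion accumulate \emph{additively} to give height at most $9|S|$ --- is precisely what you yourself identify as ``the main obstacle'', and you do not carry it out; you close by deferring to Colcombet and Kufleitner. A proof whose central estimate is delegated to the literature is a citation, not a proof, and in that case you might as well do what the paper does and simply invoke \cite{Simon:1990}. If you want a self-contained argument, the local-divisor induction you mention (or the Rees-coordinate induction with prefix-value cutting sketched above) is the right skeleton, but you must then state and prove an explicit inductive height bound (for instance of the form $h\le 3|S|$ or $h\le 9|S|$), making precise how the three sources of height combine at each step.
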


Relaxing the linear bound in the above theorem to an exponential
bound, Simon~\cite{Simon:1992} has also produced a simplified proof
based on the Krohn-Rhodes Decomposition Theorem.

Our proof of the following result relies on the methods and ideas of
Simon's proof of Brown's theorem \cite[Theorem~5]{Simon:1989}.

\begin{Thm}
  \label{t:Brown-profinite}
  Let $\varphi:S\to T$ be a continuous homomorphism where $S$ is a
  profinite semigroup generated by a finite set $A$ and $T$ is a
  finite semigroup. Then the semigroup $S$ is algebraically generated
  by $A$ together with the elements that map to idempotents in~$T$.
\end{Thm}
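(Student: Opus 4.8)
The plan is to mimic Simon's proof of Brown's theorem, replacing the finiteness conclusion by the weaker statement that the profinite semigroup $S$ is the algebraic closure (under multiplication only) of the set $A\cup\varphi^{-1}(E(T))$. Write $P=A\cup\varphi^{-1}(E(T))$ and let $S_0$ be the subsemigroup of $S$ (not necessarily closed) generated by $P$; I must show $S_0=S$. Since $A$ generates a dense subsemigroup of $S$ and every element of $S$ is a limit of words in $A^+$ (as recalled for $\Om A{\pv S}$, and more generally using that $S$ is residually finite), it suffices to show that each such limit already lies in $S_0$; equivalently, I will show that for every word $w\in A^+$ there is a factorization $w=p_1\cdots p_k$ with each $p_j$ representing an element of $P$ — no, that is false for finite words — so instead I will argue directly on convergent sequences of words, extracting structure from a factorization forest.

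First I would fix a continuous homomorphism $\varphi:S\to T$ as in the statement, pick a surjective continuous homomorphism $\rho:\Om A{\pv S}\to S$ identical on $A$ (so $S$ is a continuous quotient of the free profinite semigroup), and set $f=\varphi\circ\rho\colon A^+\to T$ (restricting to finite words). Apply the Factorization Forest Theorem (Theorem~\ref{t:FFT}) to $f\colon A^+\to T$ to obtain a Ramseyan factorization forest $F=(A^+,d)$ of height at most $9|T|$, a bound that is crucially independent of the word. Given an arbitrary element $s\in S$, choose a sequence of words $w_n\in A^+$ with $\rho(w_n)\to s$. For each $w_n$, I would use $F$ to produce a ``structured'' expression of $w_n$ as a bounded-depth iterated product, where at each node of degree $\ge3$ the relevant factor maps under $f$ to a fixed idempotent of $T$. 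The key point inherited from Simon's argument is that, at an idempotent node $x$ with $d(x)=(x_1,\dots,x_m)$, one may regroup $x_1\cdots x_m$ so that, modulo the behaviour under $f$, only boundedly many ``shapes'' occur; each $x_i$ maps to the same idempotent $e\in E(T)$, so $\rho(x_i)\in\varphi^{-1}(e)\subseteq P$.

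The heart of the argument — and the step I expect to be the main obstacle — is to pass from the finite expressions to the limit $s$ while keeping membership in $S_0$. Since the forest has height $\le9|T|$, each $w_n$ is built by at most $9|T|$ nested applications of the operations ``concatenate boundedly many factors'' and ``concatenate many copies of factors lying over a common idempotent.'' For the first kind of node there is a uniform bound on arity, so passing to a subsequence one may assume the bracketing shape and the $f$-images are constant; the limit of the corresponding subwords lies in the closed subsemigroup generated by limits of the child subwords. For an idempotent node, the factors $\rho(x_i)$ all lie in the pro-$\pv V$... rather, all lie in the closed subsemigroup $\varphi^{-1}(e)$, and one must check that the limit of a product of boundedly-many-shapes of such elements — after further subsequence extraction so that the number of ``blocks'' and their images stabilize, using that $\varphi^{-1}(e)$ is compact hence its element-products have convergent subsequences — is again a (finite!) product of elements of $\varphi^{-1}(e)$, hence lies in $S_0$. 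Concretely, inside the compact semigroup $\varphi^{-1}(e)$ one shows (as in Simon's combinatorial lemma, now read in $S$ rather than in a finite semigroup) that a product of arbitrarily many factors from a finite set of $\varphi^{-1}(e)$-elements can be collapsed, using associativity and the idempotency available in the ambient semigroup $T$ only to license the regrouping, to a product of boundedly many such factors; this bound being independent of $n$, the limit over $n$ is again such a bounded product and thus lies in $S_0$. Inducting on the height of the node (at most $9|T|$ steps, independent of $n$) then yields $s\in S_0$, proving $S=S_0$ and hence the theorem. The delicate bookkeeping is exactly the transfer of Simon's ``bounded number of factors after regrouping'' estimate from the finite semigroup $T$, where it is a statement about $f$-images, to the profinite semigroup $S$, where it must become a statement about actual products of preimages under $\varphi$; making this precise, and ensuring all the subsequence extractions can be arranged compatibly across the $\le9|T|$ levels, is where the real work lies.
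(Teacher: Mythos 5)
Your setup coincides with the paper's (apply the Factorization Forest Theorem to $f=\varphi\circ\rho$ restricted to $A^+$, then analyse limits of words according to the forest of bounded height), but the step you yourself flag as ``where the real work lies'' is a genuine gap, and it is not filled by the route you sketch. At a node of degree at least $3$, Ramseyanness tells you that the \emph{whole} word $w$ at that node satisfies $f(w)\in E(T)$; there is nothing to regroup and no need to control the (unbounded) number of children. This is exactly how the paper closes the argument: it inducts on the forest height $k$, taking $S_k$ to be the set of $\rho$-images of limits of words of height at most $k$, and in the inductive step splits into three cases. If infinitely many $w_n$ have height $<k$, the induction hypothesis applies; if infinitely many have degree $2$, one extracts convergent subsequences of the two children (bounded arity, so no compatibility issues across levels ever arise); and if all but finitely many $w_n$ have degree $\ge 3$, then $f(w_n)\in E(T)$ for those $n$, and since $E(T)$ is finite, hence closed, and $\varphi$ is continuous, the limit $s$ itself satisfies $\varphi(s)\in E(T)$, i.e.\ $s\in\varphi^{-1}(E(T))$ is already one of the allowed generators --- no recursion into the children, no factorization of the long product at all.

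By contrast, your plan at idempotent nodes --- collapsing a product of arbitrarily many factors lying over $e$ into a product of boundedly many ``such factors'' by transferring Simon's combinatorial lemma from $T$ to $S$ --- is either trivial or unjustified, depending on how it is read. If ``such factors'' means arbitrary elements of $\varphi^{-1}(e)$, the statement is trivially true with bound one, because $\varphi^{-1}(e)$ is a closed subsemigroup of $S$: any finite product of elements lying over $e$ is a single element of $\varphi^{-1}(e)\subseteq A\cup\varphi^{-1}(E(T))$, and any limit of such elements stays in $\varphi^{-1}(e)$; but then no Simon-style regrouping is needed and your elaborate level-by-level subsequence bookkeeping evaporates. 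If instead it means a bounded product of the \emph{original} factors $\rho(x_i)$, the claim is false in general for profinite semigroups (there is no uniform bound on the number of generators needed to express elements of the abstract subsemigroup generated by a set), and Simon's counting argument, which lives in the finite semigroup $T$, does not transfer to actual products of preimages in $S$. Either way, the missing observation --- that at Ramseyan nodes the limit itself maps to an idempotent of the finite semigroup $T$, so it is a generator --- is what makes the proof work, and with it the argument reduces to the paper's short three-case induction.
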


\begin{proof}
  Let $\psi:\Om AS\to S$ be the continuous homomorphism mapping each
  generator to itself and let $f:A^+\to S$ be obtained by restricting
  $\varphi\circ\psi$ to~$A^+$. By Theorem~\ref{t:FFT}, $f$ admits a
  Ramseyan factorization forest $F=(A^+,d)$ of finite height $H$. For
  each integer $k\in[0,H]$, consider the subset $S_k$ of~$S$
  consisting of all elements of the form $\psi(w)$ where $w=\lim w_n$
  for a sequence of words $w_n\in A^+$ with $h(w_n)\le k$. Note that
  $S_H=S$. Let $E=E(T)$. By induction on~$k$, we show that every
  element of~$S_k$ is a (finite) product of elements of
  $A\cup\varphi^{-1}(E)$.

  Since the factorization forest $F$ is admitted by~$f$, the set of
  external elements of $F$ is $A$. As $A$ is a finite set, it follows
  that $S_0=A$. Assume now, inductively, that $k>0$ and $S_{k-1}$ is
  contained in the subsemigroup $\langle A\cup\varphi^{-1}(E)\rangle$
  generated by $A\cup\varphi^{-1}(E)$. Let $s\in S_k$ and let
  $(w_n)_n$ be a convergent sequence of words from $A^+$ of height at
  most $k$ such that $s=\psi(\lim w_n)$. We claim that $s\in\langle
  A\cup\varphi^{-1}(E)\rangle$.

  In case an infinite number of terms $w_n$ of the sequence have
  height less than~$k$, then $s\in S_{k-1}$ and the induction
  hypothesis yields the claim. On the other hand, if an infinite
  number of the $w_n$ have degree $2$, say $d(w_n)=(u_n,v_n)$, then
  $h(u_n)$ and $h(v_n)$ are both less than $k$. By compactness, there
  is a strictly increasing sequence of indices $n_r$ such that the
  sequences $(u_{n_r})_r$ and $(v_{n_r})_r$ converge. Let
  $s'=\psi(\lim u_{n_r})$ and $s''=\psi(\lim v_{n_r})$, so that
  $s=s's''$. Since $s'$ and $s''$ belong to $\langle
  A\cup\varphi^{-1}(E)\rangle$ by the induction hypothesis, it follows
  that so does $s$.

  Hence, we may assume that every $w_n$ has degree at least three.
  Since $F$ is Ramseyan modulo $f$, we conclude that $f(w_n)$ is an
  idempotent. Since $T$~is finite and $f$ is continuous, it follows
  that $\varphi(s)$ is also an idempotent, that is,
  $s\in\varphi^{-1}(E)$. This concludes the induction step and the
  proof.
\end{proof}

\section{Profinite semigroups with only finitely many regular
  \texorpdfstring{\Cl J}{J}-classes}
\label{sec:LG}

In this section, we investigate profinite semigroups with only
finitely many regular \Cl J-classes and their relationship with
pseudovarieties consisting of semigroups with only one regular \Cl
J-class, that is, subpseudovarieties of~\pv{LG}. We start with a
couple of general results about Green's relations in profinite
semigroups.

\begin{Prop}
  \label{p:Green}
  Let $S$ be a profinite semigroup and let \Cl K be any of Green's
  relations \Cl L, \Cl R, \Cl J, \Cl H or the corresponding
  quasiorders $\le_{\Cl L}$, $\le_{\Cl R}$, $\le_{\Cl J}$, and
  $\le_{\Cl H}$. Then two elements of~$S$ are \Cl K-related if and
  only if their images under every continuous homomorphism onto a
  finite semigroup are \Cl K-related.
\end{Prop}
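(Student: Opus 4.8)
The plan is to prove the statement for the quasiorders $\le_{\Cl R}$, $\le_{\Cl L}$, $\le_{\Cl J}$ first, since the equivalence relations $\Cl R$, $\Cl L$, $\Cl J$ follow by intersecting a quasiorder with its reverse, and $\Cl H$ (and $\le_{\Cl H}$) is then obtained as the intersection of the $\Cl R$- and $\Cl L$-versions (and $\le_{\Cl R}$ with $\le_{\Cl L}$). For each quasiorder the ``only if'' direction is trivial: a continuous homomorphism preserves principal ideals, hence preserves $\le_{\Cl K}$. So the content is the ``if'' direction. Let me treat $\le_{\Cl R}$; the others are symmetric or analogous. Suppose $s,t\in S$ have the property that $\varphi(s)\le_{\Cl R}\varphi(t)$ for every continuous homomorphism $\varphi$ onto a finite semigroup. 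I want to produce $x\in S^1$ with $s=tx$.

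First I would observe that for each such $\varphi:S\to F$ the set $C_\varphi=\{x\in S^1 : \varphi(tx)=\varphi(s)\}$ is nonempty (by hypothesis, some element of $F^1$ witnesses $\varphi(s)\le_{\Cl R}\varphi(t)$, and since $\varphi$ is onto we can pull it back into $S^1$, using that $\varphi$ extends to $S^1\to F^1$ by sending the adjoined identity to the identity) and closed (it is the preimage of a point under the continuous map $x\mapsto\varphi(tx)$ on the compact space $S^1$). Next, the family $\{C_\varphi\}$ as $\varphi$ ranges over all continuous homomorphisms onto finite semigroups is a filtered family: given $\varphi_1:S\to F_1$ and $\varphi_2:S\to F_2$, the map $\varphi=(\varphi_1,\varphi_2):S\to F_1\times F_2$ followed by corestriction to its image is again a continuous homomorphism onto a finite semigroup, and $C_\varphi\subseteq C_{\varphi_1}\cap C_{\varphi_2}$. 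By compactness of $S^1$, the intersection $\bigcap_\varphi C_\varphi$ is nonempty; any $x$ in it satisfies $\varphi(tx)=\varphi(s)$ for every continuous homomorphism $\varphi$ onto a finite semigroup, and since $S$ is profinite (so such homomorphisms separate points) we get $tx=s$, i.e.\ $s\le_{\Cl R}t$ in $S$. The argument for $\le_{\Cl L}$ is the left-right dual, and for $\le_{\Cl J}$ one uses the compact space $S^1\times S^1$ and the map $(x,y)\mapsto\varphi(xty)$.

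The main (mild) obstacle is bookkeeping with the monoid $S^1$: one must be careful that ``finite semigroup'' quotients of $S$ and the adjoined identity interact correctly, i.e.\ that $\varphi(s)\le_{\Cl R}\varphi(t)$ in the finite semigroup $F$ really does give a witness in $(S^1)$ rather than only in $F^1$, which is why we work throughout with the monoids $S^1$ and $F^1$ and exploit that $\varphi$ is onto. Once $\le_{\Cl R}$, $\le_{\Cl L}$, $\le_{\Cl J}$ are established, the equivalence relations are immediate: $s\,\Cl R\,t$ iff $s\le_{\Cl R}t$ and $t\le_{\Cl R}s$, and continuous homomorphisms preserve and reflect the conjunction; similarly for $\Cl L$ and $\Cl J$. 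Finally $\le_{\Cl H}={\le_{\Cl R}}\cap{\le_{\Cl L}}$ and $\Cl H={\Cl R}\cap{\Cl L}$, so the $\Cl H$-case follows formally. It is worth noting that, since $\Cl D=\Cl J$ in every compact semigroup (as recalled in Section~\ref{sec:prelims}), no separate treatment of $\Cl D$ is needed.
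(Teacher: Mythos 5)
Your proposal is correct and is essentially the paper's own argument: a standard compactness proof in which, for each continuous homomorphism onto a finite quotient, one forms the nonempty closed set of witnesses in $S^1$ (or $S^1\times S^1$), observes that the family is directed, applies the finite intersection property, and uses that finite quotients separate points to conclude the witness works in $S$ itself. The only (cosmetic) difference is organizational: you establish the quasiorders $\le_{\Cl R}$, $\le_{\Cl L}$, $\le_{\Cl J}$ first and deduce the equivalence relations and the $\Cl H$-cases formally, whereas the paper treats $\Cl L$ directly by taking pairs $(x,y)\in S^1\times S^1$ witnessing both inequalities simultaneously and notes the other cases are analogous.
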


\begin{proof}
  The proof is a more or less standard compactness argument which
  works more generally for a system of equations with given parameters
  (see \cite[Section~5.6]{Almeida:1994a}, where the argument is
  formulated only for relatively free profinite semigroups). For the
  sake of completeness, we present the proof in the case of~\Cl L. The
  other cases may be proved similarly. Since homomorphisms preserve
  the relation~\Cl L, we may consider a pair of elements $u,v\in S$
  such that $\varphi(u)$ and $\varphi(v)$ are \Cl L-related for every
  continuous homomorphism $\varphi:S\to T$ onto a finite
  semigroup~$T$. We may consider an inverse system $S_i$ of finite
  semigroups $S_i$ indexed by a directed set $I$ and onto
  homomorphisms $S_i\to S_j$ ($i\ge j$) whose inverse limit is
  isomorphic with~$S$. For each $i\in I$, let $\varphi_i:S\to S_i$ be
  the projection homomorphism. Let $X_i$ be the set of all pairs
  $(x,y)$ of elements of $S^1$ such that $\varphi_i(xu)=\varphi_i(v)$
  and $\varphi_i(yv)=\varphi_i(u)$. Since $\varphi_i$ is continuous,
  $X_i$ is a closed subset of~$S^1\times S^1$. On the other hand,
  since $I$ is a directed set and $X_i\subseteq X_j$ whenever $i\ge
  j$, every finite subfamily of the family of closed sets $(X_i)_{i\in
    I}$ has nonempty intersection. By compactness, the set
  $X=\bigcap_{i\in I}X_i$ is nonempty. Given a pair $(x,y)\in X$, we
  must have $xu=v$ and $yv=u$ since the two sides of each of these
  equations have the same image under every homomorphism $\varphi_i$.
  This shows that $u\mathrel{\Cl L}v$.
\end{proof}

The following is a simple application of Proposition~\ref{p:Green}.

\begin{Cor}
  \label{c:Green}
  Let $S$ be a profinite semigroup. Then each of Green's relations \Cl
  L, \Cl R, \Cl J, and \Cl H is a closed equivalence relation and the
  corresponding quotient topological space is profinite.
\end{Cor}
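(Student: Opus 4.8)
The plan is to combine Proposition~\ref{p:Green} with the realization of $S$ as an inverse limit of finite semigroups. Fix one of the relations $\Cl K\in\{\Cl L,\Cl R,\Cl J,\Cl H\}$ and write $S=\varprojlim S_i$ over a directed set~$I$, with continuous projections $\varphi_i\colon S\to S_i$. For each $i\in I$, set $R_i=\{(u,v)\in S\times S:\varphi_i(u)\mathrel{\Cl K}\varphi_i(v)\}$. Since $S_i$ is finite, hence discrete, and $\Cl K$ is an equivalence relation on~$S_i$, it is a clopen subset of $S_i\times S_i$; therefore $R_i=(\varphi_i\times\varphi_i)^{-1}(\Cl K_{S_i})$ is a clopen equivalence relation on~$S$, and the quotient map $q_i\colon S\to S/R_i$ has finite discrete codomain.

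First I would observe that, by Proposition~\ref{p:Green}, $u\mathrel{\Cl K}v$ holds if and only if $\varphi(u)\mathrel{\Cl K}\varphi(v)$ for every continuous homomorphism $\varphi$ of~$S$ onto a finite semigroup; since each $\varphi_i$ is of this kind and, conversely, every such $\varphi$ factors through some~$\varphi_i$ while homomorphisms preserve~$\Cl K$, this is equivalent to $(u,v)\in\bigcap_{i\in I}R_i$. Thus $\Cl K=\bigcap_{i\in I}R_i$, which exhibits $\Cl K$ as an intersection of closed sets and hence as a closed equivalence relation; this is the first assertion.

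For the second assertion I would consider the map $q\colon S\to\prod_{i\in I}(S/R_i)$ whose components are the $q_i$; it is continuous, and $q(u)=q(v)$ precisely when $(u,v)\in\bigcap_{i\in I}R_i=\Cl K$. Hence $q$ factors through an injective continuous map $\overline q\colon S/\Cl K\to\prod_{i\in I}(S/R_i)$. Now $S/\Cl K$ is compact, being a continuous image of~$S$, whereas $\prod_{i\in I}(S/R_i)$ is a product of finite discrete spaces and so is profinite; a continuous injection from a compact space into a Hausdorff space is an embedding onto a compact, hence closed, subspace, so $\overline q$ identifies $S/\Cl K$ with a closed subspace of a profinite space. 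Therefore $S/\Cl K$ is profinite (and, in particular, Hausdorff, in accordance with $\Cl K$ being closed).

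I do not expect a genuine obstacle here: once Proposition~\ref{p:Green} is in hand, the whole argument is a routine compactness manipulation. The only point that needs a little care is the passage from ``continuous bijection onto its image'' to ``homeomorphism'' for $\overline q$, which is automatic from compactness of the source and Hausdorffness of the target; this is precisely what upgrades the conclusion from mere compactness of the quotient to its profiniteness.
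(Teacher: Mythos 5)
Your proposal is correct and follows essentially the same route as the paper: closedness of \Cl K via Proposition~\ref{p:Green} expressed as an intersection of clopen preimages of the relation under continuous maps to finite semigroups, and profiniteness of $S/\Cl K$ by separating distinct classes through continuous maps to finite discrete quotients. Your only cosmetic deviations are indexing over a fixed inverse limit presentation (which needs the standard fact that finite continuous quotients factor through the projections) and packaging the point-separation step as an explicit embedding into the product $\prod_i(S/R_i)$, both of which are fine.
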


\begin{proof}
  Since the argument is similar for all the considered relations, we
  deal only with \Cl L. By Proposition~\ref{p:Green}, the relation
  $\Cl L_S$ on~$S$ is the intersection of the closed relations
  $(\varphi\times\varphi)^{-1}\Cl L_T$ where $\varphi$ runs over all
  continuous homomorphisms $S\to T$ onto finite semigroups $T$. Hence,
  $\Cl L_S$ is closed. Moreover, if $u,v\in S$ are not \Cl L-related
  then, again by Proposition~\ref{p:Green}, there exists a continuous
  homomorphism $\varphi:S\to T$ onto a finite semigroup $T$ such that
  $\varphi(u)$ and $\varphi(v)$ are not \Cl L-related. Since the \Cl L
  relation is preserved by homomorphisms, we obtain the following
  commutative diagram of continuous functions, where the vertical
  arrows are the natural mappings:
  $$\xymatrix{
    S \ar[r]^\varphi \ar[d] %
    & T \ar[d] \\
    S/\Cl L_S \ar@{-->}[r] %
    & T/\Cl L_T. }$$ %
  Thus, distinct points in the quotient space $S/\Cl L_S$ may be
  distinguished by continuous mappings onto finite discrete sets,
  which shows that $S/\Cl L_S$ is a profinite space.
\end{proof}

\begin{Prop}
  \label{p:1oc-count-in-LG}
  A countably generated pro-\pv{LG} semigroup is countable if and only
  if it has only countably many idempotents and its subgroups are
  finite.
\end{Prop}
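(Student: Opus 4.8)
The direct implication is immediate: a countable semigroup has only countably many idempotents, and each of its subgroups, being a countable profinite group, is finite, as recalled after Theorem~\ref{t:Zelmanov}.

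For the converse, suppose $S$ is a countably generated pro-\pv{LG} semigroup with only countably many idempotents and with finite subgroups. The first step I would take is to show that the union~$R$ of all regular \Cl J-classes of~$S$ is countable. Fix a regular \Cl J-class~$J$. By Green's Lemma all \Cl H-classes contained in~$J$ are equipotent with a maximal subgroup of~$S$, hence finite; and $J$ is partitioned into $|J/\Cl R|$ \Cl R-classes and into $|J/\Cl L|$ \Cl L-classes, each of which contains an idempotent, so $|J/\Cl R|,|J/\Cl L|\le|E(J)|\le|E(S)|\le\aleph_0$. Thus $|J|\le\aleph_0$. Since distinct regular \Cl J-classes contain distinct idempotents, there are at most countably many of them, whence $R$ is countable.

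Next I would record a useful auxiliary fact: every closed monogenic subsemigroup $\overline{\langle s\rangle}$ of~$S$ is countable. Indeed $\overline{\langle s\rangle}$ is a continuous homomorphic image of $\Om{1}{S}$, which is the disjoint union of its countable set $\{x,x^2,\dots\}$ of finite elements with its minimal ideal, a profinite group. The image of that minimal ideal is the minimal ideal of~$\overline{\langle s\rangle}$, a commutative completely simple subsemigroup, hence a group; since $S$ is pro-\pv{LG}, such a group lies inside a maximal subgroup of~$S$ and is therefore finite. Consequently $\overline{\langle s\rangle}$ is the union of the countable set $\{s,s^2,\dots\}$ with a finite group, hence countable.

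The remaining task is to bound $S\setminus R$, and here the plan is to invoke Theorem~\ref{t:Brown-profinite}: for finitely generated~$S$ and a continuous homomorphism $\varphi\colon S\to T$ onto a finite semigroup, that theorem exhibits $S$ as the subsemigroup algebraically generated by $A\cup\varphi^{-1}(E(T))$, where $A$ is the finite generating set. One would like to choose~$\varphi$, iterating if necessary, so that $\varphi^{-1}(E(T))$ can be replaced by the countable set $A\cup R\cup E(S)$, thereby presenting $S$ as generated by a countable set and hence as countable. The main obstacle is precisely this: controlling the preimages $\varphi^{-1}(e)$ of idempotents, equivalently bounding the non-regular \Cl J-classes. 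For this one must exploit that in a pro-\pv{LG} semigroup every local monoid $fSf$ (with $f\in E(S)$) is a finite group, the stability principle that a factorization $s=uv$ with $s,u,v$ in one non-regular \Cl J-class is impossible so that $u$ or $v$ is strictly \Cl J-above~$s$, and the monogenic bound above; one must also carry out a passage from countably many generators to finitely many, which is subtler than a directed union, since a countably generated profinite semigroup need not be the union of its finitely generated closed subsemigroups. I expect this inductive core, rather than any single ingredient above, to be the delicate part of the proof.
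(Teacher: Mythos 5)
The first half of your argument (the direct implication and the countability of the union of the regular \Cl J-classes) is correct, though in a pro-\pv{LG} semigroup it collapses to something much simpler: no nontrivial semilattice embeds in such a semigroup, so every idempotent lies in the minimal ideal $K$, which is therefore the unique regular \Cl J-class and is the union of the maximal subgroups, hence countable by hypothesis. The genuine gap is the part you yourself flag: you never prove that $S\setminus K$ is countable, and the route you propose cannot be carried out as described. Theorem~\ref{t:Brown-profinite} applies only to a semigroup generated by a \emph{finite} set, which you do not have (and, as you observe, countable generation does not reduce to finite generation by a directed-union argument); moreover, even for finitely generated $S$ that theorem would only present $S$ as algebraically generated by $A\cup\varphi^{-1}(E(T))$, and the fibres $\varphi^{-1}(e)$ are precisely the sets whose cardinality you are trying to control, so without a further structural input the argument is circular. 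Your auxiliary fact on closed monogenic subsemigroups, while correct, does not supply that input.

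The missing idea --- and essentially the whole of the paper's converse --- is to pass to the Rees quotient $S/K$. Since all idempotents of $S$ lie in $K$, the only idempotent of $S/K$ is $0$, so every $\omega$-power in $S/K$ equals $0$ and $S/K$ is a countably generated pro-\pv N semigroup; the paper concludes that such a semigroup is countable (a nonzero element remains nonzero in some finite nilpotent continuous quotient, which caps the length of the words in the generators approximating it). Hence $S=K\cup(S\setminus K)$ is countable, with no factorization-forest machinery, no induction over \Cl J-classes, and no reduction to finite generation. In short, your proposal correctly handles the regular part but leaves the core of the statement unproved, and the tool you hoped to use is not applicable in the countably generated setting; the pro-\pv N structure of $S/K$ is what replaces your entire inductive plan.
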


\begin{proof}
  Let $S$ be a countably generated pro-\pv{LG} semigroup. If $S$
  countable then it certainly has only countably many idempotents and
  all its subgroups are countable, whence finite
  (cf.~Section~\ref{sec:loc-count-pvs}). For the converse, the
  hypothesis guarantees that the minimum ideal $K$ of~$S$ is countable
  since it is the union of the subgroups contained in it. On the other
  hand, since $S$ is pro-\pv{LG}, the Rees quotient $S/K$ is a
  countably generated pro-\pv N semigroup, whence countable. Thus, $S$
  is countable.
\end{proof}

The following result shows how \pv{LG} may be used to decompose
semigroups with only finitely many regular \Cl J-classes. It depends
on the well known fact that a profinite semigroup is pro-\pv{LG} if
and only if the only semilattice that embeds in it is the trivial one.

\begin{Prop}
  \label{p:finmany-reg-J-classes}
  Let $S$ be a profinite semigroup with only finitely many regular \Cl
  J-classes. Then, there is a continuous homomorphism $\varphi:S\to T$
  onto a finite semigroup such that $\varphi^{-1}(e)$ is a pro-\pv{LG}
  semigroup for every idempotent $e\in T$.
\end{Prop}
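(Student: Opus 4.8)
The plan is to show that the failure of being pro-\pv{LG} is a ``local'' phenomenon that can be detected in finite quotients, and then to collect finitely many such quotients into a single one. Recall the quoted fact that a profinite semigroup is pro-\pv{LG} if and only if the only semilattice embedding into it is trivial; equivalently, for every continuous homomorphism $\psi$ onto a finite semigroup, the image has no nontrivial subsemilattice, i.e. the image lies in \pv{LG}. Concretely, the obstruction to $S'$ being pro-\pv{LG} is a pair of idempotents $e,g$ with $eg=ge$ an idempotent and $e\ne g$ (a copy of the two-element semilattice); since $S$ has only finitely many regular \Cl J-classes and the quotient topology on the set of \Cl J-classes is profinite (Corollary~\ref{c:Green}), there are only finitely many ``levels'' where such obstructions can be glued, and this is what makes the finite-quotient collection possible.

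First I would set up the induction on the number $n$ of regular \Cl J-classes of~$S$. If $n=0$ then $S$ is pro-\pv N (every element is \Cl J-below every other after enough multiplications, there being no regular \Cl J-class to stop the descent), hence already pro-\pv{LG}, and one takes $\varphi$ to be the projection onto the trivial semigroup. For the inductive step, pick a regular \Cl J-class $J$ that is minimal in the \Cl J-order among regular classes; the ideal $I$ generated by~$J$ is closed (its complement is open by Proposition~\ref{p:Green} applied to $\le_{\Cl J}$ together with compactness of the set of \Cl J-classes above~$J$). The Rees quotient $S/I$ has $n-1$ regular \Cl J-classes, so by induction there is a continuous homomorphism $\varphi_1:S/I\to T_1$ with all idempotent fibres pro-\pv{LG}; composing with $S\to S/I$ gives a continuous $\varphi_1':S\to T_1$. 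This controls everything strictly above~$I$. It remains to handle~$I$ itself.

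Second I would analyse $I$. Since $J$ is a minimal regular \Cl J-class, $I$ is a profinite semigroup whose unique regular \Cl J-class $J$ is its minimum ideal; thus $I$ is an ideal extension of a completely simple semigroup (the minimum ideal $K$, a union of groups) by a pro-\pv N semigroup $I/K$, so $I$ is itself pro-\pv{LG} — indeed any semilattice embedding into $I$ would have to meet~$K$ (as $K$ absorbs products) and $K$, being completely simple, contains no nontrivial semilattice, and modulo $K$ one is in pro-\pv N, which again has none. Hence the whole of $I$ is already good. Now I want a continuous homomorphism $\varphi_2:S\to T_2$ onto a finite semigroup such that $\varphi_2^{-1}(e)\subseteq I$ whenever $e$ is a ``low'' idempotent, while idempotents not of this kind have fibres mapping into the ideal of $S/I$ so as to be caught by $\varphi_1'$. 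The standard way to produce such a $\varphi_2$ is Hunter's Lemma (used already in the proof of Theorem~\ref{t:Malcev}): the sets $\overline{S\setminus I}$ and $I$, being clopen, can be separated by a clopen congruence refining them, yielding $\varphi_2:S\to T_2$ with $\varphi_2^{-1}(0)=I$ for a zero element $0\in T_2$; then every idempotent $e\in T_2$ is either $0$, with $\varphi_2^{-1}(0)=I$ pro-\pv{LG}, or nonzero, with $\varphi_2^{-1}(e)$ disjoint from~$I$, hence mapped by $S\to S/I$ into a single idempotent fibre of $S/I$, hence — via $\varphi_1$ — pro-\pv{LG}.

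Finally I would take $\varphi=(\varphi_1',\varphi_2):S\to T_1\times T_2$, with $T$ the (finite) image. An idempotent of $T$ is a pair $(e_1,e_2)$ of idempotents, and $\varphi^{-1}(e_1,e_2)=\varphi_1'^{-1}(e_1)\cap\varphi_2^{-1}(e_2)$ is a closed subsemigroup of one of the two pro-\pv{LG} semigroups identified above, hence pro-\pv{LG} (pro-\pv{LG} is closed under closed subsemigroups, as the no-nontrivial-subsemilattice criterion is inherited). This gives the required $\varphi$. The main obstacle I anticipate is the bookkeeping in the inductive step — specifically, making sure that the ideal $I$ really is clopen and that the induction hypothesis applied to $S/I$ transfers cleanly back to~$S$ (one must check that idempotents of $T_1$ pull back, under $S\to S/I\to T_1$, to fibres that are still pro-\pv{LG}, which uses that the preimage in $S$ of a pro-\pv{LG} fibre of $S/I$ differs from it only by part of~$I$, itself pro-\pv{LG}, and that an extension-like union of two pro-\pv{LG} pieces inside a fibre is still pro-\pv{LG} because no nontrivial semilattice can straddle them). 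Everything else is a routine compactness-and-Hunter's-Lemma argument of the kind already rehearsed in Section~\ref{sec:Malcev}.
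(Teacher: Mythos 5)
There are two genuine gaps in your argument. First, the induction on the number $n$ of regular \Cl J-classes does not progress. A nonempty compact semigroup always has a minimum ideal, which is a regular \Cl J-class lying \Cl J-below every other class; so your ``minimal regular \Cl J-class'' $J$ is forced to be the kernel $K$ of~$S$, and the ideal it generates is $I=K$ itself. In the Rees quotient $S/K$ the zero is an idempotent, so $\{0\}$ is again a regular \Cl J-class, while the regular classes of $S$ outside $K$ survive unchanged; hence $S/I$ has $n$ regular \Cl J-classes, not $n-1$, and the induction hypothesis cannot be applied (iterating the step just reproduces $S/K$). Your base case is symptomatic of the same point: $n=0$ never occurs for a nonempty profinite semigroup. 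Second, the construction of $\varphi_2$ fails: $I=S^1JS^1$ is closed (continuous image of a compact set, $J$ being closed by Corollary~\ref{c:Green}), but it is in general not open, so $\{I,\,S\setminus I\}$ is not a clopen partition and Hunter's Lemma cannot produce a finite quotient with $\varphi_2^{-1}(0)=I$ exactly. For example, $(\Om 1S)^1$ has exactly two regular \Cl J-classes, and its kernel is the set of accumulation points of the powers $x^n$; any fibre of a continuous homomorphism onto a finite semigroup is clopen, hence the fibre containing the kernel also contains infinitely many finite powers, so the kernel is never an exact fibre.

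The final bookkeeping you sketch is essentially sound and in fact shows that $\varphi_2$ is dispensable: if a fibre of $\varphi_1$ in $S/I$ contains $0$ then, being pro-\pv{LG}, it can contain no other idempotent (since $\{0,f\}$ would be a nontrivial semilattice), so all idempotents of its preimage in $S$ lie in $I$ and no semilattice can straddle $I$ and the rest. But this does not repair the broken induction, which is where the proof as written collapses. For comparison, the paper's argument is a single step with no induction, Rees quotients, or Hunter's Lemma: choose one element $s_J$ in each of the finitely many regular \Cl J-classes and use Proposition~\ref{p:Green} to obtain one continuous homomorphism $\varphi:S\to T$ onto a finite semigroup that faithfully reflects the \Cl J-order on these finitely many elements; then a fibre $\varphi^{-1}(e)$ cannot contain two idempotents with $f<_{\Cl J}g$ (their classes' representatives would have their strict order collapsed in $T$), and since \Cl J-equivalent comparable idempotents coincide by stability, no nontrivial semilattice embeds in the fibre, i.e.\ each idempotent fibre is pro-\pv{LG}.
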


\begin{proof}
  Choose an element $s_J$ from each regular \Cl J-class $J$ of~$S$. By
  Proposition~\ref{p:Green}, there exists a continuous homomorphism
  $\varphi:S\to T$ onto a finite semigroup $T$ such that the
  restriction of the \Cl J-order of $S$ to the $s_J$ is isomorphic
  under $\varphi$ to the restriction of the \Cl J-order of $T$ to the
  $\varphi(s_J)$.

  We claim that $\varphi^{-1}(e)$ is pro-\pv{LG} for every idempotent
  $e\in T$. Indeed, from the choice of $\varphi$ it follows that there
  cannot be a pair of idempotents $f$ and $g$ in~$\varphi^{-1}(e)$
  such that $f<_{\Cl J}g$.
  This implies that $\varphi^{-1}(e)$ is a pro-\pv{LG} semigroup.
\end{proof}

We may now give a characterization of finitely generated profinite
semigroups with only finitely many regular \Cl J-classes in terms of
Mal'cev products.

\begin{Cor}
  \label{c:LG-malcev}
  Let $S$ be a finitely generated profinite semigroup. Then $S$ has
  only finitely many regular \Cl J-classes if and only if $S$ is a
  pro-\pv{(LG\malcev V)} semigroup for some locally finite
  pseudovariety \pv V.
\end{Cor}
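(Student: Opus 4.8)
The plan is to prove both implications using the machinery already developed, principally Theorem~\ref{t:Malcev}, Corollary~\ref{c:Malcev}, Proposition~\ref{p:finmany-reg-J-classes}, and the standard fact that a pseudovariety is locally finite exactly when its finitely generated relatively free profinite members are finite. For the forward implication, suppose $S$ is finitely generated, say $A$-generated, with only finitely many regular \Cl J-classes. Apply Proposition~\ref{p:finmany-reg-J-classes} to get a continuous homomorphism $\varphi\colon S\to T$ onto a finite semigroup with each $\varphi^{-1}(e)$ pro-\pv{LG}. The target $T$ generates some locally finite pseudovariety \pv V (indeed the pseudovariety generated by a single finite semigroup is locally finite). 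Now $\varphi$, viewed as a (closed, in fact functional) relational morphism $S\relm T$, has the property that $\varphi^{-1}(e)$ is pro-\pv{LG} for every $e\in E(T)$, and $T\in\pv V$ is pro-\pv V; by Theorem~\ref{t:Malcev}, $S$ is a pro-\pv{(LG\malcev V)} semigroup.

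For the converse, suppose $S$ is $A$-generated and pro-\pv{(LG\malcev V)} for some locally finite \pv V. It suffices to bound the number of regular \Cl J-classes; the natural strategy is to show that the regular \Cl J-classes of $S$ are separated by the natural projection $p\colon S\to\Om AV$ (or, more cleanly, that $S$ has at most as many regular \Cl J-classes as the finite semigroup $\Om AV$). Since $S$ is a quotient of $\Om A{(LG\malcev V)}$ and Green's relations are preserved by continuous surjections, it is enough to treat $S=\Om A{(LG\malcev V)}$. By Corollary~\ref{c:Malcev}, the natural projection $p\colon\Om A{(LG\malcev V)}\to\Om AV$ satisfies $p^{-1}(e)$ is pro-\pv{LG} for every $e\in E(\Om AV)$. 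Because \pv V is locally finite, $\Om AV$ is a finite semigroup, so it has finitely many (regular) \Cl J-classes; the claim is that each regular \Cl J-class of $\Om A{(LG\malcev V)}$ maps into a single \Cl J-class of $\Om AV$ under $p$, and that $p$ can hit each \Cl J-class of $\Om AV$ in at most one regular \Cl J-class of the domain. The first part is automatic since $p$ preserves \Cl J. For the second part I would argue as follows: if $J$ is a regular \Cl J-class of $\Om A{(LG\malcev W)}$ with $p(J)\subseteq D$ for a single \Cl J-class $D$ of $\Om AV$, pick an idempotent $e\in J$; then $p(e)$ is an idempotent in $D$, and since $p^{-1}(p(e))$ is pro-\pv{LG} it contains no nontrivial semilattice, in particular no two comparable idempotents lying in distinct \Cl J-classes of $S$ but in the same \Cl J-class of $\Om AV$. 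Combined with the fact that in a profinite semigroup regular \Cl J-classes are the \Cl J-classes containing idempotents (Corollary~\ref{c:Green} and the regularity discussion of Section~\ref{sec:prelims}), this yields that distinct regular \Cl J-classes of $S$ have distinct $p$-images among the \Cl J-classes of $\Om AV$, so there are at most finitely many of them.

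The main obstacle I anticipate is exactly the last point in the converse: controlling idempotents of $S$ that project to the same \Cl J-class of $\Om AV$. One must rule out an infinite \Cl J-chain $e_1>_{\Cl J}e_2>_{\Cl J}\cdots$ of idempotents of $S$ all mapping to a fixed idempotent $e$ of $\Om AV$; this is precisely where the pro-\pv{LG} hypothesis on $p^{-1}(e)$ is needed, since a profinite semigroup is pro-\pv{LG} iff the only semilattice embedding in it is trivial, which forbids such chains within the fiber $p^{-1}(e)$. Care is required because comparable idempotents of $S$ need not lie in the same fiber of $p$; but since $p$ preserves $\le_{\Cl J}$, an infinite strictly descending \Cl J-chain of regular classes in $S$ mapping into a finite \Cl J-poset must eventually have all its terms in a single \Cl J-class of $\Om AV$, hence (after passing to a tail) in a single fiber, and then the pro-\pv{LG} property gives the contradiction. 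Once this finiteness is in place, both implications are short, and the corollary follows.
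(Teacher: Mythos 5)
Your forward implication is exactly the paper's argument (Proposition~\ref{p:finmany-reg-J-classes}, then Theorem~\ref{t:Malcev} with \pv V generated by the finite image $T$) and is fine. The problem is the converse. The step you need is that only finitely many regular \Cl J-classes of $S$ can map into the \Cl J-poset of the finite semigroup \Om AV, and neither of the two arguments you offer establishes it. First, your claimed key statement (distinct regular \Cl J-classes have distinct images among the \Cl J-classes of \Om AV) is only justified by the observation that a fiber $p^{-1}(e)$, being pro-\pv{LG}, contains no two comparable idempotents in distinct \Cl J-classes; but two regular \Cl J-classes mapping into the same \Cl J-class $D$ of \Om AV may have their idempotents lying over \emph{different} idempotents of $D$, i.e., in different fibers, and need not be \Cl J-comparable at all, so this observation does not apply. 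Second, your fallback argument only rules out infinite strictly descending chains of idempotents; ``no infinite descending \Cl J-chain'' does not imply ``finitely many regular \Cl J-classes'', since an infinite antichain of regular \Cl J-classes is not excluded by that reasoning (and infinite antichains of regular \Cl J-classes do occur in free profinite semigroups, so this is not a vacuous worry). There is also a small slip inside the chain argument itself: terms mapping into a single \Cl J-class of \Om AV need not lie in a single fiber, since that \Cl J-class may contain several idempotents (this one is repairable by pigeonhole, but it signals that the fiber-versus-\Cl J-class distinction is being blurred).

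The repair is short and is essentially what the paper does, working directly with the closed relational morphism $\mu:S\relm T$ from Theorem~\ref{t:Malcev} (with $T$ finitely generated, hence finite, by local finiteness of \pv V), which also spares your reduction to $S=\Om A{(LG\malcev V)}$. Every regular element $s$ has an idempotent $ss'$ in its \Cl J-class, and $ss'$ lies in $\mu^{-1}(e)$ for some idempotent $e$ chosen in the closed subsemigroup $\mu(ss')$ of~$T$. The property of a pro-\pv{LG} semigroup that you should invoke is not merely the absence of a nontrivial semilattice but the stronger consequence that \emph{all} its idempotents lie in its minimum ideal, so that $\mu^{-1}(e)$ has a unique regular \Cl J-class; hence all idempotents of a given fiber are \Cl J-equivalent in~$S$, and the number of regular \Cl J-classes of $S$ is bounded by $|E(T)|$, which is finite. (Your stronger statement, one regular \Cl J-class per \Cl J-class of \Om AV, is in fact true, but proving it needs an extra conjugation argument with mutually inverse elements $a,a'$ satisfying $aa'=e_1$, $a'a=e_2$ and the fact that $(xy)^\omega$ and $(yx)^\omega$ are always \Cl J-equivalent; it is not needed, since counting fibers over idempotents already suffices.)
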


\begin{proof}
  Suppose first that $S$ has only finitely many regular \Cl J-classes.
  Consider a continuous homomorphism $\varphi:S\to T$ given by
  Proposition~\ref{p:finmany-reg-J-classes} and let \pv V be the
  pseudovariety generated by the finite semigroup $T$. Then, by
  Theorem~\ref{t:Malcev}, $S$ is a pro-\pv{(LG\malcev V)} semigroup.

  Conversely, if $S$ is a pro-\pv{(LG\malcev V)} semigroup with \pv V
  locally finite, then Theorem~\ref{t:Malcev} provides a closed
  relational morphism $\mu:S\relm T$ into a pro-\pv V semigroup $T$.
  Since $S$ is finitely generated, we may assume that so is~$T$. Since
  \pv V is locally finite, it follows that $T$ is finite. Given a
  regular element $s\in S$, there exists $s'\in S$ such that $ss's=s$
  and $s'ss'=s'$. Take an idempotent $e$ in the subsemigroup
  $\mu(ss')$ of~$T$. Then, the idempotent $ss'$ belongs to
  $\mu^{-1}(e)$. Since there is only one regular \Cl J-class $J$
  in~$\mu^{-1}(e)$, as $\mu^{-1}(e)$ is pro-\pv{LG}, we conclude that
  $s$ belongs to the \Cl J-class of~$S$ containing~$J$. Hence, there
  are only finitely many regular \Cl J-classes in~$S$.
\end{proof}

Pseudovarieties of the form \pv{LG\malcev V} play a special role in
the semilocal theory of Rhodes (cf.\
\cite[Theorem~4.6.50]{Rhodes&Steinberg:2009qt}).

The following theorem gives factorizations for elements of a finitely
generated profinite semigroup with only finitely many regular \Cl
J-classes.

\begin{Thm}
  \label{t:finmany-reg-J-classes}
  If $S$ is a profinite semigroup generated by a finite set $A$ and
  $S$ has only finitely many regular \Cl J-classes then $S$ is
  algebraically generated by $A$ together with the group elements
  of~$S$. Moreover, in case $S$ has only countably many group
  elements, $S$ is algebraically generated by $A\cup E(S)$.
\end{Thm}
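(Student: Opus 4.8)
The plan is to reduce the statement to Theorem~\ref{t:Brown-profinite} by producing a suitable continuous homomorphism onto a finite semigroup. Since $S$ has only finitely many regular \Cl J-classes, Proposition~\ref{p:finmany-reg-J-classes} supplies a continuous homomorphism $\varphi:S\to T$ onto a finite semigroup such that $\varphi^{-1}(e)$ is a pro-\pv{LG} semigroup for every $e\in E(T)$. Applying Theorem~\ref{t:Brown-profinite} to $\varphi$, the semigroup $S$ is algebraically generated by $A\cup\varphi^{-1}(E(T))$. So the first part reduces to showing that each $\varphi^{-1}(e)$ (for $e\in E(T)$) is, as a closed subsemigroup of~$S$, algebraically generated by the group elements of~$S$ that it contains, together with elements we already know how to reach. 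Here I would use that a pro-\pv{LG} semigroup $R$ has a single regular \Cl J-class, namely its minimum ideal $K$, which is a union of the subgroups of~$R$; and the Rees quotient $R/K$ is pro-\pv N. The point is then that in a pro-\pv N semigroup every element is a finite product of generators (pseudowords over \pv N of length $\ge n$ are zero, i.e.\ they fall into the ideal), so lifting back, every element of $R$ is either already a finite product of the generators of $R$ or lies in $K$, hence is a group element. But the generators of $\varphi^{-1}(e)$ need not be generators of~$S$; to handle this, I would instead argue more carefully that $\varphi^{-1}(e)$ is finitely generated as a \emph{profinite} semigroup (since $S$ is finitely generated and $T$ finite — this is a standard fact, e.g.\ a clopen subsemigroup of a finitely generated profinite semigroup is finitely generated), and then feed this finite generating set into the product-expression argument, noting that each generator of $\varphi^{-1}(e)$ is in turn a finite product of elements of $A$ (being a pseudoword) up to landing in the minimum ideal. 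Tracking this bookkeeping is the main technical obstacle: one must be sure that the ``new'' generators of the pieces $\varphi^{-1}(e)$ do not reintroduce infinite pseudowords that are not group elements.

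For the second, stronger statement, assume in addition that $S$ has only countably many group elements. By Proposition~\ref{p:1oc-count-in-LG} applied to each pro-\pv{LG} semigroup $\varphi^{-1}(e)$ — which is finitely generated, hence countably generated — the hypothesis that it has finitely (so countably) many idempotents and finite subgroups is what we need; but here I would rather observe directly that the minimum ideal of~$\varphi^{-1}(e)$ has only countably many elements (it is a union of finitely many \Cl H-classes, each a subgroup, and each subgroup of~$S$ is finite once there are only countably many group elements, since an infinite profinite group is uncountable — see the discussion after Theorem~\ref{t:Zelmanov}). Thus each $\varphi^{-1}(e)$ is countable, and more importantly its group elements lie in finitely many \Cl H-classes, each finite. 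Now I want to replace ``group elements'' by ``idempotents'' in the generation statement. The key idea: in a completely simple semigroup (a single \Cl J-class that is a union of groups) with finitely many \Cl H-classes and finite maximal subgroups, the whole semigroup is finite, and is generated by its idempotents together with the generators of~$S$ that it meets — and in fact, since it is finite, we can fold it into~$T$. Concretely, I would enlarge $\varphi$ to a continuous homomorphism $\varphi':S\to T'$ onto a finite semigroup that additionally separates all the (finitely many) group elements of~$S$ from each other and from non-group elements; this is possible because there are only countably many — in fact, after the previous paragraph, only finitely many \Cl J-classes worth and each maximal subgroup is finite, so actually finitely many — group elements, and a profinite semigroup separates any finite set of points by a continuous homomorphism onto a finite quotient. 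Then $\varphi'^{-1}(e')$ for $e'\in E(T')$ still has a single regular \Cl J-class (this needs rechecking, but refining $\varphi$ only shrinks the congruence, so the property from Proposition~\ref{p:finmany-reg-J-classes} that there is no strictly \Cl J-below pair of idempotents is preserved), and moreover now $\varphi'^{-1}(e')$ contains \emph{no} nontrivial group elements unless it contains an idempotent doing the separating — so its group elements are already in~$E(S)$.

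With that reduction in place, Theorem~\ref{t:Brown-profinite} applied to $\varphi'$ gives that $S$ is algebraically generated by $A\cup\varphi'^{-1}(E(T'))$, and the content of the previous paragraph is precisely that $\varphi'^{-1}(E(T'))$ is, in turn, contained in the subsemigroup generated by $A\cup E(S)$: each such preimage is a finitely generated pro-\pv{LG} semigroup whose minimum ideal is a finite union of finite subgroups each of which is generated by its idempotent (a finite group is generated by... well, not by its identity alone — here I must be more careful). At this last point the honest argument is: a finite group need not be generated by idempotents, so I cannot literally reach arbitrary group elements from $E(S)$ alone; what saves the statement is that $S$ has only \emph{countably} many group elements \emph{gathered in finitely many regular \Cl J-classes}, forcing each maximal subgroup to be finite, and then one shows these finite subgroups, sitting inside the finitely many regular \Cl J-classes, can be swept into~$T$ so that they never appear as genuinely ``new'' elements to be expressed. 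I expect the delicate point, and the one deserving the most care in the write-up, to be exactly this: making precise that after suitably refining the finite quotient, every element of $S$ that is forced to be ``new'' (i.e.\ not a finite product of elements of~$A$) is an idempotent, not merely a group element — i.e.\ that the finitely many finite subgroups contribute nothing beyond their idempotents because their non-identity elements can be absorbed into the finite quotient $T'$ together with~$A$.
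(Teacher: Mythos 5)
Your opening reduction (Proposition~\ref{p:finmany-reg-J-classes} followed by Theorem~\ref{t:Brown-profinite}) is exactly the paper's, but from there both halves of your plan have genuine gaps. For the first statement you lean on the ``standard fact'' that a clopen subsemigroup of a finitely generated profinite semigroup is topologically finitely generated; this is false for semigroups in general: in \Om AS with $|A|=2$, the set of pseudowords beginning with a fixed letter is the clopen preimage of an idempotent under the homomorphism onto the two-element left-zero semigroup, yet it is not finitely generated (the finite words $ab^n$ are isolated points containing a single occurrence of~$a$, so they would all have to be generators). Even if such a statement held for pro-\pv{LG} fibers, it would need a proof, and the ``bookkeeping'' you defer --- expressing the new generators of $\varphi^{-1}(e)$ as products of elements of~$A$ and group elements --- is precisely the content of the theorem, so it cannot be left as an obstacle. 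The paper needs neither: it argues by induction on the depth of~$e$ in the poset $E(T)/\Cl J|_{E(T)}$, using that an element of the pro-\pv{LG} fiber $\varphi^{-1}(e)$ is either a group element or a product of elements of~$A$ and of fibers over idempotents strictly \Cl J-above~$e$, where the induction hypothesis applies.

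For the second statement your route breaks at a specific step: you upgrade ``countably many group elements'' to ``finitely many'' by asserting that the minimum ideal of each fiber is a union of finitely many \Cl H-classes. Nothing bounds the number of idempotents: for instance, in the construction $U(M,G,f)$ of Section~\ref{sec:Sl} taken with $G$ trivial, the closed subsemigroup generated by $1\in M$ and $(0,1,0)$ is a two-generated profinite semigroup with two regular \Cl J-classes, only trivial subgroups, and infinitely many idempotents. Consequently no single finite quotient can separate all group elements from each other, and the refined map $\varphi'$ you need does not exist; you also concede, without resolving it, the real difficulty that a finite group is not generated by its idempotent. The paper's argument avoids absorbing the subgroups into the finite quotient altogether: fix one maximal subgroup $G_e\subseteq\varphi^{-1}(e)$ for each of the finitely many $e\in E(T)$ (each is finite, being a countable profinite group, as in the discussion after Theorem~\ref{t:Zelmanov}), and take one finite quotient $\psi$ separating the points of these finitely many finite groups. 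Given a group element $g\in\varphi^{-1}(e)$, with $\varepsilon$ the idempotent of~$G_e$ and $f$ the idempotent of the \Cl R-class of~$g$ that is \Cl L-equivalent to~$\varepsilon$, density of the subsemigroup generated by~$A$ yields a finite word $w$ with $\varphi(w)=e$ and $\psi(w)=\psi(\varepsilon gf)$; since $\varphi^{-1}(e)$ is pro-\pv{LG}, $\varepsilon w\varepsilon\in G_e$, and injectivity of~$\psi$ on~$G_e$ turns the approximation into the exact equality $\varepsilon w\varepsilon=\varepsilon gf$, whence $g=f\,\varepsilon w\varepsilon\, g^\omega$ is a product of elements of $A\cup E(S)$. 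This exact-realization trick inside a single finite subgroup, followed by multiplication with the idempotents $f$ and $g^\omega$, is the idea missing from your proposal.
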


\begin{proof}
  By Proposition~\ref{p:finmany-reg-J-classes}, there is a continuous
  homomorphism $\varphi:S\to T$ onto a finite semigroup such that
  $\varphi^{-1}(e)$ is a pro-\pv{LG} semigroup for every idempotent
  $e\in T$. By Theorem~\ref{t:Brown-profinite}, $S$ is algebraically
  generated by~$A\cup\varphi^{-1}(E(T))$. By induction on the depth of
  an idempotent $e\in E(T)$ in the poset $E(T)/\Cl J|_{E(T)}$, we show
  that every element in $\varphi^{-1}(e)$ is a product of elements of
  $A$ and group elements. Indeed, given an idempotent $e\in T$, since
  $\varphi^{-1}(e)$~is pro-\pv{LG}, its elements are either group
  elements or products of elements of~$A\cup\varphi^{-1}(E_e)$, where
  $E_e$ denotes the set of all idempotents of~$T$ strictly \Cl
  J-above~$e$. By the induction hypothesis, in the latter case such
  elements are themselves products of elements of~$A$ and group
  elements of~$S$.

  In the special case where $S$ has only countably many group
  elements, we may choose a maximal subgroup $G_e$ of
  $\varphi^{-1}(e)$ for each idempotent $e\in E(T)$. By assumption,
  $G_e$ is countable, whence finite. There is, therefore, a continuous
  homomorphism $\psi:S\to U$ onto a finite semigroup $U$ that
  separates the points of~$G_e$ for every $e\in E(T)$.

  Now, let $g\in\varphi^{-1}(e)$ be a group element. Let $\varepsilon$
  be the idempotent in~$G_e$ and, in the semigroup $\varphi^{-1}(e)$,
  let $f$ be the idempotent in the \Cl R-class of $g$ that is \Cl
  L-equivalent to~$\varepsilon$. Since the product $\varepsilon gf$
  belongs to $G_e$ and $S$ is topologically generated by~$A$, there
  exists a product $w$ of elements of~$A$ such that
  $\psi(w)=\psi(\varepsilon gf)$ and $\varphi(w)=e$. As $\psi(G_e)$ is
  a group and $\psi(\varepsilon)$ is its idempotent, we have
  $\psi(w)=\psi(\varepsilon w\varepsilon)$. Since $\varphi^{-1}(e)$ is
  a pro-\pv{LG} semigroup, $\varepsilon w\varepsilon$ is an element
  of $G_e$. As $\psi$ separates the elements of $G_e$, we deduce from
  the equalities $\psi(\varepsilon
  w\varepsilon)=\psi(w)=\psi(\varepsilon gf)$ that $\varepsilon
  w\varepsilon=\varepsilon gf$. Taking into account that $g^\omega$ is
  an idempotent \Cl R-equivalent to~$f$ and $g^{\omega+1}=g$, it
  follows that
  $$f\varepsilon w\varepsilon g^\omega %
  =f\varepsilon gfg^\omega %
  =fgfg^\omega=g^{\omega+1}=g.$$ %
  This shows that $g$ is a product of elements of~$A\cup E(S)$ and, in
  view of the first part of the proof, completes the proof of the
  theorem.
\end{proof}

For the pseudovariety \pv S of all finite semigroups, since the
regular \Cl J-classes of~\Om A{DS} are characterized by their content
(cf.~\cite[Theorem~8.1.7]{Almeida:1994a}),
Theorem~\ref{t:finmany-reg-J-classes} applies to every finitely
generated pro-\pv{DS} semigroup. Hence,
Theorem~\ref{t:finmany-reg-J-classes} generalizes a result of Azevedo
and the first author \cite{Almeida&Azevedo:1987} (see
also~\cite[Section~8.1]{Almeida:1994a}).

\begin{Cor}
  \label{c:finmany-reg-J-classes-countmany-groupels}
  If $S$ is a finitely generated profinite semigroup with only
  finitely many regular \Cl J-classes and countably many group
  elements, then $S$ is countable.
\end{Cor}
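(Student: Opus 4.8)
The plan is to read the result off directly from Theorem~\ref{t:finmany-reg-J-classes}. Suppose $S$ is finitely generated, say topologically generated by a finite set $A$, has only finitely many regular \Cl J-classes, and has only countably many group elements. Then Theorem~\ref{t:finmany-reg-J-classes} (its second assertion) gives that $S$ is algebraically generated by $A\cup E(S)$, that is, every element of~$S$ is a finite product of elements of $A\cup E(S)$.

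The only remaining point is that $A\cup E(S)$ is a countable set. The set $A$ is finite by assumption. As for $E(S)$: by the description of Green's relations in Section~\ref{sec:prelims}, the \Cl H-class of an idempotent $e$ is a maximal subgroup of~$S$, so $e$ is a group element of~$S$; hence $E(S)$ is contained in the set of group elements of~$S$, which is countable by hypothesis. Therefore $A\cup E(S)$ is countable.

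To finish, I would note that the subsemigroup of~$S$ algebraically generated by a countable subset $B$ is itself countable, since it equals $\bigcup_{n\ge1}B^n$, a countable union of countable sets. Taking $B=A\cup E(S)$ and using that this subsemigroup is all of~$S$, we conclude that $S$ is countable. There is essentially no obstacle to overcome here beyond invoking Theorem~\ref{t:finmany-reg-J-classes}: all the difficulty is packed into that theorem (and, through it, into Theorem~\ref{t:Brown-profinite} and hence the Factorization Forest Theorem), together with the elementary observation that idempotents are group elements.
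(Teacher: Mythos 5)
Your proof is correct and follows essentially the same route as the paper: the paper's proof simply observes that Theorem~\ref{t:finmany-reg-J-classes} makes $S$ algebraically generated by a countable set (the finite generating set together with the countably many group elements, or equivalently your $A\cup E(S)$), hence countable. Your added remarks --- that idempotents are group elements and that an algebraically countably generated semigroup is countable --- are just the elementary details the paper leaves implicit.
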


\begin{proof}
  By Theorem~\ref{t:finmany-reg-J-classes}, $S$ is algebraically
  generated by a countable set and, therefore, it is countable.
\end{proof}

\section{Profinite semigroups with only finitely many idempotents}
\label{sec:IE}

In this section, we consider the special case of the setting of
Section~\ref{sec:LG} when there are only finitely many idempotents.

Recall that the pseudovariety \pv{IE} consists of all finite
semigroups with only one idempotent.

\begin{Prop}
  \label{p:1oc-count-in-IE}
  A subpseudovariety \pv V of\/ \pv{IE} is locally countable if and only
  if it is contained in $\pv N\vee\pv H$ for some locally finite
  pseudovariety \pv H of groups.
\end{Prop}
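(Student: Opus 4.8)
The plan is to prove both implications using the machinery already developed for semigroups with a single idempotent. For the easy direction, suppose $\pv V \subseteq \pv N \vee \pv H$ with $\pv H$ a locally finite pseudovariety of groups. Then every $\Om AV$ embeds, via the natural projections, into $\Om A{(N \vee H)}$, which by Theorem~\ref{t:join} embeds into $\Om AN \times \Om AH$. Since $\pv N$ is locally countable (as remarked after its definition, $\Om AN$ is a Rees quotient of $A^+$ by an ideal, hence countable) and $\pv H$ is locally finite, the product is countable, so $\Om AV$ is countable and $\pv V$ is locally countable.

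For the harder direction, assume $\pv V \subseteq \pv{IE}$ is locally countable and fix a finite set $A$; write $S = \Om AV$. Since $S$ has a unique idempotent, Theorem~\ref{t:finmany-reg-J-classes} (with the single regular \Cl J-class being the minimum ideal) applies: the group elements of $S$ form a single maximal subgroup $G$, and by local countability $G$ is countable, hence finite. First I would extract the group pseudovariety: let $\pv H$ be the pseudovariety generated by all such finite groups $G = G_A$ as $A$ ranges over finite sets — equivalently, $\pv H$ is the pseudovariety of groups generated by $\{\,\Om AV\,:\,A \text{ finite}\,\}/\mathcal H$-maximal-subgroup. Because each $G_A$ is finite and $\pv H$ is generated by finitely generated (indeed finite) free pro-$\pv H$ groups of the form $\Om A H$, which are quotients of the $G_A$, one checks $\pv H$ is locally finite. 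The next step is to show $\pv V \subseteq \pv N \vee \pv H$: it suffices to show that every pseudoidentity $u = v$ holding in $\pv N \vee \pv H$ holds in $\pv V$, or, dually, that the natural map $\Om AV \to \Om AN \times \Om AH$ is injective. Injectivity should follow from Theorem~\ref{t:finmany-reg-J-classes}: every element of $S = \Om AV$ is a product of generators and group elements, the group-element coordinates are detected by the projection onto $\Om AH$, while the "nilpotent part" — whether a given pseudoword lies in the minimum ideal and, if not, which finite prefix-word it equals in $\Om AN$ — is detected by the projection onto $\Om AN$; combining these one recovers the element.

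The main obstacle I anticipate is the injectivity of $\Om AV \to \Om AN \times \Om AH$, i.e.\ proving there is nothing "between" $\pv N$-behaviour and $\pv H$-behaviour once a single idempotent is present. The delicate point is two pseudowords $w_1, w_2$ that agree modulo $\pv N$ (so either both equal the same word, or both lie in the minimum ideal $K$) and agree modulo $\pv H$ (so their images in the maximal subgroup agree): one must show $w_1 = w_2$ in $\Om AV$. When $w_1, w_2 \notin K$ they are equal words and we are done; the real work is when both lie in $K$, where $K$ is a completely simple semigroup over the finite group $G_A$. Here I would use that $K$, being a finitely generated completely simple pro-$\pv V$ semigroup with finite maximal subgroup, has its structure (Rees matrix coordinates) completely determined by the image in a suitable finite quotient, and the \Cl R- and \Cl L-coordinates in $K$ are visible already in $\Om AN$ (they record which one-sided infinite "tail pattern" of generators produced the element), while the group coordinate is visible in $\Om AH$ — so the pair of projections separates points of $K$. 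Making the last sentence precise — identifying the \Cl R/\Cl L data in $K$ with data already recorded by $\Om AN$ — is where the argument needs care, and is presumably what the authors' proof handles.
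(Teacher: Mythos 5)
Your forward (easy) direction is correct and is exactly the paper's argument: $\pv N$ and a locally finite $\pv H$ are locally countable, so $\pv N\vee\pv H$ is locally countable by Theorem~\ref{t:join}. The problems are in the converse. A first, repairable issue is your choice and treatment of $\pv H$: you take the pseudovariety generated by the maximal subgroups $G_A$ and assert it is locally finite ``because each $G_A$ is finite'' and because ``$\Om AH$ is a quotient of $G_A$''; neither is a valid argument as stated (a pseudovariety generated by infinitely many finite groups need not be locally finite, and no reason is given why the free pro-$\pv H$ group on $A$ should be a quotient of $G_A$). The paper's route is cleaner: take $\pv H=\pv V\cap\pv G$, which is locally countable because $\pv V$ is, hence locally finite by Theorem~\ref{t:Zelmanov}.

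The genuine gap is in your proof of $\pv V\subseteq\pv N\vee\pv H$, which the paper does not reprove but cites from the literature. Your separation scheme rests on a false claim: $\Om AN$ is just $A^+$ with a zero adjoined, and its unique idempotent is $0$, so \emph{every} infinite pseudoword --- in particular every element of the minimum ideal $K$ of $\Om AV$ --- is sent to $0$; the projection to $\Om AN$ therefore records no \Cl R- or \Cl L-data whatsoever inside $K$. Moreover, the difficulty you anticipate does not exist in the form you describe: since $\pv V\subseteq\pv{IE}$, the semigroup $\Om AV$ is pro-\pv{IE} and has a unique idempotent (Theorem~\ref{t:E-finite}), so $K$ is a group, not a general completely simple semigroup with Rees coordinates. (Also, the ``natural map'' $\Om AV\to\Om AN$ you use only exists when $\pv N\subseteq\pv V$, which is not assumed.) What your argument actually needs, and never establishes, is that the projection to $\Om AH$ is injective on $K$, equivalently that two pseudowords agreeing in every nilpotent and every $\pv H$-group quotient agree in every $S\in\pv V$. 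This is where the real content lies, and it is handled by a finite-level observation rather than by $\Om AN$: in a finite $S\in\pv V$ the unique idempotent $e$ is central (Lemma~\ref{l:unique-minimal-idempotent-central}), the minimum ideal is the group $Se$, the Rees quotient $S/Se$ is nilpotent, and $s\mapsto(s\bmod Se,\,se)$ embeds $S$ into $(S/Se)\times Se\in\pv N\times\pv H$, giving $S\in\pv N\vee\pv H$ directly. Without this (or an equivalent) step your proposal does not close.
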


\begin{proof}
  Suppose that \pv V is locally countable. Then, so is the
  intersection $\pv H=\pv V\cap\pv G$. By Theorem~\ref{t:Zelmanov},
  the pseudovariety \pv H is locally finite. As observed
  in~\cite[Section~9.1]{Almeida:1994a}, it follows that $\pv
  V\subseteq\pv N\vee\pv H$.

  The converse follows from Theorem~\ref{t:join}.
\end{proof}

The following is an elementary but useful observation that holds more
generally for compact semigroups.

\begin{Lemma}
  \label{l:unique-minimal-idempotent-central}
  Let $S$ be a profinite semigroup and suppose that its minimum ideal
  has a unique idempotent $e$. Then $e$ is central in~$S$.
\end{Lemma}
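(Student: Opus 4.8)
The plan is to exploit the fact that the minimum ideal $K$ of a profinite semigroup with a unique idempotent $e$ in $K$ must be a profinite group (a completely simple semigroup with a single idempotent is a group), and then to show that multiplication by $e$ on either side is the identity on all of $S$. First I would observe that $e \in K$, $K$ being an ideal, forces $se \in K$ and $es \in K$ for every $s \in S$; since $K$ is a group with identity $e$, to prove $es = se = s$ it suffices to prove that left and right multiplication by $e$ agree with the identity map. The natural route is to reduce to the finite case: by Proposition~\ref{p:Green} and the standard fact that a profinite semigroup is an inverse limit of its finite continuous quotients, and since centrality of $e$ is expressed by the pair of equalities $ex = xe = x$ for all $x$, it is enough to check the statement in each finite continuous quotient $\varphi\colon S \to S'$. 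Here one uses that $\varphi$ maps the minimum ideal of $S$ onto the minimum ideal of $S'$ and maps $e$ to the unique idempotent of the minimum ideal of $S'$ (uniqueness is preserved because $\varphi$ restricted to $K$ is a surjective homomorphism of groups).

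So I would reduce to: let $S$ be a \emph{finite} semigroup whose minimum ideal $K$ has a unique idempotent $e$; show $e$ is central. Since $K$ is completely simple with one idempotent, $K$ is a group with identity $e$. Now take any $s \in S$. Then $es, se \in K$. I claim $es = s$ cannot hold in general for an arbitrary element, so instead the right statement to prove is simply that $e$ commutes with $s$, i.e. $es = se$; centrality here does not assert $e$ is an identity of $S$. To see $es = se$: consider the element $ese \in K$. Multiplying, $e(se) = (es)e = ese$, so both $es$ and $se$, when we multiply by $e$ appropriately, land at $ese$. Actually the cleanest argument: in the group $K$ with identity $e$, for $s \in S$ we have $se = (se)e$ and, since $se, es \in K$ and $es \cdot e = ese$, we want $se = es$. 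Here I would use that $e$ being the \emph{only} idempotent of $K$ means the group $K$ has a unique idempotent, and combine with the observation that for $k \in K$, $kes = kse$ would follow by a one-sided cancellation in the group $K$ once we show $e(es) = e(se)$, which is immediate since $e(es) = es$ (as $es \in K$ and $e$ is its identity... wait, $e$ is a \emph{left} identity of $es$ only if $es \in Ke$, which holds) and likewise $e(se)=ese=se$ needs $se \in eK$. The point I would make precise is that $Se \subseteq K$ and $eS \subseteq K$ with $e$ acting as the two-sided identity on $K$, hence $es = e(es)$ and $se = (se)e$; then $es \cdot e = ese = e \cdot se$, and cancelling $e$ on the appropriate side in the group $K$ gives $es = se$.

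The main obstacle I anticipate is handling the reduction to the finite case carefully: one must verify that a continuous surjective homomorphism $\varphi\colon S \to S'$ onto a finite semigroup carries the minimum ideal of $S$ onto that of $S'$ and carries the unique idempotent of the former onto the unique idempotent of the latter, so that the finite hypothesis genuinely holds in $S'$. This is routine (the image of an ideal under a surjection is an ideal, the minimum ideal is the intersection of all ideals, and a surjective homomorphism between groups sends identity to identity), but it is the only place where compactness and profiniteness are used. Once in the finite setting, the argument is the short computation in the group $K$ sketched above; alternatively one may avoid the finite reduction entirely and argue directly in $S$, since the only facts used are that $K$ is an ideal, $K$ is a group with identity $e$ (valid because $K$ is completely simple with a unique idempotent even in the profinite case), $eS \cup Se \subseteq K$, and cancellation in $K$ — all of which hold verbatim for profinite $S$. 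I would present the direct argument as the main proof and omit the finite reduction, noting only that it is an option.
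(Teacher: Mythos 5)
Your argument is correct, but it takes a genuinely different route from the paper's. You derive centrality from the structure of the minimum ideal: $K$ is completely simple (being the minimum ideal of a compact semigroup) and, having a unique idempotent, is a profinite group with identity $e$; since $K$ is an ideal, $es,se\in K$, and then $es=(es)e=ese=e(se)=se$. Note that at this last step no cancellation is needed, and the step you describe --- ``cancelling $e$ on the appropriate side'' of $(es)e=e(se)$ --- is not a legitimate group cancellation as stated, since $e$ sits on different sides of the two members; the identity-element computation just given is all you need. The paper avoids the structure theory of the minimal ideal altogether and argues directly with the $\omega$-power: since $se$ and $es$ lie in the closed ideal $K$, the idempotents $(se)^\omega$ and $(es)^\omega$ lie in $K$ and hence both equal $e$, so that $se=s(es)^\omega=(se)^\omega s=es$, using the identity $s(es)^n=(se)^n s$ and continuity. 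Thus the paper's proof uses only the uniqueness hypothesis plus basic properties of $\omega$-powers in compact semigroups, whereas yours additionally invokes the (standard, but heavier) fact that the minimal ideal is a regular \Cl J-class, so that a unique idempotent forces it to be a group; what your approach buys is a conceptual picture ($K=H_e$ and $e$ acts as a two-sided identity on $eS\cup Se\subseteq K$), at the price of that extra input. Your proposed reduction to finite quotients is sound but, as you yourself note, superfluous, since the direct argument works verbatim in the profinite setting; finally, centrality means $ex=xe$, not $ex=xe=x$ --- you correct this later, but the formulation in your first paragraph should be dropped.
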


\begin{proof}
  Let $s\in S$ and let $K$ be the minimum ideal of~$S$. As
  $(se)^\omega$ and $(es)^\omega$ are both idempotents in~$K$, they
  are equal to~$e$. Hence, we have
  $$se=s(es)^\omega=(se)^\omega s=es.\popQED$$
\end{proof}

The relationship between the cardinality of the set of idempotents of
a profinite semigroup and the pseudovariety \pv{IE} is considered in
the following result.

\begin{Thm}
  \label{t:E-finite}
  Let $S$ be a profinite semigroup.
  \begin{enumerate}[(i)]
  \item\label{item:E-finite-1} The set $E(S)$ is a singleton if and
    only if $S$ is pro-\pv{IE}.
  \item\label{item:E-finite-2} If $E(S)$ is finite then $S$ is
    pro-$(\pv{IE}\malcev\pv V)$ for some locally finite pseudovariety
    \pv V.
  \item\label{item:E-finite-3} If \pv V is a locally finite
    pseudovariety and $S$ is a finitely generated
    pro-$(\pv{IE}\malcev\pv V)$ semigroup, then $E(S)$ is finite.
  \end{enumerate}
\end{Thm}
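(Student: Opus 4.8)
The statement to prove is Theorem~\ref{t:E-finite}, and the three parts require rather different arguments, so I would handle them in turn. Part~(\ref{item:E-finite-1}) is essentially a reformulation of the definition of a pro-\pv{IE} semigroup via Reiterman's theorem: $S$ is pro-\pv{IE} precisely when it satisfies the pseudoidentity $x^\omega=y^\omega$, and by Proposition~\ref{p:Green} (or just by a direct compactness argument) a profinite semigroup satisfies $x^\omega=y^\omega$ if and only if all of its finite continuous quotients do, which in turn holds if and only if $E(S)$ maps to a singleton in every finite quotient. Since $E(S)$ is a closed subset of~$S$ and distinct idempotents are separated by some finite quotient, $E(S)$ being a singleton is equivalent to $x^\omega=y^\omega$ holding in~$S$. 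I would spell this out carefully, using that a continuous homomorphic image of a pro-\pv{IE} semigroup is again pro-\pv{IE}, and conversely.

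\textbf{Part (\ref{item:E-finite-2}).} Here the plan is to mimic the proof of Proposition~\ref{p:finmany-reg-J-classes}, but now separating idempotents rather than regular \Cl J-classes. Since $E(S)$ is finite, and $E(S)$ is closed (being the solution set of $x^2=x$), one can find by Proposition~\ref{p:Green} a continuous homomorphism $\varphi:S\to T$ onto a finite semigroup that is injective on~$E(S)$ and moreover respects the \Cl J-order among these finitely many idempotents. Then for each idempotent $e\in E(T)$ in the image, the preimage $\varphi^{-1}(e)$ is a closed subsemigroup of~$S$ containing exactly one idempotent (the unique idempotent of~$S$ mapping to~$e$), hence is pro-\pv{IE} by part~(\ref{item:E-finite-1}). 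Taking \pv V to be the pseudovariety generated by~$T$, Theorem~\ref{t:Malcev} then gives that $S$ is pro-$(\pv{IE}\malcev\pv V)$. I should be slightly careful that idempotents of~$S$ map to idempotents of~$T$ and that every idempotent of~$T$ that arises as $\varphi(f)$ for $f\in E(S)$ has $\varphi^{-1}$ of it containing a unique idempotent of~$S$; this is exactly where injectivity of $\varphi$ on $E(S)$ is used.

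\textbf{Part (\ref{item:E-finite-3}).} This is the converse and the one I expect to be the real obstacle, since here we start from the (potentially infinite-looking) Mal'cev product condition and must extract finiteness of $E(S)$. By Theorem~\ref{t:Malcev}, there is a closed relational morphism $\mu:S\relm T$ with $T$ pro-\pv V and $\mu^{-1}(e)$ pro-\pv{IE} for every $e\in E(T)$; since $S$ is finitely generated we may take $T$ finitely generated, hence finite because \pv V is locally finite. Now, any idempotent $f\in E(S)$ has $\mu(f)$ a closed subsemigroup of the finite semigroup $T$, so it contains some idempotent $e\in E(T)$, whence $f\in\mu^{-1}(e)$; and by part~(\ref{item:E-finite-1}) the pro-\pv{IE} semigroup $\mu^{-1}(e)$ has a unique idempotent. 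Thus the map $E(S)\to E(T)$ sending $f$ to ``the idempotent of~$T$ lying in $\mu(f)$'' is well-defined and injective, so $|E(S)|\le|E(T)|<\infty$. The one subtlety to check is that this map is well-defined — i.e.\ that $\mu(f)$ cannot contain two distinct idempotents of~$T$ with $f$ lying in both $\mu^{-1}(e_1)$ and $\mu^{-1}(e_2)$ — but this is immediate since $f$ is then the common unique idempotent, forcing $e_1=e_2$ once we note $f\in\mu^{-1}(e_i)$ means $e_i$ is the unique idempotent of $\mu^{-1}(e_i)$ and $f$ determines it. (Alternatively, one observes $\mu(f)\cdot\mu(f)\subseteq\mu(f)$ and uses that in $\mu(f)$ all idempotents coincide, being in a single pro-\pv{IE} subsemigroup $\mu^{-1}(\mu(f)^\omega\text{-part})$; the cleanest route is the injectivity argument just given.) Assembling the three parts completes the proof.
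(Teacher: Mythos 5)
Your proposal is correct and follows essentially the same route as the paper: part~(\ref{item:E-finite-1}) by separating idempotents in finite continuous quotients, part~(\ref{item:E-finite-2}) by taking an idempotent-separating continuous homomorphism onto a finite semigroup $T$, noting each fiber over an idempotent is pro-\pv{IE} by~(\ref{item:E-finite-1}), and applying Theorem~\ref{t:Malcev} with $\pv V$ the pseudovariety generated by~$T$, and part~(\ref{item:E-finite-3}) by counting idempotents of~$S$ via the fibers over idempotents of a finite pro-\pv V image, each such fiber having a unique idempotent. The only (cosmetic) divergence is in~(\ref{item:E-finite-3}), where you apply Theorem~\ref{t:Malcev} to~$S$ directly and trim the relational morphism to a finitely generated, hence finite, target (the same reduction the paper makes in Corollary~\ref{c:LG-malcev}, which does require the small argument of restricting $\mu$ to the closed subsemigroup generated by chosen pairs $(a,t_a)$), whereas the paper first reduces to $\Om A{(IE\malcev V)}$ and invokes Corollary~\ref{c:Malcev} for the natural projection onto the finite semigroup \Om AV; both versions produce the same injection of $E(S)$ into the idempotent set of a finite semigroup.
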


\begin{proof}
  (\ref{item:E-finite-1}) If $S$ has two distinct idempotents then
  they may be separated by a continuous homomorphism onto a finite
  semigroup $T$. Since $T$ has at least two idempotents, it follows
  that $T$ is not in~\pv{IE}. Hence, $S$ is not pro-\pv{IE}.
  Conversely, if $S$ is pro-\pv{IE}, then it embeds in a product of
  semigroups with only one idempotent and so it has only one
  idempotent.

  (\ref{item:E-finite-2}) Now, suppose that the profinite semigroup
  $S$ has only finitely many idempotents. Since $S$ is residually
  finite, there is an idempotent separating continuous homomorphism
  $\varphi:S\to T$ onto a finite semigroup $T$.
  By~(\ref{item:E-finite-1}), since $\varphi^{-1}(e)$ has only one
  idempotent for each $e\in E(T)$, the profinite semigroup
  $\varphi^{-1}(e)$ is pro-\pv{IE}. Hence, by Theorem~\ref{t:Malcev},
  $S$~is pro-$(\pv{IE}\malcev\pv V(T))$, where $\pv V(T)$ is the
  (locally finite) pseudovariety generated by~$T$.

  (\ref{item:E-finite-3}) Suppose that $S$ is a finitely generated
  pro-$(\pv{IE}\malcev\pv V)$ semigroup where \pv V is a locally
  finite pseudovariety. To show that $S$ has finitely many
  idempotents, it suffices to show that so does $\Om A{(IE\malcev V)}$
  for every finite set~$A$. Consider the natural continuous
  homomorphism $\varphi:\Om A{(IE\malcev V)}\to\Om AV$. Then, for each
  idempotent $e\in E(\Om AV)$, the profinite semigroup
  $\varphi^{-1}(e)$ is pro-\pv{IE} by Corollary~\ref{c:Malcev}, whence
  it has only one idempotent by~(\ref{item:E-finite-1}). Since the
  image of every idempotent of $\Om A{(IE\malcev V)}$ is an idempotent
  of~\Om AV, there are no further idempotents to consider other than
  those from the $\varphi^{-1}(e)$ with $e\in E(\Om AV)$. Hence, $\Om
  A{(IE\malcev V)}$ has the same number of idempotents as the finite
  semigroup~\Om AV.
\end{proof}

Note that the hypothesis that $S$ is finitely generated may not be
dropped from the statement (\ref{item:E-finite-3}) of
Theorem~\ref{t:E-finite}. Indeed, if $A$ is the one-point
compactification of an infinite discrete set then the semigroup \Om
A{Sl} consists of idempotents and it is uncountable for, as it is
observed in~\cite[Example, Section~1.2]{Almeida&Weil:1995a}, \Om A{Sl}
is isomorphic with the semilattice of closed subsets of~$A$ under
union.

\begin{Thm}
  \label{t:IE(H)-m-locfin}
  Let \pv V be a locally countable subpseudovariety of~\pv{IE} and let
  \pv W be a locally finite pseudovariety. Then \pv{V\malcev W} is
  locally countable.
\end{Thm}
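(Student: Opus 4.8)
The plan is to prove that $\Om A{(V\malcev W)}$ is countable for every finite set $A$, since that is exactly what local countability of $\pv V\malcev\pv W$ means. The starting point is Corollary~\ref{c:Malcev}: the natural projection $p\colon\Om A{(V\malcev W)}\to\Om AW$ has the property that $p^{-1}(e)$ is a pro-\pv V semigroup for every idempotent $e\in E(\Om AW)$. Since \pv W is locally finite, $\Om AW$ is a \emph{finite} semigroup; in particular it has only finitely many idempotents. So $\Om A{(V\malcev W)}$ is a profinite semigroup mapping onto a finite semigroup $T=\Om AW$ with the property that the preimage of each idempotent of $T$ is pro-\pv V, hence pro-\pv{IE} (as $\pv V\subseteq\pv{IE}$), hence has exactly one idempotent by Theorem~\ref{t:E-finite}(\ref{item:E-finite-1}). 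Thus $\Om A{(V\malcev W)}$ has only finitely many idempotents, and by Theorem~\ref{t:E-finite}(\ref{item:E-finite-2}) (or directly) it is pro-$(\pv{IE}\malcev\pv V')$ for a locally finite \pv{V'}; more to the point, it certainly has only finitely many regular \Cl J-classes, since each regular \Cl J-class contains an idempotent and distinct idempotents in the same \Cl J-class would have to lie over \Cl J-comparable-or-equal idempotents of the finite $T$ inside a single $p^{-1}(e)$, which is impossible as $p^{-1}(e)$ is pro-\pv{IE} with a unique idempotent.

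With only finitely many regular \Cl J-classes established, I would invoke Corollary~\ref{c:finmany-reg-J-classes-countmany-groupels}: a finitely generated profinite semigroup with only finitely many regular \Cl J-classes and only countably many group elements is countable. So it remains to show that $\Om A{(V\malcev W)}$ has only countably many group elements. The maximal subgroups of $\Om A{(V\malcev W)}$ are the \Cl H-classes containing idempotents; each such idempotent lies in some $p^{-1}(e)$ with $e\in E(T)$, and there are finitely many such $e$. Fix one idempotent $\iota$ in $p^{-1}(e)$, and let $G$ be its maximal subgroup in $\Om A{(V\malcev W)}$. The key point is that $G$ is contained in a pro-\pv V semigroup: indeed $G\subseteq p^{-1}(e)$ because $p(\iota)=e$ forces $p(g)=e$ for every $g\mathrel{\Cl H}\iota$ (the \Cl H-class of $\iota$ maps into the \Cl H-class of $e$ in $T$, but $e$ is the unique idempotent of $p^{-1}(e)$, and since $p^{-1}(e)$ is pro-\pv{IE} its image group-parts collapse—concretely, $g\in p^{-1}(e)$ since $p(g)$ is \Cl H-equivalent to $e$ in $T$ and $p^{-1}(e)$ is a subsemigroup whose image is $\{e\}$ if $T$ is taken so that $e$ is \Cl H-trivial, or more carefully one argues $p(G)$ is a group in the \Cl H-class of $e$, hence trivial as it is a continuous image of the pro-\pv{IE} semigroup $p^{-1}(e)$—wait, rather: $G=\iota\,\Om A{(V\malcev W)}\,\iota\cap(\text{units})$ and $\iota\,x\,\iota\in p^{-1}(e)$ whenever $p(x)\geq_{\Cl J}e$, in particular the whole group $G$ lands in $p^{-1}(e)$). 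Thus $G$ is a closed subgroup of the pro-\pv V, hence pro-\pv{IE}, hence (by $\pv V\subseteq\pv{IE}$ being locally countable) the profinite group $G$ is a finitely generated—no, $G$ need not be finitely generated, but $G$ is a profinite \emph{group} that is a continuous quotient of a pro-\pv V semigroup; as a profinite group contained in a pro-\pv V semigroup it is pro-$(\pv V\cap\pv G)$, and $\pv V\cap\pv G$ is a locally countable pseudovariety of groups, hence locally finite by Theorem~\ref{t:Zelmanov}, so \emph{every} profinite $(\pv V\cap\pv G)$-group is finite (using the remark after Theorem~\ref{t:Zelmanov} that every countable profinite group is finite, combined with local finiteness giving a bound). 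Hence each maximal subgroup of $\Om A{(V\malcev W)}$ is finite.

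To finish the group-element count: the group elements of $\Om A{(V\malcev W)}$ are distributed among its maximal subgroups, i.e., among the \Cl H-classes of idempotents. Each maximal subgroup is finite by the previous paragraph. The idempotents themselves are finite in number, as shown in the first paragraph, so there are only finitely many maximal subgroups, and therefore only finitely many—in particular countably many—group elements. Now Corollary~\ref{c:finmany-reg-J-classes-countmany-groupels} applies and yields that $\Om A{(V\malcev W)}$ is countable. Since $A$ was an arbitrary finite set, $\pv V\malcev\pv W$ is locally countable.

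The main obstacle I expect is the careful argument that the maximal subgroups of $\Om A{(V\malcev W)}$ actually sit inside the pro-\pv V semigroups $p^{-1}(e)$, and more precisely that a profinite group which is a closed subgroup (or continuous-homomorphic-image of a closed subsemigroup) of a pro-\pv V semigroup is a pro-$(\pv V\cap\pv G)$ group—this needs the standard fact that a closed subgroup of a pro-\pv V semigroup is pro-$(\pv V\cap\pv G)$, for which one separates points of the group by continuous homomorphisms into finite semigroups from \pv V and notes the image of the group is a finite group in \pv V, hence in $\pv V\cap\pv G$. Once that reduction is in place, Theorem~\ref{t:Zelmanov} and the remark following it do the rest, turning ``locally countable group pseudovariety'' into ``all profinite members finite,'' which is precisely the input needed to bound the subgroups and hence to apply Corollary~\ref{c:finmany-reg-J-classes-countmany-groupels}.
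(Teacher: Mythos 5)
There is a genuine gap, and it sits exactly where the paper has to work hardest. Your reduction to Corollary~\ref{c:finmany-reg-J-classes-countmany-groupels} (finitely many idempotents, hence finitely many regular \Cl J-classes, so it suffices to bound the group elements) matches the paper's strategy, but your bound on the maximal subgroups fails at two points. First, the containment $G\subseteq p^{-1}(e)$ is false in general: if $\iota$ is the idempotent of $p^{-1}(e)$ and $G$ is its maximal subgroup in $\Om A{(V\malcev W)}$, then $p(G)$ is a subgroup of the maximal subgroup of the \emph{finite} semigroup \Om AW at~$e$, which need not be trivial (take, e.g., $\pv W=\pv{Ab}_p$, so that \Om AW is a nontrivial group and $p(G)=p(\iota\,\Om A{(V\malcev W)}\,\iota)$ is all of it). Only the kernel $N=G\cap p^{-1}(e)$ of $p|_G$ lies inside the pro-\pv V semigroup $p^{-1}(e)$; since $p(G)$ is finite this is a harmless repair, but it is the second step that really breaks. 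Knowing that $N$ (or $G$) is pro-$(\pv V\cap\pv G)$ with $\pv V\cap\pv G$ locally finite does \emph{not} make it finite: local finiteness only controls finitely generated pro-\pv H groups, and a non-finitely-generated pro-\pv H group can be infinite and even uncountable --- $(\mathbb{Z}/p\mathbb{Z})^{\mathbb{N}}$ is pro-$\pv{Ab}_p$. Your appeal to the remark that countable profinite groups are finite is circular here, because countability of these subgroups is precisely what the whole proof is trying to establish.

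What is missing is an argument that the maximal subgroups are (topologically) finitely generated, and this is the substantive content of the paper's proof: it takes a regular \Cl J-class $J$, passes to the completely 0-simple Rees quotient $S_0$ of the closed subsemigroup generated by~$J$, observes that $S_0$ admits an idempotent-separating continuous homomorphism onto a finite semigroup whose idempotent preimages are pro-\pv H groups for a locally finite \pv H (so $S_0$ is locally finite by Brown's theorem), and then --- the key step with no counterpart in your proposal --- uses density of the $A$-generated subsemigroup, centrality of the idempotent in the clopen set $p^{-1}(p(\iota))$ (Lemma~\ref{l:unique-minimal-idempotent-central}) and Green's lemma to factor the group elements $eve$ as products of finitely many elements of the form $e'ae''$ with $a\in A$ and $e',e''\in E(J)$. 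Finite generation plus local finiteness then gives finiteness of~$J$, hence of the subgroups. Without some replacement for this finite-generation argument, your proof does not go through.
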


\begin{proof}
  By Proposition~\ref{p:1oc-count-in-IE}, there is a locally finite
  pseudovariety of groups \pv H such that $\pv V\subseteq\pv N\vee\pv
  H$. Let $A$ be a finite set and consider the natural continuous
  homomorphism $\varphi:\Om A{(V\malcev W)}\to\Om AW$, which is
  idempotent separating since the preimage of each idempotent belongs
  to~\pv{IE}. By Theorem~\ref{t:finmany-reg-J-classes}, we know that
  every element of $\Om A{(V\malcev W)}$ is a product of elements of
  $A$ and group pseudowords. Hence, to show that $\Om A{(V\malcev W)}$
  is countable, it suffices to show that it has only countably many
  group elements. In fact, we show that it has only finitely many
  group elements, that is, that every subgroup is finite. Since $\Om
  A{(V\malcev W)}$ has only finitely many idempotents, namely as many
  as \Om AW, that goal is achieved by showing, equivalently, that
  every regular \Cl J-class $J$ of $\Om A{(V\malcev W)}$ is finite.

  Consider the closed subsemigroup $S$ of $\Om A{(V\malcev W)}$
  generated by $J$. The restriction of~$\varphi$ to~$S$ is still an
  idempotent separating continuous homomorphism. The fact that the
  preimage of each idempotent belongs to~\pv{IE} guarantees that
  $\varphi(J)\cap\varphi(S\setminus J)=\emptyset$: if $u\in J$ and
  $s\in S$ are such that $\varphi(u)=\varphi(s)$, then there exists
  $v\in J$ such that $uv$ is an idempotent from~$J$ and so
  $\varphi(uv)=\varphi((sv)^\omega)$; since
  $\varphi^{-1}(\varphi(uv))$ is pro-\pv{IE}, it follows that
  $uv=(sv)^\omega$, which entails that $s\in J$. Hence, the profinite
  completely 0-simple semigroup given by the Rees quotient
  $S_0=S/(S\setminus J)$ admits an idempotent separating continuous
  homomorphism $\psi$ onto a finite semigroup $T$. If $s$ is an
  element of~$J$ but not a group element, then we claim that
  $\varphi(s)$ cannot be idempotent. Otherwise, we have
  $\varphi(s)=\varphi(s^\omega)=\varphi(0)$; since $\varphi$ preserves
  \Cl J-equivalence, all of~$J$ must be mapped to~$\varphi(0)$, which
  contradicts the choice of~$\varphi$. Hence, the preimage of each
  idempotent is a pro-\pv H group and, therefore, it is locally
  finite. By Brown's theorem, we conclude that $S_0$ is locally
  finite. Observing that taking the Rees quotient by the ideal
  $S\setminus J$ to obtain $S_0$ does not affect~$J$, we conclude
  that, to prove that $J$ is finite, it suffices to show that $S_0$~is
  finitely generated.

  Returning to $\Om A{(V\malcev W)}$, given an idempotent $e\in J$,
  the profinite semigroup $U=\varphi^{-1}(\varphi(e))$ is open. Since
  the subsemigroup of~$\Om A{(V\malcev W)}$ generated by $A$ is dense,
  we deduce that its intersection $V$ with~$U$ is dense in~$U$. Given
  $v\in V$, we have $e=v^\omega=(eve)^\omega$ since $e$ is central
  in~$U$ by Lemma~\ref{l:unique-minimal-idempotent-central}. Hence,
  $eve$ lies in the maximal subgroup $H_e$ containing $e$ and $H_e$ is
  generated by the elements of the form $eve$ with $v\in V$. If $v=aw$
  with $a\in A$ and $|w|=|v|-1$, then $ea$ and $we$ belong to $J$ and,
  by Green's lemma, since $eawe\in H_e$, there is an idempotent
  $e_1\in J$ in the intersection of the \Cl L-class of $ea$ with the
  \Cl R-class of $we$ and so $eve=eae_1we$. Similarly, if $w=bu$ with
  $b\in A$ and $|u|=|w|-1$, then there is an idempotent $e_2\in J$
  such that $e_1we=e_1be_2ue$. Proceeding inductively in this manner,
  we conclude that $eve$ may be factorized as a product of elements of
  the form $e'ae''$ where $a\in A$ and $e'$ and $e''$ are idempotents
  of~$J$. In conclusion, finitely many elements of the special form
  $e'ae''$ ($a\in A$, $e',e''\in E(J)$) generate a profinite
  subsemigroup $T$ of $S_0$ that contains all maximal subgroups
  in~$J$. Since $S_0$ is locally finite, we deduce that $T$ is finite
  and, hence, the maximal subgroups in~$J$ are finite. Since $J$ has
  only finitely many idempotents, we also conclude that $J$ is finite.
\end{proof}

We may now improve Theorem~\ref{t:finmany-reg-J-classes} in the
special case of the pseudovarieties of Theorem~\ref{t:IE(H)-m-locfin}
as follows.

\begin{Cor}
  \label{c:IE-m-locfin}
  If $\pv H\subseteq\pv G$ and $\pv V\subseteq\pv S$ are locally
  finite pseudovarieties, then every pseudoword over \pv{(N\vee
    H)\malcev V} is an $\omega$-word of height at most~1.
\end{Cor}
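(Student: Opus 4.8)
The plan is to combine the structural results already established in this section to show that every element of $\Om A{((N\vee H)\malcev V)}$ can be written as a product of generators and $\omega$-powers of products of generators, with no nesting of the $\omega$-operation. First, recall from Theorem~\ref{t:IE(H)-m-locfin} that, under the stated hypotheses, the pseudovariety $\pv{(N\vee H)\malcev V}$ is locally countable, and in fact the proof of that theorem shows more: the natural homomorphism $\varphi:\Om A{((N\vee H)\malcev V)}\to\Om AV$ is idempotent separating, $\Om A{((N\vee H)\malcev V)}$ has only finitely many idempotents (as many as the finite semigroup $\Om AV$), and every regular $\Cl J$-class is finite. By Theorem~\ref{t:finmany-reg-J-classes} (applicable since there are finitely many regular $\Cl J$-classes), every pseudoword $w\in\Om A{((N\vee H)\malcev V)}$ is an algebraic product of elements of $A$ together with group elements. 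So it suffices to show that each group element $g$ lying in a regular $\Cl J$-class $J$ is an $\omega$-word of height at most~1 in the generators $A$.

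For the group-element step I would follow the factorization argument already used at the end of the proof of Theorem~\ref{t:finmany-reg-J-classes}, but track the shape of the expressions obtained. Let $g$ be a group element of a maximal subgroup $H_e$ of the $\Cl J$-class $J$, with idempotent $\varepsilon=g^\omega$. In the proof of Theorem~\ref{t:IE(H)-m-locfin} it is shown that $\varepsilon$ (being the unique idempotent in $\varphi^{-1}(\varphi(\varepsilon))$, which is pro-$\pv{IE}$) coincides with $v^\omega$ for any $v$ in the dense set $V=\varphi^{-1}(\varphi(\varepsilon))\cap\langle A\rangle$; that is, $\varepsilon$ is the $\omega$-power of a product of generators, hence an $\omega$-word of height~1. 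Similarly, each of the finitely many idempotents $e'\in E(J)$ is the $\omega$-power of a suitable product of generators, because each lies in some $\varphi^{-1}(f)$ with $f\in E(\Om AV)$ pro-$\pv{IE}$, and the subsemigroup generated by $A$ is dense in the open set $\varphi^{-1}(f)$. Now, again by the Green's-lemma factorization in the proof of Theorem~\ref{t:IE(H)-m-locfin}, $\varepsilon v\varepsilon$ factors as a product of elements $e'ae''$ with $a\in A$ and $e',e''\in E(J)$, and the final displayed computation $f\varepsilon w\varepsilon g^\omega = g$ in the proof of Theorem~\ref{t:finmany-reg-J-classes} expresses $g$ as a product whose factors are elements of $A$ and idempotents of $J$. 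Substituting for each such idempotent its expression as an $\omega$-power of a word in $A$, we obtain $g$ as an $\omega$-word of height at most~1.

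The main point requiring care—and the step I expect to be the chief obstacle—is the \emph{uniformity} in expressing the idempotents of $J$ as $\omega$-powers of words: we must ensure that every idempotent appearing in the factorizations of the various group elements can be realized as $v^\omega$ for some \emph{finite} word $v$ over $A$ (not merely as a pseudoword), so that after substitution the resulting expression is genuinely an $\omega$-word and not something involving an infinite pseudoword inside an $\omega$-power. This is exactly guaranteed by the openness of $\varphi^{-1}(f)$ for $f\in E(\Om AV)$ together with density of $\langle A\rangle$: each such preimage is a clopen neighbourhood of its unique idempotent, so it contains a word $v\in A^+$, and since $\varphi^{-1}(f)$ is pro-$\pv{IE}$ its unique idempotent is $v^\omega$. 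The only remaining verification is bookkeeping: the factorization of $g$ in the proof of Theorem~\ref{t:finmany-reg-J-classes}, and the Green's-lemma refinement of Theorem~\ref{t:IE(H)-m-locfin}, introduce $\omega$-powers only at the outermost level (never inside another $\omega$-power), so the height bound of~1 is preserved throughout; I would state this explicitly and conclude.
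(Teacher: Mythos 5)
Your proposal is correct and follows essentially the paper's route: reduce, via Theorem~\ref{t:finmany-reg-J-classes} (whose hypotheses are supplied by Theorem~\ref{t:IE(H)-m-locfin} and its proof), to showing that every idempotent of \Om A{\bigl((N\vee H)\malcev V\bigr)} is the $\omega$-power of a finite word over~$A$. The paper handles this last step slightly more directly---taking words $u_n\to e$, noting $u_n^\omega\to e$, and using that there are only finitely many idempotents to conclude that $(u_n^\omega)_n$ stabilizes at~$e$---whereas you use the clopen pro-\pv{IE} fibres over $E(\Om AV)$ and the uniqueness of their idempotents; both are valid, and your explicit re-tracing of the group-element factorization is not needed once one invokes the ``moreover'' clause of Theorem~\ref{t:finmany-reg-J-classes}.
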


\begin{proof}
  By Theorem~\ref{t:finmany-reg-J-classes}, it suffices to show that
  every idempotent $e$ of the semigroup $\Om A{\bigl((N\vee H)\malcev
    V\bigr)}$ is an $\omega$-word of height~1. Let $(u_n)_n$ be a
  sequence of words converging to~$e$. Then we have $\lim
  u_n^\omega=e$. But, since $\Om A{\bigl((N\vee H)\malcev V\bigr)}$
  has only finitely many idempotents, the convergent sequence
  $(u_n^\omega)_n$ must eventually stabilize at its limit $e$. Hence,
  $e$ is indeed an $\omega$-word of height~1.
\end{proof}

The special case where the pseudovariety \pv V of
Corollary~\ref{c:IE-m-locfin} is~\pv{Sl} leads to quite familiar
pseudovarieties. Indeed, it is easy to see that %
$\pv{(N\vee H)\malcev Sl}=\pv{DH}$ and, in particular, $\pv{N\malcev
  Sl}=\pv J$. Thus, Corollary~\ref{c:IE-m-locfin} generalizes the
first author's result that every pseudoword over~\pv J is an
$\omega$-word of height at most one~\cite{Almeida:1990b}.

In view of the results of this section, it is natural to ask whether a
countable finitely generated profinite semigroup is necessarily
finitely generated in the signature consisting of multiplication and
$\omega$-power. This is left as an open problem.

\section{Mal'cev product with locally finite pseudovarieties}
\label{sec:malcev-locfin}

The question addressed in this and the next sections is whether the
Mal'cev product of a locally countable pseudovariety with a locally
finite pseudovariety is also locally countable. We consider first the
special case where locally finite pseudovariety consists of nilpotent
semigroups.

\begin{Thm}
  \label{t:Malcev-N}
  Suppose that $S$ is a profinite semigroup and $\varphi:S\to N$ is a
  continuous homomorphism into a finite nilpotent semigroup $N$ such
  that $\varphi^{-1}(0)$ is locally countable. If $S$ is finitely
  generated then so is $\varphi^{-1}(0)$. In particular, $S$ is
  locally countable.
\end{Thm}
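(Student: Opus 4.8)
The plan is to reduce the finite generation of $\varphi^{-1}(0)$ to the finite generation of $S$ by explicitly exhibiting a finite generating set. Let $A$ be a finite set topologically generating $S$, and let $n$ be the nilpotency index of $N$, so that $N$ satisfies $x_1\cdots x_n=0$, i.e.\ every product of $n$ elements of $N$ is zero. First I would observe that an element $s\in S$ lies in $\varphi^{-1}(0)$ precisely when $\varphi(s)=0$, and since $\varphi$ is continuous and $A$ is dense, every element of $\varphi^{-1}(0)$ is a limit of words $w$ over $A$ with $\varphi(w)=0$; because $N$ is finite and $0$ is isolated, such words form a clopen set, and one may restrict attention to words of the form $w=a_1\cdots a_k$ with $k\ge n$ and (by nilpotency) the very first condition already forces $\varphi(a_1\cdots a_n)=0$. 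The key structural point is that $\varphi^{-1}(0)$ is a closed ideal of $S$: if $\varphi(s)=0$ then $\varphi(ts)=\varphi(t)\varphi(s)=0$ and similarly on the other side. So $\varphi^{-1}(0)$ is the ideal generated, topologically, by the finitely many products $a_1\cdots a_n$ with $a_i\in A$.

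Next I would make this precise. Let $P$ be the (finite) set of all words of length exactly $n$ over $A$ whose $\varphi$-image is $0$; note every length-$n$ word is such, so $P=A^n$. Set $C=\{\,u\,p\,v : p\in P,\ u,v\in (A^*)\text{ of length }<n\,\}$ — no, rather, the cleaner choice: take $C$ to be the finite set of all elements $a_{i_1}\cdots a_{i_m}$ of $S$ with $n\le m\le 2n-1$ and $\varphi$-image $0$. I claim the closed subsemigroup $\langle C\rangle$ generated by $C$ equals $\varphi^{-1}(0)$. The inclusion $\langle C\rangle\subseteq\varphi^{-1}(0)$ is clear since $\varphi^{-1}(0)$ is a closed ideal containing $C$. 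For the reverse inclusion, take $s\in\varphi^{-1}(0)$ and write $s=\lim w_r$ with $w_r\in A^+$ and $\varphi(w_r)=0$; each such $w_r$ has length $\ge n$, and by cutting $w_r$ into blocks of length between $n$ and $2n-1$ (possible as soon as $|w_r|\ge n$, by a standard greedy argument) and noting each block has $\varphi$-image $0$, we exhibit $w_r$ as a product of elements of $C$; passing to the limit along a subsequence where the number of blocks is controlled — here is where care is needed.

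The main obstacle is exactly this limiting step: the number of $C$-blocks in $w_r$ is roughly $|w_r|/n$, which goes to infinity, so one cannot directly pass to a limit of bounded-length products over $C$. The fix I would use: since $\varphi^{-1}(0)$ is a closed subsemigroup of the profinite semigroup $S$, it is itself profinite, and a profinite semigroup that is topologically generated by a finite set $C$ is automatically finitely generated \emph{as a topological semigroup}; the issue is whether ``topologically generated'' is what we want. Re-reading the statement: $\varphi^{-1}(0)$ being ``finitely generated'' should mean topologically finitely generated (generated as a closed subsemigroup). Under that reading the argument above suffices: $C$ is finite, $\langle C\rangle$ is a closed subsemigroup containing $A^n$ and closed under two-sided multiplication by all of $\langle C\rangle$, and one checks $\langle C\rangle$ is in fact an ideal of $S$ (multiply a $C$-word on the left or right by a generator $a$: $a\cdot(c_1\cdots c_k)$ with $c_1$ of length $\ge n$; then $ac_1$ has length $\ge n+1\ge n$ and, if its length is $<2n$ it lies in $C$, otherwise split off a prefix block — again the greedy split — so $ac_1\cdots c_k\in\langle C\rangle$), hence $\langle C\rangle\supseteq\varphi^{-1}(0)$ because $\varphi^{-1}(0)$ is the smallest closed ideal meeting every... more simply, $\varphi^{-1}(0)$ is the closure of $S^1\,A^n\,S^1$, which is contained in $\langle C\rangle$. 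Finally, the ``In particular'' clause: every finitely generated closed subsemigroup of $S$ is covered by the decomposition $S=\varphi^{-1}(0)\cup(\text{finite-image part})$; more directly, $\Omega_B S$ maps onto any $B$-generated closed subsemigroup, and one argues $\Omega_B S$ is countable by combining that its ideal $\varphi^{-1}(0)\cap\Omega_B S$ is finitely generated, hence a finitely generated pro-(locally countable) semigroup, hence countable, while the Rees quotient $\Omega_B S/\varphi^{-1}(0)$ is a finitely generated pro-$N$ semigroup, hence finite; a countable ideal with finite Rees quotient yields a countable semigroup.
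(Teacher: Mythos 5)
Your overall route is the paper's: read ``finitely generated'' topologically, use that $\varphi^{-1}(0)$ is clopen, approximate its elements by words over $A$ lying in $\varphi^{-1}(0)$, and split long words greedily into blocks of length between $n$ and $2n-1$. However, the generating set you propose is wrong as stated, because you repeatedly assert that a word $w\in A^+$ with $\varphi(w)=0$ must have length at least $n$ (equivalently, that $\varphi^{-1}(0)=\overline{S^1A^nS^1}$), and accordingly you let $C$ consist only of products of between $n$ and $2n-1$ generators. This fails whenever some product of fewer than $n$ generators already maps to $0$. Concretely, take $S=\Om AJ$ with $A=\{a,b\}$ (which is countable, so all hypotheses of the theorem hold), let $N=\{c,0\}$ with $c^2=0$ (so $n=2$), and let $\varphi(a)=0$, $\varphi(b)=c$. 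Then $a\in\varphi^{-1}(0)$; but projecting onto the finite nilpotent Rees quotients of $A^+$ by the words of length exceeding a fixed bound shows both that finite words are isolated points of $S$ and that $a$ is not a product of two elements of $S$. Hence $a$ lies neither in $\overline{S^1A^2S^1}$ nor in the closed subsemigroup generated by your $C$, so the reverse inclusion $\varphi^{-1}(0)\subseteq\langle C\rangle$ breaks down exactly where you discard short approximating words.

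The repair consists of finitely many missing elements: add to $C$ all products of fewer than $n$ generators that lie in $\varphi^{-1}(0)$, which is precisely the role of the second term in the paper's generating set. With this enlarged finite set, every word $w_r\in A^+$ with $\varphi(w_r)=0$ is either one of the added short elements or has length at least $n$ and splits greedily into blocks of length in $[n,2n-1]$, each automatically mapping to $0$; taking closures then yields all of $\varphi^{-1}(0)$, and your worry about the unbounded number of blocks is indeed immaterial once finite generation is understood topologically. The remainder of your argument is essentially correct and close to the paper's: local countability gives that $\varphi^{-1}(0)$ is countable, and your conclusion for $S$ via a countable closed ideal with finite Rees quotient (finite because all products of $n$ generators fall into the ideal) is a legitimate alternative to the paper's appeal to Theorem~\ref{t:Brown-profinite}. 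Only note that in your last paragraph you should argue with the closed subsemigroup of $S$ generated by a finite set $B$, not with \Om BS itself (which is uncountable and is not a subsemigroup of $S$); applying the first part to that subsemigroup gives local countability of $S$.
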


\begin{proof}
  Suppose that $S$ is generated, as a topological semigroup by a
  finite subset~$A$. Let $n$ be such that $N$ satisfies the identity
  $x_1\cdots x_n=0$. We claim that $\varphi^{-1}(0)$ is generated by
  the following set, where $A^k$ denotes the set of all words of
  length~$k$ on the alphabet $A$:
  $$B=\bigcup_{n\le k<2n}A^k
  \cup\left(\bigcup_{k<n}A^k\setminus\varphi^{-1}(0)\right).$$
  Note that the finite set $B$ is certainly contained
  in~$\varphi^{-1}(0)$ since products of length at least $n$ are zero
  in~$N$. On the other hand, given an arbitrary element $s$
  of~$\varphi^{-1}(0)$, since $\varphi^{-1}(0)$ is an open set and the
  subsemigroup $T$ of $S$ generated algebraically by $A$ is dense
  in~$S$, there is a sequence $(w_r)_r$ of elements
  of~$T\cap\varphi^{-1}(0)$ converging to~$s$. Each $w_r$ admits a
  factorization in the elements of~$A$, say in $k_r$ factors. If
  $k_r<n$ then $w_r$ belongs to the second term in the union defining
  $B$. Otherwise, $w_r$ admits a factorization into elements of the
  first term of the union defining $B$, simply by taking successive
  blocks of $n$ factors of~$A$ and combining the remainder block with
  the previous block. Hence, $s$ belongs to the closed subsemigroup
  generated by~$B$, which shows that $\varphi^{-1}(0)$ is finitely
  generated as a topological semigroup. Since $\varphi^{-1}(0)$ is
  locally countable by assumption, it follows that it is countable. To
  conclude that $S$ is countable it now suffices to apply
  Theorem~\ref{t:Brown-profinite}.
\end{proof}

In contrast, if $\varphi:G\to H$ is a continuous homomorphism from an
infinite finitely generated profinite group to a finite group, then
$\varphi^{-1}(1)$ is finitely generated as a topological group. To
prove it, it suffices to consider the case where $G$ is a free
profinite group. The preimage $\varphi^{-1}(1)$ is then a clopen
subgroup in which the intersection $I$ with the free group $F$ on the
same finite set of generators is dense. Since $I$ is a subgroup of
finite index of~$F$, $I$ is finitely generated by the Nielsen-Schreier
Theorem. Hence $\varphi^{-1}(1)$ is finitely generated as a
topological (semi)group. A more precise statement can be found
in~\cite[Section~3.6]{Ribes&Zalesskii:2010}.

\begin{Cor}
  \label{c:Malcev-N}
  Let \pv V be a locally countable pseudovariety and let \pv W be a
  locally finite pseudovariety of nilpotent semigroups. Then the
  pseudovariety $\pv V\malcev\pv W$ is locally countable.\qed
\end{Cor}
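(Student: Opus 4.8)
The plan is to derive this immediately from Theorem~\ref{t:Malcev-N}, mirroring the passage from Theorem~\ref{t:IE(H)-m-locfin} to its corollaries. Let $A$ be a finite set; by the observation in Section~\ref{sec:prelims}, it suffices to show that $\Om A{(V\malcev W)}$ is countable. Since $\pv W$ is a locally finite pseudovariety of nilpotent semigroups, the relatively free profinite semigroup $\Om AW$ is a finite nilpotent semigroup, so the natural projection $\varphi:\Om A{(V\malcev W)}\to\Om AW$ is a continuous homomorphism into a finite nilpotent semigroup $N=\Om AW$. The key point is that $\varphi^{-1}(0)$ is locally countable: by Corollary~\ref{c:Malcev}, $\varphi^{-1}(e)$ is a pro-$\pv V$ semigroup for every idempotent $e\in E(\Om AW)$, and in a finite nilpotent semigroup the unique idempotent is $0$, so $\varphi^{-1}(0)$ is a pro-$\pv V$ semigroup, hence locally countable because $\pv V$ is a locally countable pseudovariety.

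With these ingredients in place, Theorem~\ref{t:Malcev-N} applies directly: $\Om A{(V\malcev W)}$ is finitely generated (it is $A$-generated) and maps onto the finite nilpotent semigroup $N$ via $\varphi$ with $\varphi^{-1}(0)$ locally countable, so $\Om A{(V\malcev W)}$ is locally countable, i.e.\ countable (being itself finitely generated). As $A$ was an arbitrary finite set, $\pv V\malcev\pv W$ is locally countable.

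I do not anticipate a genuine obstacle here; the corollary is essentially a packaging of Theorem~\ref{t:Malcev-N} together with Corollary~\ref{c:Malcev}. The one small thing to be careful about is the identification of the unique idempotent of a finite nilpotent semigroup with $0$ and hence the identification of $\varphi^{-1}(0)$ (in the sense of Theorem~\ref{t:Malcev-N}) with $\varphi^{-1}(e)$ for the unique idempotent $e$ (in the sense of Corollary~\ref{c:Malcev}); but this is immediate from the definition $\pv N=\op x^\omega=0\cl$. That is why the statement is marked with \qed in the excerpt: no further argument beyond assembling the two cited results is needed.
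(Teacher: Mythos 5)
Your proposal is correct and is exactly the argument the paper intends (the corollary is marked \qed precisely because it is an immediate assembly of Corollary~\ref{c:Malcev} with Theorem~\ref{t:Malcev-N}): reduce to $\Om A{(V\malcev W)}$, observe that $\Om AW$ is a finite nilpotent semigroup whose unique idempotent is $0$, so $p^{-1}(0)$ is pro-\pv V and hence locally countable, and apply the theorem. No gaps; your side remark identifying $\varphi^{-1}(0)$ with the preimage of the unique idempotent is the only point needing care, and you handle it correctly.
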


It is well known that the atoms in the lattice of all pseudovarieties
of semigroups are $\op xy=0\cl$, \pv{Sl}, the pseudovariety
$\pv{Ab}_p$ of all finite elementary Abelian $p$-groups, where $p$~is
prime, and the pseudovarieties \pv{LZ} and \pv{RZ}, respectively of
all finite left-zero and right-zero semigroups. Note that the largest
pseudovariety not containing all but the first one is precisely \pv N.
The next theorem shows that the pseudovariety \pv N is optimal in
Theorem~\ref{t:Malcev-N}.

\begin{Thm}
  \label{t:Malcev-non-N}
  For every non-nilpotent finite semigroup $T$, there is a non-locally
  countable profinite semigroup $S$ and a continuous homomorphism
  $\varphi:S\to U$ into a divisor of~$T$ such that $\varphi^{-1}(e)$
  is locally countable for every $e\in E(U)$.
\end{Thm}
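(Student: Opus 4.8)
The plan is to reduce to a concrete family of examples. A non-nilpotent finite semigroup $T$ fails the identity $x_1\cdots x_n=0$ for every $n$, so $T$ has an element $t$ with $t^{\omega}\ne 0$ in $T^0$; equivalently, $T$ has a nonzero idempotent, hence a nontrivial \Cl J-class that is regular. By the classification of atoms recalled just before the statement, the pseudovariety $\pv V(T)$ generated by $T$ contains one of \pv{Sl}, $\pv{Ab}_p$, \pv{LZ}, or \pv{RZ}. In each case one of these atoms is generated by a divisor of~$T$ (a subsemigroup of a quotient). So it suffices to exhibit, for each fixed atom $\pv A_0\in\{\pv{Sl},\pv{Ab}_p,\pv{LZ},\pv{RZ}\}$, a non-locally-countable profinite semigroup $S$ together with a continuous homomorphism $\varphi\colon S\to U$ onto a two-element (or $p$-element) semigroup $U$ generating $\pv A_0$ such that $\varphi^{-1}(e)$ is locally countable for every $e\in E(U)$.

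The key construction I would use is a ``wreath-product-like'' semigroup built over the free profinite semigroup on a two-letter alphabet within a locally countable pseudovariety whose semidirect product with $\pv A_0$ is uncountable. Recall from Section~\ref{sec:loc-count-pvs} that $\pv J*\pv J=\pv R$ is not locally countable, and that \pv R contains \pv K with \Om 2K uncountable; the infinite elements of \Om 2K are in bijection with infinite words over a two-letter alphabet. The idea is to realize \Om 2K (or an analogous uncountable free pro-$(\pv V*\pv A_0)$ semigroup) as the semigroup $S$, with $U$ the two-element semigroup corresponding to the $\pv A_0$-part of a natural decomposition, so that $\varphi\colon S\to U$ is the projection onto the ``bottom'' coordinate. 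For $\pv A_0=\pv{Sl}$ one can take $U$ the two-element semilattice $\{0,1\}$ and $S$ a free pro-$(\pv N\malcev\pv{Sl})$-like object that is uncountable by a Thue–Morse-type argument (cf.\ Example~\ref{sec:equational-loccount-not-locfin}); for $\pv{LZ}$, $\pv{RZ}$ one uses the two-element left- or right-zero semigroup; for $\pv{Ab}_p$ one uses $\mathbb Z/p\mathbb Z$. In each case the fibre $\varphi^{-1}(e)$ over an idempotent $e\in E(U)$ should be identified, via the decomposition theory (Theorem~\ref{t:Malcev} and Corollary~\ref{c:Malcev}), with a finitely generated free pro-\pv V semigroup for a \emph{locally countable} \pv V — indeed for \pv N or \pv{Sl} or a locally finite group pseudovariety — hence countable.

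Concretely, I would pick the pseudovariety $\pv K\vee\pv{LZ}$, or rather run through the four atoms uniformly: for the atom $\pv{LZ}$ take $S=\Om 2K$, $U=$ the two-element left-zero semigroup, and $\varphi$ the natural projection $\Om 2{(K\vee LZ)}$-style map — but since $\Om 2K$ itself already surjects onto the two-element left-zero semigroup with fibres that are pro-\pv N (hence locally countable by Theorem~\ref{t:Malcev-N} / Proposition~\ref{p:1oc-count-in-LG}), and $\Om 2K$ is uncountable, this case is essentially done. The remaining atoms are handled by the symmetric construction (\pv{RZ}), by $\pv{Ab}_p$ in place of the nilpotent part (using that the relevant free profinite object over a two-letter alphabet is uncountable while fibres are pro-\pv{Ab}$_p$, hence finite, or pro-\pv N), and by \pv{Sl} using a Thue–Morse square-free-word argument to guarantee uncountability of the appropriate fibred semigroup while the fibres over the two idempotents of the two-element semilattice remain pro-\pv N.

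The main obstacle I anticipate is the \pv{Sl} case: one must produce an uncountable profinite semigroup mapping onto the two-element semilattice with both fibres (over $0$ and over $1$) locally countable. Over $0$ the fibre is an ideal and should be pro-\pv N by construction; over $1$ it is a subsemigroup which must also be forced to be locally countable. The cleanest route is to take $S=\Om A{(N\malcev Sl)}=\Om A J$ for a suitable two-element alphabet — but that is countable by Almeida's theorem, so instead one needs a genuinely uncountable object, e.g.\ $\Om A{(\pv{N}_2\malcev\pv{Sl})}$ or the semidirect product $\pv{N}*\pv{Sl}$ on enough generators, whose uncountability comes from encoding square-free infinite words. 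Verifying that the fibre over the top idempotent of the semilattice is still finitely generated and locally countable — rather than collapsing the uncountability into the ``wrong'' fibre — is the delicate point, and I expect it to require a careful choice of alphabet size together with an application of Theorem~\ref{t:Brown-profinite} and the \pv{LG}-decomposition results of Section~\ref{sec:LG} to pin down the structure of each fibre.
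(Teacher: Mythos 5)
Your opening reduction (a non-nilpotent finite $T$ has among its divisors one of the semigroups generating the atoms \pv{Sl}, $\pv{Ab}_p$, \pv{LZ}, \pv{RZ}, so it suffices to treat each atom separately, with \pv{RZ} following from \pv{LZ} by duality) is exactly the paper's first step. But the entire substance of the theorem lies in producing, for each such atom \pv W, a locally countable pseudovariety whose Mal'cev product with \pv W is not locally countable -- equivalently, the uncountable finitely generated $S$ with locally countable fibres over the idempotents -- and the concrete candidates you propose do not work. For \pv{LZ}: the fibres of the projection of $\Om 2K$ onto the two-element left-zero semigroup are neither pro-\pv N nor locally countable. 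In $\Om 2K$ every infinite pseudoword is a left zero, hence idempotent, so the fibre over $a$ (all pseudowords beginning with $a$) contains uncountably many idempotents, whereas a pro-\pv N semigroup has a unique idempotent. Worse, the closed subsemigroup of that fibre generated by the two elements $a$ and $ab$ already contains, for every sequence $(\varepsilon_i)_{i\ge 1}$ with $\varepsilon_i\in\{1,2\}$, the distinct infinite word $\lim_n (ab\,a^{\varepsilon_1})(ab\,a^{\varepsilon_2})\cdots(ab\,a^{\varepsilon_n})$, so each fibre is itself not locally countable: the uncountability of $\Om2K$ sits entirely inside the fibres, and the projection buys nothing.

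For \pv{Sl} your candidates are in fact countable: since the Mal'cev product is monotone in its first argument, $\pv N_2\malcev\pv{Sl}\subseteq\pv N\malcev\pv{Sl}=\pv J$, which is locally countable, and $\pv N*\pv{Sl}$ is locally countable by Theorem~\ref{t:star} because \pv N is locally countable and \pv{Sl} is locally finite; so neither can serve as the uncountable $S$, and for $\pv{Ab}_p$ no construction is given. The difficulty you yourself flag (``collapsing the uncountability into the wrong fibre'') is real and is precisely what defeats any attempt with pro-\pv N fibres. The paper escapes it by different means: for \pv{Sl} it plants an uncountable free profinite group $\Om MH$ (over the one-point compactification $M$ of $\mathbb N$, with \pv H locally finite) in the minimal ideal of the synthesis-theory semigroup $U(M,G,f)$, so that the bottom fibre is completely simple -- uncountable yet \emph{locally finite}, hence locally countable -- while the whole two-generated monoid is uncountable; for \pv{LZ} and $\pv{Ab}_p$ it constructs ad hoc finite semigroups $S_k$ and $S_k(p)$ by presentations, shows the pseudovariety they generate is not locally countable by encoding increasing integer sequences, shows that the subsemigroups $T_k$, $T_k(p)$ generate a locally countable pseudovariety \pv U via the first-occurrence invariants and Higman's theorem (Lemma~\ref{l:malcev-finite-factors}), and then exhibits $S_k$ as a quotient of a subsemigroup of $T_k\times\pv{LZ}$-type products to place it in $\pv U\malcev\pv{LZ}$ (resp.\ $\pv U_p\malcev\pv{Ab}_p$). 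None of these mechanisms appears in your sketch, so the key existence claims remain unproved.
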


\begin{proof}
  It suffices to show that, if \pv W is any of the atoms \pv{Sl},
  $\pv{Ab}_p$, \pv{LZ} and \pv{RZ}, then there is a locally countable
  pseudovariety \pv V such that $\pv V\malcev\pv W$ is not locally
  countable. This is proved in the next section. for all but the
  atom~\pv{RZ}, for which the result follows from the case of~\pv{LZ}
  by left/right duality.
\end{proof}

\begin{Cor}
  \label{c:Malcev-loc-fin}
  Let \pv W be a locally finite pseudovariety. Then $\pv V\malcev\pv
  W$ is locally countable for every locally countable pseudovariety
  \pv V if and only if \pv W is contained in~\pv N.\qed
\end{Cor}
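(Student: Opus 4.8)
The corollary is an immediate consequence of the two preceding results, and the plan is simply to read off each implication. For the \emph{sufficiency} of $\pv W\subseteq\pv N$: if $\pv W\subseteq\pv N$, then \pv W is simultaneously locally finite (by hypothesis) and a pseudovariety all of whose members are nilpotent, so Corollary~\ref{c:Malcev-N} applies verbatim and gives that $\pv V\malcev\pv W$ is locally countable for every locally countable pseudovariety~\pv V.

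For the \emph{necessity}, I would argue by contraposition: assuming $\pv W\not\subseteq\pv N$, I must exhibit a locally countable pseudovariety \pv V for which $\pv V\malcev\pv W$ is not locally countable. Recall from the discussion preceding Theorem~\ref{t:Malcev-non-N} that \pv N is the largest pseudovariety of semigroups containing none of the atoms \pv{Sl}, $\pv{Ab}_p$ ($p$ prime), \pv{LZ} and \pv{RZ}; hence \pv W must contain at least one of these atoms, say~$\pv{W}_0$, for otherwise \pv W would be contained in~\pv N. The proof of Theorem~\ref{t:Malcev-non-N}, completed in Section~\ref{sec:examples}, establishes that for each of those four atoms there is a locally countable pseudovariety \pv V with $\pv V\malcev\pv{W}_0$ not locally countable; fix such a~\pv V. Since the Mal'cev product is monotone in its second argument---every finite semigroup admitting a relational morphism into a member of $\pv{W}_0$ with all idempotent preimages in \pv V also admits one into a member of \pv W---we have $\pv V\malcev\pv{W}_0\subseteq\pv V\malcev\pv W$. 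Finally, a subpseudovariety of a locally countable pseudovariety is itself locally countable, since a finitely generated free pro-$(\pv V\malcev\pv{W}_0)$ semigroup is a continuous, hence quotient, image of the corresponding free pro-$(\pv V\malcev\pv W)$ semigroup and countability passes to quotients. Therefore $\pv V\malcev\pv W$ is not locally countable, exactly as the contrapositive requires.

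Since the whole argument is assembled from results already at hand, there is no genuinely new obstacle at this point; the substantive work is deferred to Section~\ref{sec:examples}, where each of the four atoms is treated individually inside the proof of Theorem~\ref{t:Malcev-non-N}. The only auxiliary facts used here are the monotonicity of the Mal'cev product in its second variable and the heredity of local countability under passage to subpseudovarieties, both of which are routine.
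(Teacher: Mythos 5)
Your proposal is correct and follows essentially the route the paper intends for this corollary (which is stated with \qed precisely because it is assembled this way): sufficiency is Corollary~\ref{c:Malcev-N}, and necessity goes through the observation that $\pv W\not\subseteq\pv N$ forces \pv W to contain one of the atoms \pv{Sl}, $\pv{Ab}_p$, \pv{LZ}, \pv{RZ}, to which the constructions of Section~\ref{sec:examples} (via Theorem~\ref{t:Malcev-non-N}) apply, combined with monotonicity of the Mal'cev product in its second argument and heredity of local countability under passage to subpseudovarieties.
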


\section{Mal'cev products with non-nilpotent atoms}
\label{sec:examples}

We prove in this section the claim stated in the proof of
Theorem~\ref{t:Malcev-non-N} that for every non-nilpotent atom \pv W
in the lattice of pseudovarieties of semigroups there is a locally
countable pseudovariety \pv V such that $\pv V\malcev\pv W$ is not
locally countable.

\subsection{The atom \texorpdfstring{\pv{Sl}}{Sl}}
\label{sec:Sl}

The following construction seems to have been first used in
\cite{Rhodes&Allen:1973} in the so-called synthesis theory and plays a
role in several contexts
\cite{Rhodes:1986a,Rhodes:1986,Diekert&Kufleitner&Weil:2011,
  Almeida&Klima:2011a,Almeida&Klima:2015a}. Here, we extend it to
profinite semigroups.

Let $S$ and $T$ be profinite semigroups and let $f:S\to T$ be a
continuous mapping. Consider the Cartesian product $K=S\times T\times
S$. We define on $U(S,T,f)=S\uplus K$ a multiplication extending that
of~$S$ as follows:
\begin{itemize}
\item $(s_1,t,s_2)(s_1',t',s_2')=(s_1,tf(s_2s_1')t',s_2')$;
\item $s(s_1,t,s_2)=(ss_1,t,s_2)$;
\item $(s_1,t,s_2)s=(s_1,t,s_2s)$.
\end{itemize}
It is easy to see that this multiplication on~$U(S,T,f)$ is
associative (this has been done in the purely algebraic setting for
instance in~\cite[Lemma~3.1]{Almeida&Klima:2011a}). If we endow
$U(S,T,f)$ with the coproduct topology of the space $S$ with the
product space $K$, then $U(S,T,f)$ is a compact 0-dimensional space in
which the multiplication is continuous. By a theorem of Numakura
\cite[Theorem~1]{Numakura:1957}, it follows that $U(S,T,f)$ is a
profinite semigroup.

\begin{Thm}
  \label{t:malcev-Sl}
  For every nontrivial locally finite pseudovariety of groups~\pv H,
  the join $\pv N\vee\pv{CS(H)}$ is locally countable but the Mal'cev
  product $(\pv N\vee\pv{CS(H)})\malcev\pv{Sl}$ is not.
\end{Thm}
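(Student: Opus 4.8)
The plan is to establish the two halves of the statement separately. For the positive part, that $\pv N\vee\pv{CS(H)}$ is locally countable, I would argue by a direct structural description of a finitely generated pro-$(\pv N\vee\pv{CS(H)})$ semigroup. Since $\pv{CS(H)}\subseteq\pv{LG}$ and $\pv N\vee\pv{CS(H)}\subseteq\pv{LG}$ as well (a semilattice embeds in neither factor, nor in a subdirect product of members of the two), a finitely generated free pro-$(\pv N\vee\pv{CS(H)})$ semigroup is pro-$\pv{LG}$, and by Proposition~\ref{p:1oc-count-in-LG} it is countable as soon as it has countably many idempotents and finite subgroups. The finitely-many-idempotents part comes from the fact that the idempotents already live in the $\pv{CS(H)}$-coordinate, which is a completely simple semigroup whose kernel is a finitely generated completely simple semigroup over a locally finite group pseudovariety, hence finite; and the subgroups are pro-$\pv H$ groups, finitely generated, hence finite by Theorem~\ref{t:Zelmanov}. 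Alternatively, and perhaps more cleanly, one observes $\pv N\vee\pv{CS(H)}$ decomposes via the natural projection onto $\Om A{CS(H)}$ with nilpotent kernel, so Theorem~\ref{t:Malcev-N} (with $\pv V$ the obvious locally countable piece) or Theorem~\ref{t:Brown-profinite} applies once one knows $\Om A{CS(H)}$ is finite — which it is, by local finiteness of $\pv{CS(H)}$.

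For the negative part, the idea is to use the construction $U(S,T,f)$ just introduced. Take $\pv V=\pv N\vee\pv{CS(H)}$ itself (or a suitable locally countable subpseudovariety), and exhibit a finite alphabet $A$ for which $\Om A{(V\malcev Sl)}$ is uncountable. The strategy is to build, for an appropriate choice of profinite $S$ in $\pv V$ (or pro-$\pv V$), profinite $T$ in $\pv V$, and continuous $f:S\to T$, the profinite semigroup $U(S,T,f)$ and show two things: (a) $U(S,T,f)$ admits a continuous homomorphism onto a finite semilattice (essentially collapsing $K$ to a zero and $S$ to its image in some finite semilattice quotient — or rather, the two-element semilattice $\{1,0\}$ with $S\mapsto 1$, $K\mapsto 0$) whose fibers over idempotents are pro-$\pv V$; this puts $U(S,T,f)$ in pro-$(\pv V\malcev\pv{Sl})$ by Theorem~\ref{t:Malcev}; and (b) $U(S,T,f)$ contains an uncountable finitely generated closed subsemigroup. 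Point (b) is where the continuum enters: the middle coordinate of $K$ accumulates, through the rule $(s_1,t,s_2)(s_1',t',s_2')=(s_1,tf(s_2s_1')t',s_2')$, arbitrary finite products $t\,f(\cdot)\,t'\,f(\cdot)\,t''\cdots$ of values of $f$, and by choosing $S$, $T$, $f$ so that these products realize, in the limit, all infinite words over a two-letter alphabet inside $T$ (which is possible since $\pv V$ contains nontrivial groups, e.g.\ using $\pv{CS(H)}$ with a nontrivial $\pv H$ to get a free-monoid-like behaviour in the $T$-coordinate), one gets a closed subset homeomorphic to $\{0,1\}^{\mathbb N}$.

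I would organize the negative part around a concrete small choice: let $G$ be a nontrivial finite group in $\pv H$, let $T$ be (a pro-$\pv V$ semigroup built from) the free profinite semigroup over a two-element set inside $\pv V$ — concretely one can take $T=\Om2{(N\vee CS(H))}$, which is countable but has an infinite "$\pv N$-part" and whose profinite completion behaviour lets products of two distinguished elements converge to a Cantor set of limits — and let $S$ be a one-generated or two-generated pro-$\pv V$ semigroup mapping onto $T$ via $f$ so that $f(s_2s_1')$ ranges over the two generators of $T$ as $s_2,s_1'$ range over suitable finite elements. Then $U(S,T,f)$, generated by $S$'s generators together with one element of $K$, is finitely generated; the set of elements $(s_1,t,s_2)\in K$ that arise as limits of such products is uncountable because the $t$-coordinates exhaust an uncountable closed subset of $T$ — here one must actually verify $\Om2{(N\vee CS(H))}$ (or the chosen $T$) carries uncountably many limits of products of the two generators, which reduces, via the $\pv N$-coordinate, to the classical fact (Thue, as in Example~\ref{sec:equational-loccount-not-locfin} and the discussion of $\pv K$) that free profinite objects with a nilpotent component over $\ge 2$ letters have continuum many infinite pseudowords.

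The main obstacle is part (b) of the negative direction: one must pin down the profinite topology on $U(S,T,f)$ precisely enough to see that the natural generating set produces, as limits, an uncountable family of distinct elements of $K$, and simultaneously keep $U(S,T,f)$ inside pro-$(\pv V\malcev\pv{Sl})$. The subtlety is that $\pv V$ is only locally countable, not locally finite, so $S$ and $T$ themselves are infinite profinite semigroups, and one has to be careful that the multiplication rule in $K$ genuinely injects enough of $T$'s complexity into $U(S,T,f)$ without collapsing it — in particular that distinct infinite products $f(a_{i_1})f(a_{i_2})\cdots$ remain distinct in the relevant fiber. I expect this is handled by choosing $f$ so that the submonoid of $T$ generated by $\{f(a),f(b)\}$ for two chosen elements $a,b$ is free enough (e.g.\ because $T$ has a retraction onto a free pro-$\pv N$-like quotient on two letters), whence the infinite products biject with infinite binary words, yielding the Cantor set and hence the uncountability of the finitely generated closed subsemigroup of $\Om A{(V\malcev Sl)}$ into which $U(S,T,f)$ maps.
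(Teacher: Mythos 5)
Your overall architecture is the paper's: the positive half follows from $\pv{CS(H)}=\pv H*\pv{RZ}$ being locally finite together with Theorems~\ref{t:star} and~\ref{t:join} (your pro-\pv{LG} route can be made to work too, but the step ``the subgroups are finitely generated, hence finite by Theorem~\ref{t:Zelmanov}'' is unjustified -- closed subgroups of a finitely generated profinite semigroup need not be topologically finitely generated; it is cleaner to note that $\Om A{(N\vee CS(H))}$ embeds in $\Om AN\times\Om A{CS(H)}$, a product of a countable and a finite semigroup), and for the negative half you correctly propose the $U(S,T,f)$ construction, a continuous homomorphism onto a finite semilattice whose fibres over idempotents are pro-$(\pv N\vee\pv{CS(H)})$, and an appeal to Theorem~\ref{t:Malcev}. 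That is indeed the skeleton of the paper's proof.

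However, the engine you propose for uncountability is the genuine gap, and it cannot work as described. You want the middle coordinate of $K$ to accumulate ``all infinite words over a two-letter alphabet'', i.e.\ a Cantor set of distinct infinite products of two elements $f(a),f(b)$ inside a pro-$(\pv N\vee\pv{CS(H)})$ semigroup $T$, detected ``via the $\pv N$-coordinate''. Three things go wrong: (i) the closed subsemigroup of $T$ generated by two elements is a finitely generated closed subsemigroup of a pro-$(\pv N\vee\pv{CS(H)})$ semigroup, hence countable by the very first half of the theorem, so it contains no Cantor set of limits; (ii) the $\pv N$-coordinate cannot separate such limits, since every infinite pseudoword maps to $0$ in any pro-\pv N quotient ($\Om AN$ is just $A^+$ with a zero adjoined, and Example~\ref{sec:equational-loccount-not-locfin} is likewise countable -- the uncountable behaviour you have in mind is that of \pv K, e.g.\ \Om2K, which is not available inside the join); and (iii) your concrete candidate $T=\Om2{(N\vee CS(H))}$ is itself countable. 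Moreover, with $T$ not a group, $K=S\times T\times S$ is no longer completely simple and it is unclear that it is pro-$(\pv N\vee\pv{CS(H)})$, so part (a) of your plan is also in jeopardy. The paper's choice is exactly designed to dodge these obstacles: take $S=M=\mathbb N\cup\{\infty\}$ and $T=G=\Om MH$, the free pro-\pv H group over the infinite profinite space $M$, with $f$ the natural generating map. Then $G$ is an infinite profinite group, hence uncountable -- local finiteness of \pv H bounds finitely generated closed subgroups but not closed subgroups generated by an infinite compact set -- while $K=M\times G\times M$ is completely simple pro-\pv{CS(H)}, the fibres over the three-element chain are $\{1\}$, a copy of $\Om1N$, and $K$, and $U(M,G,f)$ is topologically generated by $a=1\in M$ and $b=(0,1,0)$ because $ba^ib=(0,f(i),0)$ lets the single generator of $M$ feed the infinitely many generators $f(i)$ into $G$. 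In short, the uncountability must be purchased from the group part over an infinite compact generating set, not from word combinatorics in a nilpotent-like component.
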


\begin{proof}
  Since it is well known that $\pv{CS(H)}=\pv H*\pv{RZ}$, it follows
  from Theorems~\ref{t:star} and~\ref{t:join} that the join $\pv
  N\vee\pv{CS(H)}$ is locally countable. To prove that $(\pv
  N\vee\pv{CS(H)})\malcev\pv{Sl}$ is not locally countable, we exhibit
  an uncountable finitely generated profinite semigroup $U$ and a
  continuous homomorphism $\varphi:U\to F$ onto a finite semilattice
  $F$ such that, for each $e\in F$, the subsemigroup $\varphi^{-1}(e)$
  is pro-$(\pv N\vee\pv{CS(H)})$. The theorem then follows from
  Theorem~\ref{t:Malcev}.

  To construct $U$, consider the monoid $M=\mathbb{N}\cup\{\infty\}$
  under addition, where the topology is given by the one-point
  compactification of the discrete set~$\mathbb{N}$ of natural
  numbers. Let $G=\Om MH$ and let $f:M\to G$ be the natural generating
  function. We take $U=U(M,G,f)$ to be the profinite semigroup defined
  above.

  For $F$, we take the meet-semilattice given by the three-element
  chain $0\le 1\le 2$. The mapping $\varphi$ sends $0\in M$ to $2$,
  $M\setminus\{0\}$ to~$1$, and $K=M\times G\times M$ to~$0$. Note
  that $\varphi$ is a continuous homomorphism.
  We claim that $U$ is not locally countable.

  We first observe that $U$ is generated as a topological monoid by
  the two elements $a=1\in M$ and $b=(0,1,0)$ where the middle 1 is
  the idempotent of the group $G$. Indeed, $1\in M$ generates the
  topological monoid~$M$, while the elements $ba^ib=(0,f(i),0)$
  generate the topological group $\{0\}\times G\times\{0\}$ and
  $a^i(0,g,0)a^j=(i,g,j)$. Moreover, $U$ is uncountable since $G$ is
  isomorphic to the maximal subgroups in its minimum ideal and $G$ is
  an infinite profinite group (as the function $f$ is injective),
  whence uncountable, as observed in Section~\ref{sec:loc-count-pvs}.

  To prove the claim, it remains to show that $K$ is locally finite.
  This follows from showing that $K$ is pro-\pv{CS(H)}:
  the calculation
  $$(i,g,j)=(i,gh^{-1}f(k)^{-1},0)(k,h,\ell)(0,f(\ell)^{-1},j)$$
  shows that $K$ has only one \Cl J-class while the maximal subgroups
  of~$K$ are isomorphic with~$G$.
  
  To conclude the proof, it suffices to observe that
  $\varphi^{-1}(2)=\{1\}$, $\varphi^{-1}(1)\simeq\Om1N$, and
  $\varphi^{-1}(0)=K$ is pro-\pv{CS(H)}, as shown above. By
  Theorem~\ref{t:Malcev} it follows that the profinite monoid $U$ is
  pro-$(\pv N\vee\pv{CS(H)})\malcev\pv{Sl}$.
\end{proof}

The semigroup $U$ of the preceding proof may also be used to establish
the following result, which shows that the hypothesis that \pv W is
locally finite may not be dropped in Corollary~\ref{c:Malcev-N}.

\begin{Thm}
  \label{t:Malcev-N-bis}
  There is a locally finite pseudovariety \pv V such that $\pv
  V\malcev\pv N$ is not locally countable.
\end{Thm}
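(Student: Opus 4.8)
The plan is to reuse the profinite semigroup $U=U(M,G,f)$ built in the proof of Theorem~\ref{t:malcev-Sl}, where now \pv H is taken to be any nontrivial locally finite pseudovariety of groups, say $\pv H=\pv{Ab}_p$ for a prime~$p$; recall that $M=\mathbb N\cup\{\infty\}$, $G=\Om M H$, and $K=M\times G\times M$. Note first that $U$ itself cannot be pro-$(\pv V\malcev\pv N)$ for any locally countable~\pv V: since a nilpotent semigroup has no nontrivial submonoid, a monoid lying in $\pv V\malcev\pv N$ already lies in~\pv V, whereas $U$ is an uncountable monoid (with identity $0\in M$). The right object is $S=U\setminus\{0\}$. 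Because $0$ is an isolated point of the clopen subset~$M$ of~$U$, the set $S$ is closed in~$U$, and since $uv=0$ in~$U$ forces $u=v=0$, the set~$S$ is a closed, hence profinite, subsemigroup of~$U$; as a set $S=M''\uplus K$ with $M''=M\setminus\{0\}=\{1,2,\ldots\}\cup\{\infty\}$.

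The verifications that $S$ is finitely generated and uncountable are inherited from the proof of Theorem~\ref{t:malcev-Sl}: the elements $a=1$ and $b=(0,1,0)$ already generate a dense subsemigroup of~$U$ that avoids~$0$, so they generate~$S$ as a topological semigroup, and $S$ contains the uncountable group $\{0\}\times G\times\{0\}\cong G$. Next, $M''$ is isomorphic to $\Om1N$ and is therefore a pro-\pv N semigroup whose unique idempotent is~$\infty$. I would then define $\varphi\colon S\to M''$ by $\varphi|_{M''}=\mathrm{id}$ and $\varphi|_K\equiv\infty$; since $M''$ and $K$ are clopen in~$S$ and $\infty$ is the zero of~$M''$, a routine case distinction shows that $\varphi$ is a continuous surjective homomorphism, with $\varphi^{-1}(\infty)=K\cup\{\infty\}$.

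The crux is to show that the pseudovariety \pv V generated by the profinite semigroup $\varphi^{-1}(\infty)=K\cup\{\infty\}$ is locally finite. I would prove that every closed subsemigroup of $K\cup\{\infty\}$ generated by $n$ elements has cardinality bounded by a function of~$n$ alone: in such a subsemigroup the first and third coordinates of its members lying in~$K$ take at most $n+1$ values each, while their middle coordinates lie in the closed subgroup of~$G$ generated by $O(n^2)$ prescribed elements, which is finite of size bounded in terms of~$n$ because \pv H is locally finite (equivalently, $G$ satisfies some identity $x^N=1$, by Zelmanov's theorem). A standard argument then converts this uniform bound into local finiteness of~\pv V. Granting it, $K\cup\{\infty\}$ is pro-\pv V, $M''$ is pro-\pv N, and the graph of~$\varphi$ is a closed relational morphism $S\relm M''$ with $\varphi^{-1}(e)$ pro-\pv V for the unique idempotent $e=\infty$ of~$M''$; by Theorem~\ref{t:Malcev}, $S$ is a pro-$(\pv V\malcev\pv N)$ semigroup. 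Being finitely generated and uncountable, $S$ witnesses that $\pv V\malcev\pv N$ is not locally countable, and since \pv V is locally finite this is exactly the assertion of the theorem.

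The only real work is the local finiteness of~\pv V; the remaining steps are routine once one realizes that the correct domain is $S=U\setminus\{0\}$ rather than~$U$ itself.
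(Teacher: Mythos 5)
Your proposal is correct, and its skeleton coincides with the paper's: both pass from $U$ to $S=U\setminus\{0\}$, observe that $S$ is a $2$-generated uncountable profinite semigroup, that $I=K\cup\{\infty\}$ is the preimage of the unique idempotent under the (Rees-quotient) map onto $M\setminus\{0\}\cong\Om1N$, and then invoke Theorem~\ref{t:Malcev}. Where you genuinely diverge is the crucial step of producing a \emph{locally finite} pseudovariety \pv V with $I$ pro-\pv V. The paper does this by naming a familiar one, $\pv{Sl}\vee\pv{CS(H)}$, and showing $I$ is pro-$(\pv{Sl}\vee\pv{CS(H)})$ via an explicit embedding of $I$ into $K\times\{0,1\}$ (identifying $\infty$ with $(e,1)$ for the idempotent $e=(\infty,f(\infty)^{-1},\infty)$, which acts on $K$ exactly as $\infty$ does). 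You instead take \pv V to be the pseudovariety generated by the finite continuous quotients of $I$ and prove local finiteness by a direct uniform bound on $n$-generated closed subsemigroups of $I$; your coordinate analysis is sound (first and third coordinates confined to the $\le n+1$ values coming from the generators and $\infty$, middle coordinates confined to the finite closed subgroup of $G$ generated by the $g_i$'s and the elements $f(y_ix_j)$, finite because \pv H is locally finite), and your ``standard argument'' does go through, but note it requires precisely the \emph{uniform} bound you state, not merely local finiteness of $I$: one lifts generators of an $n$-generated subsemigroup of a finite quotient of $I$ back to $I$, and then uses that there are only finitely many isomorphism types of pointed $n$-generated semigroups of bounded size to conclude $\Om n V$ is finite. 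Your approach buys self-containedness (no need to know that $\pv{Sl}\vee\pv{CS(H)}$ is locally finite, nor the embedding trick), at the cost of a computation; the paper's is shorter and identifies a concrete familiar bound for \pv V. Two small remarks: the appeal to Zelmanov is unnecessary (with $\pv H=\pv{Ab}_p$, local finiteness is immediate, and it is local finiteness, not an exponent identity, that you actually use), and your opening observation that $U$ itself cannot serve (monoids in $\pv V\malcev\pv N$ already lie in \pv V) is correct and a nice motivation, though not needed for the proof.
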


\begin{proof}
  Let $S$ be the profinite semigroup that is obtained from $U$ by
  removing the identity element. Note that, in the notation of the
  proof of Theorem~\ref{t:malcev-Sl}, $I=K\cup\{\infty\}$ is an ideal
  of $S$ and $S/I$ is isomorphic with \Om1N. Taking into account that
  $K$ is locally finite, it is easy to see that $I$ is also locally
  finite. By the proof of Theorem~\ref{t:malcev-Sl}, $S$ is not
  locally countable. Moreover, if \pv V is a pseudovariety such that
  $I$ is pro-\pv V, then $S$ is pro-$(\pv V\malcev\pv N)$ by
  Theorem~\ref{t:Malcev}. Thus, to complete the proof, it suffices to
  exhibit a locally finite pseudovariety \pv V for which $I$ is
  pro-\pv V. We claim that the locally finite pseudovariety
  $\pv{Sl}\vee\pv{CS(H)}$ has the required property.

  Consider the product $P=K\times\{0,1\}$ of $K$ with the two-element
  semilattice and the subset $T=(K\times\{0\})\cup\{(e,1)\}$, where
  $e=(\infty,f(\infty)^{-1},\infty)$. Since $e$ is an idempotent, $T$
  is a closed subsemigroup of~$P$. The profinite semigroup $P$ is
  pro-$(\pv{Sl}\vee\pv{CS(H)})$ since $K$ is pro-\pv{CS(H)}. To
  establish the claim, it thus suffices to show that $T$ is isomorphic
  with~$I$, which is a consequence of the following calculations: for
  $(x,g,y)$ in $K$, we have
  \begin{itemize}
  \item $e(x,g,y)
    =(\infty,f(\infty)^{-1}\cdot f(\infty)\cdot g,y)
    =(\infty,g,y)
    =\infty (x,g,y)$;
  \item dually, $(x,g,y)e=(x,g,y)\infty$.\popQED
  \end{itemize}
\end{proof}

\subsection{The atom \texorpdfstring{\pv{LZ}}{LZ}}
\label{sec:LZ}

Our treatment of the atom \pv{LZ} requires a more casuistic and
complicated construction than that of~\pv{Sl}. This subsection
provides a somewhat long and technical proof of the following result,
where the terminology is explained later.

\begin{Thm}
  \label{t:LZ}
  There is an aperiodic locally countable pseudovariety \pv U of
  semigroups of dot depth~1 such that $\pv U\malcev\pv{LZ}$ is not
  locally countable.
\end{Thm}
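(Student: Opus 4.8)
The plan is to imitate the strategy used for the atom $\pv{Sl}$ in Theorem~\ref{t:malcev-Sl}, but now building a semigroup that admits a continuous homomorphism onto a left-zero semigroup whose idempotent preimages generate an aperiodic, locally countable, dot-depth-$1$ pseudovariety $\pv U$. First I would exhibit a finitely generated uncountable profinite semigroup $S$ together with a continuous homomorphism $\varphi\colon S\to L$ onto a finite left-zero semigroup $L$ such that each $\varphi^{-1}(e)$ ($e\in E(L)=L$) is pro-$\pv U$ for a suitable aperiodic pseudovariety $\pv U$ of dot depth~$1$; by Theorem~\ref{t:Malcev}, $S$ is then pro-$(\pv U\malcev\pv{LZ})$, so this pseudovariety is not locally countable, while $\pv U$ itself must be shown to be locally countable. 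To get the $\pv{LZ}$-structure one wants a semigroup that is, roughly, a coproduct of several ``copies'' indexed by the left-zero semigroup, with multiplication remembering only the leftmost index; the classical $U(S,T,f)$-type construction of Section~\ref{sec:Sl} has to be modified so that the skeleton semigroup is not a semilattice but a left-zero semigroup (or a small aperiodic semigroup with an $\pv{LZ}$-quotient), while still producing an uncountable maximal subgroup or, more plausibly in the aperiodic setting, an uncountable $\pv{R}$-class or $\pv{L}$-class of a completely $0$-simple image. Since we need $\pv U$ aperiodic, the uncountability must come not from an infinite profinite group but from an infinite aperiodic completely simple or completely $0$-simple semigroup, e.g.\ one whose structure matrix/coordinates range over an uncountable profinite space such as $\widehat{\mathbb N}$.

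The key steps, in order, are as follows. First, fix a finite left-zero semigroup, say $L=\{\lambda,\rho\}$, and construct $S$ as a ``bi-indexed'' semigroup $S=M\uplus K$ where $M$ is a small aperiodic profinite semigroup generated by a finite set and carrying a map to $\widehat{\mathbb N}$, and $K$ is a completely simple (aperiodic) semigroup of the form $X\times Y$ with multiplication $(x,y)(x',y')=(x,y')$, ``glued'' to $M$ through an action that uses a continuous connecting map $f\colon M\to Y$ so that products of the form $a^i b a^j b\cdots$ realize uncountably many distinct elements of~$K$; the compact-$0$-dimensional coproduct topology together with Numakura's theorem makes $S$ profinite, exactly as in Section~\ref{sec:Sl}. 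Second, define $\varphi\colon S\to L$ by sending the ``left half'' to $\lambda$ and the ``right half'' to $\rho$ (or whichever assignment makes $\varphi$ a homomorphism onto $L$), and check that each $\varphi^{-1}(e)$ is built from copies of $M$ and~$K$ only, hence pro-$\pv U$ where $\pv U$ is a fixed aperiodic pseudovariety containing the relevant finite quotients. Third, verify that $S$ is finitely generated as a topological semigroup (two generators $a\in M$ and $b\in K$, in the spirit of the proof of Theorem~\ref{t:malcev-Sl}), and that $S$ is uncountable because the elements $b a^{i_1} b a^{i_2}\cdots b$ separate uncountably many points, using that $f$ is injective into an uncountable profinite space. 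Fourth, and this is the step requiring the most care, verify that the pseudovariety $\pv U$ generated by all the finite continuous quotients of the semigroups $\varphi^{-1}(e)$ is (a) aperiodic, (b) locally countable, and (c) of dot depth~$1$; aperiodicity and dot depth~$1$ should follow by choosing $M$ and the glueing data inside $\op\cdots\cl$ for an explicit dot-depth-$1$ basis (something like $\pv{B}_1$ or a known locally countable dot-depth-$1$ pseudovariety such as $\pv J*\pv{LZ}$-type classes), while local countability of $\pv U$ will presumably reduce, via Theorems~\ref{t:star}, \ref{t:join} and the results of Section~\ref{sec:LG}, to a combination of already-established locally countable building blocks (e.g.\ $\pv N$, $\pv{CS(H)}$ with $\pv H$ of finite exponent, and finite semidirect factors).

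I expect the main obstacle to be step four: pinning down an explicit aperiodic, dot-depth-$1$, \emph{locally countable} pseudovariety $\pv U$ that simultaneously (i) is large enough to contain all the finite quotients of the $\varphi^{-1}(e)$'s appearing in the construction and (ii) is small enough that local countability can actually be proved, presumably by reducing to Theorem~\ref{t:star} or to the $\pv N$- and $\pv{CS(H)}$-based results already in the paper. The dot-depth-$1$ constraint is delicate because dot depth is not obviously preserved by the operations that most easily yield local countability, so the construction must be tuned so that the pieces $M$ and~$K$, and the way they are glued, land inside an \emph{a priori} dot-depth-$1$ class. A secondary, more routine obstacle is the bookkeeping for associativity and for the homomorphism property of $\varphi$ once the skeleton is a left-zero semigroup rather than a semilattice; this is the ``somewhat long and technical'' part flagged in the statement, but it should go through by the same pattern as \cite[Lemma~3.1]{Almeida&Klima:2011a} and the coproduct-topology argument via Numakura's theorem used in Section~\ref{sec:Sl}.
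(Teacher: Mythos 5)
There is a genuine gap: your plan defers exactly the steps that constitute the proof, and the one concrete mechanism you do propose cannot work. You never exhibit a candidate \pv U, never prove its local countability, and never verify dot depth~1; moreover, the hoped-for reduction of local countability to Theorems~\ref{t:join} and~\ref{t:star} with building blocks such as \pv N and \pv{CS(H)} is blocked by the aperiodicity requirement, since for nontrivial \pv H the pseudovariety \pv{CS(H)} is not aperiodic, and an aperiodic (pro\nobreakdash-)completely simple semigroup is just a (profinite) rectangular band. This also undermines your proposed source of uncountability. In the \pv{Sl} case the uncountability of $U(M,G,f)$ comes from the fact that the closed \emph{subgroup} generated by the countable set $f(M)$ is the uncountable free pro-\pv H group $\Om MH$: the middle coordinate accumulates products. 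A rectangular band accumulates nothing: a product only remembers its extreme coordinates, so in any analogous glued semigroup the elements of the completely simple part $K$ that lie in the closed subsemigroup generated by a finite set have coordinates in continuous images of the compact countable space $M$ (together with finitely many generator coordinates), and hence form a countable set. Thus an aperiodic imitation of the $U(M,G,f)$ construction with a completely simple (or completely $0$-simple) middle cannot yield an uncountable finitely generated pro-$(\pv U\malcev\pv{LZ})$ semigroup, and the ``main obstacle'' you flag is not a technical tuning issue but the entire content of the theorem.

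For comparison, the paper does not build a profinite glued semigroup over a left-zero skeleton at all; it argues at the level of explicit finite semigroups. It presents finite aperiodic semigroups $S_k$ (dot depth~1 by Knast's pseudoidentity, Lemma~\ref{l:dd1}), lets \pv V be generated by them, and shows \pv V is not locally countable (Lemma~\ref{l:malcev-lz-uncountable}) by encoding each increasing sequence $s$ of integers in a two-generated pseudoword $w_s=\lim a^{s_1}b^{s_2}a^{s_3}b^{s_4}\cdots$, the various $S_k$ separating these limits; the uncountability thus comes from unbounded alternation of exponents, not from a completely simple piece. Then \pv U is generated by the subsemigroups $T_k\le S_k$, and its local countability (Proposition~\ref{p:malcev-countable-U}) is proved by a bespoke combinatorial argument (Lemma~\ref{l:malcev-finite-factors}, using first/last-occurrence counts, Higman's theorem and induction on the alphabet), not by closure properties of locally countable pseudovarieties. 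Finally, $\pv V\subseteq\pv U\malcev\pv{LZ}$ is obtained from the finite subsemigroups $R_k\le S_k\times S$ (with $S$ the two-element left-zero semigroup) generated by $(a,a)$ and $(b,b)$, the two idempotent preimages being copies of $T_k$ via the automorphism of $S_k$ exchanging $a$ and $b$ (Proposition~\ref{p:malcev-LZ}); no Numakura-type construction and no appeal to Theorem~\ref{t:Malcev} is needed. Even if your glued construction could be repaired, you would still have to supply the locally countable, aperiodic, dot-depth-1 pseudovariety \pv U admitting uncountable (necessarily non-finitely-generated) pro-\pv U semigroups, which is precisely what these lemmas provide.
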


Let $k$ be a positive integer. We consider the semigroup $S_k$ with
zero given by the following presentation
\begin{align*}
  \langle\ a,b \mid\ 
  & a^{k+1}=a^{k},\  b^{k+1}=b^{k},\ a^{k} b^{k}
    a^{k}=a^{k},\
    b^{k} a^{k} b^{k}=b^{k}, \\
  & a^nb^na=b^na^nb=0\  (n< k)  \
    \rangle\, .
\end{align*}
The elements of $S_k$ may be represented by words of the form 
\begin{equation}
\label{eq:canonical-form}
w=a^{\gamma_0} b^{\gamma_1} \dots a^{\gamma_{2\ell}}b^{\gamma_{2\ell+1}}.
\end{equation}
To describe the unique representation of each element different
from~$0$, we first denote $E_k=\{0,1,\dots , k\}$ the set of potential
exponents, since we may assume that $\{\gamma_0,\gamma_1, \dots ,
\gamma_{2\ell+1}\}\subseteq E_k$ as higher powers may be reduced by
applying the rules $a^{k+1}\mapsto a^k$ and $b^{k+1}\mapsto b^k$. And,
of course, we assume that only $\gamma_0$ and $\gamma_{2\ell+1}$ may
take the value $0$. Then we consider the relation $\prec_k$ on $E_k$
given by the formula $i \prec_k j$ if $i<j$ or $i=j=k$. We may assume
that for each $0\le i<2\ell-1$ we have $\gamma_i \prec_k
\gamma_{i+1}$, since otherwise we have $w=0$ by the defining relations
$a^nb^na=b^na^nb=0$. For the same reason, we may assume that
$\gamma_{2\ell-1} \prec_k \gamma_{2\ell}$ in case
$\gamma_{2\ell+1}\not =0$. Furthermore, we may assume that at most two
exponents take the value $k$, since otherwise we could shorten the
word by applying the transformations $a^{k} b^{k} a^{k}\mapsto a^{k}$
or $b^{k} a^{k} b^{k}\mapsto b^{k}$. This describes canonical forms of
words in $S_k$. The canonical form may be obtained from each word by
applying rules which are oriented by the defining relations from
longer words to shorter ones. To say it more formally, we may add a
new symbol 0 and the rules $a0,0a,b0,0b \mapsto 0$, and mention that
the resulting rewriting system is obviously confluent with canonical
forms described above. Finally, notice that the semigroup $S_k$ is
finite, since there are only finitely many canonical words.

Now, let $\pv V$ be the pseudovariety generated by the set of all such
semigroups $S_k$ with $k\ge1$. Notice that all semigroups $S_k$ are
aperiodic and, therefore, we have $\pv V\subseteq \pv A$. More
precisely, $S_k$ is a semigroup of ``dot depth~1''. The
Straubing-Th\'erien dot depth hierarchy is a filtration of~\pv A as an
infinite increasing chain of pseudovarieties, which has been
extensively studied. The natural definition of the hierarchy comes
from language theory (cf.~\cite{Pin:1997}) via Eilenberg's
correspondence between pseudovarieties of semigroups and so-called
``varieties of regular languages'' \cite{Eilenberg:1976}. The second
level of the hierarchy (that starts at level~0, given by the trivial
pseudovariety \pv I) is known as dot depth~1 and has been shown by
Knast \cite{Knast:1983a} to be defined by the pseudoidentity
\begin{equation}
  \label{eq:Knast}
  (exfye)^\omega xft(ezfte)^\omega=(exfye)^\omega(ezfte)^\omega
\end{equation}
where $e=u^\omega$, $f=v^\omega$ and $t,u,v,x,y,z$ are distinct
variables.

\begin{Lemma}
  \label{l:dd1}
  The semigroup $S_k$ is of dot depth~1.
\end{Lemma}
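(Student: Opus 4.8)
The plan is to verify the Knast pseudoidentity~\eqref{eq:Knast} directly in each $S_k$, using the explicit canonical forms described above. Since $S_k$ is finite, it suffices to show that for every assignment of the variables $t,u,v,x,y,z$ to elements of~$S_k$, writing $e=u^\omega$ and $f=v^\omega$, one has $(exfye)^\omega xft(ezfte)^\omega=(exfye)^\omega(ezfte)^\omega$. The first step is to understand the idempotents of~$S_k$: besides~$0$, the nonzero idempotents are exactly $a^k$ and $b^k$ (since $a^{k+1}=a^k$, $b^{k+1}=b^k$, and these are the only canonical words that are fixed by squaring), so for any $s\in S_k$ the element $s^\omega$ is one of $0$, $a^k$, $b^k$. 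Thus $e,f\in\{0,a^k,b^k\}$.

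Next I would reduce to cases according to the values of $e$ and $f$. If either $e=0$ or $f=0$, then both sides of~\eqref{eq:Knast} are~$0$ (each side contains a factor $e$ and a factor $f$), so the identity holds trivially. If $e=f$, say $e=f=a^k$ (the case $b^k$ being symmetric under swapping $a\leftrightarrow b$), then both sides are products of powers of $a^k$ with the fixed elements $x,y,z,t$ sandwiched in the pattern $a^k\,(\cdot)\,a^k$; here I would use the relation $a^kb^ka^k=a^k$ together with the canonical form to see that any element of the form $a^k s a^k$ equals either $a^k$ or~$0$ (it equals~$0$ precisely when $s$ forces a forbidden $a^nb^na$ or $b^na^nb$ pattern with $n<k$, otherwise the middle collapses), and from this both sides simplify to the same element. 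The remaining case is $\{e,f\}=\{a^k,b^k\}$, say $e=a^k$, $f=b^k$ (again the other assignment is symmetric). This is the genuinely delicate case.

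In the case $e=a^k$, $f=b^k$, the key observation is that $exfye=a^k x b^k y a^k$; by the canonical-form analysis this element is either $0$ or, after reduction, lies in the ``core'' $\{a^k, b^k, a^kb^k, b^ka^k, a^kb^ka^k\}$ — and in fact, examining which canonical words $w$ satisfy $a^k w a^k=w$ (equivalently are already flanked appropriately), one finds that $a^kxb^kya^k$ reduces either to~$0$ or to~$a^k$, because once you have the pattern $a^k\cdots b^k\cdots a^k$ with at least three blocks at level~$k$ the rule $a^kb^ka^k\mapsto a^k$ applies and absorbs everything in between (using also that any sub-block at a lower level with the wrong alternation gives~$0$). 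Hence $(exfye)^\omega$ is $0$ or $a^k$; likewise $(ezfte)^\omega=(a^k z b^k t a^k)^\omega$ is $0$ or $a^k$. If either is~$0$ the identity is immediate; if both equal~$a^k$, then the left side is $a^k\cdot xft\cdot a^k = a^k x b^k t a^k$, which by the same reduction is $0$ or $a^k$, and the right side is $a^k\cdot a^k=a^k$. The subtle point to nail down is that $a^kxb^kya^k=a^k$ (not $0$) forces $a^kxb^kta^k=a^k$ as well — i.e.\ that the hypothesis making $(exfye)^\omega=a^k$ already guarantees the left side equals~$a^k$ too. This follows because $a^kxb^ky a^k=a^k$ means in particular that $a^kxb^k$ reduces without hitting a zero and ``ends in'' a way compatible with a following $a^k$; the factor $ya^k$ is irrelevant to that (it can only be further absorbed), so replacing $ya^k$ by $ta^k$ cannot create a zero, and the reduction $a^kb^ka^k\mapsto a^k$ collapses the result to~$a^k$.

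The main obstacle will be the bookkeeping in this last case: carefully describing, in terms of the canonical form~\eqref{eq:canonical-form} and the order $\prec_k$, exactly when a sandwiched product $a^k s b^k s' a^k$ (or $a^k s a^k$) reduces to~$0$ versus to~$a^k$, and checking that the ``zero-or-not'' status of the three relevant products $a^kxb^kya^k$, $a^kzb^kta^k$, $a^kxb^kta^k$ is consistent in the way the identity requires. I would organize this by first proving a small lemma: for any $p,q\in S_k$, the element $a^k p\, a^k$ equals $a^k$ if the canonical form of $p$ ``contains no forbidden low-level alternation and its first/last blocks are compatible with flanking $a^k$'s'' and equals $0$ otherwise — and crucially that whether it is $0$ depends only on $p$ through a condition that is monotone under truncating $p$ on either side. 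Granting such a lemma (which is a finite, mechanical verification on canonical forms), the three cases above close and Knast's identity~\eqref{eq:Knast} holds in every $S_k$, hence in the generated pseudovariety, so each $S_k$ — and thus $\pv V$ — is of dot depth~1.
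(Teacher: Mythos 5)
Your proof breaks down at its first structural claim: it is not true that the only nonzero idempotents of $S_k$ are $a^k$ and $b^k$. For example, $a^kb^k$ is a nonzero idempotent, since $(a^kb^k)^2=a^k(b^ka^kb^k)=a^kb^k$, and it is an irreducible (canonical) word distinct from $a^k$ and $b^k$; likewise $b^ka^k$, and more generally every element $a^ib^ka^j$ with $i+j\ge k$ (and the dual words $b^ia^kb^j$) is idempotent. In Knast's pseudoidentity~\eqref{eq:Knast} the elements $e=u^\omega$ and $f=v^\omega$ range over \emph{all} idempotents of $S_k$ (every idempotent is its own $\omega$-power), so your case analysis ``$e,f\in\{0,a^k,b^k\}$'' omits most of the cases, e.g.\ $e=a^kb^k$, $f=b^ka^k$, and nothing in the proposal addresses them. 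This is a genuine gap, not a matter of bookkeeping left to the reader.

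There is also a local flaw in the one delicate case you do treat ($e=a^k$, $f=b^k$): you claim that $a^kxb^kya^k=a^k$ \emph{alone} forces $a^kxb^kta^k=a^k$, ``because replacing $ya^k$ by $ta^k$ cannot create a zero''. That is false: for $k\ge2$ take $t=ab$, so that $b^kta^k$ contains the factor $bab=0$. The correct argument must use both hypotheses: $a^kxb^kya^k\ne0$ forces $a^kxb^k=a^kb^k$ (the only nonzero canonical word starting with $a^k$ and ending with $b^k$), and $a^kzb^kta^k\ne0$ forces $b^kta^k=b^ka^k$, whence $a^kxb^kta^k=a^kb^ka^k=a^k$; the nonvanishing of $(ezfte)^\omega$ is indispensable. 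For comparison, the paper's proof avoids all canonical-form case analysis: it notes that all nonzero idempotents lie in a single regular \Cl D-class, so any product $esf$ (with $e,f$ idempotent) that occurs as a factor of a nonzero idempotent lies in that class and, by stability and aperiodicity, is the unique element of the intersection of the \Cl R-class of $e$ with the \Cl L-class of $f$; Green's Lemma then yields $esfte=e$ whenever $fte$ is also a factor of a nonzero idempotent, and \eqref{eq:Knast} follows uniformly in $e$ and $f$. If you want to salvage your approach, you would have to either extend the case analysis to all idempotents $e,f$ described above or prove a reduction lemma of the structural kind the paper uses.
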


\begin{proof}
  All nonzero idempotents of~$S_k$ lie in the same \Cl D-class $D$,
  which consists of all elements in canonical
  form~\eqref{eq:canonical-form} for which the first or second nonzero
  exponent is~$k$.
  It follows that, if a product of the form $esf$ is a factor of a
  nonzero idempotent, where $e$ and $f$ are idempotents, then $esf$
  belongs to~$D$. Thus, $esf$ is the only element in the intersection
  of the \Cl R-class of $e$ with the \Cl L-class of $f$. Hence, if
  $fte$ is also a factor of a nonzero idempotent, then the equality
  $esfte=e$ follows from Green's Lemma
  (cf.~\cite[Proposition~2.3.7]{Howie:1995}) and aperiodicity. This
  shows that $S_k$ satisfies the pseudoidentity~\eqref{eq:Knast}.
\end{proof}

The next step for the proof of Theorem~\ref{t:LZ} is the following
result.

\begin{Lemma}
  \label{l:malcev-lz-uncountable}
  The pseudovariety $\pv V$ is not locally countable.
\end{Lemma}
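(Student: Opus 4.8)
The plan is to exhibit an uncountable finitely generated pro-\pv V semigroup, and the natural candidate is a suitable quotient of $\Om2V$ obtained as an inverse limit involving the semigroups $S_k$. First I would observe that, by Reiterman's theorem and the definition of~\pv V, an element of~$\Om AV$ is determined by its images in all the finite semigroups $S_k$ (together with their subsemigroups and quotients). So it suffices to produce a family of homomorphisms $\{h_k : A^+ \to S_k\}_k$, for a fixed two-letter alphabet $A=\{a,b\}$, that is \emph{coherent} enough to assemble — via the compactness/diagonal argument used throughout the paper — into a continuous map from $\Om AV$ onto an uncountable space, or rather to show directly that $\Om AV$ has uncountably many elements by distinguishing continuum-many sequences of words through their $S_k$-images.

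The key idea is that the relations defining $S_k$ forbid the factors $a^nb^na$ and $b^na^nb$ for $n<k$ (sending them to $0$), but tolerate longer "near-palindromic" blocks built from the maximal power $a^k$, $b^k$. Thus a word is nonzero in $S_k$ essentially when its sequence of exponents is $\prec_k$-increasing up to the point where it plateaus at~$k$. I would then take, for each infinite binary sequence $\xi = (\xi_1,\xi_2,\ldots) \in \{0,1\}^{\mathbb N}$, an associated infinite word $w_\xi$ over $\{a,b\}$ — for instance a concatenation $c_1 c_2 c_3 \cdots$ where the block $c_m$ is a palindrome of length depending on $m$ whose central letter (or some encoded feature) records $\xi_m$, chosen so that any finite prefix is a canonical nonzero word in $S_k$ once $k$ is large enough. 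The point is that two distinct sequences $\xi \ne \xi'$ first differ at some coordinate $m$, and by padding the blocks so that the $m$-th block has both powers reaching exactly $k$ for a suitable $k$, the corresponding finite prefixes of $w_\xi$ and $w_{\xi'}$ are mapped to \emph{distinct} nonzero elements of~$S_k$. Passing to accumulation points of the sequences of prefixes in the profinite semigroup $\Om AV$ (using that $\Om AV$ is compact), one obtains continuum-many distinct pseudowords, so $\Om AV$ is uncountable and hence \pv V is not locally countable.

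The main obstacle is the bookkeeping in the combinatorial construction: one must choose the blocks $c_m$ and the lengths carefully so that (i) every finite prefix of every $w_\xi$ is in canonical form and nonzero in the relevant $S_k$ (this forces the exponent sequence to be $\prec_k$-increasing, so the palindromic blocks must be arranged in a strictly growing pattern, with the plateau at~$k$ used sparingly — at most twice — per "window"), and (ii) the first point of disagreement between $w_\xi$ and $w_{\xi'}$ is genuinely visible in some $S_k$, i.e.\ survives reduction to canonical form rather than collapsing to~$0$ or being absorbed by the $a^{k+1}\mapsto a^k$ rules. I would handle (ii) by arranging that up to and including the $m$-th block both distinguishing variants remain canonical of the \emph{same} shape except for one exponent among $\{c_1,\ldots,c_m\}$, which the canonical form of $S_k$ preserves. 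A secondary technical point is to confirm that the resulting accumulation points are pairwise distinct as elements of~$\Om AV$ and not merely as sequences; this is where one invokes that \pv V is generated by the $S_k$, so that the continuous homomorphism $\Om AV \to S_k$ already separates the relevant prefixes and therefore (by continuity and the choice of convergent subsequences) separates their limits.
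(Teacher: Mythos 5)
Your overall strategy is the same as the paper's: produce continuum-many pseudowords over $\{a,b\}$ in $\Om AV$ (or in $\Om AA$, pushed forward), indexed by infinite sequences, and separate any two of them by a single well-chosen $S_k$ determined by the first coordinate where the indexing sequences disagree. However, as written there is a genuine gap: the combinatorial construction that carries the whole lemma is never actually produced. Your proposed encoding by ``palindromic blocks whose central letter records $\xi_m$'' conflicts with the canonical forms of $S_k$ — a nonzero element must have its exponent sequence $\prec_k$-increasing, so any genuinely palindromic block (exponents rising then falling below $k$) is annihilated in every $S_k$ with $k$ large; you acknowledge this and retreat to ``strictly growing patterns with the plateau at $k$ used sparingly,'' but then the mechanism by which the bit $\xi_m$ is recorded, and by which the first disagreement produces two \emph{distinct nonzero} images in one specific $S_k$, is exactly the bookkeeping you defer — and that bookkeeping \emph{is} the proof. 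The paper resolves it with a concrete choice: for an increasing sequence $(s_i)$ take $w_i=a^{s_1}b^{s_2}\cdots a^{s_{2i-1}}b^{s_{2i}}$, and for two sequences first differing at index $j$ with $s_j<t_j$ take $k=s_j+1$; then the images stabilize at $a^{s_1}\cdots b^{s_j}a^kb^k$ versus $a^{s_1}\cdots a^{s_{j-1}}b^k$, which are distinct canonical forms.

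A second, related weakness is the limit step. You pass to accumulation points of the prefixes and argue that since some $\Om AV\to S_k$ ``separates the relevant prefixes,'' it ``separates their limits.'' That inference is not valid in general: having distinct images at each finite stage says nothing about accumulation points unless the two image sequences in $S_k$ eventually stabilize at distinct values (or at least have disjoint sets of accumulation points). In the paper this is not left implicit: convergence of $(w_i)$ is first proved in $\Om AA$ using aperiodicity (eventual stabilization of $\alpha(w_i)$ in every finite aperiodic $S$), and stabilization of the $S_k$-images at the two distinct values above follows from the absorption relations $a^kb^ka^k=a^k$ and $b^ka^kb^k=b^k$ once all later exponents reach $k$. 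Without an explicit block construction for which you verify both the nonvanishing/stabilization and the visibility of the first disagreement in a single $S_k$, the argument does not yet establish uncountability of $\Om AV$.
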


\begin{proof}
  We fix the alphabet $A=\{a,b\}$ and consider the relatively free
  profinite semigroups $\Om A A$ and $\Om A V$. We denote $\eta : \Om
  A A \rightarrow \Om A V$ and $\psi_k : \Om A V \rightarrow S_k$
  ($k\ge1$) the natural continuous homomorphisms; all these
  homomorphisms map the generating set $A$ identically. We describe an
  uncountable subset $X$ in $\Om A A$ such that the restriction of
  $\eta$ to $X$ is an injective mapping.

  Let $s=(s_i)_{i\in \mathbb N}$ be an increasing sequence of natural
  numbers. For each such $s$, we consider the following sequence of
  words: we put $w_1=a^{s_1}b^{s_2}$ and, for each $i>1$, we let
  $w_{i}=w_{i-1}a^{s_{2i-1}}b^{s_{2i}}$. We claim that the sequence
  $(w_i)_{i\in\mathbb N}$ converges in $\Om A A$. Indeed, let $\alpha
  : \Om A A \rightarrow S$ be a continuous homomorphism into a finite
  aperiodic semigroup~$S$. Then, for some $n$, we have $u^n=u^{n+1}$
  for every element $u\in S$. In particular, we have
  $\alpha(a)^m=\alpha(a)^n$ and $\alpha(b)^m=\alpha(b)^n$ for every
  $m>n$. Thus, for such $m>n$, we have $\alpha(w_m)=\alpha\bigl(w_n
  (a^nb^n)^{m-n}\bigr)$ and therefore $\alpha(w_m)=\alpha\bigl(w_n
  (a^nb^n)^{n}\bigr)$ for every $m\ge 2n$ and we see that the sequence
  $(\alpha(w_i))_{i\in\mathbb N}$ is eventually constant. Let $w_s$ be
  the limit in~\Om AA of the converging sequence $(w_i)_{i\in\mathbb
    N}$.
  Let $X$ be the set of all $w_s$'s obtained in this way.

  Let $s$ and $t$ be distinct increasing sequences of natural numbers.
  Let $j$ be the minimum index such that $s_j\ne t_j$. Without loss of
  generality, we may assume that $s_j<t_j$. Now we put $k=s_j+1$ and
  consider the semigroup $S_k$. Let us assume first that $j$ is an
  even index. We see that, for $i$ such that $2i>j$, we have $\psi_k
  (\eta(w_i))= a^{s_1}b^{s_2}\dots a^{s_{j-1}} b^{s_j} a^k b^k$. Let
  $w'_i=a^{t_1}b^{t_2} \dots a^{t_{2i-1}}b^{t_{2i}}$ be the words in
  the given sequence converging to $w_t$. Since $t_j\ge k$, we get
  that, for every $i$ such that $2i>j$, the equality
  $\psi_k(\eta(w'_i))= a^{s_1}b^{s_2}\dots a^{s_{j-1}} b^{k}$ holds.
  Therefore, the element $\psi_k (\eta(w_s))$ of~$S_k$ is equal to
  $a^{s_1}b^{s_2}\dots a^{s_{j-1}} b^{s_j} a^k b^k$, while $\psi_k
  (\eta(w_t))$ is equal to $a^{s_1}b^{s_2}\dots a^{s_{j-1}} b^{k}$.
  Thus, $\psi_k (\eta(w_s))\not=\psi_k(\eta(w_t))$ in the case of even
  $j$. The case of odd $j$ may be treated in the same way. Hence, we
  have $\eta(w_s)\not=\eta(w_t)$ and we conclude that $X$ is
  uncountable and the restriction of~$\eta$ to the set~$X$ is an
  injective mapping, as claimed.
\end{proof}

For each $k$, let $T_k$ be the subsemigroup of $S_k$ consisting of $0$
and the elements in canonical form~\eqref{eq:canonical-form} with
$\gamma_0\not=0$. Let $\pv U$ be the pseudovariety generated by all
finite semigroups $T_k$ ($k\ge2$). Thus, we have $\pv U\subseteq \pv V
\subseteq \pv A$.

Let $C$ be a finite set. It is known that, for arbitrary $u\in\Om C A$
and every factor $x\in C$ of~$u$, there are $u',u''\in (\Om CA)^1$ such
that $u=u'xu''$ and $x$ is not a factor of~$u'$, where $u'$ is
uniquely determined (see
\cite{Almeida&Trotter:1999a}).\footnote{Actually, by
  ``equidivisibility'' of the pseudovariety \pv A, $u''$ is also
  unique. See \cite{Almeida&ACosta:2017}, where equidivisible
  pseudovarieties are characterized.} We talk about the \emph{first
  occurrence} of $x$ in $u$. We denote by $\flat^x_y(u)$ the
\emph{number of occurrences} of an element $y$ of~$C$ in $u'$: if it
exists, $\flat^x_y(u)$ is the maximum non-negative number~$n$ such
that $y^n$ is a subword of~$u'$; otherwise, $\flat^x_y(u)$ is the
symbol~$\infty$. Thus, $\flat^x_y(u)$ is defined for any pair of
distinct letters $x,y$ such that $x$ occurs in~$u$. Dually, if we
consider the last occurrence of $x$ in $u$ then we get $u=v'xv''$
where $x$ does not appear in the uniquely determined $v''$; we
denote by $\sharp^x_y(w)$ the number of occurrences of $y$ in~$v''$.
Furthermore, for each pair of not necessarily distinct letters $x$,
$y$ we consider the set $M_u(x,y)$ of all words $w\in C^*$ in which
$x$ and $y$ do not appear and such that $xwy$ is a factor of $u$. By
Higman's theorem \cite{Higman:1952}, for every set of words the set of
its minimal members in the subword ordering is finite. We denote this
finite subset of $M_u(x,y)$ by $m_u(x,y)$. Notice that $m_u(x,y)$ is
empty if and only if $u$ has no finite factor of the form $xwy$; also,
$m_u(x,y)=\{1\}$ if and only if $xy$ is a factor of $u$.

The following technical lemma plays a crucial role in our
investigation of the pseudovariety \pv U.

\begin{Lemma}
  \label{l:malcev-finite-factors}
  Let $u,v\in \Om C A$ be a pair of infinite pseudowords satisfying
  the following assumptions:
  \begin{enumerate}[(i)]
  \item\label{item:l:malcev-finite-factors-1} Both $u$ and $v$
    contain the word $x^3$ as a subword for every $x\in C$.
  \item\label{item:l:malcev-finite-factors-2} For every pair $x,y\in
    C$ of distinct letters, we have $\flat^x_y(u)=\flat^x_y(v)$ and
    $\sharp^x_y(u)=\sharp^x_y(v)$.
  \item\label{item:l:malcev-finite-factors-3} For each $x,y\in C$ we
    have $m_u(x,y)=m_v(x,y)$.
  \end{enumerate}
  Then $\pv U\models u=v$.
\end{Lemma}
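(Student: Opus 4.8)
The plan is to reduce the pseudoidentity to the generators of \pv U and then exploit the extremely rigid structure of the semigroups $T_k$.

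First I would observe that, since \pv U is generated by the finite semigroups $T_k$ with $k\ge2$, it suffices to show that each $T_k$ satisfies $u=v$. So fix $k\ge2$ and a continuous homomorphism $\varphi:\Om CA\to T_k$, write $s_c=\varphi(c)\in T_k$ for $c\in C$, and aim to prove $\varphi(u)=\varphi(v)$. By hypothesis~(i) every letter of~$C$ occurs in both $u$ and~$v$, so if $s_c=0$ for some $c\in C$ then $\varphi(u)=0=\varphi(v)$ and we are done; hence I would assume henceforth that every $s_c$ is a nonzero element of~$T_k$, that is, a word in canonical form~\eqref{eq:canonical-form} beginning with a nonzero power of~$a$.

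The core of the argument is to show that $\varphi(u)$ is determined by a bounded amount of combinatorial information about~$u$ which, by (i)--(iii), coincides with the corresponding information for~$v$. First I would record the relevant facts about $T_k$: its nonzero elements are ``increasing staircases'', whose exponent sequence is $\prec_k$-increasing apart from a bounded trailing part, so there are only finitely many of them, each with at most $k+1$ alternating blocks; in particular $E(T_k)=\{0,a^k,a^kb^k\}$. Computing $\varphi(u)$ as the eventual value of $\varphi(u_n)$ for a sequence of words $u_n\to u$ and tracking the canonical form of the partial products $s_{c_1}\cdots s_{c_j}$, one sees that the exponent sequence can only climb, through at most $k$ stages, before the product either collapses to~$0$ or stabilizes at maximal height. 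Combining this with Theorem~\ref{t:Brown-profinite}, which writes $u$ as an algebraic product of letters and elements of $\varphi^{-1}(E(T_k))$, and with the Ramseyan factorization machinery of Section~\ref{sec:FFT} (applicable here because hypothesis~(i) guarantees that every letter is present with enough multiplicity), one obtains that, when $\varphi(u)\ne0$, it has the form $L\,e\,R$ with $e\in\{a^k,a^kb^k\}$, where $L$ is an initial staircase read off from a bounded ``head'' of~$u$ and $R$ a terminal staircase read off from a bounded ``tail''.

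It then remains to identify each ingredient with the invariants in (i)--(iii). Whether $\varphi(u)=0$ is governed by the occurrence inside the concatenation $s_{c_1}s_{c_2}\cdots$ of a forbidden factor $a^nb^na$ or $b^na^nb$ with $n<k$; since this is a local two-letter condition on $S_k$ and we work inside the tame subsemigroup~$T_k$, I expect it to be testable on the finite sets of minimal factors $m_u(x,y)$ together with the boundary factors controlled by the first/last-occurrence counts $\flat^x_y(u)$ and $\sharp^x_y(u)$, so that by (ii) and~(iii) the verdict for $u$ and for $v$ agree. The initial staircase $L$ depends only on how $\varphi$ acts up to the point where every letter has appeared, which is exactly what the $\flat^x_y(u)$ record; dually $R$ is recovered from the $\sharp^x_y(u)$; and the intermediate idempotent~$e$ (whether the generic internal factor of~$u$ evaluates to something ending in a power of~$a$ or of~$b$) is likewise read off from~$m_u$. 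I expect the hard part to be precisely this last matching --- proving rigorously that ``$\varphi(u)=0$'' and the value of~$e$ are determined by the $m_u(x,y)$ together with $\flat$ and~$\sharp$, and not by finer factor data --- which requires a careful analysis of which factors of~$u$ can trigger a forbidden pattern under an arbitrary~$\varphi$, and is exactly where the restriction from $S_k$ to~$T_k$ (ruling out the uncountably-distinguishing behaviour of Lemma~\ref{l:malcev-lz-uncountable}) is essential. Once the three ingredients are matched, $\varphi(u)=\varphi(v)$, hence $T_k\models u=v$ for all $k\ge2$, and therefore $\pv U\models u=v$.
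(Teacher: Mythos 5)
Your setup (reduce to a continuous homomorphism $\varphi\colon\Om CA\to T_k$, dispose of the case where some letter maps to $0$) matches the paper, but the heart of the lemma is exactly the part you explicitly leave as an expectation: that the vanishing of $\varphi(u)$ and the value of the nonzero alternative are determined by the data in (i)--(iii). As written, the proposal is a plan rather than a proof. The key missing idea is how hypothesis~(i) is used structurally: since $x^3$ is a subword of \emph{both} $u$ and $v$, if the canonical form of some $\varphi(x)$ has a $b$-exponent $\gamma<k$, then writing $u=u_0xu_1xu_2xu_3$ produces two factors $ab^{\gamma}a$ in $\varphi(xu_1xu_2x)$, so $\varphi(u)=0$ and likewise $\varphi(v)=0$; hence one may assume every letter maps either to a power of $a$ or to an element $a^{\gamma_0}b^ka^{\gamma_2}$. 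After this normalization, hypothesis~(ii) identifies the images of the maximal prefix and suffix of $u$ and $v$ over the set $X$ of letters mapping to $a$-powers (they are the same powers $a^m$, $a^n$ for both), so $\varphi(u)$ and $\varphi(v)$ can each only be $0$ or one and the same element $a^{m+\gamma_0}b^ka^{j}$. Finally, if exactly one of them were $0$, a zero witness is a \emph{short} factor $y'w'z'$ with $y',z'\notin X$, $w'\in X^*$ and small total $a$-exponent; by compactness such a factor occurs in $u$ itself, and hypothesis~(iii) transfers it to $v$ via a Higman-minimal subword $w''\in m_u(y',z')=m_v(y',z')$ --- the crucial point being that passing to a subword of $w'$ can only lower the $a$-exponent, so the product is still $0$. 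None of these three steps appears in your sketch, and without them the ``matching'' you defer is not established.

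In addition, two of the concrete claims you rely on are off. First, $E(T_k)$ is strictly larger than $\{0,a^k,a^kb^k\}$: for instance $a^kb^ka^j$ and $a^ib^ka^k$ are idempotent, so the asserted decomposition $\varphi(u)=L\,e\,R$ with $e\in\{a^k,a^kb^k\}$ is not correct as stated (and in the actual proof no such decomposition is needed; the nonzero value is simply $a^{m+\gamma_0}b^ka^j$, or $a^k$ when all letters map to powers of $a$). Second, the appeal to Theorem~\ref{t:Brown-profinite} and the factorization-forest machinery is both unjustified and unnecessary here: those results yield algebraic generation by idempotent preimages, not the bounded ``head/tail'' reading of $\varphi(u)$ you want, and hypothesis~(i) has no bearing on their applicability. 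The argument that actually works is the elementary canonical-form analysis in $T_k$ outlined above.
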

  
\begin{proof}
  Let $\varphi :C \rightarrow T_k$ be an arbitrary mapping with
  $k\ge2$; we denote by the same symbol the unique extension to a
  continuous homomorphism $\varphi: \Om C A\rightarrow T_k$. We want
  to show that $\varphi(u)=\varphi(v)$. Let us assume for a
  contradiction that $\varphi(u)\not=\varphi(v)$.

  If $\varphi (x)=0$ for some $x\in C$ then $\varphi(u)=\varphi(v)=0$.
  Thus, we may assume that, for each $x\in C$, the image $\varphi(x)$
  is a canonical word of the form~\eqref{eq:canonical-form} with
  $\gamma_0\not=0$.
  Assume for a moment that some exponent $\gamma_{2i+1}$ of $b$ in
  $\varphi(x)$ is smaller than $k$. Since $x^3$ is a subword of $u$,
  we may factorize $u$ in the following way $u=u_0 x u_1 x u_2 x u_3$
  for some pseudowords $u_0,\dots ,u_3\in (\Om C A)^1$. Since the
  canonical forms of both $\varphi(u_1 x)$ and $\varphi(u_2 x)$ start
  with $a$, a representation of $\varphi(x u_1 x u_2 x)$ in the
  form~\eqref{eq:canonical-form} contains two factors of the form
  $ab^{\gamma_{2i+1}}a$, which means that $\varphi(u)=0$. So, we reach
  a contradiction and from hereon we may assume that $\varphi(x)$ is
  of the form $a^{\gamma_0}b^ka^{\gamma_{2}}$ or it is a power of $a$.
  Denote by $X$ the set of all $x\in C$ such that $\varphi(x)$ is a
  power of $a$. If all letters are mapped to powers of $a$, then we see
  that $\varphi(u)=\varphi(v)=a^k$ since both $u$ and $v$ are infinite
  pseudowords. Hence, we may assume that there is $y\in C\setminus X$
  such that there is a factorization $u=u_0yu'$ with $u_0\in\Om XA$.
  By~(\ref{item:l:malcev-finite-factors-1}), there is also a
  factorization $v=v_0yv'$ where $y$ is not a factor of~$v_0$.
  By~(\ref{item:l:malcev-finite-factors-2}), the factors of~$v_0$
  from~$C$ are the same as those in~$u_0$. So, $y$~is also the
  leftmost factor of~$v$ from~$C\setminus X$. Taking also into account
  the left/right dual argument, we conclude that we may assume that
  there are letters $y,z\in C\setminus X$ and factorizations
  $u=u_0yu_1zu_2$ and $v_0yv_1zv_2$ such that $u_0,v_0,u_2,v_2\in (\Om
  X A)^1$ are uniquely determined. By the
  assumption~(\ref{item:l:malcev-finite-factors-2}), we get that
  $\varphi(u_0)=\varphi(v_0)=a^m$ and $\varphi(u_2)=\varphi(v_2)=a^n$
  for some natural numbers $m$ and $n$. Then there are just two
  possible values of $\varphi(u)$ and $\varphi(v)$, namely $0$ and
  $a^{m+\gamma_0}b^k a^j$ with $j=k$ or $j=\gamma_2+n$ where
  $\gamma_0$ is the exponent of the first $a$ in $\varphi(y)$ and
  $\gamma_2$ is the exponent of the last $a$ in $\varphi(z)$.

  Now, since the images $\varphi(u)$ and $\varphi(v)$ are distinct, we
  see that exactly one of them is equal to~$0$. We may assume without
  loss of generality that it is $\varphi(u)$. Let $(u_n)_n$ be a
  sequence of words from $C^+$ converging to~$u$ in~\Om CA. Since
  $\varphi$~is continuous, without loss of generality we may assume
  that $\varphi(u_n)=0$ for every $n$. Since every letter
  in~$C\setminus X$ maps to an element of~$T_k$ of the form
  $a^{\gamma_0}b^ka^{\gamma_2}$ and those in $X$ map to powers of~$a$,
  we deduce that there is a finite factor $y'w'z'$ of $u_n$ such that
  $\varphi(y')=a^{\alpha_0}b^ka^{\alpha_2}$, $\varphi(z')=a^{\beta_0}
  b^k a^{\beta_2}$, $\varphi(w')=a^m$ (where we let $a^0=1$), and
  $\alpha_2+m+\beta_0<k$. As there are only finitely many
  possibilities for such factors $y'w'z'$, for $|w'|<k$, there are
  infinitely many values of~$n$ for which we may choose the same such
  factor $y'w'z'$. Hence, there is also such a factor $y'w'z'$ of~$u$.
  Using the hypothesis~(\ref{item:l:malcev-finite-factors-3}) for the
  pair of letters $y',z'$, we know that there is a word $w''$ which is
  a subword of $w'$ and such that $y'w''z'$ is a factor of $v$.
  Clearly, $\varphi(w'')=a^n$ with $n\le m$ and, therefore, we have
  $\varphi(v)=0$, which contradicts the assumption at the beginning of
  the paragraph.
\end{proof}

We are now ready to establish the following result, which is part of
the proof of Theorem~\ref{t:LZ}.

\begin{Prop}
  \label{p:malcev-countable-U}
  The pseudovariety $\pv U$ is locally countable.
\end{Prop}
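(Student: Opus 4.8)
The plan is to show that $\Om CU$ is countable for every finite set~$C$; by the observation in Section~\ref{sec:prelims} this is precisely local countability of~$\pv U$. Since $\pv U\subseteq\pv A$, there is a natural continuous onto homomorphism $\eta\colon\Om CA\to\Om CU$ sending each generator to itself, so it suffices to produce a \emph{countable} subset $Z$ of~$\Om CU$ such that $\Om CU$ is the subsemigroup algebraically generated by~$Z$; for then $\Om CU=\langle Z\rangle$ is countable.

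Call a pseudoword over a subset $D\subseteq C$ \emph{tame} if it is infinite, has content equal to~$D$, and contains $x^3$ as a subword for every $x\in D$, and let $\Cl T_D$ denote the set of tame pseudowords over~$D$. The first step is to prove, by induction on~$|C|$, that every $w\in\Om CA$ is a finite product of elements of $C\cup\bigcup_{D\subseteq C}\Cl T_D$. A finite pseudoword is a product of its letters. If $w$ is infinite with content $D\subsetneq C$, then $w\in\Om DA$ and the induction hypothesis applies. If $w$ is infinite with content~$C$ and is itself tame, there is nothing to prove. In the remaining case the set $Y=\{x\in C:x^3\text{ is not a subword of }w\}$ is nonempty, and each letter of~$Y$ occurs at most twice in~$w$ (else it would contribute an $x^3$); using the uniqueness of first occurrences in pseudowords over~$\pv A$ \cite{Almeida&Trotter:1999a}, one splits off the occurrences of letters of~$Y$ one at a time and obtains a factorization $w=w_0\,y_1\,w_1\cdots y_p\,w_p$ with $p\le 2|C|$, each $y_i\in Y$, and each $w_i$ a (possibly empty) pseudoword over~$C\setminus Y$. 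Since $|C\setminus Y|<|C|$, the induction hypothesis applies to every~$w_i$, and the claim follows.

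The second step estimates the image of $\bigcup_{D\subseteq C}\Cl T_D$ in~$\Om CU$. To a tame pseudoword~$w$ over~$D$ associate its \emph{type}: the family of values $\flat^x_y(w),\sharp^x_y(w)\in\mathbb N\cup\{\infty\}$ for ordered pairs of distinct letters $x,y\in D$, together with the finite sets $m_w(x,y)\subseteq C^*$ for $x,y\in D$. By Lemma~\ref{l:malcev-finite-factors} — whose hypothesis~(\ref{item:l:malcev-finite-factors-1}) is exactly tameness — any two tame pseudowords $u,v$ over~$D$ with the same type satisfy $\pv U\models u=v$, hence have the same image under~$\eta$. A type is a finite tuple each of whose entries ranges over a countable set (there are only countably many finite sets of words over~$C$), so there are only countably many types; consequently the image of $\bigcup_{D\subseteq C}\Cl T_D$ in~$\Om CU$ is countable. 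Taking $Z$ to be this set together with the finitely many images of the letters of~$C$, the first step gives $\Om CU=\langle Z\rangle$, so $\Om CU$ is countable and $\pv U$ is locally countable.

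The step I expect to be the real obstacle is the structural decomposition of the second paragraph: making the iterated first‑occurrence factorizations precise and bounded, so that every pseudoword over~$C$ is genuinely built from letters and tame pseudowords over subsets of~$C$ in \emph{finitely} many multiplication steps. Granting that, the substance of the argument is Lemma~\ref{l:malcev-finite-factors} together with the crucial finiteness remark that $m_w(x,y)$ ranges over the countable set of finite subsets of~$C^*$.
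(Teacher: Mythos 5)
Your argument is correct, and it reaches the conclusion by a route that differs from the paper's mainly in the inductive decomposition and in how countability is extracted at the end. The paper also inducts on $|C|$ and also rests everything on Lemma~\ref{l:malcev-finite-factors} plus the Higman-based finiteness of $m_u(x,y)$, but its case split is ``$u$ contains every word of $C^*$ as a subword'' versus not: in the bad case it factors $u$ along a shortest missing subword $a_1\cdots a_{n+1}$ as $u=u_1a_1u_2\cdots a_nu_{n+1}$ with $a_i$ absent from $u_i$, and it assembles a countable set $W\subseteq\Om CA$ of \emph{representatives} so that $|\Om CU|\le|W|$; in the good case it applies the lemma exactly as you do, only to the smaller class of pseudowords containing all words as subwords. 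You instead split off the (at most $2|C|$) occurrences of letters whose cube is not a subword, which yields a bounded-length factorization into letters and ``tame'' pseudowords over proper subalphabets, and you conclude via countable \emph{algebraic generation} of $\Om CU$ rather than via representatives; since the lemma only needs cubes of all letters, your tame class is exactly what it can handle, so nothing is lost. The step you flag as the obstacle is genuine but not a gap in substance: it needs the cited first-occurrence factorizations over \pv A, prefix comparability (equidivisibility of \pv A, noted in the paper's footnote) to pick the leftmost occurrence among the cube-deficient letters, the remark that splitting a letter off three times would produce its cube as a subword (giving the bound $p\le2|C|$), and the standard fact that a pseudoword of content contained in $B$ lies in the canonical copy of $\Om BA$ (via the content homomorphism, as $\pv{Sl}\subseteq\pv A$); this is the same kind of factorization assertion the paper itself makes without further detail in its first case, so your proof is at a comparable level of rigor and is a valid alternative to Proposition~\ref{p:malcev-countable-U}.
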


\begin{proof}
  Let $C$ be a finite set and consider $\Om C U$ and $\Om C A$. We
  prove inductively with respect to the size of $C$ that $\Om C U$ is
  countable. Since $\pv U\subseteq \pv A$, we see that $\Om C U$ is
  countable for every singleton set $C$. We next assume that $\Om B U$
  is countable for every proper subset $B$ of $C$. We show that there
  is a countable subset $W$ of $\Om C A$ such that for each $u\in \Om
  C A$ there is $w\in W$ such that $\pv U \models u=w$. This proves
  that the cardinality of $\Om C U$ is at most the cardinality of the
  set $W$, whence \Om CU is countable.

  At first, assume that $u$ does not contain all words from $C^*$ as
  subwords and consider some shortest word $v$ that is not a subword
  of~$u$. Then, there is a factorization $v=a_1a_2\dots a_na_{n+1}$
  where $a_1a_2\dots a_n$ is a subword of~$u$. Moreover, there is a
  factorization of $u$ of the form $u=u_1a_1u_2a_2\dots a_nu_{n+1}$
  such that each letter $a_i$ does not occur in $u_i$ for $1\le i \le
  n+1$. Therefore, each $u_i$ belongs to $\Om{B}A$ for some proper
  subset $B$ of~$C$. By induction hypothesis, there is a countable set
  $W_B\subseteq \Om{B}A$ of representatives of elements of $\Om{B}U$.
  We construct the set $\overline{W}$ of all pseudowords of the form
  $w_0a_1w_1a_2\dots a_nw_{n+1}$ with $w_i\in\Om{B_i}A$, where
  $B_i=C\setminus\{a_i\}$ ($i=1,\ldots,n+1$). The set $\overline W$ is
  countable and for each pseudoword $u$ which does not contain all
  words from $C^*$ as subwords there is a pseudoword
  $w\in\overline W$ such that $\pv U\models u=w$.

  Thus, it remains to deal with the case when $u$ contains all words
  from $C^*$ as subwords. Denote by $Z$ the set of all such
  pseudowords $u$. We consider an equivalence relation $\sim$ on $Z$
  such that $u\sim v$ if $u$ and $v$ satisfy the condition
  (\ref{item:l:malcev-finite-factors-2})
  and~(\ref{item:l:malcev-finite-factors-3}) of
  Lemma~\ref{l:malcev-finite-factors}; note that the definition of $Z$
  entails that its elements are infinite and that
  condition~(\ref{item:l:malcev-finite-factors-1}) of
  Lemma~\ref{l:malcev-finite-factors} is satisfied for every pair of
  elements of~$Z$. Thus, the relation $\sim$ is in fact an
  intersection of finitely many equivalence relations on $Z$ each of
  which has countably many classes. Hence, also $\sim$ has countably
  many classes and we may choose from each class one representative
  and collect them into the countable subset $W'$ of $Z\subseteq \Om C
  A$. By Lemma~\ref{l:malcev-finite-factors}, we deduce that for each
  $u \in Z$ there is $w\in W'$ such that $\pv U\models u=w$. We have
  proved that there is a countable set $W=\overline W \cup W'$ with
  the required property.
\end{proof}

\begin{Prop}
  \label{p:malcev-LZ}
  The pseudovariety $\pv U\malcev \pv{LZ}$ is not locally countable.
\end{Prop}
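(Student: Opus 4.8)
The plan is to show that $\pv U\malcev\pv{LZ}$ contains the pseudovariety generated by all the semigroups $S_k$ ($k\ge2$), and then to invoke (the proof of) Lemma~\ref{l:malcev-lz-uncountable}, which already witnesses that such a pseudovariety is not locally countable.

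The key step is to realize each $S_k$ ($k\ge 2$) as a member of $\pv U\malcev\pv{LZ}$. Observe first that, besides $T_k$, the ``dual'' set $T'_k$ consisting of $0$ together with the elements whose canonical form begins with $b$ is a subsemigroup of $S_k$ isomorphic to $T_k$, via the automorphism of $S_k$ that interchanges $a$ and $b$ (such an automorphism exists because the defining presentation is symmetric in $a$ and $b$); in particular $T'_k\in\pv U$. Moreover both $T_k$ and $T'_k$ are right ideals of $S_k$, since the leading letter of a canonical form is not changed by multiplication on the right unless the product is $0$. Now let $L=\{1,2\}$ be the two-element left-zero semigroup and let $\mu\subseteq S_k\times L$ be the relation with $(w,1)\in\mu$ iff $w\in T_k$ and $(w,2)\in\mu$ iff $w\in T'_k$; thus $0$ is related to both elements of $L$, while every other element of $S_k$ is related to exactly one. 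Since $L$ is left-zero, any product in $S_k\times L$ takes the form $(u,x)(v,y)=(uv,x)$, and because $T_k$ and $T'_k$ are right ideals we get $uv\in T_k$ when $x=1$ and $uv\in T'_k$ when $x=2$; hence $\mu$ is a subsemigroup of $S_k\times L$. Its first projection is onto, so $\mu:S_k\relm L$ is a relational morphism, and by construction $\mu^{-1}(1)=T_k$ and $\mu^{-1}(2)=T'_k\cong T_k$, so for each of the two idempotents $e$ of $L$ the semigroup $\mu^{-1}(e)$ belongs to $\pv U$. Since $L\in\pv{LZ}$, the relational-morphism description of the Mal'cev product recalled in Section~\ref{sec:Malcev} gives $S_k\in\pv U\malcev\pv{LZ}$.

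It follows that the pseudovariety $\pv V'$ generated by $\{S_k:k\ge 2\}$ is contained in $\pv U\malcev\pv{LZ}$, so it suffices to check that $\pv V'$ is not locally countable. This is precisely what the proof of Lemma~\ref{l:malcev-lz-uncountable} shows, provided one there restricts attention to increasing sequences of positive integers: for two such distinct sequences $s,t$ the semigroup $S_k$ used to separate their associated pseudowords has $k=s_j+1\ge2$, hence lies in $\pv V'$, so the uncountable subset $X$ of $\Om AA$ constructed in that proof (with $A=\{a,b\}$) is mapped injectively into $\Om A{V'}$. Therefore $\Om A{V'}$ is uncountable, and since $\pv V'\subseteq\pv U\malcev\pv{LZ}$ so is $\Om A{(U\malcev LZ)}$.

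I expect the only subtle point -- and the reason the left-zero component genuinely adds power -- to be the use of a relational morphism rather than a homomorphism in the construction above: no homomorphism $S_k\to L$ distributing the elements according to their leading letter exists, because a product of two elements whose canonical forms begin with $b$ may collapse to $0$, which then ``ought'' to be sent to $2$ but is forced into $T_k$. Relating $0$ to both elements of $L$ repairs exactly this defect, and once this is in place the remaining verifications (closure of $\mu$ under multiplication, the identification of the two fibres, and the observation that the uncountability argument of Lemma~\ref{l:malcev-lz-uncountable} only ever needs the semigroups $S_k$ with $k\ge2$) are routine.
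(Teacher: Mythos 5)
Your proof is correct and takes essentially the same approach as the paper: the graph of your relational morphism $\mu$, namely $(T_k\times\{1\})\cup(T'_k\times\{2\})$, is exactly the subsemigroup $R_k$ of $S_k\times\{a,b\}$ generated by $(a,a)$ and $(b,b)$ that the paper uses, the only difference being that the paper deduces $S_k\in\pv U\malcev\pv{LZ}$ from the two projections of $R_k$ (homomorphism onto the left-zero semigroup with fibres $T_k$ and its dual, then $S_k$ as a homomorphic image of $R_k$) rather than by quoting the relational-morphism characterization. Your explicit restriction to increasing sequences of positive integers, ensuring that only the semigroups $S_k$ with $k\ge2$ are needed in Lemma~\ref{l:malcev-lz-uncountable}, is a point the paper leaves implicit and is handled correctly.
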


\begin{proof}
  Let $S$ be the two-element set $\{a,b\}$ and consider the operation
  on $S$ given by $xy=x$. Thus, $S$ is a left zero semigroup. For
  every $k\ge2$, we denote by $R_k$ the subsemigroup of $S_k\times S$,
  which is generated by the pair of elements $(a,a)$ and $(b,b)$.
  Clearly, both projections $\pi_1: S_k\times S \rightarrow S_k$ and
  $\pi_2: S_k\times S \rightarrow S$ to the first and second
  coordinates are surjective when restricted to $R_k$. Note that
  $\pi_2^{-1}(a)=T_k\times\{a\}\in \pv U$. Also, the subsemigroup
  $\pi_2^{-1}(b)$ is isomorphic to $T_k$ by applying the automorphism
  of $S_k$ that exchanges the generators $a$ and $b$. Hence $R_k$
  belongs to $\pv U \malcev \pv{LZ}$ and since $S_k$ is a homomorphic
  image of $R_k$, we get $\pv V\subseteq \pv U \malcev \pv{LZ}$. By
  Lemma~\ref{l:malcev-lz-uncountable}, it follows that $\pv U\malcev
  \pv{LZ}$ is not locally countable.
\end{proof}

\begin{proof}[Proof of Theorem~\ref{t:LZ}]
  The result follows from Propositions~\ref{p:malcev-countable-U}
  and~\ref{p:malcev-LZ} together with Lemma~\ref{l:dd1}.
\end{proof}

\subsection{The atom \texorpdfstring{$\pv{Ab}_p$}{Abp}}
\label{sec:Abp}

Throughout this subsection, $p$ denotes a fixed prime number. Our aim
is to establish the following result.

\begin{Thm}
  \label{t:Abp}
  There is a locally countable pseudovariety $\pv U_p$ such that the
  Mal'cev product $\pv U_p\malcev\pv{Ab}_p$ is not locally countable.
\end{Thm}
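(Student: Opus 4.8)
The plan is to follow the pattern of the atoms $\pv{Sl}$ and $\pv{LZ}$: we shall produce a locally countable pseudovariety $\pv U_p$ and a pseudovariety $\pv V_p$, generated by explicit finite semigroups, such that $\pv V_p$ is not locally countable while $\pv V_p\subseteq\pv U_p\malcev\pv{Ab}_p$. Since $\pv{Ab}_p$ is a pseudovariety of groups, every semigroup in it has $1$ as its only idempotent, so the only fibre condition to check when verifying membership in $\pv U_p\malcev\pv{Ab}_p$ is that the preimage of the identity be pro-$\pv U_p$.

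The uncountability is to come from the prime-$p$ Prouhet--Thue--Morse substitution. On the alphabet $A=\{0,1,\dots,p-1\}$ let $\sigma$ be the substitution $\sigma(i)=i\,(i{+}1)\cdots(i{+}p{-}1)$, with additions modulo~$p$; for $p=2$ this is the Thue--Morse substitution $0\mapsto01$, $1\mapsto10$. It induces a continuous endomorphism $\hat\sigma$ of $\Om AS$. First I would record the combinatorial facts needed: each $\sigma(i)$ contains every letter of~$A$ exactly once, so the abelianization matrix of $\sigma$ is the all-ones matrix and every block $\sigma^n(i)$ with $n\ge2$ contains each letter exactly $p^{\,n-1}$ times, a multiple of~$p$; hence the ``content modulo~$p$'' homomorphism $\varphi\colon\Om AS\to(\mathbb Z/p)^A$ annihilates $\hat\sigma^n(i)$ for $n\ge2$. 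Then, as in the proof of Lemma~\ref{l:malcev-lz-uncountable}, one builds an uncountable family $(w_s)$ of pseudowords of $\Om AS$ all lying in $\varphi^{-1}(0)$, indexed by strictly increasing sequences $s=(s_i)$ of integers $\ge2$: $w_s$ is the limit of the convergent sequence of words obtained by concatenating the blocks $\sigma^{s_1}(0),\sigma^{s_2}(\,\cdot\,),\dots$, and one separates $w_s$ from $w_t$ for $s\ne t$ by projecting onto a suitable finite semigroup that records the first index at which $s$ and $t$ differ.

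Next I would introduce finite semigroups $N_k$ ($k\ge1$), defined in the spirit of the semigroups $S_k$ of Subsection~\ref{sec:LZ} but carrying an extra $\mathbb Z/p$-component: one works with ``canonical forms'' assembled from the blocks $\sigma^j(\,\cdot\,)$ with $j\le k$, collapses a product to~$0$ as soon as a forbidden Prouhet pattern or a sufficiently long factor appears, and adjoins the content modulo~$p$. Let $\pv V_p$ be the pseudovariety generated by all $N_k$ and let $\pv U_p$ be the pseudovariety generated by those subsemigroups of the $N_k$ that can occur as a preimage of the identity under the content map (for instance those whose canonical forms begin in a prescribed way), exactly as $\pv U$ was carved out of $\pv V$ in Subsection~\ref{sec:LZ}. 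Three things then remain, each mirroring a result of that subsection. \emph{(a)} The pseudovariety $\pv V_p$ is not locally countable: this follows from the family $(w_s)$ once one checks, as in Lemma~\ref{l:malcev-lz-uncountable}, that the $N_k$ separate the $w_s$. \emph{(b)} The pseudovariety $\pv U_p$ is locally countable: one argues by induction on the size of the alphabet~$C$ as in Proposition~\ref{p:malcev-countable-U}, handling pseudowords that omit some finite word as a subword by reduction to proper subalphabets, and pseudowords containing every finite word as a subword by showing---via Higman's theorem together with finitely many first/last-occurrence and minimal-interval invariants of the kind $\flat$, $\sharp$, $m_u$ used in Lemma~\ref{l:malcev-finite-factors}, and structural information about the idempotents of the $N_k$ in the vein of Lemma~\ref{l:dd1}---that their images in the distinguished subsemigroups are determined by countably much data. \emph{(c)} One has $\pv V_p\subseteq\pv U_p\malcev\pv{Ab}_p$: fixing a finite group $G\in\pv{Ab}_p$ that receives the content grading, one forms, as in Proposition~\ref{p:malcev-LZ}, the subsemigroup $R_k$ of $N_k\times G$ generated by the graded generators; the preimage of $1\in G$ under the projection $R_k\to G$ is one of the distinguished subsemigroups of~$N_k$, hence lies in $\pv U_p$, so $R_k\in\pv U_p\malcev\pv{Ab}_p$ by the definition of the Mal'cev product, and since $N_k$ is a homomorphic image of~$R_k$ we conclude $\pv V_p\subseteq\pv U_p\malcev\pv{Ab}_p$. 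Combining \emph{(a)} and \emph{(c)} (together with Corollary~\ref{c:Malcev}, which shows that the uncountable family persists in $\Om A{(\pv U_p\malcev\pv{Ab}_p)}$) with \emph{(b)} gives the theorem.

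The main obstacle is the simultaneous calibration of $\pv U_p$: it must be small enough to be locally countable---which forces the distinguished subsemigroups of the $N_k$ to ``see'' only boundedly much of the substitutive structure---yet the $N_k$ themselves, which must lie in $\pv U_p\malcev\pv{Ab}_p$, have to remain rich enough to separate all the pseudowords $w_s$. Getting the truncation in the definition of the $N_k$ right, so that the analogues of Lemmas~\ref{l:dd1} and~\ref{l:malcev-finite-factors} hold while the $w_s$ stay pairwise distinct modulo $\pv U_p\malcev\pv{Ab}_p$, is the delicate point; it is there that the arithmetic specific to the prime~$p$ enters, namely the fact that every block $\sigma^n(i)$ with $n\ge2$ has content divisible by~$p$, so that the $\mathbb Z/p$-grading and the substitutive dynamics are compatible.
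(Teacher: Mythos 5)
Your overall architecture coincides with the paper's: exhibit finite semigroups generating a non-locally-countable pseudovariety $\pv V_p$, carve out of them ``identity-fibre'' subsemigroups generating a locally countable $\pv U_p$, and obtain $\pv V_p\subseteq\pv U_p\malcev\pv{Ab}_p$ by taking, inside a product with a $p$-group, the subsemigroup generated by graded generators (your step \emph{(c)} is exactly the paper's Proposition~\ref{p:malcev-group}). But the proposal stops short of the actual construction, and that is where the whole content of the theorem lies. You never define the semigroups $N_k$, nor the distinguished fibre subsemigroups, nor the finite quotients that are supposed to separate the uncountable family $(w_s)$; you yourself flag the ``simultaneous calibration of $\pv U_p$'' --- rich enough that the $N_k$ distinguish uncountably many pseudowords, small enough that an analogue of Lemma~\ref{l:malcev-finite-factors} holds --- as the unresolved delicate point. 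That calibration \emph{is} the difficulty of the theorem, so as written this is a plan rather than a proof: none of steps \emph{(a)}--\emph{(c)} can be verified, and step \emph{(b)} in particular hinges on fine structural features of the fibres (which canonical forms occur, how idempotents and the exponent-$p$ groups sit inside them) that only a concrete definition can supply. The appeal to Corollary~\ref{c:Malcev} at the end is also unnecessary: once $\pv V_p\subseteq\pv U_p\malcev\pv{Ab}_p$ and $\pv V_p$ is not locally countable, the conclusion is immediate because local countability passes to subpseudovarieties.

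For comparison, the paper shows that no substitution dynamics or content-mod-$p$ grading of an alphabet of size $p$ is needed. It works over $\{a,b\}$ with the presented semigroups $S_k(p)=\langle a,b\mid a^2=0,\ b^{k+1}=b^k,\ b^k(ab^k)^p=b^k,\ b^nab^na=0\ (n<k)\rangle$, whose nonzero elements have canonical forms $b^{\beta_0}ab^{\beta_1}a\cdots ab^{\beta_\ell}$. The uncountable family consists of limits of the explicit word sequences built from blocks $ab^{ps_1}ab^{ps_2}\cdots$ grouped in $p$'s, separated by the maps onto suitable $S_k(p)$ (Lemma~\ref{l:malcev-groups-uncountable}); $\pv U_p$ is generated by the subsemigroups $T_k(p)$ of elements whose number of occurrences of $a$ is divisible by~$p$; local countability of $\pv U_p$ follows from the analogue of Lemma~\ref{l:malcev-finite-factors} for these fibres (Lemma~\ref{l:malcev-group-finite-factors} and Proposition~\ref{p:malcev-countable-U-group}, run inside $\Om C{W_p}$ with $\pv W_p=\op x^{\omega+p}=x^\omega\cl$ in place of $\pv A$); and the inclusion $\pv V_p\subseteq\pv U_p\malcev\pv{Ab}_p$ comes from the subsemigroup $R_k$ of $S_k(p)\times\mathbb{Z}/p\mathbb{Z}$ generated by $(a,a)$ and $(b,1)$, whose identity fibre is $T_k(p)$ and which maps onto $S_k(p)$. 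If you wish to pursue your route via a $p$-letter Prouhet--Thue--Morse-type substitution, you would still have to produce explicit finite quotients playing the role of the $S_k(p)$ and prove the two key lemmas for them --- at which point the simpler presented semigroups above already do the job with far less combinatorial overhead.
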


The proof follows the same lines as the proof of Theorem~\ref{t:LZ},
based on a similar construction. We include just those details in
which the new construction differs from the previous one.

Let $k$ be a positive integer. We consider the semigroup $S_k(p)$ with
zero given by the following presentation
$$
\langle\ a,b \mid a^{2}=0,\ b^{k+1}=b^{k},\ b^{k} (a b^{k})^p =b^{k},\
b^n a b^na=0\ ( n< k ) \ \rangle\, .$$ %
Since $a^2=0$, the elements of $S_k(p)$ different from~$0$ may be
represented by words of the form
\begin{equation}
  \label{eq:canonical-form-group}
  w=b^{\beta_0} a b^{\beta_1} a {b^{\beta_2}} a \dots a b^{\beta_{\ell}}.
\end{equation}
Here, the integer $\ell$ denotes the number of occurrences of the
letter $a$ in the word, including the case $\ell=0$ where the
considered word is $b^{\beta_0}$ with a positive exponent $\beta_0$.
To describe canonical forms of words in $S_k(p)$, we use the same
notation $E_k$ and $i \prec_k j$ as in Subsection~\ref{sec:LZ}. So, we
assume that $\{\beta_0, \dots , \beta_{\ell}\}\subseteq E_k$ since
$b^{k+1}= b^k$, and only $\beta_0$ and $\beta_{\ell}$ may take the
value $0$, where $b^0$ is interpreted as the empty word. Furthermore,
for $0\le i<\ell-1$ we assume that $\beta_i \prec_k \beta_{i+1}$, and
at most $p$ of the exponents $\beta$ take the value $k$, since
otherwise we could shorten the word by applying the equality $b^{k} (a
b^k)^p = b^k$. In particular, under all these assumptions, for
canonical words of the form~(\ref{eq:canonical-form-group}) we have
$\ell \le k+p$ and we conclude that there are only finitely many
canonical forms representing elements of the semigroup $S_k(p)$, which
is consequently finite. Moreover, for $w$ in the canonical
form~(\ref{eq:canonical-form-group}) with $\ell\not=0$ and such that
$w^3\not=0$, we see that $\beta_1=\dots=\beta_{\ell-1}=k\le
\beta_0+\beta_\ell$. Then, for each $m\ge 3$, we have
$w^{m}=b^{\beta_0}(ab^k)^{m\ell-1}ab^{\beta_\ell}$. Thus
$w^{p+3}=w^3$, an equality that holds also in the case where $w^3=0$.
If $w$ is the canonical word~(\ref{eq:canonical-form-group}) with
$\ell=0$, then $w=b^{\beta_0}$ and we have $w^{k+1}=w^k$. Hence
$S_k(p)$ satisfies the identity $x^{\max\{k,3\}+p}=x^{\max\{k,3\}}$.

Let $\pv V_p$ be the pseudovariety generated by the semigroups
$S_k(p)$ with $k\ge2$. By the arguments at the end of the previous
paragraph we obtain the inclusion $\pv V_p\subseteq \op
x^{\omega+p}=x^\omega \cl$. We denote by $\pv W_p$ the latter
pseudovariety which consists of all finite semigroups containing as
nontrivial subgroups only groups of exponent $p$. Notice that $\pv
A\subseteq \pv W_p$ for an arbitrary prime $p$.

\begin{Lemma}
  \label{l:malcev-groups-uncountable}
  For each prime $p$, the pseudovariety $\pv V_p$ is not locally
  countable.
\end{Lemma}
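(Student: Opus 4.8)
The plan is to follow the proof of Lemma~\ref{l:malcev-lz-uncountable} closely. Fix $A=\{a,b\}$, and let $\eta:\Om A{W_p}\to\Om A{V_p}$ and $\psi_k:\Om A{V_p}\to S_k(p)$ (for $k\ge2$) be the natural continuous homomorphisms, all identical on~$A$; recall that $\pv V_p\subseteq\pv W_p$. As in that lemma, it suffices to produce an uncountable set $X\subseteq\Om A{W_p}$ on which $\eta$ is injective, for then $\eta(X)$ is an uncountable subset of~$\Om A{V_p}$.

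To each strictly increasing sequence $s=(s_i)_{i\ge1}$ of positive multiples of~$p$ I would associate the words
$$w_i^{(s)}=b^{s_1}ab^{s_2}a\cdots ab^{s_{p^i}}\qquad(i\ge1),$$
and I first check that $(w_i^{(s)})_i$ converges in~$\Om A{W_p}$. This is the point at which the argument really departs from Lemma~\ref{l:malcev-lz-uncountable}: there the blocks could be taken constant in the limit and aperiodicity collapsed the resulting power, whereas here powers of a single element are only eventually periodic, with period dividing~$p$. Given a continuous homomorphism $\alpha:\Om A{W_p}\to S$ into a finite semigroup of~$\pv W_p$, the identity $x^{\omega+p}=x^\omega$ together with $p\mid s_j$ forces $\alpha(b^{s_j})=\alpha(b)^\omega=:e$ for all large~$j$; hence, for large~$i$, one has $\alpha(w_i^{(s)})=c\cdot(\alpha(a)e)^{p^i-c'}$ for some $c\in S$ and $c'\in\mathbb N$ depending only on~$\alpha$ and~$s$. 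Since $p^i-c'$ is eventually constant modulo~$p$ and tends to infinity, while the cyclic subsemigroup generated by~$\alpha(a)e$ has period dividing~$p$, the sequence $\alpha(w_i^{(s)})$ is eventually constant, so $(w_i^{(s)})_i$ converges. (Using $p^i$ blocks — or $i!$ blocks — instead of the one-block increments of Lemma~\ref{l:malcev-lz-uncountable} is exactly what absorbs the period.) Put $w_s=\lim_i w_i^{(s)}$ and $X=\{w_s\}$.

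To see that $\eta$ separates the~$w_s$, take $s\ne t$, let $j$ be least with $s_j\ne t_j$, and assume $s_j<t_j$; set $k=s_j+1$ and work in~$S_k(p)$. By continuity of $\psi_k\circ\eta$ and finiteness of~$S_k(p)$, one has $\psi_k(\eta(w_s))=\psi_k(\eta(w_i^{(s)}))$ for all large~$i$, and likewise for~$t$. Reducing $\psi_k(w_i^{(s)})$ to the canonical form~\eqref{eq:canonical-form-group}, every $b$-exponent $\ge k$ collapses to~$k$ and a run $(ab^k)^q$ reduces to $(ab^k)^{1+((q-1)\bmod p)}$ via $b^k(ab^k)^p=b^k$. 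Since $s$ is strictly increasing, $s_1<\cdots<s_j=k-1<k\le s_{j+1}$, so $\psi_k(\eta(w_s))$ is a canonical word with exactly $j$ exponents below~$k$ (namely $s_1,\dots,s_j$) followed by a tail of copies of~$k$; for~$t$ one has $t_{j'}=s_{j'}<k$ for $j'<j$ and $t_j>s_j=k-1$, hence $t_j\ge k$, so $\psi_k(\eta(w_t))$ has only $j-1$ exponents below~$k$. These canonical words are distinct, so $\eta(w_s)\ne\eta(w_t)$.

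Hence $\eta(X)$ is an uncountable subset of~$\Om A{V_p}$ and $\pv V_p$ is not locally countable. I expect the one real obstacle to be establishing convergence of the sequence $(w_i^{(s)})_i$ in the face of the genuine order-$p$ periodicity present in~$\pv W_p$, which the $p^i$-block construction is tailored to overcome; the remaining canonical-form bookkeeping in~$S_k(p)$ is a routine variant of the $S_k$ computation in the proof of Lemma~\ref{l:malcev-lz-uncountable}.
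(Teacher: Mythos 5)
Your proof is correct and follows essentially the same route as the paper: uncountably many limits of block words $b^{s_1}ab^{s_2}\cdots$ whose convergence in $\Om A{W_p}$ is forced by making both the $b$-exponents and the block counts controlled modulo~$p$, and which are then separated in a suitable $S_k(p)$ by comparing canonical forms. The only differences are cosmetic parametrizations (block count $p^i$ versus the paper's $pi$, exponents taken as multiples of $p$ directly rather than multiplied by $p$, and $k=s_j+1$ in place of $p(s_j+1)$), all of which work.
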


\begin{proof}
  Although the proof is a quite straightforward modification of the
  proof of Lemma~\ref{l:malcev-lz-uncountable}, we include the details
  since the canonical forms of elements in $S_k(p)$ are slightly
  different from those in $S_k$.

  Let $\eta : \Om{A}W_p \rightarrow \Om{A}V_p$ and $\psi_k : \Om{A}V_p
  \rightarrow S_k(p)$ ($k\ge2$) be the natural continuous
  homomorphisms which map the generating set $A=\{a,b\}$ identically.
  Our aim is to show that $\Om{A}V_p$ is not countable.

  For an increasing sequence of natural numbers $s=(s_i)_{i\ge1}$, we
  consider the following sequence of words: $w_1=ab^{ps_1}ab^{ps_2}
  \dots ab^{ps_p}$ and for each $i>1$ we put
  $$w_{i}=w_{i-1} \, a b^{p s_{p(i-1)+1}}\,
  a b^{p s_{p(i-1)+2}} \dots a b^{p s_{pi}}\, .$$ %
  We show that the sequence $(w_i)_{i\ge1}$ converges in $\Om A {\pv
    W_p}$. Let $S$ be an arbitrary finite semigroup from $\pv W_p$ and
  let $\alpha : \Om AW_p \rightarrow S$ be a continuous homomorphism.
  There is an integer $n$ such that $s^n=s^{n+p}$ for every element
  $s\in S$. Clearly, if we fix such $n$ (depending just on the
  semigroup $S$), we also have $s^m=s^{m+p}$ for every $m>n$. In
  particular, for every $m>n$, $s^{pm}=s^{pn}$ is a unique idempotent
  which is a power of $s\in S$. To simplify a notation, we denote
  $a_S=\alpha(a)$ and $b_S=\alpha(b)$. Then we have
  $\alpha(ab^{ps_m})=a_Sb_S^{ps_m}=a_Sb_S^{pn}$ for every $m>n$
  because $s_m\ge m$. Now, for each $m\ge 2n$, we deduce that
  $\alpha(w_m)=\alpha(w_n) (a_S b_S^{pn})^{p(m-n)}=\alpha(w_n) (a_S
  b_S^{pn})^{pn}$. Hence, the sequence $(\alpha(w_i))_{i\ge1}$ is
  eventually constant and $(w_i)_{i\ge1}$ converges. Let $w_s$ be the
  limit of the converging sequence $(w_i)_{i\ge1}$.

  We show that $\eta (w_s)\not =\eta (w_t)$ for every pair of distinct
  increasing sequences of natural numbers $s\not=t$. This proves that
  $\Om{A}V_p$ is uncountable as there are uncountably many increasing
  sequences of natural numbers.

  Assume that $(w_i)_{i\ge 1}$ and $(w'_i)_{i\ge 1}$ are the
  constructed converging sequence for $s$ and $t$ respectively. Let
  $j$ be the minimum index such that $s_j\ne t_j$ and assume that
  $s_j<t_j$ as the opposite case can be treated in the same way. We
  put $k=p(s_j+1)$ and consider the semigroup $S_k(p)$. Denote by $j'$
  the unique integer such that $p(j'-1) < j \le pj'$. For each $i\ge
  j'$, we have
  $$\psi_k (\eta(w_i)) %
  = a b^{ps_1} a b^{ps_2} \dots ab^{ps_{j-1}} a b^{ps_j} (a %
  b^k)^{pj'-j} (a b^k )^{p(i-j')}.$$ %
  Thus we deduce that $\psi_k (\eta(w_s))$ is equal to $ab^{ps_1}\dots
  ab^{ps_{j-1}} a b^{ps_j} (a b^k )^{\ell}$, for a certain
  $\ell\in\{1,\dots p\}$. On the other hand, the word %
  $w'_i=a b^{pt_1} a b^{pt_2}\dots a b^{pt_{pi}} $ is such that, for
  every $i\ge j'$, the equality
  $$\psi_k (\eta(w'_i))
  =a b^{ps_1} a b^{ps_2} \dots ab^{ps_{j-1}} a b^{k} (a b^k )^{pj'-j}
  (a b^k )^{p(i-j')}$$ %
  holds. Thus, $\psi_k (\eta(w_t))$ is equal to $a b^{ps_1} \dots
  ab^{ps_{j-1}} (a b^k )^{\ell+1}$ with the same $\ell$ as above. We
  conclude that $\psi_k (\eta(w_s))\not=\psi_k(\eta(w_t))$ in $S_k(p)$
  which implies that $\eta(w_s)\not=\eta(w_t)$.
\end{proof}

For each $k$, we denote $T_k(p)$ the subsemigroup of $S_k(p)$
consisting of the element $0$ and all elements in canonical
form~\eqref{eq:canonical-form-group} where the number of occurring
$a$'s, that is $\ell$, is divisible by $p$. The pseudovariety
generated by all semigroups $T_k(p)$ is denoted $\pv U_p$. Notice that
$\pv U_p\subseteq \pv V_p \subseteq \pv W_p$.

Our aim is the same as in the previous subsection, namely to prove
inductively with respect to the size of a finite set $C$ that $\Om C
U_p$ is countable. First of all, for $u\in\Om C W_p$, we may define
all technical notions, that is $\flat^x_y(u)$, $\sharp^x_y(w)$,
$M_u(x,y)$ and $m_u(x,y)$, in the same way, because $\pv W_p$
satisfies the same conditions as $\pv A$ which enable us to apply the
results from~\cite{Almeida&Trotter:1999a} when we define the first
occurrence of the letter in the pseudoword $u$.

We formulate a slight modification of
Lemma~\ref{l:malcev-finite-factors}.

\begin{Lemma}
  \label{l:malcev-group-finite-factors}
  Let $u,v\in \Om C W_p$ be a pair of infinite pseudowords satisfying
  the following assumptions:
  \begin{enumerate}[(i)]
  \item\label{item:l:malcev-group-finite-factors-1} For every $x\in
    C$, both $u$ and $v$ contain $x^2$ as a subword.
  \item\label{item:l:malcev-group-finite-factors-2} For every pair $x,y\in
    C$ of distinct letters, we have $\flat^x_y(u)=\flat^x_y(v)$ and
    $\sharp^x_y(u)=\sharp^x_y(v)$.
  \item\label{item:l:malcev-group-finite-factors-3} For each $x,y\in
    C$, we have $m_u(x,y)=m_v(x,y)$.
  \end{enumerate} Then $\pv U_p\models u=v$.
\end{Lemma}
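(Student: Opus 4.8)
The proof follows the pattern of that of Lemma~\ref{l:malcev-finite-factors}, with the semigroups $S_k(p)$ and $T_k(p)$ and the relatively free semigroup $\Om C W_p$ playing the roles of $S_k$, $T_k$ and $\Om CA$; as recalled just before the statement, the invariants $\flat^x_y$, $\sharp^x_y$ and $m_u(x,y)$ are available over $\Om C W_p$ because $\pv W_p$ satisfies the conditions needed to apply the first-occurrence theory for pseudowords. Fix a mapping $\varphi\colon C\to T_k(p)$ with $k\ge2$ and denote also by $\varphi$ its continuous extension $\Om C W_p\to T_k(p)$; the goal is to show $\varphi(u)=\varphi(v)$, so assume for a contradiction that $\varphi(u)\ne\varphi(v)$. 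First, if $\varphi(x)=0$ for some $x\in C$ then, by~(\ref{item:l:malcev-group-finite-factors-1}), $x$ occurs in both $u$ and $v$ and hence $\varphi(u)=\varphi(v)=0$, a contradiction; so each $\varphi(x)$ is a nonzero canonical word of the form~\eqref{eq:canonical-form-group}, and the number $\ell_x$ of occurrences of $a$ in it is a multiple of~$p$.

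Next I would exploit~(\ref{item:l:malcev-group-finite-factors-1}) more finely. Writing $u=u_0xu_1xu_2$ with $x$ not a factor of $u_0u_1$, the two copies of $\varphi(x)$ that appear in $\varphi(u)$ each carry the internal $b$-blocks of $\varphi(x)$; analysing the canonical forms of $S_k(p)$ (bearing in mind that the $b$-exponents of a nonzero canonical form are $\prec_k$-increasing, and that the relations $a^2=0$ and $b^nab^na=0$ are the ones producing~$0$) one sees that, unless $\varphi(u)=0$---in which case $\varphi(v)=0$ as well, against our assumption---every internal $b$-block of every $\varphi(x)$ must be $b^k$. Thus each $\varphi(x)$ is either a power of~$b$ (the case $\ell_x=0$) or has the bounded shape $b^{\beta_0}(ab^k)^{\ell_x-1}ab^{\beta_{\ell_x}}$. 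Set $X=\{x\in C:\varphi(x)\text{ is a power of }b\}$. If $C=X$ then, $u$ and $v$ being infinite, $\varphi(u)=\varphi(v)=b^k$, again a contradiction; hence $C\setminus X\ne\emptyset$.

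From this point the argument proceeds exactly as in Lemma~\ref{l:malcev-finite-factors}. Using~(\ref{item:l:malcev-group-finite-factors-2}) one locates the leftmost occurrence in $u$ of a letter $y\in C\setminus X$ and, dually, the rightmost occurrence of a letter $z\in C\setminus X$; matching the pertinent $\flat$- and $\sharp$-values then forces $y$ and $z$ to be respectively the leftmost and rightmost such letters of $v$ as well, and gives factorizations $u=u_0yu_1zu_2$ and $v=v_0yv_1zv_2$ with $u_0,v_0,u_2,v_2\in(\Om X W_p)^1$ uniquely determined and $\varphi(u_0)=\varphi(v_0)=b^m$, $\varphi(u_2)=\varphi(v_2)=b^n$ for suitable $m,n$ (reading $b^\infty$ as $b^k$, since powers of $b$ commute). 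As $T_k(p)$ is finite and its nonzero canonical forms have bounded length, $\varphi(u)$ and $\varphi(v)$ each take one of finitely many values depending only on $\varphi(y)$, $\varphi(z)$, $m$ and $n$---the middle pseudowords $u_1,v_1$ being absorbed, and the relation $b^k(ab^k)^p=b^k$ making the number of $a$'s they contribute irrelevant modulo~$p$---so these values agree unless exactly one of $\varphi(u),\varphi(v)$ is~$0$. Say $\varphi(u)=0$, without loss of generality. Pick words $u_r\in C^+$ with $u_r\to u$ and, by continuity, $\varphi(u_r)=0$ for all $r$. Since every letter maps to a power of~$b$ or to an element of the shape above, $\varphi(u_r)=0$ forces a finite factor $y'w'z'$ of $u_r$ with $y',z'\in C\setminus X$, $w'$ a word over~$X$, $\varphi(w')=b^m$ (where $b^0=1$) and $\alpha+m+\gamma<k$, where $b^\alpha$ is the last $b$-block of $\varphi(y')$ and $b^\gamma$ the first $b$-block of $\varphi(z')$, the offending block $b^{\alpha+m+\gamma}$ being squeezed between two blocks~$b^k$ inside $\varphi(y'w'z')$. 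Necessarily $|w'|<k$, so only finitely many such factors $y'w'z'$ are possible and some one of them is a factor of infinitely many $u_r$, hence of~$u$. By~(\ref{item:l:malcev-group-finite-factors-3}), $m_u(y',z')=m_v(y',z')$, so some subword $w''$ of $w'$ has $y'w''z'$ a factor of~$v$; then $\varphi(w'')=b^{n'}$ with $n'\le m$, whence $\varphi(y'w''z')=0$ and therefore $\varphi(v)=0$, contradicting $\varphi(v)\ne\varphi(u)$.

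The step that will need the most care is the canonical-form bookkeeping in $S_k(p)$: one must check, from the rewriting system underlying~\eqref{eq:canonical-form-group}, both that hypothesis~(\ref{item:l:malcev-group-finite-factors-1})---now only requiring $x^2$, rather than $x^3$ as in Lemma~\ref{l:malcev-finite-factors}, to be a subword---indeed forces each non-$b$-power image into the stated bounded shape, and that the order-$p$ cyclic structure arising from $b^k(ab^k)^p=b^k$ does not disturb the ``finitely many values'' step. The overall architecture is identical to that of the proof of Lemma~\ref{l:malcev-finite-factors}.
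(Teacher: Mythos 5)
Your proposal is correct and follows essentially the same route as the paper, which likewise reduces to the argument of Lemma~\ref{l:malcev-finite-factors}: rule out letters mapping to~$0$, use the subword $x^2$ to force all internal $b$-blocks of each $\varphi(x)$ to equal $b^k$ (the paper notes the nonzero non-$b$-power images are then exactly of the form $b^{\beta_0}(ab^k)^{p-1}ab^{\beta_p}$), set $X$ to be the letters mapping to powers of~$b$, and repeat the leftmost/rightmost-letter and $m_u=m_v$ arguments. The only difference is that you spell out the steps the paper omits as ``essentially the same'' (in particular the mod-$p$ bookkeeping coming from $b^k(ab^k)^p=b^k$), and you do so correctly.
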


\begin{proof}
  We proceed in the same way as in the proof of
  Lemma~\ref{l:malcev-finite-factors}. Let $\varphi: \Om C
  W_p\rightarrow T_k(p)$ be a continuous homomorphism and assume for a
  contradiction that $\varphi(u)\not=\varphi(v)$.

  The case when $\varphi (x)=0$ for some $x\in C$ is clear. So, assume
  that, for each $x\in C$, the element $\varphi(x)\in T_k(p)$ is
  represented by the canonical word $\varphi(x)=b^{\beta_0} a
  b^{\beta_1} \dots a b^{\beta_{\ell}}$ with $\ell$ divisible by $p$.
  If there is $1\le i< \ell$ such that $\beta_i <k$, then
  $\varphi(u)=\varphi(v)=0$ since $x^2$ is a subword of both $u$ and
  $v$. Thus we may also assume that every $\varphi(x)$ is of the form
  $b^{\beta_0}(ab^k)^{p-1} ab^{\beta_{p}}$ or $b^{\beta_0}$. Now we
  denote $X$ the set of all $x\in C$ such that $\varphi(x)$ is a power
  of $b$ and the rest of the proof is essentially the same as in the
  case of Lemma~\ref{l:malcev-finite-factors} and is omitted.
\end{proof}

\begin{Prop}
  \label{p:malcev-countable-U-group}
  For each prime $p$, the pseudovariety $\pv U_p$ is locally
  countable.
\end{Prop}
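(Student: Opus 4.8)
The plan is to transcribe, essentially verbatim, the proof of Proposition~\ref{p:malcev-countable-U}, replacing the pseudovariety \pv A by \pv W_p and Lemma~\ref{l:malcev-finite-factors} by Lemma~\ref{l:malcev-group-finite-factors} throughout; as was noted just before Lemma~\ref{l:malcev-group-finite-factors}, the invariants $\flat^x_y$, $\sharp^x_y$ and $m_u(x,y)$ and the associated first-occurrence machinery of~\cite{Almeida&Trotter:1999a} are available over \pv W_p exactly as over \pv A, so this substitution is legitimate. Concretely, fix a finite set $C$ and argue by induction on $|C|$ that $\Om C U_p$ is countable. For the base case $|C|=1$: since $\pv U_p\subseteq\pv W_p$, the semigroup $\Om 1 U_p$ is a continuous homomorphic image of $\Om 1 W_p$, and on a single generator the pseudoidentity $x^{\omega+p}=x^\omega$ forces the maximal subgroup to be cyclic of order dividing the prime~$p$, so $\Om 1 W_p$ is the union of the countable chain $\{x^n:n\ge1\}$ with a finite cyclic group and is therefore countable; hence so is $\Om 1 U_p$. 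This is the one point at which the argument genuinely departs from that of Proposition~\ref{p:malcev-countable-U}, whose base case rested on the inclusion $\pv U\subseteq\pv A$ and the countability of $\Om 1 A$.

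For the inductive step, assume that $\Om B U_p$ is countable for every proper subset $B\subsetneq C$. As in Proposition~\ref{p:malcev-countable-U}, it suffices to produce a countable set $W\subseteq\Om C W_p$ such that every $u\in\Om C W_p$ satisfies $\pv U_p\models u=w$ for some $w\in W$, since then $|\Om C U_p|\le|W|$. If $u$ does not contain every word of $C^*$ as a subword, pick a shortest non-subword $v=a_1\cdots a_na_{n+1}$; then $a_1\cdots a_n$ is a subword of~$u$, and extracting first occurrences as in the proof of Proposition~\ref{p:malcev-countable-U} gives a factorization $u=u_1a_1u_2a_2\cdots a_nu_{n+1}$ in which each $u_i$ omits the letter~$a_i$ (for $u_{n+1}$ this uses that $v$ itself is not a subword), so $u_i\in\Om{B_i}W_p$ with $B_i=C\setminus\{a_i\}$ a proper subset; replacing each $u_i$ by one of the countably many representatives of $\Om{B_i}U_p$ furnished by the induction hypothesis produces a countable set $\overline W$ covering, modulo $\pv U_p$, all such~$u$. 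Otherwise $u$ lies in the set $Z$ of pseudowords having every word of $C^*$ as a subword; on $Z$ define $u\sim v$ to mean that $u$ and~$v$ satisfy conditions~(\ref{item:l:malcev-group-finite-factors-2}) and~(\ref{item:l:malcev-group-finite-factors-3}) of Lemma~\ref{l:malcev-group-finite-factors}. Each of the finitely many invariants involved ranges over a countable set (the values $\flat^x_y,\sharp^x_y$ over $\mathbb N\cup\{\infty\}$, and each $m_u(x,y)$ over the finite subsets of the countable set $C^*$, by Higman's theorem), so $\sim$ has only countably many classes; picking a representative from each yields a countable set $W'\subseteq Z$. Since condition~(\ref{item:l:malcev-group-finite-factors-1}) of Lemma~\ref{l:malcev-group-finite-factors} holds automatically for elements of~$Z$, the lemma gives $\pv U_p\models u=w$ whenever $u\in Z$ and $w\in W'$ represents its $\sim$-class. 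Taking $W=\overline W\cup W'$ completes the induction, hence the proof.

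The main work has therefore already been done: once Lemma~\ref{l:malcev-group-finite-factors} is in hand the argument is routine, and the only fresh verification relative to Proposition~\ref{p:malcev-countable-U} is the countability of $\Om 1 W_p$ handled above. If anything subtle remains, it is purely bookkeeping — checking that the greedy-factorization step and the equivalence-class counting go through unchanged over \pv W_p — which they do, precisely because \pv W_p was chosen to share with \pv A all the structural features on which those steps rely.
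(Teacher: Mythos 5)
Your proposal is correct and takes essentially the same route as the paper, whose own proof likewise just repeats the argument of Proposition~\ref{p:malcev-countable-U} with $\pv A$ replaced by $\pv W_p$, $\pv U$ by $\pv U_p$, and Lemma~\ref{l:malcev-finite-factors} by Lemma~\ref{l:malcev-group-finite-factors}. Your explicit justification of the base case (the one-generated free pro-$\pv W_p$ semigroup being the chain of finite powers of the generator together with a cyclic group of order dividing~$p$, hence countable) fills in a detail the paper merely asserts, and it is accurate.
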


\begin{proof}
  This result can be proved in the same way as
  Proposition~\ref{p:malcev-countable-U}. The pseudovariety $\pv A$ is
  replaced by $\pv W_p$ and the pseudovariety $\pv U$ by $\pv U_p$. At
  the basis of induction, we again have that $\Om C W_p$ is countable
  whenever $C$~is a singleton set.
\end{proof}

\begin{Prop}
  \label{p:malcev-group}
  For every prime $p$, the pseudovariety $\pv U_p\malcev \pv{Ab}_p$ is
  not locally countable.
\end{Prop}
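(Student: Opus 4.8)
The plan is to mimic the proof of Proposition~\ref{p:malcev-LZ} exactly, transporting the construction from the left-zero semigroup to the cyclic group of order~$p$. First I would let $S=\mathbb{Z}/p\mathbb{Z}$ and, for each $k\ge2$, build a finitely generated subsemigroup $R_k$ of $S_k(p)\times S^1$ (or $S_k(p)^1\times S$, whichever is cleanest) that surjects onto both factors and whose preimages under the second-coordinate projection $\pi_2$ lie in~$\pv U_p$. The natural generators are $(a,1)$ and $(b,0)$, so that the $\mathbb{Z}/p\mathbb{Z}$-coordinate of an element records the number of occurrences of~$a$ modulo~$p$. Since $T_k(p)$ was defined precisely as the subsemigroup of words with the number of $a$'s divisible by~$p$, each fibre $\pi_2^{-1}(c)$ for $c\in\mathbb{Z}/p\mathbb{Z}$ will be (isomorphic to) $T_k(p)$ — adjoining a zero or an identity as needed — and hence lies in~$\pv U_p$. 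It follows that $R_k\in\pv U_p\malcev\pv{Ab}_p$.

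**Next I would argue** that $S_k(p)$ is a homomorphic image of $R_k$ (project onto the first coordinate), so every $S_k(p)$, $k\ge2$, divides a member of $\pv U_p\malcev\pv{Ab}_p$; since the pseudovariety $\pv V_p$ is generated by exactly these semigroups, we obtain $\pv V_p\subseteq\pv U_p\malcev\pv{Ab}_p$. By Lemma~\ref{l:malcev-groups-uncountable}, the pseudovariety $\pv V_p$ is not locally countable — indeed $\Om A{V_p}$ is uncountable for the two-letter alphabet $A=\{a,b\}$. A pseudovariety containing a non-locally-countable pseudovariety is itself not locally countable (the free pro-$(\pv U_p\malcev\pv{Ab}_p)$ semigroup on $A$ maps onto $\Om A{V_p}$, which is uncountable). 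Therefore $\pv U_p\malcev\pv{Ab}_p$ is not locally countable, as required.

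**The one point needing care** — and the place where the group case genuinely differs from the $\pv{LZ}$ case — is the verification that the second-coordinate fibres of $R_k$ are really copies of $T_k(p)$, and in particular that $R_k$ is a \emph{subsemigroup} of the product on which $\pi_2$ is everywhere defined and onto. In the $\pv{LZ}$ case one had $\pi_2^{-1}(a)=T_k\times\{a\}$ literally, and $\pi_2^{-1}(b)\cong T_k$ via the automorphism of $S_k$ swapping $a$ and~$b$; here $S_k(p)$ has no such symmetry, so the identification of the fibre over each $c\in\mathbb{Z}/p\mathbb{Z}$ must instead be read off from the defining relation $b^k(ab^k)^p=b^k$ together with $a^2=0$, which forces the first coordinate of any element of $R_k$ with $a$-count $\equiv c$ to be (apart from~$0$) a canonical word whose number of $a$'s is congruent to~$c$ mod~$p$. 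Writing out that the set of such words, with $0$ adjoined, is closed under multiplication and isomorphic to $T_k(p)$ is the routine but essential computation. Once that is in place, everything else is a direct transcription of the proof of Proposition~\ref{p:malcev-LZ}, with Lemma~\ref{l:malcev-lz-uncountable} replaced by Lemma~\ref{l:malcev-groups-uncountable} and $\pv{LZ}$ by $\pv{Ab}_p$.
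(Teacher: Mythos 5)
Your proposal is essentially the paper's own proof: the paper takes $R_k$ to be the subsemigroup of $S_k(p)\times G$, with $G$ cyclic of order~$p$, generated by $(a,a)$ and $(b,1)$, observes that the fibre over the identity of~$G$ is $T_k(p)\in\pv U_p$, gets $S_k(p)$ as the image of $R_k$ under the first-coordinate projection, hence $\pv V_p\subseteq\pv U_p\malcev\pv{Ab}_p$, and concludes with Lemma~\ref{l:malcev-groups-uncountable}. One correction to your ``point needing care'': the claim that \emph{every} fibre $\pi_2^{-1}(c)$, $c\in\mathbb{Z}/p\mathbb{Z}$, is a copy of $T_k(p)$ is false for $c\ne 0$ --- such a fibre is not even a subsemigroup, since multiplying two elements whose $a$-count is $\equiv c$ gives $a$-count $\equiv 2c$ --- but this is harmless, because the definition of the Mal'cev product only requires the preimages of the \emph{idempotents} of the target to lie in $\pv U_p$, and the unique idempotent of the group is its identity, whose fibre is exactly $T_k(p)$ (the $a$-count modulo~$p$ of a nonzero element of $S_k(p)$ is well defined, and $0$ occurs with trivial group coordinate, e.g.\ as $(a,a)^{2p}$). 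This is precisely where the group case is \emph{simpler} than the $\pv{LZ}$ case of Proposition~\ref{p:malcev-LZ}, where both elements of the left-zero semigroup are idempotents and both fibres had to be checked; likewise there is no need to pass to $S^1$ or to adjoin anything, since $\mathbb{Z}/p\mathbb{Z}$ is already a monoid and $T_k(p)$ already contains~$0$.
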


\begin{proof}
  Let $G=\{1,a,a^2,\dots, a^{p-1}\}$ be a cyclic group of order~$p$.
  For every $k\ge2$, we consider the semigroup $S_k(p)\times G$ and
  its subsemigroup $R_k$ generated by $(a,a)$ and $(b,1)$. Let $\pi_2
  : R_k \rightarrow G$ be the restriction of the projection from
  $S_k(p)\times G$ onto the second coordinate. Then
  $\pi_2^{-1}(1)=T_k(p)\in \pv U_p$, where $1$ is a unique idempotent
  in the group $G$. Thus $R_k$ belongs to $\pv U_p \malcev \pv{Ab_p}$.
  Considering the restriction of the projection from $S_k(p)\times G$
  onto the first coordinate, we see that $S_k(p)$ is a homomorphic
  image of $R_k$. Thus, we have $\pv V_p\subseteq \pv U_p \malcev
  \pv{Ab}_p$. We deduce that $\pv U_p\malcev \pv{Ab}_p$ is not locally
  countable by Proposition~\ref{l:malcev-groups-uncountable}.
\end{proof}

\begin{proof}[Proof of Theorem~\ref{t:Abp}]
  It suffices to invoke Propositions \ref{p:malcev-countable-U-group}
  and~\ref{p:malcev-group}.
\end{proof}

Unlike the case of the atom \pv{LZ}, we do not have a familiar upper
bound for the pseudovariety $\pv U_p$. Yet, the same argument as in
the proof of Lemma~\ref{l:dd1} shows that $S_k(p)$ satisfies the
pseudoidentity obtained from~\eqref{eq:Knast} by raising both sides to
the $p$th power.

\section{Pseudovarieties of aperiodic inverse semigroups}
\label{sec:A-ESl}

Recall that a semigroup is said to be an \emph{inverse semigroup} if,
for every element $s$ there is a unique element $t$ such that $sts=s$
and $tst=t$; an element $t$ satisfying these equalities is called an
\emph{inverse} of~$s$. The existence of an inverse characterizes
regular elements. An inverse semigroup may also be characterized as a
semigroup in which every element is regular and in which idempotents
commute. Another important property of inverse semigroups is that they
arise precisely as semigroups of partial bijections, that is, of
bijections between two subsets of a fixed set; the operation is
composition and the set should be closed under taking function
inverses.

Denote by \pv{Inv} the class of all finite inverse semigroups. The
pseudovariety of semigroups generated by~\pv{Inv} turns out to be
\pv{ESl} \cite{Ash:1987}, which may me decomposed both as $\pv{Sl}*\pv
G$ and as $\pv{Sl}\malcev\pv G$. It is natural to ask whether a
pseudovariety bound on the groups in a class of finite inverse
semigroups leads to a smaller bound on a pseudovariety containing the
class. The negative answer to this question may be found
in~\cite[Theorem~5.3]{Higgins&Margolis:1999}: in particular, if
$\pv{Inv}\cap\pv A\subseteq\pv{DA}\malcev\pv H$ for a pseudovariety of
groups \pv H, then $\pv H=\pv G$.

The main purpose of this section is to show that the pseudovariety
$\pv{ESl}\cap\pv A$ is not locally countable. The proof of this result
requires some combinatorial tools which we proceed to introduce.

Given a word $w=a_1a_2\cdots a_n$ over an alphabet $A$, we consider
the following linear $A$-labeled digraph $\Lambda_w$:
$$q_1\xrightarrow{a_1}q_2\xrightarrow{a_2}q_3\xrightarrow{}\cdots
\xrightarrow{a_n}q_{n+1}.$$ The vertices $q_1$ and $q_{n+1}$ are
called respectively the \emph{beginning} and \emph{end} vertices. For
words $w_1,\ldots,w_r$, the \emph{flower digraph}
$\Phi(w_1,\ldots,w_r)$ is obtained by taking the disjoint union of the
digraphs $\Lambda_{w_i}$ ($i=1,\ldots,r$) and identifying all
beginning and end vertices to a single vertex which is denoted $q_0$.

Consider the Prouhet-Thue-Morse endomorphism $\mu$ of the free monoid
$\{a,b\}^*$, which is defined by $\mu(a)=ab$ and $\mu(b)=ba$. Noting
that $\mu^n(a)$ is a prefix of $\mu^{n+1}(a)$, we see that the
sequence of words $\bigl(\mu^n(a)\bigr)_n$ determines an infinite
word, denoted $\mathbf{t}$, which is also known as the
\emph{Prouhet-Thue-Morse word}: the word $\mu^n(a)$ is the prefix
of~$\mathbf{t}$ of length $2^n$. See~\cite{Allouche&Shallit:1999} for
the name attribution and many relevant properties. In particular,
Thue~\cite{Thue:1912} proved that $\mathbf{t}$ is \emph{cube-free} and
even \emph{overlap-free} in the sense that it contains respectively no
factor of the forms $w^3$ and $uvuvu$ with $u,w\in\{a,b\}^+$ and
$v\in\{a,b\}^*$. The latter corresponds to a minimal situation where
we find overlapping occurrences of the same word. By direct
inspection, one verifies that the only words in $\{a,b\}^+$ of length
at most~3 that are not factors of~$\mathbf{t}$ are the cubes $a^3$ and
$b^3$. Note also that if the finite word $w$ is a factor
of~$\mathbf{t}$ then so is $\mu(w)$.

For $n\ge0$, we let $\Gamma_n=\Phi\bigl(\mu^n(a),\mu^n(b)\bigr)$ where
$\mu^0$ means the identity mapping. The vertex $q_0$ of the graph
$\Gamma_n$ is denoted $0_n$. The first three $\{a,b\}$-labeled
digraphs $\Gamma_n$ are depicted in Figure~\ref{fig:Gamma-1,2}.
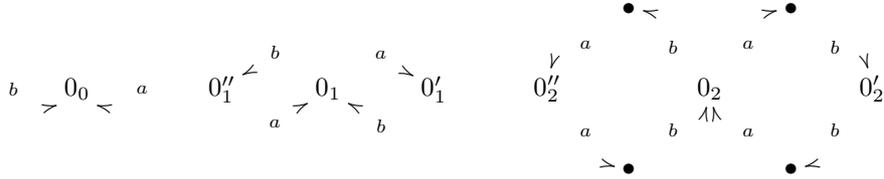
\begin{figure}[ht]
  \centering\small
  $\vcenter{\hbox{
      \newdir{|>>}{*:(1,+0.2)@^{>} *:(1,+0.2)@_{>}}
      \newdir{>>>}{*:(1,-0.2)@^{>} *:(1,-0.2)@_{>}} \xymatrix@d{ 0_0
        \ar@{-|>>}@(ru,rd)[]^a \ar@{->>>}@(lu,ld)[]_b }}}$
  \quad
  $\vcenter{\hbox{ \xymatrix{
        0_1'' %
        \ar@/_3mm/[r]_a & 0_1 %
        \ar@/^3mm/[r]^a \ar@/_3mm/[l]_b & 0_1' %
        \ar@/^3mm/[l]^b }}}$
  \qquad
  $\vcenter{\hbox{ \xymatrix@C=6mm@R=6mm{
        & \bullet %
        \ar@/_3mm/[ld]^a
        && \bullet %
        \ar@/^3mm/[rd]_b
        & \\ 0_2'' \ar@/_3mm/[rd]^a
        && 0_2 %
        \ar@/^3mm/[ru]_a \ar@/_3mm/[lu]^b
        && 0_2' \ar@/^3mm/[ld]_b \\
        &
        \bullet %
        \ar@/_3mm/[ru]^b
        && \bullet \ar@/^3mm/[lu]_a
        & }}}$
  \caption{The flower digraphs $\Gamma_0$, $\Gamma_1$ and $\Gamma_2$}
  \label{fig:Gamma-1,2}
\end{figure}

For an $A$-labeled digraph $\Delta$, one may view each letter $a\in A$
as representing a binary relation on the set of vertices, namely the
relation $\Delta(a)$ consisting of the pairs of vertices $(p,q)$ such
that there is an edge $p\xrightarrow{a}q$ in the digraph. The monoid
of binary relations generated by all $\Delta(a)$ with $a\in A$ is
called the \emph{transition} monoid of~$\Delta$ and is denoted
$T(\Delta)$. Note that, for a word $w=a_1a_2\cdots a_n$ ($a_i\in A$),
a pair of vertices $(p,q)$ belongs to the product
$\Delta(a_1)\Delta(a_2)\cdots\Delta(a_n)$ if and only if there is a
path in~$\Delta$ beginning at $p$ and ending at $q$ for which $w$ is
the concatenation of the labels of the successive edges; in this case,
we also write $p\xrightarrow{w}q$. If the relations $\Delta(a)$ ($a\in
A$) are partial functions, then every word $w$ in~$A^*$ induces a
right action on the vertices of $\Delta$ by $p\cdot w=q$ if
$p\xrightarrow{w}q$. For simplicity, we then also say that words of
$A^*$ \emph{act} on~$\Delta$.

In case distinct edges either starting or ending at the same vertex
have distinct labels, the relations $\Delta(a)$ are partial
bijections. Since the words $\mu^n(a)$ and $\mu^n(b)$ start with
different letters and end with different letters, that property holds
for the flower digraphs $\Gamma_n$. We denote by $M_n$ the transition
monoid of the disjoint union
$\tilde{\Gamma}_n=\biguplus_{i=0}^n\Gamma_i$. Since it is generated by
partial bijections of a finite set, $M_n$ embeds in the inverse monoid
of all partial bijections of the vertex set of~$\tilde{\Gamma}_n$ and
so $M_n$ belongs to the pseudovariety~$\pv{ESl}$. Note that the
restriction of the mappings from $M_{n+1}$ to the graph
$\tilde{\Gamma}_n$ defines an onto homomorphism $\varphi_n:M_{n+1}\to
M_n$. Also note that, if we let $T_n=T(\Gamma_n)$, then $M_n$ is a
subdirect product of the monoids $T_0,\ldots,T_n$. The fact that the
identity mapping on generators does not extend to a homomorphism
$T_{n+1}\to T_n$ explains the need to work with $M_n$ to obtain an
inverse system. For instance, $\mu^n(a^3)$ acts as a nonempty
transformation on~$\Gamma_n$ while, as the arguments in the proof of
Lemma~\ref{l:x4=x3} show, it acts as the empty transformation
on~$\Gamma_{n+1}$.

Recall that a language $L\subseteq A^*$ over a finite alphabet $A$ is
said to be \emph{star-free} if it may be expressed in terms of the
languages $\emptyset$, $\{1\}$ and $\{a\}$ ($a\in A$) using only the
operations of finite union, finite product, and complement (in~$A^*$)
\cite{Pin:1997}.

There is a nice alternative description of the semigroup $T_n$ which
we proceed to give. Consider $\Gamma_n$ as a (deterministic,
incomplete) automaton $\Cl A_n$ with $0_n$ as the only initial and
terminal state. The language it recognizes is simply the image
$\image\mu^n$ of the homomorphism $\mu^n$. Since no quotient of the
automaton recognizes the same language, $T_n$ is the syntactic monoid
of $\image\mu^n$. We claim that the language $\image\mu^n$ is
star-free. We prove this below by induction on $n\ge0$. The induction
step depends on the following result.

\begin{Lemma}
  \label{l:1substitution}
  Let $\eta$ be an injective endomorphism of the free monoid $A^*$
  such that $\image\eta$ is star-free. If $L\subseteq A^*$ is a
  star-free language then so is $\eta(L)$.
\end{Lemma}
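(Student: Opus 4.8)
The plan is to argue by structural induction on a star-free expression defining~$L$, showing that $\eta$ carries each of the intermediate languages to a star-free language. For this it is convenient to first record the standard closure properties of the class of star-free languages over~$A^*$: it contains $\emptyset$, $\{1\}$ and each $\{a\}$ with $a\in A$, it is closed under finite union, finite product and complement, hence (by De Morgan) also under finite intersection; in particular every finite subset of~$A^*$ is star-free, since $\{a_1\cdots a_k\}=\{a_1\}\cdots\{a_k\}$.

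Fixing a star-free expression for~$L$, I would first dispatch the base cases: $\eta(\emptyset)=\emptyset$, $\eta(\{1\})=\{1\}$, and $\eta(\{a\})=\{\eta(a)\}$, which is a finite language and therefore star-free. For the inductive step on union and product one uses only that $\eta$ is a monoid homomorphism: $\eta(K\cup M)=\eta(K)\cup\eta(M)$ and $\eta(KM)=\eta(K)\,\eta(M)$, so that the conclusion follows from the induction hypothesis together with closure of the star-free languages under union and product.

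The one step where the hypotheses on~$\eta$ are genuinely used --- and the only point of any subtlety --- is complementation. Here injectivity of~$\eta$ gives $w\in K\iff\eta(w)\in\eta(K)$ for every $w\in A^*$, whence
$$\eta\bigl(A^*\setminus K\bigr)=\eta(A^*)\setminus\eta(K)=\image\eta\setminus\eta(K)=\image\eta\cap\bigl(A^*\setminus\eta(K)\bigr).$$
By hypothesis $\image\eta$ is star-free, and by the induction hypothesis $\eta(K)$ is star-free; since the star-free languages are closed under complement and intersection, $\eta(A^*\setminus K)$ is star-free. This completes the induction, and applying it to the whole expression yields that $\eta(L)$ is star-free.

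I expect no real obstacle beyond making sure the two hypotheses on~$\eta$ (injectivity, and star-freeness of $\image\eta$) are invoked exactly in the complement case; every other case is a formal consequence of $\eta$ being a homomorphism and of the Boolean-algebra-with-concatenation structure of the star-free languages. One could alternatively phrase the argument via syntactic monoids and Sch\"utzenberger's theorem, but the structural induction is shorter and entirely self-contained.
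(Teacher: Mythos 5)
Your proposal is correct and follows essentially the same route as the paper's proof: a structural induction in which union and product are handled by the homomorphism property, and the complement case is resolved via the identity $\eta(A^*\setminus K)=(\image\eta)\cap(A^*\setminus\eta(K))$, using injectivity of $\eta$ and the star-freeness of $\image\eta$. No gaps; the argument matches the intended one.
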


\begin{proof}
  By assumption, $L$ is obtained from the languages $\emptyset$,
  $\{1\}$, and $\{a\}$ ($a\in A$) by applying a finite number of times
  the operations of finite union, finite product and complement in
  $A^*$. Proceeding by induction on the number of times the operations
  are applied, $\eta(L)$ may be obtained from the languages
  $\emptyset$, $\{1\}$, and $\{\eta(a)\}$ ($a\in A$) by applying the
  operations of finite union, finite product, and
  $\eta(K)\mapsto\eta(A^*\setminus K)$. Since finite languages are
  star-free and homomorphisms preserve union and multiplication, all
  we need to show is that, in the induction process, the image under
  $\eta$ of the complement also produces star-free languages. Indeed,
  since $\eta$ is injective, we obtain the following equalities:
  $$\eta(A^*\setminus K) %
  =\{\eta(w):w\in A^*\setminus K\} %
  =(\image\eta)\cap(A^*\setminus\eta(K)).$$ %
  By the induction hypothesis, we may assume that $\eta(K)$ is
  star-free. Hence, so is $\eta(A^*\setminus K)$ as $\image\eta$ is
  assumed to be star-free.
\end{proof}

It is well known and easy to check that syntactic monoids of star-free
languages are aperiodic. The converse is a key result of
Schützenberger \cite{Schutzenberger:1965}.

We may now easily prove our claim.

\begin{Prop}
  \label{p:starfree}
  The language $\image\mu^n$ is star-free.
\end{Prop}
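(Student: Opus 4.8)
The plan is to prove the statement by induction on $n$, along the lines already announced in the text. The inductive step will be an immediate application of Lemma~\ref{l:1substitution}, so the real content lies in the base case, namely establishing that $\image\mu$ itself is star-free.

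For the base of the induction I separate $n=0$ from $n=1$. For $n=0$ there is nothing to do: $\image\mu^0=A^*$ (with $A=\{a,b\}$), which is star-free as the complement of $\emptyset$. For $n=1$ I would argue that $\image\mu=(ab+ba)^*$ is star-free by appealing to Schützenberger's theorem recalled above: as noted before the proposition, the automaton $\Cl A_1$ (the digraph $\Gamma_1$ with $0_1$ as unique initial and terminal state) is minimal for $\image\mu$, so its transition monoid $T_1$ is the syntactic monoid of $\image\mu$, and it therefore suffices to check that $T_1$ is aperiodic. This is a short finite verification: on $\Gamma_1$ one has $0_1\cdot a=0_1'$, $0_1''\cdot a=0_1$ and $0_1''\cdot a^2=0_1'$, so $a^3$ acts as the empty partial transformation, and symmetrically for $b$; one then checks that no word induces a nontrivial permutation of any set of vertices of $\Gamma_1$, so $T_1$ has only trivial subgroups and is aperiodic (alternatively, one may invoke the aperiodicity of these transition monoids, cf.\ Lemma~\ref{l:x4=x3}).

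For the inductive step, assume $\image\mu^n$ is star-free for some $n\ge1$. Since $\mu^{n+1}=\mu\circ\mu^n$, we have $\image\mu^{n+1}=\mu(\image\mu^n)$. The endomorphism $\mu$ is injective — it is a uniform substitution with $\mu(a)=ab\ne ba=\mu(b)$, so $\{ab,ba\}$ is a code, and concretely one recovers $w$ from $\mu(w)$ by reading $\mu(w)$ two letters at a time — and its image $\image\mu=\image\mu^1$ is star-free by the base case. Hence Lemma~\ref{l:1substitution}, applied with $\eta=\mu$ and $L=\image\mu^n$, shows that $\image\mu^{n+1}=\mu(\image\mu^n)$ is star-free, which closes the induction.

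The only point that is not purely formal is the base case $n=1$: the inductive engine, Lemma~\ref{l:1substitution}, already presupposes that $\image\eta=\image\mu$ is star-free, so it cannot by itself produce the star-freeness of $\image\mu$; that fact must be obtained separately, which is exactly where Schützenberger's theorem (or the direct computation in $T_1$) enters. Everything else in the argument is bookkeeping with the composition $\mu^{n+1}=\mu\circ\mu^n$.
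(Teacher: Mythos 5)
Your proof is correct and follows essentially the same route as the paper: star-freeness of $\image\mu^0$ and of $\image\mu$ (the latter via aperiodicity of the syntactic monoid $T_1$, checked by a finite computation, plus Sch\"utzenberger's theorem), then induction using $\image\mu^{n+1}=\mu(\image\mu^n)$, injectivity of~$\mu$, and Lemma~\ref{l:1substitution}. One caution: your parenthetical alternative of invoking Lemma~\ref{l:x4=x3} for the base case would be circular, since the paper's proof of that lemma uses Corollary~\ref{c:Tn-in-A}, which is itself a consequence of Proposition~\ref{p:starfree}; stick with the direct verification on $\Gamma_1$, which is exactly what the paper does.
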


\begin{proof}
  We first note that an elementary hand (or computer)
  calculation shows that $T_1$ is a 15-element aperiodic inverse
  monoid. Hence, by Schützenberger's theorem, the language
  $\image\mu=\{ab,ba\}^*$ is star-free. On the other hand, the
  language $\image\mu^0=\{a,b\}^*$ is star-free, being the complement
  of the empty language. Also note that $\mu$ is injective. Finally,
  as $\image\mu^{n+1}=\mu(\image\mu^n)$, assuming inductively that
  $\image\mu^n$ is star-free, Lemma~\ref{l:1substitution} yields that
  so is $\image\mu^{n+1}$.
\end{proof}

As an immediate consequence of Proposition~\ref{p:starfree}, we obtain
the following result.

\begin{Cor}
  \label{c:Tn-in-A}
  The monoid $T_n$ is aperiodic.\qed
\end{Cor}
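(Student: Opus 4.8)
The plan is to chain together what has already been assembled in this section, as Corollary~\ref{c:Tn-in-A} is a pure consequence of Proposition~\ref{p:starfree}. First I would recall, as observed just before the statement of Proposition~\ref{p:starfree}, that $T_n$ is the \emph{syntactic} monoid of the language $\image\mu^n$: viewing $\Gamma_n$ as the deterministic incomplete automaton $\Cl A_n$ with $0_n$ as unique initial and terminal state, this automaton recognizes $\image\mu^n$ and admits no proper quotient recognizing the same language, so its transition monoid $T_n$ is exactly the syntactic monoid of $\image\mu^n$.

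Next I would invoke Proposition~\ref{p:starfree}, which asserts that $\image\mu^n$ is star-free, and then apply the elementary implication recalled above---that the syntactic monoid of a star-free language is aperiodic. Combining these two facts yields that $T_n$ is aperiodic, which is the statement.

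There is no genuine obstacle at this stage: the work was done in Proposition~\ref{p:starfree} (and, behind it, in Lemma~\ref{l:1substitution} together with the base-case verification that $T_1$ is a $15$-element aperiodic inverse monoid). If one wished to avoid even the easy half of Sch\"utzenberger's theorem, one could instead argue more directly, using the overlap-freeness of $\mathbf{t}$ to control the cyclic behaviour of words acting on~$\Gamma_n$; but the route through Proposition~\ref{p:starfree} is shorter and is the one I would take.
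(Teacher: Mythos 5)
Your proposal is correct and matches the paper's argument exactly: the corollary is deduced by recalling that $T_n$ is the syntactic monoid of $\image\mu^n$, invoking Proposition~\ref{p:starfree} for star-freeness, and applying the easy direction that syntactic monoids of star-free languages are aperiodic. Nothing further is needed.
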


The following is a little technical observation about the action of
certain words on~$\Gamma_n$.

\begin{Lemma}
  \label{l:rank-1-identities}
  The two words $\mu^{n+1}(a)$ and $\mu^{n+1}(b)$ act in the same way
  on~$\Gamma_n$, namely as the identity at the single vertex $0_n$.
\end{Lemma}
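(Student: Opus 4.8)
The plan is to exploit the identities $\mu^{n+1}(a)=\mu^n(a)\mu^n(b)$ and $\mu^{n+1}(b)=\mu^n(b)\mu^n(a)$, which are immediate from $\mu^{n+1}=\mu^n\circ\mu$ together with $\mu(a)=ab$ and $\mu(b)=ba$. By the definition of the flower digraph, reading $\mu^n(a)$ from $0_n$ runs once around the $a$-petal back to $0_n$, and then reading $\mu^n(b)$ runs once around the $b$-petal back to $0_n$; hence $0_n\cdot\mu^{n+1}(a)=0_n$, and symmetrically $0_n\cdot\mu^{n+1}(b)=0_n$. The rest of the argument shows that neither word is defined at any vertex other than $0_n$. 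Since the map that swaps the two petals of $\Gamma_n$ and complements all edge labels is a symmetry of $\Gamma_n$ carrying $\mu^{n+1}(a)$ to $\mu^{n+1}(b)$, it suffices to treat $\mu^{n+1}(a)$; also we may assume $n\ge1$, since $\Gamma_0$ has only the vertex $0_0$.

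First I would set up the forced-path analysis. Assume $\mu^{n+1}(a)$ is defined at some vertex $q\ne 0_n$. Then $q$ is an internal vertex of one of the petals, so it has a unique outgoing edge and the path read from $q$ is entirely forced; since $|\mu^{n+1}(a)|=2^{n+1}>2^n$, this forced path must pass through $0_n$, and when it first does so it has spelled a proper nonempty suffix $s$ of the word labelling that petal, of some length $m$ with $1\le m\le 2^n-1$. As $m<|\mu^{n+1}(a)|$, the word $s$ is a prefix of $\mu^{n+1}(a)=\mu^n(a)\mu^n(b)$, and since $m\le 2^n$ this means $s$ is the length-$m$ prefix of $\mu^n(a)$. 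From $0_n$ the remaining suffix $w'$ of $\mu^{n+1}(a)$, of length $2^{n+1}-m>2^n$, must be read along the petals, so its length-$2^n$ prefix equals $\mu^n(a)$ or $\mu^n(b)$; here one uses that $\{\mu^n(a),\mu^n(b)\}$ is a prefix code because its two members start with different letters.

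Next I would run a case analysis reducing everything to overlap-freeness of the Thue--Morse word $\mathbf t$. If $q$ lies on the $a$-petal at position $i$ (so $m=2^n-i$), then $s$ is at once the length-$m$ suffix and the length-$m$ prefix of $\mu^n(a)$, i.e.\ $\mu^n(a)$ has period $i$. If $i\le 2^{n-1}-1$ this is impossible, since a word of length $2^n\ge 2i+1$ with period $i$ contains an overlap while $\mu^n(a)$ is a factor of the overlap-free word $\mathbf t$; if $i=2^{n-1}$ it would make $\mu^n(a)=\mu^{n-1}(a)\mu^{n-1}(b)$ a square, forcing the false equality $\mu^{n-1}(a)=\mu^{n-1}(b)$. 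If $i\ge 2^{n-1}+1$, I would write $w'$ explicitly as $\bigl(\text{the length-}i\text{ suffix of }\mu^n(a)\bigr)\,\mu^n(b)$ and identify its length-$2^n$ prefix $x_1$: should $x_1=\mu^n(a)$, comparing letters forces $s$ to be a common prefix of $\mu^n(a)$ and $\mu^n(b)$, contradicting that they begin differently; should $x_1=\mu^n(b)$, one obtains that $\mu^n(b)$ has period $i$ and that its length-$i$ prefix $P$ coincides with a suffix of $\mu^n(a)$, so that inside the factor $\mu^n(a)\mu^n(b)$ of $\mathbf t$ one reads $P$, then $P$ again, then the letter $\mu^n(b)[i+1]=\mu^n(b)[1]$ --- that is, the overlap consisting of $PP$ followed by the first letter of $P$, again a contradiction. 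The case $q$ on the $b$-petal is entirely parallel: there $s$ is a suffix of $\mu^n(b)$ equal to a prefix of $\mu^n(a)$, and the same bookkeeping (now invoking that $\mu^n(b)\mu^n(a)=\mu^n(ba)$ is a factor of $\mathbf t$) yields, in each subcase, a short-period overlap in $\mu^n(a)$ or in $\mu^n(b)$, a square forcing $\mu^{n-1}(a)=\mu^{n-1}(b)$, or an overlap straddling the $\mu^n(b)$--$\mu^n(a)$ junction. Thus $\mu^{n+1}(a)$ is defined at $0_n$ only, which completes the proof.

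The routine ingredients are the block decomposition and the behaviour at $0_n$. The main obstacle I anticipate is the family of large-period cases $i\ge 2^{n-1}+1$: there overlap-freeness applied to $\mu^n(a)$ alone is too weak, and one genuinely has to follow the forced path through $0_n$, pin down the next codeword $x_1$, and extract the hidden period --- of $\mu^n(b)$, or of $\mu^n(a)$ on the other petal --- that surfaces as an overlap straddling the junction of the two blocks. Carrying this out uniformly over the two petals, and via the complementation symmetry over the two words $\mu^{n+1}(a)$ and $\mu^{n+1}(b)$, without mishandling the index arithmetic, is where the real effort lies.
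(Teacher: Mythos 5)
Your proposal is correct: the forced-path analysis, the border/period bookkeeping, and the overlap constructions (including the ones straddling the block junction) all close, and the petal-swapping, label-complementing symmetry legitimately reduces $\mu^{n+1}(b)$ to $\mu^{n+1}(a)$ since $\mu$ commutes with exchanging $a$ and $b$. The paper reaches the same contradiction from the same two ingredients (determinism of the action on the flower graph and overlap-freeness of $\mathbf{t}$), but organizes it so that your entire case analysis disappears: a path labeled $\mu^{n+1}(a)$ starting at an internal vertex yields a factorization $\mu^n(u_1u_2u_3)=x\,\mu^n(a)\mu^n(b)\,y$ with $1\le|x|<2^n$, where $u_1u_2u_3$ is the sequence of petals traversed; then the occurrence of $\mu^n(a)$ at offset $|x|$ overlaps the block $\mu^n(u_2)$ inside the factor $\mu^n(u_1u_2)$ of $\mathbf{t}$, ruling out $u_2=a$, and symmetrically the occurrence of $\mu^n(b)$ overlaps $\mu^n(u_2)$ inside $\mu^n(u_2u_3)$, ruling out $u_2=b$ --- absurd. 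This ``middle factor'' trick uses only that overlapping occurrences of a common factor of $\mathbf{t}$ are impossible and that all two-letter words are factors of $\mathbf{t}$, so there is no need to locate the first return to $0_n$, to extract borders and periods, to identify the next codeword, or to treat the small-period, square, and junction subcases separately; your square case ($i=2^{n-1}$, where overlap-freeness alone is silent) is also absorbed by it. Your argument costs more index arithmetic but has the mild advantage of only invoking the factors $\mu^n(ab)$ and $\mu^n(ba)$ of $\mathbf{t}$; if you keep your route, do write out the $b$-petal case rather than leaving it as ``entirely parallel'', since its two subcases end in contradictions of different types (a junction overlap in $\mu^n(ba)$ versus a common-prefix clash), not literal mirror images of the $a$-petal ones.
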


\begin{proof}
  To prove the lemma, note that, by construction of the
  graph~$\Gamma_n$, the words $\mu^n(a)$ and $\mu^n(b)$ label circuits
  at the vertex $0_n$. Since $\mu^{n+1}(a)=\mu^n(a)\mu^n(b)$, this
  word also labels a circuit at the vertex $0_n$. We need to show that
  it does not label a path starting at any other vertex of~$\Gamma_n$.
  Indeed, otherwise, there would be some word $u=u_1u_2u_3$
  ($u_i\in\{a,b\}$) and a factorization
  $$\mu^n(u)=x\mu^{n+1}(a)y=x\mu^n(a)\mu^n(b)y$$
  with $1\le|x|<2^n$. In particular, $\mu^n(a)$ is a factor
  of~$\mu^n(u_1u_2)$ which overlaps with $\mu^n(u_2)$ and $\mu^n(b)$
  is a factor of $\mu^n(u_2u_3)$ which also overlaps
  with~$\mu^n(u_2)$. Since both $\mu^n(u_1u_2)$ and $\mu^n(u_2u_3)$
  are factors of~$\mathbf{t}$ and no two consecutive occurrences of a
  factor of~$\mathbf{t}$ may overlap, by considering respectively the
  factor $\mu^n(a)$ of $\mu^n(u_1u_2)$ and the factor $\mu^n(b)$
  of~$\mu^n(u_2u_3)$ we conclude that $u_2$ may be neither $a$ nor
  $b$, which is absurd. Interchanging the roles of $a$ and~$b$, we
  conclude that also $\mu^{n+1}(b)$ acts on the vertices of~$\Gamma_n$
  precisely as the identity on the vertex $0_n$.
\end{proof}

There is a natural graph homomorphism respecting labels
$\gamma_n:\Gamma_{n+1}\to\Gamma_n$: it maps $0_{n+1}$ to~$0_n$ and
each vertex $0_{n+1}\cdot w$ to $0_n\cdot w$; this mapping is well
defined since the two simple cycles at $0_{n+1}$ in $\Gamma_{n+1}$ are
labeled by the words $\mu^{n+1}(a)$ and $\mu^{n+1}(b)$, which
fix~$0_n$.
The preimage of $0_n$ in $\Gamma_{n+1}$ consists of three vertices,
namely $0_{n+1}$, $0_{n+1}'=0_{n+1}\cdot\mu^n(a)$ and
$0_{n+1}''=0_{n+1}\cdot\mu^n(b)$. The two vertices $0_{n+1}'$ and
$0_{n+1}''$ are distinguished in particular by the fact that from the
first only leaves an arrow labeled $b$ (which is the first letter
of~$\mu^n(b)$) while from the second only leaves an arrow labeled $a$
(the first letter of~$\mu^n(a)$).

To simplify the proof and as it is sufficient for our purposes, the
following lemma states only a special case of a much more general
phenomenon.

\begin{Lemma}
  \label{l:lifting}
  Given any transformation $t\in T(\Gamma_n)$ of domain $0_n$, there
  are words $u$ and $v$ whose action on~$\Gamma_n$ coincides with~$t$
  and whose actions on~$\Gamma_{n+1}$ are distinct transformations of
  domain~$0_{n+1}$.
\end{Lemma}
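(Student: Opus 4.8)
The plan is to take a word $w$ whose action on $\Gamma_n$ is the given transformation $t$ of domain $0_n$, and to produce from it two words $u,v$ that still act as $t$ on $\Gamma_n$ but are forced to send $0_{n+1}$ to the two different vertices $0_{n+1}'$ and $0_{n+1}''$ in $\Gamma_{n+1}$. First I would fix such a $w$; since $t$ has domain $0_n$, we have $0_n\cdot w = 0_n$ by the flower structure (any path from $0_n$ labelled by a word accepted as a loop returns to $0_n$), so in fact $w\in\image\mu^n$, i.e.\ $w=\mu^n(w_0)$ for some $w_0\in\{a,b\}^*$. The idea is then to prepend to $w$ one of the two ``lobes'' of $\Gamma_{n+1}$ at $0_{n+1}$: set $u=\mu^n(a)\,\mu^{n+1}(b)\cdot w'$ and $v=\mu^n(b)\,\mu^{n+1}(a)\cdot w'$ for a suitable tail $w'$, exploiting that $0_{n+1}\cdot\mu^n(a)=0_{n+1}'$ and $0_{n+1}\cdot\mu^n(b)=0_{n+1}''$, while by Lemma~\ref{l:rank-1-identities} the blocks $\mu^{n+1}(a)$ and $\mu^{n+1}(b)$ act on $\Gamma_n$ as the identity at $0_n$, so that the whole of $u$ and $v$ still acts on $\Gamma_n$ as $t$ provided we choose $w'$ so that $\mu^n(a)\mu^{n+1}(b)w'$ and $\mu^n(b)\mu^{n+1}(a)w'$ lie in $\image\mu^n$ and act there as $w$ does.

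Concretely, I would take $w' = \mu^n(x)\cdot w$ where $x$ is chosen so that $ax$ and $bx$ (more precisely the relevant length-$3$ prefixes $a\cdot b\cdot x$, etc., coming from expanding $\mu^n(a)\mu^{n+1}(b)=\mu^n(abba)$ and $\mu^n(b)\mu^{n+1}(a)=\mu^n(baab)$) are factors of $\mathbf t$; explicitly $\mu^n(abba)$ and $\mu^n(baab)$ are images under $\mu^n$ of length-$4$ factors of $\mathbf t$ (since $abba$ and $baab$ occur in $\mathbf t$), hence act on $\Gamma_{n+1}$ and on $\Gamma_n$ without collapsing to the empty transformation. On $\Gamma_n$, $\mu^n(abba)$ and $\mu^n(baab)$ both act as the identity at $0_n$ by Lemma~\ref{l:rank-1-identities}, so $u$ and $v$ act on $\Gamma_n$ exactly as $w$, i.e.\ as $t$. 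On $\Gamma_{n+1}$, reading $u=\mu^n(abba)w$ from $0_{n+1}$, the first block $\mu^n(a)$ moves $0_{n+1}$ to $0_{n+1}'$, and then I must check the remainder $\mu^n(bba)w$ is actually defined at $0_{n+1}'$ and that the endpoint differs from the endpoint obtained along $v$, which starts by moving $0_{n+1}$ to $0_{n+1}''$.

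The main obstacle, and where the real work lies, is the last point: showing the actions on $\Gamma_{n+1}$ genuinely differ rather than accidentally re-merging. The key tool is the ``no overlap'' property of $\mathbf t$ (overlap-freeness, as recalled before Lemma~\ref{l:rank-1-identities}) together with the analysis already carried out in Lemma~\ref{l:rank-1-identities} of how $\mu^{n+1}(a),\mu^{n+1}(b)$ sit inside factors of $\mathbf t$: because the only preimages of $0_n$ under $\gamma_n$ are $0_{n+1},0_{n+1}',0_{n+1}''$, and $0_{n+1}'$ (only a $b$-edge leaves) is distinguished from $0_{n+1}''$ (only an $a$-edge leaves), one tracks that after the initial lobe the paths for $u$ and $v$ stay in $\gamma_n^{-1}(0_n\cdot(\text{prefix of }w))$ in a way that keeps them in different fibres, so their endpoints project to the same vertex of $\Gamma_n$ but are distinct in $\Gamma_{n+1}$. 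I expect this to reduce, via $\gamma_n$ and overlap-freeness, to the single combinatorial fact that a word in $\image\mu^n$ cannot simultaneously act at $0_{n+1}'$ and at $0_{n+1}''$ ending at the same vertex — essentially the same ``two consecutive occurrences of a $\mathbf t$-factor cannot overlap'' argument used in Lemma~\ref{l:rank-1-identities}, applied one level up.
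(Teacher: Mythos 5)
There is a genuine gap, and it starts with the hypothesis. A ``transformation $t\in T(\Gamma_n)$ of domain $0_n$'' is a partial map defined \emph{only} at $0_n$, but its value $t(0_n)=p$ can be any vertex of $\Gamma_n$; it need not fix $0_n$. Your opening reduction --- ``since $t$ has domain $0_n$, we have $0_n\cdot w=0_n$, so $w\in\image\mu^n$'' --- is false in general (and the parenthetical justification is circular: it assumes the word labels a loop). This is not a harmless special case: in the proof of Theorem~\ref{t:limproj-Mn-uncountable} the lifted transformations send $0_{n+1}$ to interior vertices of the petals of $\Gamma_{n+1}$, and these non-loops are exactly what must be lifted at the next level, so a version of Lemma~\ref{l:lifting} restricted to $w\in\image\mu^n$ would stall the tree construction after one step. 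The paper instead takes $w$ to be the label of the shortest path from $0_n$ to $p$, hence a prefix of $\mu^n(c)$ for some letter $c$.

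Even within your restricted setting, the step you yourself call ``the real work'' is not only left undone but fails for the concrete choice proposed. First, a computational slip: $\mu^n(a)\mu^{n+1}(b)=\mu^n(aba)$ and $\mu^n(b)\mu^{n+1}(a)=\mu^n(bab)$, not $\mu^n(abba)$ and $\mu^n(baab)$. More seriously, these prefixes carry $0_{n+1}$ to $0_{n+1}'$ and $0_{n+1}''$ respectively, and from $0_{n+1}'$ only a $b$-edge leaves while from $0_{n+1}''$ only an $a$-edge leaves; hence a \emph{common} tail $w'=\mu^n(x)w$ is readable after at most one of the two prefixes, so one of $u,v$ acts on $\Gamma_{n+1}$ as the empty transformation, violating the requirement that both have domain $0_{n+1}$. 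You also never address that requirement explicitly (you only track the path from $0_{n+1}$), and no argument is given that the two endpoints differ beyond an expectation that overlap-freeness will settle it. The paper's construction avoids all of this: with $w$ a prefix of $\mu^n(c)$ and $\{c,d\}=\{a,b\}$, take $u=\mu^{n+2}(a)w$ and $v=\mu^{n+2}(a)\mu^n(d)w$. The block $\mu^{n+2}(a)$, by Lemma~\ref{l:rank-1-identities} applied at levels $n$ and $n+1$, forces the domains to be exactly $\{0_n\}$ on $\Gamma_n$ and $\{0_{n+1}\}$ on $\Gamma_{n+1}$; the inserted $\mu^n(d)$ fixes $0_n$ (so both words act as $t$ on $\Gamma_n$) but moves $0_{n+1}$ to the midpoint of the $d$-petal, after which $w$ is readable in both cases (as a prefix of $\mu^{n+1}(c)$, respectively of the second half of $\mu^{n+1}(d)$), and the two endpoints lie on different petals, so distinctness is immediate with no further appeal to overlap-freeness.
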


\begin{proof}
  Suppose that $t$ is given by $t:0_n\to p$. Let $w$ be the label of
  the shortest path from $0_n$ to~$p$. Since $w$ is a prefix of
  $\mu^n(c)$ for some $c\in\{a,b\}$ (there is a choice for $c$ only in
  case $w=1$), we consider such a letter $c$. Let $u=\mu^{n+2}(a)w$
  and $v=\mu^{n+2}(a)\mu^n(d)w$, where $d$ is the letter such that
  $\{c,d\}=\{a,b\}$. By Lemma~\ref{l:rank-1-identities}, the domain of
  the actions of $u$ and $v$ on~$\Gamma_{n+1}$ is reduced to~$0_{n+1}$
  and they both act like $t$ on~$\Gamma_n$.
\end{proof}

The following result was discovered by computer calculation for small
values of~$n$. The proof that it holds for all $n\ge0$ is somewhat
technical.

\begin{Lemma}
  \label{l:x4=x3}
  The monoid $M_n$ satisfies the identity $x^4=x^3$.
\end{Lemma}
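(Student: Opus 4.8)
\emph{Reduction.} Since $M_n$ is a subdirect product of the transition monoids $T_i=T(\Gamma_i)$ with $0\le i\le n$, it suffices to prove that $T_m\models x^4=x^3$ for every $m\ge0$. An element of $T_m$ is a partial bijection $f$ of the finite vertex set of $\Gamma_m$, and its functional graph decomposes into directed cycles and directed chains; a routine check then shows that $f^4=f^3$ holds if and only if $\mathrm{dom}(f^3)\subseteq\{p:p\cdot f=p\}$ (equivalently, every cycle of $f$ is a fixed point and every chain has at most two edges). So it is enough to establish, for every $m\ge0$ and every word $w$ over $\{a,b\}$, the statement
$$P(m):\qquad\text{if }p\cdot w^3\text{ is defined in }\Gamma_m,\text{ then }p\cdot w=p .$$

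\emph{Induction.} The base case $P(0)$ is immediate since $\Gamma_0$ has a single vertex. Assume $P(m)$ and let $w$, $p$ be such that $p\cdot w^3$ is defined in $\Gamma_{m+1}$. Applying the label-respecting homomorphism $\gamma_m\colon\Gamma_{m+1}\to\Gamma_m$, the $w^3$-path from $p$ projects to a $w^3$-path from $\bar p:=\gamma_m(p)$, so $P(m)$ gives $\bar p\cdot w=\bar p$ and hence $\gamma_m(p\cdot w^i)=\bar p$ for all $i$. Thus the whole orbit $p,p\cdot w,p\cdot w^2,p\cdot w^3$ lies in the fibre $\Phi:=\gamma_m^{-1}(\bar p)$, which (as one checks directly from the definition of $\gamma_m$) has at most three elements, exactly three precisely when $\bar p=0_m$, in which case $\Phi=\{0_{m+1},0_{m+1}',0_{m+1}''\}$. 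Since $w$ is injective, a repetition among four elements of a set of size at most three forces $p$ onto a $w$-cycle of length at most $3$; so it remains to show that $w$ has no cycle of length $2$ or $3$ inside $\Phi$, a length-$1$ cycle giving $p\cdot w=p$ and finishing the step.

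\emph{The fibre.} Suppose first $\bar p=0_m$, so $0_m\cdot w=0_m$ and therefore $w=\mu^m(\mathbf c)$, where $\mathbf c$ is the sequence of petals of the circuit $0_m\xrightarrow{w}0_m$ in $\Gamma_m$. Reading $\mu^m(\mathbf c)$ from the midpoint $0_{m+1}'$ of the $a$-petal of $\Gamma_{m+1}$ is possible only if the first letter of $\mathbf c$ is $b$ (the only letter leaving $0_{m+1}'$, beginning the half-petal $\mu^m(b)$) and each later petal letter switches petals at $0_{m+1}$; symmetrically at $0_{m+1}''$. In particular, if $\mathbf c$ begins with $a$ then $0_{m+1}'\notin\mathrm{dom}(w)$, so a cycle of $w$ in $\Phi$ could only be the transposition of $0_{m+1}$ and $0_{m+1}''$ (there is no $3$-cycle, as some special vertex always lies outside $\mathrm{dom}(w)$); but $0_{m+1}\cdot w=0_{m+1}''$ forces $\mathbf c\in(\image\mu)\,b$ while $0_{m+1}''\cdot w=0_{m+1}$ forces $\mathbf c\in a\,(\image\mu)$, and these are incompatible — together they make $\mathbf c$ alternating, beginning with $a$ and ending with $b$, hence of even length, whereas $a\,(\image\mu)$ consists of odd-length words. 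The case where $\mathbf c$ begins with $b$ is symmetric, and the empty case is trivial. Now suppose $\bar p\ne0_m$; say $\bar p$ lies on the $a$-petal of $\Gamma_m$ at distance $d$, so $\Phi=\{\alpha_d,\beta_d\}$, where $\alpha_d$ is the distance-$d$ vertex of the $a$-petal of $\Gamma_{m+1}$ and $\beta_d$ the distance-$d$ vertex of the half-petal $\mu^m(a)$ of its $b$-petal. The only possible cycle is the transposition $\alpha_d\leftrightarrow\beta_d$. Going forward, $\alpha_d$ first meets $\gamma_m^{-1}(0_m)$ at $0_{m+1}'$ and is reached from $0_{m+1}$, while $\beta_d$ first meets it at $0_{m+1}$ and is reached from $0_{m+1}''$; hence a $w$-path $\alpha_d\to\beta_d$ factors as $\alpha_d\to0_{m+1}'\xrightarrow{M}0_{m+1}''\to\beta_d$ and a $w$-path $\beta_d\to\alpha_d$ as $\beta_d\to0_{m+1}\xrightarrow{M}0_{m+1}\to\alpha_d$ with one and the same middle word $M$ (namely $w$ with its initial $2^m-d$ and terminal $d$ letters removed). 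Being the label of a circuit at $0_{m+1}$, $M$ lies in $\image\mu^{m+1}$; but a word of $\image\mu^{m+1}$ read from $0_{m+1}'$ either is undefined or returns to $0_{m+1}'$, never reaching $0_{m+1}''$ — a contradiction. The case of $\bar p$ on the $b$-petal is symmetric. In all cases $\Phi$ carries no $2$- or $3$-cycle of $w$, which completes the induction and the proof.

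\emph{Main obstacle.} The technical heart is the fibre analysis above, and within it the bookkeeping relating the half-petals of $\Gamma_{m+1}$ to the petals of $\Gamma_m$: one must verify carefully that a half-petal read from $0_{m+1}'$ or $0_{m+1}''$ forces a petal-switch at $0_{m+1}$ — this is exactly where the overlap-free (recognizable) structure of the Prouhet–Thue–Morse substitution is used — so that $\mu^m(\mathbf c)$ can pass through the three special vertices only when $\mathbf c$ lies, up to one dangling letter, in $\image\mu$. From this one determines precisely which words act nontrivially on the special vertices and rules out the offending short cycles.
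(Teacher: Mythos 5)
Your proof is correct, but it follows a genuinely different route from the paper. The paper also reduces to the monoids $T_m$, but then argues at a single level: it takes a path $p\xrightarrow{w^r}q$ with $r\ge3$, observes that $w^r$ must sit inside some $\mu^m(u)$, and runs a case analysis (is $\mu^m(a)$ or $\mu^m(b)$ a factor of $w$, is $w$ a factor of a petal label, is $w$ a conjugate of a petal label) driven by the cube-freeness and overlap-freeness of the Prouhet--Thue--Morse word, invoking at one point the aperiodicity of $T_m$ obtained from star-freeness of $\image\mu^m$. You instead induct along the tower via the label-preserving projections $\gamma_m\colon\Gamma_{m+1}\to\Gamma_m$: the induction hypothesis pins the whole $w$-orbit of $p$ into a single fibre of size at most $3$, injectivity plus pigeonhole puts $p$ on a short $w$-cycle, and the short cycles are excluded by a desubstitution-style analysis of how $\mu^m$-blocks act on the three special vertices (equivalently, the fibre action of $\mu^m(\mathbf c)$ is the action of $\mathbf c$ on $\Gamma_1$), together with the forced-path bookkeeping in the two-point fibres. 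This avoids cube-freeness, overlap-freeness and the aperiodicity input altogether, exploits the inverse system the paper introduces anyway, and yields the stronger conclusion that for every $f\in T_m$ the domain of $f^3$ consists of fixed points of $f$ (so $f^3$ is a partial identity), which is more than $x^4=x^3$. The checks you leave implicit are genuinely routine but are where the content lies: that $\mathbf c\in(\image\mu)b\cap a(\image\mu)$ forces $\mathbf c$ to be alternating (read the two block decompositions of $\image\mu=\{ab,ba\}^*$ against each other) and hence cannot exist; and that a word of $\image\mu^{m+1}$, read from $0_{m+1}'$, is either undefined or returns to $0_{m+1}'$ (one more round of your block analysis, or the $\Gamma_1$-correspondence applied to a word of $\image\mu$). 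Spelling those two verifications out would make the argument complete as written.
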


\begin{proof}
  It suffices to show that each monoid $T_n$ ($n\ge1$) satisfies the
  identity $x^4=x^3$.
  
  We claim that if $w\in\{a,b\}^+$ then either $w^3=w^2$ in~$T_n$ or,
  whenever $p\xrightarrow{w^r}q$ in~$\Gamma_n$ for some $r\ge 3$ then
  $p=q$ and $p\xrightarrow wp$, so that $p\xrightarrow{w^k}q$ for
  every $k\ge1$. Indeed, having a path from $p$ to $q$ labeled $w^r$
  implies that $w^r$ is a factor of $\mu^n(u)$ for some word $u$ that
  we may assume to be of minimum length. More precisely, there is a
  factorization $\mu^n(u)=xw^ry$ with $0\le\max\{|x|,|y|\}<2^n$ and
  $0_n\cdot x=p$ (cf.~Figure~\ref{fig:wr},
  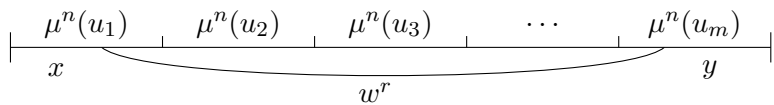
\begin{figure}[ht]
    \begin{center}
      \begin{tikzpicture}[x=1mm, y=1mm]
        \draw (0,5)--(100,5); %
        \draw (0,3)--(0,7); %
        \draw (100,3)--(100,7); %
        \foreach \x in {20,40,60,80}%
           \draw (\x,5)--(\x,6.5); %
        \draw (12,5) .. controls (18,0) and (80,0) .. (86,5); %
        \draw (10,8) node {$\mu^n(u_1)$}; %
        \draw (30,8) node {$\mu^n(u_2)$}; %
        \draw (50,8) node {$\mu^n(u_3)$}; %
        \draw (70,8) node {$\ldots$}; %
        \draw (90,8) node {$\mu^n(u_m)$}; %
        \draw (48,-1) node {$w^r$}; %
        \draw (6,2) node {$x$}; %
        \draw (92,2) node {$y$};
      \end{tikzpicture}
    \end{center}
    \caption{A factorization involving $w^r$}
    \label{fig:wr}
  \end{figure}
  where $u=u_1u_2\cdots u_m$,
  with the $u_i\in\{a,b\}$).
  Since $r\ge3$ and $\mathbf{t}$ is
  cube-free, $\mu^n(u)$ cannot be a factor of~$\mathbf{t}$ and,
  therefore, neither may be $u$. Hence, we must have $|u|\ge3$ and at
  least one of the words $\mu^n(a)$ or $\mu^n(b)$ must be a factor
  of~$w^r$. Since the two situations are symmetric, we assume that
  $\mu^n(a)$ is a factor of~$w^r$.
  Say by Lemma~\ref{l:rank-1-identities}, $\mu^n(a)$ acts as the
  identity at the single vertex $0_{n-1}$ of~$\Gamma_{n-1}$. We deduce
  that the domain of the action of $\mu^n(a)$ on~$\Gamma_n$ consists
  of the vertices $0_n$ and $0_n''$. Moreover, by definition of the
  graph $\Gamma_n$, $\mu^n(a)$ fixes both $0_n$ and $0_n''$.

  If either $\mu^n(a)$ or $\mu^n(b)$ is actually a factor of~$w$, then
  the domain of the (partial injective) action of~$w$ on~$\Gamma_n$
  has at most two vertices. Since $p\cdot w^3$ is defined, either
  $p\cdot w=p$, and we are done, or $p\cdot w\ne p$ and $p\cdot
  w^2=p$. The latter case is excluded because $T_n$ is aperiodic by
  Corollary~\ref{c:Tn-in-A}. Hence, we may assume that neither
  $\mu^n(a)$ nor $\mu^n(b)$ is a factor of~$w$.

  Suppose now that $w$ is not a factor of either $\mu^n(a)$ or
  $\mu^n(b)$. In this case, the beginning of the factorization of
  $\mu^n(u)$ must be as in Figure~\ref{fig:www-mu}.
  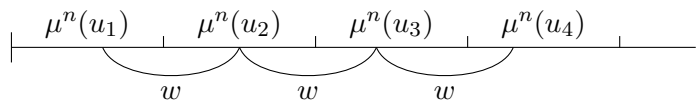
\begin{figure}[ht]
    \begin{center}
      \begin{tikzpicture}[x=1mm, y=1mm]
        \draw (0,5)--(90,5); %
        \draw (0,3)--(0,7); %
        \foreach \x in {20,40,60,80} %
        \draw (\x,5)--(\x,6.5); %
        \foreach \x in {12,30,48} %
           \draw (\x,5) .. controls (\x+2,0) and (\x+16,0) .. (\x+18,5); %
        \draw (10,8) node {$\mu^n(u_1)$}; %
        \draw (30,8) node {$\mu^n(u_2)$}; %
        \draw (50,8) node {$\mu^n(u_3)$}; %
        \draw (70,8) node {$\mu^n(u_4)$}; %
        \foreach \x in {21,39,57}
           \draw (\x,-1) node {$w$}; %
      \end{tikzpicture}
    \end{center}
    \caption{A factorization involving $w^3$}
    \label{fig:www-mu}
  \end{figure}
  Again, since $\mathbf{t}$ is cube-free, the word $u_1u_2u_3u_4$
  cannot be a factor of~$\mathbf{t}$ and so either the first three or
  the last three letters of~$u_1u_2u_3u_4$ must be equal. In either
  case, there is a letter $c$ such that $w^2$ appears as a factor of
  $\mu^n(c^3)$ as in Figure~\ref{fig:ww-mu}.
  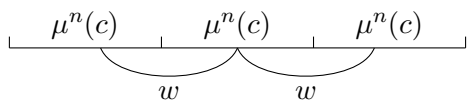
\begin{figure}[ht]
    \begin{center}
      \begin{tikzpicture}[x=1mm, y=1mm]
        \draw (0,5)--(60,5); %
        \foreach \x in {0,20,40,60} %
           \draw (\x,5)--(\x,6.5); %
        \foreach \x in {12,30} %
           \draw (\x,5) .. controls (\x+2,0) 
               and (\x+16,0) .. (\x+18,5); %
        \draw (10,8) node {$\mu^n(c)$}; %
        \draw (30,8) node {$\mu^n(c)$}; %
        \draw (50,8) node {$\mu^n(c)$}; %
        \foreach \x in {21,39}
           \draw (\x,-1) node {$w$}; %
      \end{tikzpicture}
    \end{center}
    \caption{A factorization involving $w^2$}
    \label{fig:ww-mu}
  \end{figure}
  If $|w|\ne2^n$, as in Figure~\ref{fig:ww-mu}, then comparing the two
  factorizations of $\mu^n(c^2)$, we find an overlap of $w$ within
  $\mathbf{t}$ (see Figure~\ref{fig:w-overlap}), which is impossible.
  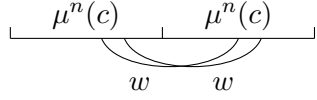
\begin{figure}[ht]
    \begin{center}
      \begin{tikzpicture}[x=1mm, y=1mm]
        \draw (0,5)--(40,5); %
        \foreach \x in {0,20,40} %
           \draw (\x,5)--(\x,6.5); %
        \draw (10,8) node {$\mu^n(c)$}; %
        \draw (30,8) node {$\mu^n(c)$}; %
        \foreach \x in {12,15} %
           \draw (\x,5) .. controls (\x+2,0)
                 and (\x+16,0) .. (\x+18,5); %
        \foreach \x in {17,28} %
           \draw (\x,-1) node {$w$}; %
      \end{tikzpicture}
    \end{center}
    \caption{A factorization involving $\mu^n(c^2)$}
    \label{fig:w-overlap}
  \end{figure}
  Hence, we must have $|w|=2^n$ and $w$ and $\mu^n(c)$ are
  conjugate words, that is, there is a factorization
  $\mu^n(c)=\alpha\beta$ such that $w=\beta\alpha$. It follows that
  $w^3=\beta(\alpha\beta)^2\alpha=w^2$ in~$T_n$ as $\mu^n(c)$ acts as
  a local identity on $\Gamma_n$, and this fulfills our claim.

  It remains to consider the case where in the factorization of
  Figure~\ref{fig:wr}, there is at least one of the factors $w$ in
  $w^r$ that falls completely and properly within one of the factors
  $\mu^n(u_i)$. Looking at such a factor $\mu^n(u_i)$, within which
  appears a full factor $w$ in the factorization of
  Figure~\ref{fig:wr}, we find a word $z=z_1z_2z_3$ (with
  $z_i\in\{a,b\}$) such that $w^3$ is a factor of~$\mu^n(z)$. Since
  the word $w^3$ is not a factor of~$\mathbf{t}$, the infinite word
  $\mathbf{t}$ cannot have $z$ as a factor. Hence, we must find $w^3$
  as a factor of~$\mu^n(c^3)$ for some letter $c$, but not as a factor
  of~$\mu^n(c^2)$. Depending on where each factor $w$ starts, the
  picture may look different but one possible configuration is
  represented in Figure~\ref{fig:www-mu-short}.
  \begin{figure}[ht]
    \begin{center}
      \begin{tikzpicture}[x=1mm, y=1mm]
        \draw (0,5)--(60,5); %
        \foreach \x in {0,20,40,60} %
           \draw (\x,5)--(\x,6.5); %
        \foreach \x in {8,22,36} %
           \draw (\x,5) .. controls (\x+2,0) 
                 and (\x+12,0) .. (\x+14,5); %
        \foreach \x in {0,20,40} %
           \draw (\x+10,8) node {$\mu^n(c)$}; %
        \foreach \x in {15,29,43}
           \draw (\x,-1) node {$w$}; %
      \end{tikzpicture}
    \end{center}
    \caption{Another factorization involving $w^3$}
    \label{fig:www-mu-short}
  \end{figure}
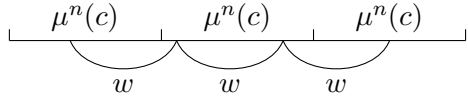
  In any case, comparing again the two factorizations of~$\mu^n(c^2)$,
  since $|w|<2^n$, we find again an overlap within $\mu^n(c^2)$ unless
  $|w|=2^{n-1}$ (see Figure~\ref{fig:w-overlap-short}), which is
  impossible. In the exceptional case, looking at the middle factor
  $\mu^n(c)$, we see that there is a factorization $\mu^n(c)=uwv$,
  where $u$ is a suffix of~$w$ and $v$~is a prefix of~$w$; since
  $|\mu^n(c)|=2|w|$, it follows that $w=vu$ and so
  $(uv)^2=\mu^n(c)=\mu^{n-1}(c)\mu^{n-1}(d)$, where
  $\{c,d\}=\{a,b\}$, which is impossible since the factors
  $\mu^{n-1}(c)$ and $\mu^{n-1}(d)$ have the same length and start
  with distinct letters.
  \begin{figure}[t]
    \begin{center}
      \begin{tikzpicture}[x=1mm, y=1mm]
        \draw (0,5)--(40,5); %
        \foreach \x in {0,20,40} %
           \draw (\x,5)--(\x,6.5); %
        \draw (10,8) node {$\mu^n(c)$}; %
        \draw (30,8) node {$\mu^n(c)$}; %
        \foreach \x in {2,8} %
           \draw (\x,5) .. controls (\x+2,0) 
                 and (\x+26,0) .. (\x+28,5); %
        \foreach \x in {12,28} %
           \draw (\x,-1) node {$w^2$}; %
        \end{tikzpicture}
    \end{center}
    \caption{Another factorization involving $\mu^n(c^2)$}
    \label{fig:w-overlap-short}
  \end{figure}
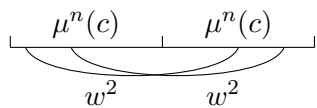
  This completes the proof of the lemma.
\end{proof}

Since $M_n$ is defined as a monoid of binary relations, it is ordered
by the relation $s\le t$ if $s\supseteq t$ in the sense that $\le$ is
a partial order that is stable under multiplication.

The pseudovariety of ordered monoids $\op1\le x^n\cl$ is the main
theme of the authors' paper \cite{Almeida&Klima:2017b}. The main
problem considered in that paper is to determine the pseudovariety of
monoids (or semigroups) $\langle\op1\le x^n\cl\rangle$ generated by
the ordered monoids in $\op1\le x^n\cl$ once the order is forgotten.
The conjectured result is the subpseudovariety of~\pv{BG} defined by
the pseudoidentities $(xy^n)^\omega = (y^nx)^\omega$ and
$x^{\omega+n}=x^\omega$, which is denoted $(\pv{BG})_n$.

\begin{Lemma}
  \label{l:1LEQxn}
  The ordered monoid $M_n$ satisfies the inequality $1\le x^k$ for
  every $k\ge3$.
\end{Lemma}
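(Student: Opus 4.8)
The plan is to combine two ingredients already available: that $M_n$ is, by construction, a monoid of \emph{partial bijections} of the finite vertex set $V$ of $\tilde{\Gamma}_n$, and that $M_n$ satisfies the identity $x^4=x^3$ by Lemma~\ref{l:x4=x3}. One must keep in mind the direction of the order on $M_n$: since $M_n$ is a monoid of binary relations ordered by reverse inclusion, the inequality $1\le x^k$ asserts that $x^k\subseteq\mathrm{id}_V$, i.e.\ that the partial bijection $x^k$ is a \emph{restriction of the identity} (undefined or a fixed point at each vertex of its domain). Because $x^4=x^3$ gives $x^k=x^3$ in $M_n$ for every $k\ge3$, it suffices to prove that $1\le x^3$, that is, that $f^3$ is a subidentity for every $f\in M_n$.

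So I would fix $f\in M_n$, a partial bijection of $V$ with $f^3=f^4$, and carry out a short orbit computation. Take any $p$ in the domain of $f^3$ and set $q=f^3(p)$. From $f^3=f^4$ one gets $f(q)=f^4(p)=f^3(p)=q$, so $q$ is a fixed point of $f$. Now I would walk back along the orbit $p,\ f(p),\ f^2(p),\ f^3(p)=q$, using at each step the injectivity of $f$ on its domain together with $f(q)=q$: from $f(f^2(p))=q=f(q)$ we get $f^2(p)=q$; from $f(f(p))=f^2(p)=q=f(q)$ we get $f(p)=q$; and from $f(p)=q=f(q)$ we get $p=q$. Hence $p$ is itself a fixed point and $f^3(p)=p$. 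Therefore $f^3$ fixes every point of its domain, i.e.\ $f^3\subseteq\mathrm{id}_V$, which is exactly $1\le f^3$ in the ordered monoid $M_n$; combined with the reduction above this yields $1\le x^k$ for every $k\ge3$.

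I do not expect a genuine obstacle: the only inputs are that $M_n$ consists of partial bijections (observed when $M_n$ was introduced) and the identity $x^4=x^3$; aperiodicity is not even needed separately, since the orbit argument automatically rules out cycles of length greater than one. The single delicate point is the direction of the order, so that $1\le x^k$ truly means ``$x^k$ is contained in the identity'' and not the reverse inequality, which would be false (the $M_n$ do contain elements whose square is not a subidentity, which is also why the bound $k\ge3$ is sharp). An alternative, slightly longer route would be to regard $M_n$ as a subdirect product of the $T_i=T(\Gamma_i)$ and argue componentwise via the claim established inside the proof of Lemma~\ref{l:x4=x3}---either $w^3=w^2$ in $T_i$, whence injectivity of the $w$-action forces $w^2$ to act as a partial identity on $\Gamma_i$, or every power $w^r$ with $r\ge3$ already acts as a partial identity on $\Gamma_i$---but the partial-bijection argument above is cleaner and I would present that.
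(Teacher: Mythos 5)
Your proof is correct and takes essentially the same route as the paper: both arguments rest solely on the fact that $M_n$ is a monoid of partial bijections of the vertex set of $\tilde{\Gamma}_n$ together with Lemma~\ref{l:x4=x3}, and you handle the direction of the order ($1\le x^k$ meaning $x^k\subseteq\mathrm{id}$) correctly. The only difference is cosmetic: the paper observes that idempotents of $M_n$ are restrictions of the identity, giving $1\le x^\omega$, and then uses $x^\omega=x^k$ for $k\ge3$, whereas your orbit computation re-derives this elementary fact directly for $x^3$.
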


\begin{proof}
  Note that, since idempotents of~$M_n$ are partial bijections, they
  are identity mappings of some subset of the vertex set
  of~$\tilde{\Gamma}_n$. Thus, $M_n$ satisfies the inequality $1\le
  x^\omega$. By Lemma~\ref{l:x4=x3}, $M_n$ satisfies the identity
  $x^\omega=x^k$ for every $k\ge3$ and whence also the inequality
  $1\le x^k$.
\end{proof}

The homomorphisms $\varphi_n:M_{n+1}\to M_n$ constitute a chain and
determine an inverse limit $\varprojlim M_n$.
Our key result of this section is the following.

\begin{Thm}
  \label{t:limproj-Mn-uncountable}
  The inverse limit $\varprojlim M_n$ is uncountable.
\end{Thm}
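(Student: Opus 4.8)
The plan is to prove that $\varprojlim M_n$ has the cardinality of the continuum by spreading continuum‑many elements along an infinite binary tree. Recall that $\varprojlim M_n$ is the set of all threads $(m_n)_{n\ge0}$ with $m_n\in M_n$ and $\varphi_n(m_{n+1})=m_n$, each $M_n$ being finite; so it suffices to produce uncountably many such threads. I would build a Cantor scheme $(m_\sigma)_{\sigma\in\{0,1\}^*}$ with $m_\sigma\in M_{|\sigma|}$ enjoying, for $n=|\sigma|$: (a) the transformation that $m_\sigma$ induces on the component $\Gamma_n$ of $\tilde\Gamma_n$ has domain the single vertex $0_n$; (b) $\varphi_n(m_{\sigma0})=\varphi_n(m_{\sigma1})=m_\sigma$; and (c) $m_{\sigma0}\ne m_{\sigma1}$. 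Granting this, every $\varepsilon=(\varepsilon_1,\varepsilon_2,\dots)\in\{0,1\}^{\mathbb N}$ determines, by~(b), a thread $\bigl(m_{\varepsilon_1\cdots\varepsilon_n}\bigr)_n\in\varprojlim M_n$, and two sequences first differing in coordinate~$j$ give threads differing in coordinate~$j$ by~(c); as $\{0,1\}^{\mathbb N}$ is uncountable, the theorem follows.

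The scheme is constructed by induction on $|\sigma|$. For the empty word, $M_0$ is the trivial monoid and $m_\emptyset$ is its identity, which satisfies~(a) with $0_0$ the only vertex of $\Gamma_0$. Given $m_\sigma$ ($|\sigma|=n$) satisfying~(a), let $t$ be the transformation it induces on $\Gamma_n$, say with $0_n\cdot t=p$, and apply Lemma~\ref{l:lifting}: it yields words $u,v$ acting as~$t$ on $\Gamma_n$ and as \emph{distinct} transformations of domain~$0_{n+1}$ on~$\Gamma_{n+1}$. Define $m_{\sigma0}$ and $m_{\sigma1}$ to be the elements of $M_{n+1}=T(\tilde\Gamma_{n+1})$ given by the actions of $u$ and $v$ on $\tilde\Gamma_{n+1}$. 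Property~(a) at level $n+1$ holds because both actions have domain~$0_{n+1}$ on $\Gamma_{n+1}$, and~(c) holds because those actions already differ on the component~$\Gamma_{n+1}$.

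The heart of the argument, and the step I expect to be the main obstacle, is~(b): one must verify that $u$ and $v$ act on $\tilde\Gamma_n$ exactly as $m_\sigma$, i.e.\ that they induce $t$ on $\Gamma_n$ (clear from Lemma~\ref{l:lifting}) \emph{and} agree with $m_\sigma$ on each $\Gamma_i$ with $i<n$. For the latter one inspects the explicit words produced in the proof of Lemma~\ref{l:lifting}, namely $u=\mu^{n+2}(a)\,w$ and $v=\mu^{n+2}(a)\,\mu^n(d)\,w$, where $w$ is the label of a shortest path from $0_n$ to $p$, hence a prefix of $\mu^n(c)$ with $\{c,d\}=\{a,b\}$. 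By Lemma~\ref{l:rank-1-identities} applied at the levels below~$n$ (so that $\mu^{i+1}(a)$ and $\mu^{i+1}(b)$, and hence all their products, act on $\Gamma_i$ as the identity at~$0_i$), each of the blocks $\mu^{n+2}(a)$ and $\mu^n(d)$ acts on $\Gamma_i$ as the identity at~$0_i$ for every $i<n$; hence on such a $\Gamma_i$ both $u$ and $v$ act simply as $0_i\mapsto 0_i\cdot w$. Thus~(b) will hold provided the induction carries the strengthened invariant that $m_\sigma$ acts on each $\Gamma_i$ ($i\le n$) as $0_i\mapsto 0_i\cdot w_\sigma$ for a single word $w_\sigma$ that is a prefix of $\mu^n(a)$ or $\mu^n(b)$; with this bookkeeping (and the correspondingly strengthened form of Lemma~\ref{l:lifting}, which its proof already delivers), the lifts are compatible with all the data fixed at lower levels, precisely because the ``resetting'' prefixes $\mu^{n+2}(a)$ are invisible to the graphs $\Gamma_i$ with $i\le n$. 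It is worth noting that this spreading‑out cannot be replaced by a shortcut through a single monogenic subsemigroup: by Lemma~\ref{l:x4=x3} every element $x$ of $\varprojlim M_n$ satisfies $x^4=x^3$, so $\overline{\langle x\rangle}$ has at most three elements.
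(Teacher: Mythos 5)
Your proposal is correct and follows essentially the same route as the paper: the paper also builds an embedding of the complete infinite binary tree into the tree whose level-$n$ vertices are the elements of $M_n$ with parent map $\varphi_n$, using Lemma~\ref{l:lifting} to produce two distinct compatible lifts at each step. Your strengthened invariant (that $m_\sigma$ acts on every component $\Gamma_i$ via a single word that is a prefix of $\mu^n(a)$ or $\mu^n(b)$) is exactly the bookkeeping needed to see that the two lifts agree with $m_\sigma$ on the lower components $\Gamma_i$, $i<n$ — a point the paper's one-paragraph proof leaves implicit — so it is a welcome clarification rather than a different argument.
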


\begin{proof}
  We may build a tree by taking as vertices at level $n$ the elements
  of~$M_n$ and letting the sons of a vertex $s\in M_n$ be the elements
  of $\varphi_n^{-1}(s)$. The elements of $\varprojlim M_n$ may then
  be identified with the infinite simple paths from the root. The
  result will follow from showing that the complete infinite binary
  tree embeds in our tree. The existence of such an embedding follows
  from Lemma~\ref{l:lifting}, which provides two alternatives for
  lifting elements of~$M_n$ whose action on $\Gamma_n$ is a
  transformation with domain $0_n$ to transformations whose action on
  each $\Gamma_{n+1}$ has domain~$0_{n+1}$.
\end{proof}

We conclude this section with consequences of
Theorem~\ref{t:limproj-Mn-uncountable}.

\begin{Cor}
  \label{c:several-non-loccount}
  None of the pseudovarieties \pv{A\cap ESl}, \pv{BV} (for every
  pseudovariety \pv V), $\langle\op1\le x^n\cl\rangle$, and
  $(\pv{BG})_n$ ($n\ge3$) is locally countable.
\end{Cor}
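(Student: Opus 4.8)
The plan is to extract every assertion from Theorem~\ref{t:limproj-Mn-uncountable} through a single reduction. Put $M=\varprojlim M_n$, a profinite monoid with identity $1_M=(1_n)_n$, and let $\bar a=(\bar a_n)_n$ and $\bar b=(\bar b_n)_n$, where $\bar a_n,\bar b_n\in M_n$ are the partial transformations induced by the letters $a,b$ on $\tilde\Gamma_n$; these sequences are legitimate because $\varphi_n$ merely restricts the action of a word to the subgraph $\tilde\Gamma_n$, so $\varphi_n(\bar a_{n+1})=\bar a_n$ and $\varphi_n(\bar b_{n+1})=\bar b_n$. Since $M_n$ is, by definition of a transition monoid, generated as a monoid by $\bar a_n,\bar b_n$, the closed subsemigroup of $M$ generated by $\{\bar a,\bar b,1_M\}$ projects onto each $M_n$ and hence equals $M$. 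Thus $M$ is a finitely generated profinite semigroup, and it is uncountable by Theorem~\ref{t:limproj-Mn-uncountable}. Consequently, for every pseudovariety $\pv W$ with $M_n\in\pv W$ for all $n$, the semigroup $M$ is a finitely generated pro-\pv W semigroup that is uncountable, so \pv W is not locally countable. It therefore suffices to show that every $M_n$ lies in each of the pseudovarieties named in the statement.

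For $\pv A\cap\pv{ESl}$ this is already available: $M_n\in\pv{ESl}$ because $M_n$ is a submonoid of a finite symmetric inverse monoid, and $M_n\in\pv A$ by Lemma~\ref{l:x4=x3}. For the block pseudovarieties I would first observe that $\pv A\cap\pv{ESl}\subseteq\pv{BI}$: in a regular \Cl D-class of a semigroup from $\pv A\cap\pv{ESl}$, two \Cl L-related idempotents $e,f$ satisfy $ef=e$ and $fe=f$, hence $e=f$ by commutativity of idempotents, and likewise for \Cl R-related ones; thus every $\beta$-class is a singleton $\{e\}$, the associated subsemigroup $T$ equals the maximal subgroup $H_e$, which is trivial by aperiodicity, and the block $T/I$ is a one-element semigroup. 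Since the operator $\pv B$ is monotone and every pseudovariety contains the trivial pseudovariety $\pv I$, we get $\pv A\cap\pv{ESl}\subseteq\pv{BI}\subseteq\pv{BV}$ for every \pv V; as local countability is inherited by subpseudovarieties, no $\pv{BV}$ is locally countable.

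For $\langle\op1\le x^n\cl\rangle$ with $n\ge3$, Lemma~\ref{l:1LEQxn} says that the ordered monoid $M_n$ satisfies $1\le x^n$, so the underlying monoid $M_n$ belongs to $\langle\op1\le x^n\cl\rangle$ and the reduction applies. For $(\pv{BG})_n$ with $n\ge3$ I would check its two defining pseudoidentities on $M_n$: first, $M_n\in\pv{BG}$ since $M_n$ is a submonoid of a finite inverse monoid; second, $x^{\omega+n}=x^\omega$ follows from $x^4=x^3$ (Lemma~\ref{l:x4=x3}), which also shows that $y^n$ is idempotent; and third, $(xy^n)^\omega=(y^nx)^\omega$ holds because, writing $e=y^n$ and computing inside the symmetric inverse monoid containing $M_n$, one has $\mathrm{Fix}(xe)=\mathrm{Fix}(ex)=\mathrm{Fix}(x)\cap\mathrm{dom}(e)$, while for every $t\in M_n$ the idempotent $t^\omega$ is the partial identity on $\mathrm{Fix}(t)$ (here aperiodicity of $M_n$ enters). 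Alternatively, one may invoke the inclusion $\langle\op1\le x^n\cl\rangle\subseteq(\pv{BG})_n$ from~\cite{Almeida&Klima:2017b}.

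Since the hard construction is already carried out in Theorem~\ref{t:limproj-Mn-uncountable}, the only genuine work is this membership bookkeeping. The point that most deserves care is the $(\pv{BG})_n$ case: the equality $(xe)^\omega=(ex)^\omega$ must be read in $M_n$, not merely in the ambient symmetric inverse monoid — but this is harmless, since $M_n$ embeds in that monoid and it is enough to see that the two partial bijections agree.
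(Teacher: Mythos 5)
Your proposal is correct and takes essentially the same route as the paper: everything is reduced to Theorem~\ref{t:limproj-Mn-uncountable} by checking that each $M_n$ lies in the relevant pseudovarieties, via Lemma~\ref{l:x4=x3}, the embedding of~$M_n$ into a symmetric inverse monoid, the inclusions $\pv{A\cap ESl}\subseteq\pv{BI}\subseteq\pv{BV}$, and Lemma~\ref{l:1LEQxn} together with $\langle\op1\le x^n\cl\rangle\subseteq(\pv{BG})_n$. The only cosmetic differences are that you keep the identity of $\varprojlim M_n$ (adding it as a generator) where the paper passes to the two-generated semigroup $\varprojlim M_n\setminus\{1\}$, and that you spell out the inclusion $\pv{A\cap ESl}\subseteq\pv{BI}$ and an optional direct verification of the $(\pv{BG})_n$ pseudoidentities, which the paper merely asserts or cites.
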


\begin{proof}
  By Lemma~\ref{l:x4=x3}, the monoid $M_n$ is aperiodic and its
  idempotents commute. Hence, the semigroup $S=\varprojlim
  M_n\setminus\{1\}$ is a pro-\pv{(A\cap ESl)} semigroup on two
  generators. Since $S$ is uncountable by
  Theorem~\ref{t:limproj-Mn-uncountable}, so is $\Om2{(A\cap ESl)}$.
  For \pv{BV}, the result follows from the inclusions $\pv{A\cap ESl}
  \subseteq\pv{BI}\subseteq\pv{BV}$. For the remaining
  pseudovarieties, the result follows from the inclusion
  $\langle\op1\le x^n\cl\rangle\subseteq(\pv{BG})_n$
  \cite[Proposition~4.2]{Almeida&Klima:2017b}, Lemma~\ref{l:1LEQxn}
  and Theorem~\ref{t:limproj-Mn-uncountable}.
\end{proof}

The questions as to whether $(\pv{BG})_n$ or $\pv B\op x^n=1\cl$ are
locally countable were raised in our paper
\cite[Section~8]{Almeida&Klima:2017b}. With the restriction $n\ge3$,
Corollary~\ref{c:several-non-loccount} does not handle the case of
$(\pv{BG})_2$ and $\langle\op1\le x^2\cl\rangle$, while
$(\pv{BG})_1=\langle\op 1\le x\cl\rangle=\pv J$ is locally countable.
We will not go into details, but the case of $n=2$ may be treated
similarly by working with a suitable substitution generating an
infinite square-free word instead of the Prouhet-Thue-Morse
substitution. One such substitution over a three letter alphabet is
given by $\varphi(a)=abc$, $\varphi(b)=ac$, $\varphi(c)=b$
\cite[Proposition~2.3.2]{Lothaire:1983}. The idea is to start with the
flower digraph
$\Phi\bigl(\varphi^n(a),\varphi^n(b),\varphi^n(c)\bigr)$ and apply the
Stallings folding procedure (see,
\cite{Stallings:1983,Margolis&Sapir&Weil:1999,Kapovich&Myasnikov:2002})
to reduce it to a labeled digraph $\Lambda_n$ in which each letter
acts as a partial bijection. The graphs $\Lambda_n$ for $n=1,\ldots,5$
are drawn in Figure~\ref{fig:sf}. The transition semigroup of
$\Lambda_n$ plays the role of the semigroup $T_n$ in the above
argument. Then, one may prove analogs of the previous results in this
section, which yield an extension of
Corollary~\ref{c:several-non-loccount} to cover the case $n=2$. We
leave open the question as to whether the profinite semigroups
$\Om2{}\langle\op 1\le x^2\cl\rangle$ and $\Om2{(BG)}_2$ are
countable.

\begin{figure}[ht] \centering
  
  \begin{tikzpicture}[->,>=stealth',shorten >=1pt,auto,
    node distance=1.3cm,
    initial/.style={initial above,initial text=,initial distance=5mm},
    accepting/.style={accepting above,accepting distance=5mm}]
    \footnotesize
    \node (1) {1};
    \node (0) [left of=1,initial,accepting] {0};
    \path (1) edge [loop right] node {$b$}
          (1) edge [bend right] node [above] {$c$} (0)
          (0) edge [loop left] node {$b$}
          (0) edge [bend right] node [below] {$a$} (1);
  \end{tikzpicture} \qquad
  \begin{tikzpicture}[->,>=stealth',shorten >=1pt,auto,
    node distance=1.3cm,
    initial/.style={initial above,initial text=,initial distance=5mm},
    accepting/.style={accepting above,accepting distance=5mm}]
    \footnotesize
    \node (1) {1};
    \node (2) [right of=1] {2};
    \node (3) [above of=2] {3};
    \node (0) [left of=3,initial,accepting] {0};
    \path (1) edge node {$b$} (2) edge [bend left] node {$c$} (0)
          (2) edge [bend left] node {$c$} (3)
          (3) edge [bend left] node {$a$} (2) edge node {$b$} (0)
          (0) edge [bend left] node {$a$} (1);
  \end{tikzpicture}
  \qquad
  \begin{tikzpicture}[->,>=stealth',shorten >=1pt,auto,
    node distance=1.3cm,
    initial/.style={initial above,initial text=,initial distance=5mm},
    accepting/.style={accepting above,accepting distance=5mm}]
    \footnotesize
    \node (1) {1};
    \node (2) [below of=1] {2};
    \node (3) [right of=2] {3};
    \node (4) [right of=3] {4};
    \node (5) [right of=4] {5};
    \node (6) [above of=5] {6};
    \node (7) [left of=6] {7};
    \node (0) [left of=7, initial,accepting]{0};
    \path (1) edge node {$b$} (2)
          (2) edge node {$c$} (3)
          (3) edge node {$a$} (4) edge node {$b$} (0)
          (4) edge node {$c$} (5)
          (5) edge node {$b$} (6)
          (6) edge node {$a$} (7)
          (7) edge node {$b$} (4)
          (7) edge node {$c$} (0)
          (0) edge node {$a$} (1);
  \end{tikzpicture}

  \bigskip

  \begin{tikzpicture}[->,>=stealth',shorten >=1pt,auto,
    node distance=1.3cm,
    initial/.style={initial above,initial text=,initial distance=5mm},
    accepting/.style={accepting above,accepting distance=5mm}]
    \footnotesize
    \def \n {8}
    \def \radius {1.5cm}
    \foreach \s in {1,...,7}
    {
      \node (\s) at ({360/(2*\n) + 360/\n * \s}:\radius) {$\s$};
    }
    \node (0) [initial,accepting] at ({360/(2*\n) + 360/\n * \n}:\radius) {$0$};
    \begin{scope}[xshift=4cm]
      \foreach \s in {8,...,15}
      {
        \node (\s) at ({180 + 360/(2*\n) + 360/\n * \s}:\radius)
        {$\s$};
      }
    \end{scope}
    \path (0) edge node {$a$} (1)
          (1) edge node {$b$} (2)
          (2) edge node {$c$} (3)
          (3) edge node {$a$} (4)
          (4) edge node {$c$} (5)
          (5) edge node {$b$} (6)
          (6) edge node {$a$} (7)
          (7) edge node {$b$} (8) edge node {$c$} (0)
          (8) edge node {$c$} (9)
          (9) edge node {$b$} (10)
          (10) edge node {$a$} (11)
          (11) edge node {$c$} (12)
          (12) edge node {$a$} (13)
          (13) edge node {$b$} (14)
          (14) edge node {$c$} (15)
          (15) edge node {$b$} (0) edge node {$a$} (8);
  \end{tikzpicture}

  \bigskip

  \begin{tikzpicture}[->,>=stealth',shorten >=1pt,auto,scale=1.1,
    initial/.style={initial above,initial text=,initial distance=5mm},
    accepting/.style={accepting above,accepting distance=5mm}]
    \footnotesize
    \def \n {16}
    \def \radius {2.5cm}
  
    \foreach \s in {1,...,15}
    {
      \node (\s) at ({360/(2*\n) + 360/\n * \s}:\radius) {$\s$};
    }
    \node (0) [initial,accepting] at ({360/(2*\n) + 360/\n * \n}:\radius) {$0$};
    \begin{scope}[xshift=6cm]
      \foreach \s in {16,...,31}
      {
        \node (\s) at ({180 + 360/(2*\n) + 360/\n * \s}:\radius)
        {$\s$};
      }
    \end{scope}

    \path (0) edge node {$a$} (1)
    (1) edge node {$b$} (2)
    (2) edge node {$c$} (3)
    (3) edge node {$a$} (4)
    (4) edge node {$c$} (5)
    (5) edge node {$b$} (6)
    (6) edge node {$a$} (7)
    (7) edge node {$b$} (8)
    (8) edge node {$c$} (9)
    (9) edge node {$b$} (10)
    (10) edge node {$a$} (11)
    (11) edge node {$c$} (12)
    (12) edge node {$a$} (13)
    (13) edge node {$b$} (14)
    (14) edge node {$c$} (15)
    (15) edge node {$b$} (0) edge node {$a$} (16)
    (16) edge node {$c$} (17)
    (17) edge node {$b$} (18)
    (18) edge node {$a$} (19)
    (19) edge node {$c$} (20)
    (20) edge node {$a$} (21)
    (21) edge node {$b$} (22)
    (22) edge node {$c$} (23)
    (23) edge node {$b$} (24)
    (24) edge node {$a$} (25)
    (25) edge node {$b$} (26)
    (26) edge node {$c$} (27)
    (27) edge node {$a$} (28)
    (28) edge node {$c$} (29)
    (29) edge node {$b$} (30)
    (30) edge node {$a$} (31)
    (31) edge node {$c$} (0) edge node {$b$} (16);
  \end{tikzpicture}

  \caption{The labeled digraphs $\Lambda_n$ as automata for $n=1,\ldots,5$}
  \label{fig:sf}
\end{figure}
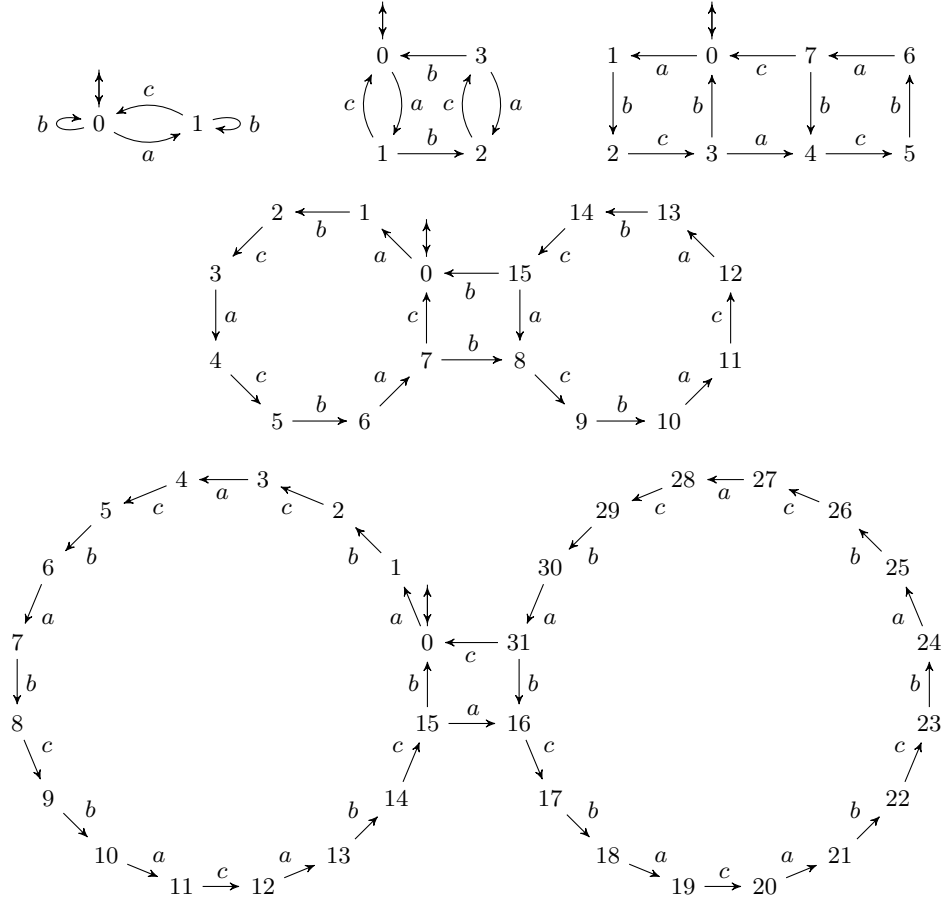

\section*{Acknowledgments}

The first author acknowledges partial funding by CMUP (UID/MAT/
00144/2019) which is funded by FCT (Portugal) with national (MATT'S)
and European structural funds (FEDER) under the partnership agreement
PT2020. %
The work was carried out at Masaryk University, whose hospitality is
gratefully acknowledged, with the support of the FCT sabbatical
scholarship SFRH/BSAB/142872/2018. %
The second author was supported by Grant 19-12790S of the Grant Agency
of the Czech Republic.

\bibliographystyle{amsplain}
\bibliography{sgpabb,ref-sgps}

\providecommand{\bysame}{\leavevmode\hbox to3em{\hrulefill}\thinspace}
\providecommand{\MR}{\relax\ifhmode\unskip\space\fi MR }
\providecommand{\MRhref}[2]{%
  \href{http://www.ams.org/mathscinet-getitem?mr=#1}{#2}
}
\providecommand{\href}[2]{#2}
\begin{thebibliography}{10}

\bibitem{Allouche&Shallit:1999}
J.-P. Allouche and J.~Shallit, \emph{The ubiquitous {P}rouhet-{T}hue-{M}orse
  sequence}, Sequences and their applications ({S}ingapore, 1998), Springer
  Ser. Discrete Math. Theor. Comput. Sci., Springer, London, 1999, pp.~1--16.

\bibitem{Almeida:1990b}
J.~Almeida, \emph{Implicit operations on finite {${\cal {J}}$}-trivial
  semigroups and a conjecture of {I}. {S}imon}, J. Pure Appl. Algebra
  \textbf{69} (1990), 205--218.

\bibitem{Almeida:1994a}
\bysame, \emph{Finite semigroups and universal algebra}, World Scientific,
  Singapore, 1995, {E}nglish translation.

\bibitem{Almeida:2003cshort}
\bysame, \emph{Profinite semigroups and applications}, Structural theory of
  automata, semigroups and universal algebra (New York) (V.~B. Kudryavtsev and
  I.~G. Rosenberg, eds.), Springer, 2005, pp.~1--45.

\bibitem{Almeida&Azevedo:1987}
J.~Almeida and A.~Azevedo, \emph{Implicit operations on certain classes of
  semigroups}, Semigroups and their applications (Dordrecht) (S.~M. Goberstein
  and P.~M. Higgins, eds.), D. Reidel, 1987, pp.~1--11.

\bibitem{Almeida&Costa:2015hb}
J.~Almeida and A.~Costa, \emph{Handbook of {A}uto{M}ath{A}}, ch.~Profinite
  topologies, European Math. Soc. Publ. House, To appear.

\bibitem{Almeida&ACosta:2017}
\bysame, \emph{Equidivisible pseudovarieties of semigroups}, Publ. Math.
  Debrecen \textbf{90} (2017), 435--453.

\bibitem{Almeida&Klima:2015a}
J.~Almeida and O.~Kl{\'\i}ma, \emph{Representations of relatively free
  profinite semigroups, irreducibility, and order primitivity}, Tech. report,
  Univ. Masaryk and Porto, 2015, arXiv:1509.01389, to appear in Trans. Amer.
  Math. Soc.

\bibitem{Almeida&Klima:2011a}
\bysame, \emph{On the irreducibility of pseudovarieties of semigroups}, J. Pure
  Appl. Algebra \textbf{220} (2016), 1517--1524.

\bibitem{Almeida&Klima:2017a}
\bysame, \emph{Towards a pseudoequational proof theory}, Portugal. Math.
  \textbf{75} (2018), 79--119.

\bibitem{Almeida&Klima:2017b}
\bysame, \emph{On the insertion of $n$-powers}, Discrete Math. \& Theor. Comp.
  Sci. \textbf{21} (2019), no.~3, 18 pp.

\bibitem{Almeida&Klima:2019d}
\bysame, \emph{Stone pseudovarieties}, 2019, in preparation.

\bibitem{Almeida&Trotter:1999a}
J.~Almeida and P.~G. Trotter, \emph{Hyperdecidability of pseudovarieties of
  orthogroups}, Glasgow Math. J. \textbf{43} (2001), 67--83.

\bibitem{Almeida&Weil:1995a}
J.~Almeida and P.~Weil, \emph{Free profinite semigroups over semidirect
  products}, Russian Math. (Iz. VUZ) \textbf{39} (1995), 1--27.

\bibitem{Ash:1987}
C.~J. Ash, \emph{Finite semigroups with commuting idempotents}, J. Austral.
  Math. Soc., Ser. A \textbf{43} (1987), 81--90.

\bibitem{Brown:1971}
T.~C. Brown, \emph{An interesting combinatorial method in the theory of locally
  finite semigroups}, Pacific J. of Math. \textbf{36} (1971), 285--289.

\bibitem{Brzozowski&Fich:1984}
J.~A. Brzozowski and F.~E. Fich, \emph{On generalized locally testable
  semigroups}, Discrete Math. \textbf{50} (1984), 153--169.

\bibitem{Day:1967}
G.~W. Day, \emph{Superatomic {B}oolean algebras}, Pacific J. of Math.
  \textbf{23} (1967), 479--489.

\bibitem{Diekert&Kufleitner&Weil:2011}
V.~Diekert, M.~Kufleitner, and P.~Weil, \emph{Star-free languages are
  {C}hurch-{R}osser congruential}, Theor. Comp. Sci. (2012), 129--135.

\bibitem{Eilenberg:1976}
S.~Eilenberg, \emph{Automata, languages and machines}, vol.~B, Academic Press,
  New York, 1976.

\bibitem{Gehrke:2016a}
M.~Gehrke, \emph{Stone duality, topological algebra, and recognition}, J. Pure
  Appl. Algebra \textbf{220} (2016), no.~7, 2711--2747.

\bibitem{Gehrke&Grigorieff&Pin:2008}
M.~Gehrke, S.~Grigorieff, and J.-E. Pin, \emph{Duality and equational theory of
  regular languages}, ICALP 2008, Part II, Lect. Notes in Comput. Sci., no.
  5126, Springer Verlag, 2008, pp.~246--257.

\bibitem{Green:1951}
J.~A. Green, \emph{On the structure of semigroups}, Ann. of Math. \textbf{54}
  (1951), 163--172.

\bibitem{Higgins&Margolis:1999}
P.~M. Higgins and S.~W. Margolis, \emph{Finite aperiodic semigroups with
  commuting idempotents and generalizations}, Israel J. Math. \textbf{116}
  (2000), 367--380.

\bibitem{Higman:1952}
G.~Higman, \emph{Ordering by divisibility in abstract algebras}, Proc. London
  Math. Soc. \textbf{2} (1952), 326--336.

\bibitem{Howie:1995}
J.~M. Howie, \emph{Fundamentals of semigroup theory}, London Mathematical
  Society Monographs. New Series, vol.~12, The Clarendon Press, Oxford
  University Press, New York, 1995.

\bibitem{Hunter:1988}
R.~P. Hunter, \emph{Certain finitely generated compact zero-dimensional
  semigroups}, J. Austral. Math. Soc., Ser. A \textbf{44} (1988), 265--270.

\bibitem{Kapovich&Myasnikov:2002}
I.~Kapovich and A.~Myasnikov, \emph{Stallings foldings and subgroups of free
  groups}, J. Algebra \textbf{248} (2002), 608--668.

\bibitem{Kechris:1995}
A.~S. Kechris, \emph{Classical descriptive set theory}, Graduate Texts in
  Mathematics, vol. 156, Springer-Verlag, New York, 1995.

\bibitem{Knast:1983a}
R.~Knast, \emph{A semigroup characterization of dot-depth one languages}, RAIRO
  Inf. Th\'eor. et Appl. \textbf{17} (1983), 321--330.

\bibitem{Lothaire:1983}
M.~Lothaire, \emph{Combinatorics on words}, Addison-Wesley, Reading, Mass.,
  1983.

\bibitem{Margolis&Sapir&Weil:1999}
S.~Margolis, M.~Sapir, and P.~Weil, \emph{Closed subgroups in pro-{V}
  topologies and the extension problem for inverse automata}, Int. J. Algebra
  Comput. \textbf{11} (2001), 405--445.

\bibitem{Numakura:1957}
K.~Numakura, \emph{Theorems on compact totally disconnetced semigroups and
  lattices}, Proc. Amer. Math. Soc. \textbf{8} (1957), 623--626.

\bibitem{Pin:1997}
J.-E. Pin, \emph{Syntactic semigroups}, Handbook of Formal Languages
  (G.~Rozenberg and A.~Salomaa, eds.), Springer, 1997.

\bibitem{Pin&Weil:1996a}
J.-E. Pin and P.~Weil, \emph{Profinite semigroups, {M}al'cev products and
  identities}, J. Algebra \textbf{182} (1996), 604--626.

\bibitem{Pippenger:1997}
N.~Pippenger, \emph{Regular languages and stone duality}, Theory Comput. Syst.
  \textbf{30} (1997), 121--134.

\bibitem{Reiterman:1982}
J.~Reiterman, \emph{The {B}irkhoff theorem for finite algebras}, Algebra
  Universalis \textbf{14} (1982), 1--10.

\bibitem{Rhodes:1986a}
J.~Rhodes, \emph{Infinite iteration of matrix semigroups, {I}. {S}tructure
  theorem for torsion semigroups}, J. Algebra \textbf{98} (1986), 422--451.

\bibitem{Rhodes:1986}
\bysame, \emph{Infinite iteration of matrix semigroups, {II}. {S}tructure
  theorem for arbitrary semigroups up to aperiodic morphism}, J. Algebra
  \textbf{100} (1986), 25--137.

\bibitem{Rhodes&Allen:1973}
J.~Rhodes and D.~Allen~Jr., \emph{Synthesis of the classical and modern theory
  of finite semigroups}, Adv. in Math. \textbf{11} (1973), 238--266.

\bibitem{Rhodes&Steinberg:2009qt}
J.~Rhodes and B.~Steinberg, \emph{The $q$-theory of finite semigroups},
  Springer Monographs in Mathematics, Springer, 2009.

\bibitem{Ribes&Zalesskii:2010}
L.~Ribes and P.~A. Zalesskii, \emph{Profinite groups}, second ed., Ergeb. Math.
  Grenzgebiete. 3, vol.~40, Springer-Verlag, Berlin, 2010.

\bibitem{Schutzenberger:1965}
M.~P. Schützenberger, \emph{On finite monoids having only trivial subgroups},
  Inform. and Control \textbf{8} (1965), 190--194.

\bibitem{Simon:1989}
I.~Simon, \emph{Properties of factorization forests}, Formal properties of
  finite automata and applications ({R}amatuelle, 1988), Lect. Notes in Comput.
  Sci., vol. 386, Springer, Berlin, 1989, pp.~65--72.

\bibitem{Simon:1990}
\bysame, \emph{Factorization forests of finite height}, Theor. Comp. Sci.
  \textbf{72} (1990), 65--94.

\bibitem{Simon:1992}
\bysame, \emph{A short proof of the factorization forest theorem}, Tree
  Automata and Languages (Amsterdam) (M.~Nivat and A.~Podelski, eds.), Elsevier
  Science Publ., 1992, pp.~433--438.

\bibitem{Stallings:1983}
J.~R. Stallings, \emph{Topology of finite graphs}, Inventiones Mathematicae
  \textbf{71} (1983), 551--565.

\bibitem{Thue:1912}
A.~Thue, \emph{Über die gegenseitige {L}age gleicher {T}eile gewisser
  {Z}eichenreihen}, Norske Vid. Selsk. Skr. I. Mat.-nat. Kl., Christiana
  \textbf{1} (1912), 1--67.

\bibitem{Vannacci:2015PhD}
M.~Vannacci, \emph{On a family of hereditarily just infinite profinite groups
  which are not virtually pro-$p$}, Ph.D. thesis, Univ. London, 2015.

\bibitem{Zelmanov:1991}
E.~I. Zelmanov, \emph{On the restricted {B}urnside problem}, Proceedings of the
  {I}nternational {C}ongress of {M}athematicians, {V}ol. {I}, {II} ({K}yoto,
  1990), Math. Soc. Japan, Tokyo, 1991, pp.~395--402.

\end{thebibliography}

\end{document}